\setlist{leftmargin=7mm} \setlist[enumerate,1]{label={\rm (\alph*)}}
\theoremstyle{plain}
\newtheorem{theorem}{Theorem}[section]
\renewcommand{\thetheorem}{\arabic{section}.\arabic{theorem}}
\newtheorem{theoremi}{Theorem}[section]
\newtheorem{proposition}[theorem]{Proposition}
\newtheorem{lemma}[theorem]{Lemma}
\newtheorem{corollary}[theorem]{Corollary}
\theoremstyle{definition}
\newtheorem{example}[theorem]{Example}
\theoremstyle{remark}
\newtheorem{remark}[theorem]{Remark}
\newtheoremstyle{boldNote}{}{}{}{}{\bfseries}{.}{5pt plus 1pt minus 1pt} 
  {\thmname{#1}\thmnumber{ #2}\thmnote{ (#3)}}  
\theoremstyle{boldNote}
\newtheorem{definition}[theorem]{Definition}
\newcommand{\theloctheorem}{\thetheorem}
\newcommand{\eqloc}[1]{\renewcommand{\theloctheorem}{\ref{#1}}}
\newtheoremstyle{clm}{5pt}{1pt}{\itshape}{.2cm}{\itshape\sc}{.}{ }{}{}
\theoremstyle{clm}
\newtheorem{claim}{Claim}[section]
\newenvironment{cproof}[1][Proof of claim]{\par
\pushQED{\qed}%
\normalfont \topsep1\p@\@plus1\p@\relax
\trivlist\item\relax{\hspace{.2cm}\itshape#1\@addpunct{.}}\hspace\labelsep\ignorespaces}{%
\popQED\endtrivlist\@endpefalse\par\addvspace{5pt}
}
 \DeclareMathOperator{\Id}{\mathsf{Id}}
\let\im\relax
\DeclareMathOperator{\im}{\mathsf{im}}
\DeclareMathOperator{\coker}{\mathsf{coker}}
\let\ker\relax
\DeclareMathOperator{\ker}{\mathsf{ker}}
\DeclareMathOperator{\Hom}{\mathsf{Hom}}
\DeclareMathOperator{\Ext}{\mathsf{Ext}}
\DeclareMathOperator{\End}{\mathsf{End}}
\DeclareMathOperator{\irr}{\mathsf{irr}}
\DeclareMathOperator{\Fac}{\mathsf{Fac}}
\DeclareMathOperator{\Mod}{\mathsf{Mod}}
\let\mod\relax
\DeclareMathOperator{\mod}{\mathsf{mod}}
\DeclareMathOperator{\fl}{\mathsf{f.\!l.}}
\DeclareMathOperator{\tors}{\mathsf{tors}}
\newcommand{\un}[2]{{{}_{#1} \mathbf{1}_{#2}}}
\newcommand{\st}[1]{\underline{\smash{#1}}}
\newcommand{\maxstr}[1][]{\mathop{\mathsf{maxNC}_{#1}}(Q, R)}
\newcommand{\torsQ}{\mathop{\mathsf{tors}}(Q, R)}
\DeclareMathOperator{\TTT}{\mathsf{T}}
\DeclareMathOperator{\Filt}{\mathsf{Filt}}
\DeclareMathOperator{\brick}{\mathsf{brick}}
\newcommand{\us}{\mathbf{s}}
\newcommand{\ut}{\mathbf{t}}
\newcommand{\xto}{\xrightarrow}
\newcommand{\Tors}{\TTT}
\newcommand{\extr}[1]{[ #1 \rangle}
\newcommand{\extir}[1]{[ #1 \langle}
\newcommand{\extl}[1]{\rangle #1 ]}
\newcommand{\extil}[1]{\langle #1 ]}
\renewcommand{\leq}{\leqslant}
\renewcommand{\geq}{\geqslant}
\newcommand{\Acal}{\mathcal{A}}
\newcommand{\Ar}{\mathscr{A}}
\newcommand{\Mr}{\mathscr{M}}
\newcommand{\Tr}{\mathscr{T}}
\newcommand{\Sr}{\mathscr{S}}
\newcommand{\Xr}{\mathscr{X}}
\newcommand{\Ur}{\mathscr{U}}
\newcommand{\Z}{\mathbb{Z}}
\renewcommand{\epsilon}{\varepsilon}
\newcommand{\tens}{\otimes}
\renewcommand{\phi}{\varphi}
\renewcommand{\bar}[1]{\overline{#1}}
\newcommand{\comment}[1]{}
\newcommand{\join}{\vee}
\newcommand{\inj}{\hookrightarrow}
\renewcommand{\tilde}[1]{\smash{\widetilde{#1}}}
\newcommand{\forloop}[5][1] { \setcounter{#2}{#3} \ifthenelse{#4} { #5 \addtocounter{#2}{#1} \forloop[#1]{#2}{\value{#2}}{#4}{#5} }{ } } 
\newcounter{i}
\newcounter{j}
\newcommand{\dets}[4]{
    \displaystyle \left|\;
    \begin{matrix}
      \forloop{i}{1}{\value{i} < 7}{
       \forloop{j}{1}{\value{j} < 7}{
        \ifthenelse{\value{i} < #1 \or \value{i} > #2 \or \value{j} < #3 \or \value{j} > #4}{\ifthenelse{\value{i}<\value{j}}{\cdot}{\ifthenelse{\value{i} = \value{j}}{1}{}}}{\bullet} \ifthenelse{\value{j}<6}{&}{}
       } \ifthenelse{\value{i}<6}{\\}{}
      }
    \end{matrix}
    \;\right|^{\vphantom{M^{M^M}}}_{\vphantom{M_{M_M}}}}
\def\overarrowb@#1#2#3{\vbox{\vspace*{-.3ex}\ialign{##\crcr\vspace*{-.3ex}#1#2\crcr
 \noalign{\nointerlineskip}$\m@th\hfil#2#3\hfil$\crcr}}}
\newcommand{\amsvect}{%
  \mathpalette{\overarrowb@\rightarrowfill@}}
\renewcommand{\mod}{\operatorname{mod}}
\newsavebox\locboxinminipage
\newlength\locboxinminipagel
\def\newboxedcommand#1#2 
 \def\newboxedcommandlocala##1##2.{##2}%
 \edef\newboxedcommandlocalb{\expandafter\newboxedcommandlocala\string#1.}%
\newsavebox\csname\newboxedcommandlocalb savebox\endcsname%
\newlength\csname\newboxedcommandlocalb largeurbox\endcsname%
\newcommand{\margincolor}{red}      
\definecolor{darkgreen}{rgb}{0,0.7,0}
\newcounter{margincounter}
\newcommand{\marginnum}{
\ifnum\value{margincounter}<10
\textcolor{\margincolor}{\begin{picture}(0,0)\put(2.2,2.4){\circle{9}}\end{picture}\footnotesize\arabic{margincounter}}
\else\ifnum\value{margincounter}<100
\textcolor{\margincolor}{\begin{picture}(0,0)\put(4.256,2.5){\circle{11}}\end{picture}\footnotesize\arabic{margincounter}}
\else
\textcolor{\margincolor}{\begin{picture}(0,0)\put(6.8,2.5){\circle{14}}\end{picture}\footnotesize\arabic{margincounter}}
\fi\fi
}
\newcommand{\cross}[5]
{
 \left. \begin{array}{r} {#1} \\ {#4} \end{array} \hspace{-3pt} \right\rangle {\ifx\hfuzz#2\hfuzz \hspace{-6pt} \else \hspace{-2pt} #2 \hspace{-2pt} \fi} \left\langle \hspace{-3pt} \begin{array}{l} {#3} \\ {#5} \end{array} \right.
}
\newcommand{\crosswn}[7]
{
 \left. \begin{array}{rr} {#1} = \hspace{-5pt} & {#2} \\ {#5} = \hspace{-5pt} & {#6} \end{array} \hspace{-3pt} \right\rangle {\ifx\hfuzz#3\hfuzz \hspace{-6pt} \else \hspace{-2pt} #3 \hspace{-2pt} \fi}  \left\langle \hspace{-3pt}  \begin{array}{l} {#4} \\ {#7} \end{array} \right.
}
\def\clap#1{\hbox to 0pt{\hss#1\hss}}
\numberwithin{equation}{subsection}
\definecolor{darkblue}{rgb}{0,0,0.7} 
\newcommand{\defn}[1]{\textsl{\color{darkblue} #1}}
\newcommand{\fin}{\mathsf{fin}}
\newcommand{\blambda}{\pmb{\lambda}}
\newcommand{\nstr}{\mathsf{nstr}}
\newcommand{\mt}{\mathtt}
\newcommand{\penta}[2]{
\draw[thick] (#1) circle (2);
\draw[shift={(#1)}, #2] (0:2) -- (144:2) (0:2) -- (-144:2);
\draw (#1) ++(0:2) node{$\bullet$};
\draw (#1) ++(72:2) node{$\bullet$};
\draw (#1) ++(144:2) node{$\bullet$};
\draw (#1) ++(-144:2) node{$\bullet$};
\draw (#1) ++(-72:2) node{$\bullet$};
}
\newcommand{\cyl}[1]{
\draw[thick] (#1) circle (2);
\filldraw [pattern=north east lines, thick] (#1) circle (0.5);
\draw (#1)+(0,0.5) node {$\bullet$};
\draw (#1)+(0,-2) node {$\bullet$}; 
}
\newcommand{\hrt}[1]{
\draw[gray, dotted, shift={(0,-2)}] (#1) .. controls +(1,0.9) and +(0,-1.5) .. +(1.5,2.5);
\draw[gray, dotted, shift={(0,0.5)}] (#1) .. controls +(0.3,1.5) and +(0,1) .. +(1.5,0);
\draw[gray, dotted, shift={(0,-2)}] (#1) .. controls +(-1,0.9) and +(0,-1.5) .. +(-1.5,2.5);
\draw[gray, dotted, shift={(0,0.5)}] (#1) .. controls +(-0.3,1.5) and +(0,1) .. +(-1.5,0);
}
\newcommand{\strPa}[2]{
\spiral[#2](#1)(275:160)(1.3:1)[1];
\draw[#2, shift={(#1)}] (-0.935,0.36) .. controls (-0.95,0.1) and (-0.9,-0.1) .. (-0.5,-0.1);
\draw[#2, shift={(#1)}] (0.115,-1.295) .. controls (-0.4,-1.38) and (-1.1,-1.4) .. (-1.417,-1.4);
}
\newcommand{\strPb}[2]{
\draw[#2, shift={(#1)}] (-0.4,-0.3) -- +(-1.56,0);
}
\newcommand{\strPba}[2]{
\spiral[#2](#1)(275:140)(1.5:0.8)[2];
\draw[#2, shift={(#1)}] (-0.61,0.52) .. controls (-0.8,0.2) and (-0.8,0) .. (-0.5,0);
\draw[#2, shift={(#1)}] (0.17,-1.487) .. controls +(-0.3,-0.04) and (-0.8,-1.6) .. (-1.2,-1.6);
}
\newcommand{\strPab}[2]{
\spiral[#2](#1)(275:120)(1.7:0.65)[3];
\draw[#2, shift={(#1)}] (-0.3,0.58) .. controls (-0.55,0.42) and +(-0.29,0.05) .. (-0.5,0.1);
\draw[#2, shift={(#1)}] (0.2,-1.685) .. controls +(-0.3,-0.05) and +(0.3,0) .. (-0.9,-1.78);
}
\newcommand{\strInftyOuter}[2]{
\spiral[#2](#1)(280:120)(1.75:1.43)[3];
\spiral[dashed, #2](#1)(122:170)(1.43:1.41)[0];
\draw[#2, shift={(#1)}] (0.32,-1.72) .. controls +(-0.4,-0.05) and +(0.2,0.25) .. (-0.58,-1.91);
}
\newcommand{\strInftyInner}[2]{
\spiral[#2](#1)(250:100)(1.1:0.78)[3];
\spiral[dashed, #2](#1)(245:180)(1.1:1.12)[0];
\draw[#2, shift={(#1)}] (-0.1,0.78) .. controls (-0.6,0.6) and (-0.4,0.3) .. (-0.4,0.3);
}
\newcommand{\cylNullStr}[2]{
\draw[#2, rotate=-90] (#1)+(8.5:2) arc (115:245:0.34);
\draw[#2, rotate=90] (#1)+(-39:0.5) arc (-90:94:0.32);
}
\newcommand\spiral{} 
\def\spiral[#1](#2)(#3:#4)(#5:#6)[#7]{
\pgfmathsetmacro{\domain}{#4+#7*360}
\pgfmathsetmacro{\growth}{180*(#6-#5)/(pi*(\domain-#3))}
\draw [#1,
       shift={(#2)},
       domain=#3*pi/180:\domain*pi/180,
       variable=\t,
       smooth,
       samples=int(\domain/5)] plot ({\t r}: {#5+\growth*\t-\growth*#3*pi/180})
}
\title{Classifying torsion classes of gentle algebras}
\author{Aaron Chan and Laurent Demonet}
\begin{document}

\begin{abstract}
For a finite-dimensional gentle algebra, it is already known that the functorially finite torsion classes of its category of finite-dimensional modules can be classified using a combinatorial interpretation, called maximal non-crossing sets of strings, of the corresponding support $\tau$-tilting module (or equivalently, two-term silting complexes).  In the topological interpretation of gentle algebras via marked surfaces, such a set can be interpreted as a dissection (or partial triangulation), or equivalently, a lamination that does not contain a closed curve.  We will refine this combinatorics, which gives us a classification of torsion classes in the category of finite length modules over a (possibly infinite-dimensional) gentle algebra.  As a consequence, our result also unifies the functorially finite torsion class classification of finite-dimensional gentle algebras with certain classes of special biserial algebras - such as Brauer graph algebras.
\end{abstract}

\maketitle

\section{Introduction}

Finite-dimensional gentle algebras has always been one of the central themes subject in the representation theory of finite-dimensional algebras.  They constitute to one of the most interesting classes of algebras as their representation theory are made up from type $\mathbb{A}$ quiver - called \emph{string modules} - and type $\widetilde{\mathbb{A}}$ quiver  - called \emph{bands modules}; see \cite{WW} or Section \ref{sec:gentlealg}.  


A gentle algebra $\Lambda=\Lambda_{Q,R}$ is uniquely described by a gentle quiver $(Q,R)$, where $R$ is the generating set of relations, all of which are quadratic monomial; see Definition \ref{def:gentle} for details.  A recent trend, coming from developments in cluster theory \cite{FST, ABCP} as well as symplectic topology \cite{HKK}, is to identify a gentle quiver with a \emph{dissection of marked surface}.  For simplicity, we use in this introduction the topological model from cluster theory.  That is, the vertices of $Q$ are given by the (non-boundary) arcs of a triangulation (without self-folded triangles) on a marked bordered (compact orientable real) surface without punctures $\Sigma$, and the arrows are in bijections with the (oriented) angles between arcs.  In Figure \ref{fig:A2,Kron0}, we show the case of the $\mathbb{A}_2$-quiver on the left, and the case of the Kronekcer quiver (i.e. $\widetilde{\mathbb{A}}_1$-quiver) on the right.

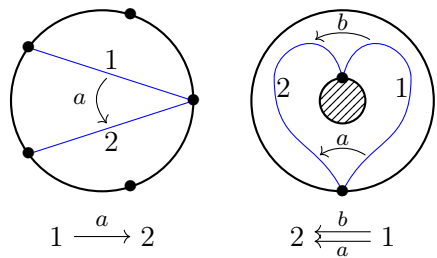
\begin{figure}[!ht]
\centering
\begin{tikzpicture}[scale=0.6]
\penta{0,0}{blue};
\node at (0.2,0.9) {1}; \node at (0.2,-0.9) {2};
\draw[->] (0.1,0.5) .. controls (-0.2,0.2) and (-0.2,-0.1) .. (0.1,-0.5) node [midway, left] {\footnotesize $a$};
\node (u1) at (-1,-3) {1}; \node (u2) at (1,-3) {2};
\draw[->] (u1) -- (u2) node [midway, above] {\footnotesize $a$};

\begin{scope}[xshift=150] 
\draw[blue, shift={(0,-2)}] (0,0) .. controls +(0.6,1.2) and +(0,-1.5) .. +(1.5,2.5);
\draw[blue, shift={(0,0.5)}] (0,0) .. controls +(0.3,1.3) and +(0,0.7) .. +(1.5,0);
\node at (1.3,0.3) {1};
\draw[blue, shift={(0,-2)}] (0,0) .. controls +(-0.6,1.2) and +(0,-1.5) .. +(-1.5,2.5);
\draw[blue, shift={(0,0.5)}] (0,0) .. controls +(-0.3,1.3) and +(0,0.7) .. +(-1.5,0);
\node at (-1.3,0.3) {2};
\cyl{0,0};
\draw[->] (0.6,1.35) .. controls (0.3,1.55) and (-0.3,1.55) .. (-0.6,1.35) node [inner sep=1, midway,above] {\footnotesize $b$};
\draw[->] (0.5,-1.2) .. controls (0.2,-1) and (-0.2,-1) .. (-0.5,-1.2) node [inner sep=1, midway,above] {\footnotesize $a$};
\node (v1) at (1,-3) {1}; \node (v2) at (-1,-3) {2};
\draw[->] ([yshift=3]v1.west) -- node [inner sep=2, midway,above] {\scriptsize $b$} ([yshift=3]v2.east);
\draw[->] ([yshift=-3]v1.west) -- node [inner sep=2, midway,below] {\scriptsize $a$} ([yshift=-3]v2.east);
\end{scope}
\end{tikzpicture}
\caption{Dissection and gentle quiver}\label{fig:A2,Kron0}
\end{figure}

Under this topological model, a string module corresponds to an arc (i.e. curve connecting marked points), and a band module corresponds to a closed curve equipped with some power of an irreducible Laurent polynomial\footnote{Specifying a `power of an irreducible Laurent polynomial' is equivalent to specifying a Jordan block, when the underlying field is algebraically closed}.  This has become a very powerful tool in understanding homological properties of gentle algebras.  For example, adding some geometric ingredients to a variant of this topological model yields a complete classification of derived equivalences classes as well as silting complexes \cite{APS}.

The homological behaviour that we are interested in this article is \emph{torsion classes} of the category of finite-dimensional $\Lambda$-modules.  In the topological model of the cluster theory setting, it is known that one can classify \emph{functorially finite} torsion classes by triangulations on the associated surface via the following sequence of bijections.
\[
\xymatrix@R=8pt@C=0pt{
*[l]{\{\text{triangulations of }\Sigma\}\;\;} \ar@{<->}[r]^{\mbox{\small\cite{FST}}} & *[r]{\;\;\{\text{clusters of }\mathcal{A}_{\Sigma}\}} &\\
\ar@{<->}[r]^{\mbox{\cite{BZ}}} & *[r]{\;\;\{\text{cluster tilting objects in } \mathcal{C}_{\Sigma}\}} \\
\ar@{<->}[r]^{\mbox{\small\cite{AIR}}} & *[r]{\;\;\{\text{functorially finite torsion classes}\}.}
}\]
The left-hand side of Figure \ref{fig:A2,Kron1} shows some examples for the $\mathbb{A}_2$ quiver and the Kronecker quiver.  Classification of functorially finite torsion classes for all finite-dimensional gentle algebras are known in \cite[Cor 5.7]{EJR}, \cite[Thm 5.1]{BDMTY},  \cite[Thm 2.6]{PPP17} using various different combinatorial model.  We remark that the combinatorics used in these cited works can be regarded as a subset of ours.  

The aim of this article is to classify \emph{all} torsion classes for \emph{any} gentle algebras, regardless of finite-dimensionality, by replacing triangulations with something more general.

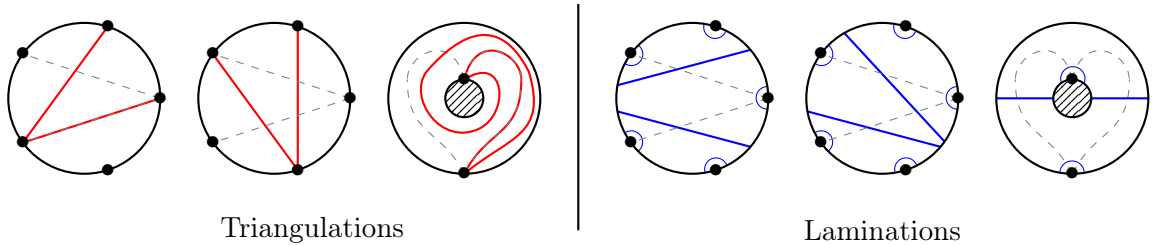
\begin{figure}[!ht]
\centering
\begin{tikzpicture}[scale=0.5]
\begin{scope}[shift={(-16,0)}]
\begin{scope}[shift={(10,0)}]
\draw[dashed, gray, shift={(0,-2)}] (0,0) .. controls +(-0.6,1.2) and +(0,-1.5) .. +(-1.5,2.5);
\draw[dashed, gray, shift={(0,0.5)}] (0,0) .. controls +(-0.3,1.3) and +(0,0.7) .. +(-1.5,0);
\draw[red, thick, shift={(0,-2)}] (0,0) .. controls +(0.6,1.2) and +(0,-1.5) .. +(1.5,2.5);
\draw[red, thick, shift={(0,0.5)}] (0,0) .. controls +(0.3,1.3) and +(0,0.7) .. +(1.5,0);
\draw[dashed, gray, shift={(0,-2)}] (0,0) .. controls +(0.6,1.2) and +(0,-1.5) .. +(1.5,2.5);
\draw[dashed, gray, shift={(0,0.5)}] (0,0) .. controls +(0.3,1.3) and +(0,0.7) .. +(1.5,0);
\draw[thick,red] (0,0.5) .. controls (1.1,1.2) and (1.2,-0.4) .. (0.4,-0.8) .. controls (-0.6,-1.3) and (-1.5,0) .. (-1,0.7) .. controls (-0.4,1.4) and (0.8,2.5) .. (1.7,0.7) .. controls (2.2,-1) and (1.2,-1) .. (0,-2);
\cyl{0,0};
\end{scope}
\begin{scope}[shift={(0,0)}]
\draw[thick, red] (72:2) -- (-144:2) (0:2) -- (-144:2);
\penta{0,0}{dashed, gray};
\end{scope}
\begin{scope}[shift={(5,0)}]
\draw[thick, red] (72:2) -- (-72:2) (-72:2) -- (144:2);
\penta{0,0}{dashed, gray};
\end{scope}
\end{scope}



\begin{scope}[shift={(0,0)}]
\draw[thick, blue] (-40:2) -- (-170:2) (40:2) -- (170:2);
\draw[blue] (9:2) arc (91:269:0.3);
\draw[blue, rotate=72] (9:2) arc (91:269:0.3);\draw[blue, rotate=-72] (9:2) arc (91:269:0.3);
\draw[blue, rotate=144] (8.5:2) arc (91:269:0.3);\draw[blue, rotate=-144] (8.5:2) arc (91:269:0.3);
\penta{0,0}{dashed, gray};
\end{scope}
\begin{scope}[shift={(5,0)}]
\draw[thick, blue] (-40:2) -- (-170:2) (-35:2) -- (120:2);
\draw[blue] (9:2) arc (91:269:0.3);
\draw[blue, rotate=72] (9:2) arc (91:269:0.3);\draw[blue, rotate=-72] (9:2) arc (91:269:0.3);
\draw[blue, rotate=144] (8.5:2) arc (91:269:0.3);\draw[blue, rotate=-144] (8.5:2) arc (91:269:0.3);
\penta{0,0}{dashed, gray};
\end{scope}
\begin{scope}[shift={(10,0)}]
\draw[dashed, gray, shift={(0,-2)}] (0,0) .. controls +(-0.6,1.2) and +(0,-1.5) .. +(-1.5,2.5);
\draw[dashed, gray, shift={(0,0.5)}] (0,0) .. controls +(-0.3,1.3) and +(0,0.7) .. +(-1.5,0);
\draw[dashed, gray, shift={(0,-2)}] (0,0) .. controls +(0.6,1.2) and +(0,-1.5) .. +(1.5,2.5);
\draw[dashed, gray, shift={(0,0.5)}] (0,0) .. controls +(0.3,1.3) and +(0,0.7) .. +(1.5,0);
\draw[thick, blue] (0.5,0) -- (2,0) (-0.5,0) -- (-2,0);
\draw[blue, rotate=-90] (9:2) arc (91:269:0.32);
\draw[blue, rotate=90] (35:0.5) arc (110:-110:0.32);
\cyl{0,0};
\end{scope}

\draw[thick] (-3,2.5) -- (-3,-3.5);
\node at (-10,-3.5) {Triangulations};
\node at (5,-3.5) {Laminations};
\end{tikzpicture}
\caption{Examples of triangulations and laminations}\label{fig:A2,Kron1}
\end{figure}

In finding deeper connection between the geometry of (triangulated) surfaces and cluster theory and in particular, tools that work regardless of existence of punctures, classical triangulation is somewhat inconvenient to work with.  Based on Fock and Goncharov's work \cite{FG}, Fomin and Thurston considered (Fock-Goncharov's reinterpretation of) \emph{laminations} in place of triangulations in the sequel article \cite{FT} of \cite{FST}.

A lamination is a maximal collection of pairwise non-intersecting (self-non-intersecting) curves (up to isotopy) such that each curve is of one of the following form:
\begin{itemize}[leftmargin=2cm]
\item[(L1)] a closed curve non-isotopic to a point;

\item[(L2)] a non-closed curve such that each of its ends 
\begin{itemize}
\item[either] (L2.a) terminates at an \emph{unmarked} point on the boundary,
\item[or] (L2.b) winds around a punctured indefinitely.
\end{itemize}
\end{itemize}
The three configurations on the right-hand side of Figure \ref{fig:A2,Kron1} shows the laminations corresponding to the three triangulations on the left.

Consider relaxing the condition (L2.b) to 
\begin{itemize}[leftmargin=2cm]
\item[(L2.b')] never terminates.
\end{itemize}
Then we call a maximal collection of pairwise non-intersecting curves satisfying (L0), (L1), (L2.a), (L2.b') a `\emph{refined lamination}'.
For instance, in the case of the Kronecker quiver, the two small curve around the marked points in Figure \ref{fig:A2,Kron1} along with the unique simple closed curve of the annulus form a lamination - this is shown in the first configuration of Figure \ref{fig:lam-eg}.  This lamination admits four distinct refinements that are shown on the right-hand side of Figure \ref{fig:kron2}.  In each of these refinements, there are two curves with one end that terminates on the boundary (satisfying (L2.a)) and the other end winds along the simple closed curve indefinitely (satisfying (L2.b')).  Note that every refined lamination of the Kronecker quiver belongs to one of these four, or one of the (ordinary) lamination; see Figure \ref{fig:kron2}.

\begin{figure}[!ht]
\centering
\begin{tikzpicture}[scale=0.6]
\draw[thick, black!60!green] (0,0) circle (1.25);
\cylNullStr{0,0}{orange};
\cyl{0,0}; 

\begin{scope}[shift={(5,0)}]
\draw[black!60!green] (0,0) circle (1.25);
\strInftyInner{0,0}{thick, magenta};
\strInftyOuter{0,0}{thick, blue}; 
\cylNullStr{0,0}{orange};\cyl{0,0}; 
\end{scope}

\begin{scope}[shift={(10,0)}]
\draw[black!60!green] (0,0) circle (1.25);
\strInftyOuter{0,0}{thick, blue}; 
\begin{scope}[xscale=-1]\strInftyInner{0,0}{thick, magenta};\end{scope}
\cylNullStr{0,0}{orange};\cyl{0,0}; 
\end{scope}

\begin{scope}[shift={(15,0)}]
\draw[black!60!green] (0,0) circle (1.25);
\strInftyInner{0,0}{thick, magenta}; 
\begin{scope}[xscale=-1]\strInftyOuter{0,0}{thick, blue};\end{scope}
\cylNullStr{0,0}{orange};\cyl{0,0}; 
\end{scope}

\begin{scope}[shift={(20,0)}]
\draw[black!60!green] (0,0) circle (1.25);
\begin{scope}[xscale=-1]\strInftyInner{0,0}{thick, magenta};\end{scope}
\begin{scope}[xscale=-1]\strInftyOuter{0,0}{thick, blue};\end{scope}
\cylNullStr{0,0}{orange};\cyl{0,0}; 
\end{scope}

\draw[thick] (2.5,2) -- (2.5,-2.5);
\node at (0,-2.5) {Lamination};
\node at (12.5,-2.5) {Refined lamination};
\end{tikzpicture}
\caption{Refinements of a lamination}\label{fig:lam-eg}
\end{figure}
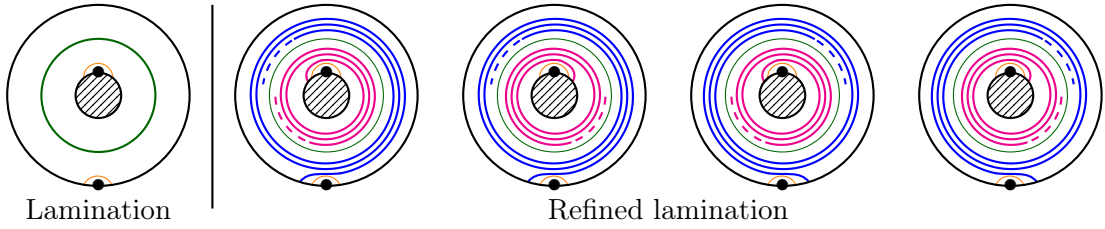

Roughly speaking, the main result of this article shows that `refined laminations', together with some extra data (namely, Laurent polynomials attached to simple closed curves), do classify torsion classes of gentle algebras.  However, nowhere in the actual proof rely on \emph{any} topological input; this is because all manipulation of curves in the topological model can only be described using the combinatorics of strings (and bands).  Therefore, we will not give any further details to the topological model from now on, and instead refer the reader to \cite{PPP19}.  We will, however, try to explain some of the intuitions of the proof using brief topological picture from time to time.  The precise classification result is the following.

\begin{theoremi}\label{thma}{\rm (Theorem \ref{classtors})}
Let $(Q,R)$ be a gentle quiver and $\Lambda$ be the associated gentle algebra defined over a field $k$.
Then there is a bijection
 \begin{align*}
\begin{Bmatrix}
 \mbox{$k$-parametrised maximal non-crossing set}\\ 
 \text{of infinite strings of ($Q, R$)} 
\end{Bmatrix} \leftrightarrow \begin{Bmatrix}
\text{torsion classes in the category of} \\
\text{fintie-dimensional $\Lambda$-modules}
\end{Bmatrix}.
 \end{align*}
Moreover, the inclusion relation between torsion classes is reflected by the positive crossings of strings.
\end{theoremi}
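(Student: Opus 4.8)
The plan is to reduce the statement, in two steps, to a combinatorial description of torsion closures. Since every finite-length $\Lambda$-module is a direct sum of string and band modules (\cite{WW}, or Section~\ref{sec:gentlealg}), and a torsion class $\mathcal T\subseteq\mod\Lambda$ is closed under quotients and extensions, $\mathcal T$ is determined by the set $\ind\mathcal T$ of indecomposables it contains, and $\ind\mathcal T$ consists of string modules and band modules. So it suffices to describe which such sets $\ind\mathcal T$ occur, and to match them with $k$-parametrised maximal non-crossing sets of infinite strings of $(Q,R)$.

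\emph{The two maps.} Given such a datum $\mathcal S$ — a maximal family of pairwise non-crossing infinite strings together with the Laurent-polynomial data attached to its closed (band) curves — let $M(\mathcal S)$ be the set of string modules $M(b)$ for $b$ a finite substring of a member of $\mathcal S$, together with the band modules prescribed by the closed curves of $\mathcal S$ and their labels. (The infinite strings are not themselves modules; they enter the module side only through their finite substrings, while the Laurent polynomials encode, over a possibly non-closed field $k$, the Jordan-type data of the band modules.) Define $\mathcal T(\mathcal S)$ to be the smallest torsion class of $\mod\Lambda$ containing $M(\mathcal S)$; one must verify that $\ind\mathcal T(\mathcal S)$ is exactly what one expects combinatorially. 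Conversely, from a torsion class $\mathcal T$ build $\mathcal S(\mathcal T)$ by: (i) amalgamating the compatible (overlapping) string modules of $\mathcal T$ into maximal, possibly infinite, strings; (ii) for each band, repackaging the multiplicities of the band modules of that band lying in $\mathcal T$ into a $k$-rational Laurent datum; (iii) noting that the resulting non-crossing family is automatically maximal, since a string non-crossing with all of it could be adjoined to $\mathcal T$ without destroying its being a torsion class.

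\emph{Mutual inverseness.} For $\mathcal S\mapsto\mathcal T(\mathcal S)\mapsto\mathcal S(\mathcal T(\mathcal S))$, the inclusion $\mathcal S(\mathcal T(\mathcal S))\supseteq\mathcal S$ is immediate, and the reverse inclusion requires the key combinatorial input: an explicit description of $\ind\mathcal T(\mathcal S)$ showing that no string positively crossing a member of $\mathcal S$ lies in $\mathcal T(\mathcal S)$, after which maximality of $\mathcal S$ leaves no room for extra strings. For $\mathcal T\mapsto\mathcal S(\mathcal T)\mapsto\mathcal T(\mathcal S(\mathcal T))$, by the Butler--Ringel decomposition it suffices to check that every indecomposable of $\mathcal T$ is a finite substring of some maximal string of $\mathcal T$, or is accounted for by the band datum, and that $\mathcal T$ equals the smallest torsion class containing these — the latter holding because $\mathcal T$ is already closed under extensions and quotients.

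\emph{Order and the main obstacle.} Compatibility with the order reduces to the pointwise assertion that $M(b)\in\mathcal T(\mathcal S)$ precisely when $b$ does not positively cross any member of $\mathcal S$; granting this, $\mathcal T_1\subseteq\mathcal T_2$ unwinds to the stated positive-crossing condition between $\mathcal S(\mathcal T_1)$ and $\mathcal S(\mathcal T_2)$. The crux of the whole argument, and the expected main obstacle, is exactly this combinatorial description of the torsion closure of a family of string and band modules of a gentle algebra — identifying which strings occur as subfactors of filtrations and proving this is governed by the non-crossing relation. It is delicate because $\Lambda$ may be infinite-dimensional and $\mathcal T$ need not be functorially finite, so the support $\tau$-tilting and approximation machinery underlying the classical case is unavailable and one must argue directly with (possibly infinite) filtrations; because the infinite strings must be controlled purely through their finite truncations; and because the band contribution over a non-closed field genuinely needs the Laurent-polynomial parametrisation, so the bookkeeping in steps (i)--(ii) above has to be set up with care.
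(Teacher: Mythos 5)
Your proposal correctly identifies the overall shape (two maps between $k$-parametrised maximal non-crossing sets and torsion classes, with a combinatorial description of torsion closures as the crux), but the step you make everything hinge on is false as stated, and the parts you flag as "the expected main obstacle" are precisely the content that has to be supplied. Concretely, the pointwise criterion ``$X(\st{b})\in\Tr_\Lambda(\Sr,\blambda)$ precisely when $\st{b}$ does not positively cross any member of $\Sr$'' fails already for simple modules: a simple corresponds to a stationary string, which admits no decomposition of the shape required in Definition \ref{def:cross}, hence never positively crosses anything in either direction, yet whether a simple lies in the torsion class certainly depends on $\Sr$ (e.g.\ the simple at vertex $2$ of $\vec{A}_2$ is outside the torsion class of Example \ref{eg:L(T)G(T)}). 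Membership is instead governed by $\Tors(\Sr)=\Filt\Fac(\Sr)$ (Lemma \ref{filtfac}); crossings enter the actual argument elsewhere: in the definition of $G(\Tr)$, in Lemma \ref{condp1}, and in the order statement of Theorem \ref{mainthmstrings}(b), where the condition is one-directional, $c^+(\Sr',\Sr)=\emptyset$ — your formulation also ignores this asymmetry. The paper does not prove injectivity via a membership test at all: it shows $\Tr_\Lambda$ is a morphism of complete join-semilattices (Proposition \ref{stjoin}), gets surjectivity from completely join-irreducible torsion classes being $\Tors(S)$ for bricks $S$ (Propositions \ref{torclass1}, \ref{classbrick}, \ref{brickposs}, Lemma \ref{bricksurj}), and gets injectivity from Lemma \ref{injmor} by exhibiting a module in $\Tr_\Lambda(\Sr,\blambda)\cap\Tr_\Lambda(\Sr',\blambda')^\perp$ (Lemma \ref{separate}), which requires the Hom-basis theorems \ref{basisMorphString} and \ref{basisMorphBand}.

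Your inverse map also has a genuine defect: ``amalgamating the overlapping string modules of $\mathcal T$ into maximal strings'' produces nothing for small torsion classes — for $\Tr=\{0\}$ there are no indecomposables to amalgamate, yet the corresponding maximal non-crossing set is nonempty and consists of infinite strings built from blossoming arrows (see Examples \ref{eg:mainthm1}, \ref{eg:mainthm2}), which do not arise from modules in $\Tr$. Likewise the claim that maximality is ``automatic, since a non-crossing string could be adjoined to $\mathcal T$'' does not typecheck (infinite strings are not modules) and is exactly the nontrivial completion machinery of Proposition \ref{tomax}, Lemma \ref{condp}, and Propositions \ref{GisMax} and \ref{sgenerate}, i.e.\ most of Section \ref{sec:torset}; the paper works with the auxiliary torsion-set lattice $\torsQ$ and the operators $L(-)$ and $G(-)$ precisely to make these steps meaningful and provable, and the parametrised lattice $\maxstr[k]$ (Proposition \ref{parametrized}) to handle the band data. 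So, as written, your argument is a plan whose load-bearing steps are either missing or incorrect in the stated form.
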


Let us gives a brief explanation to the words appearing in the statement. 
\begin{itemize}
\item {\it Infinite strings}:  This is the combinatorics that says that a curve satisfying (L1) and (L2) in the refined lamination setting.

\item {\it Non-crossing}:  This is the combinatorial language of curves in a set being self- and pairwise non-intersecting.

\item {\it $k$-parametrised}:  This is the property analogous to a band module corresponds to a simple closed curve equipped with a Laurent polynomial.  Precisely, a  non-crossing set of infinite strings is $k$-parametrised if every simple closed curve (i.e. periodic infinite string) in the set is attached with a (possibly empty) set of irreducible Laurent polynomial.  c.f. classification of torsion classes for Kronecker algebra \cite[Example 3.6]{DIRRT}.

\item {\it Positive crossing}:  Topologically, this is a certain choice of orienting an intersection by utilising the orientation of the surface and the (partial) triangulation defining the gentle algebra.
\end{itemize}

For the algebraically-inclined reader, let us remark on the meaning of positive crossing in the following.  One should think of an infinite string as a complex concentrated in homological degree $-1$ and $0$.  Suppose $U,V$ are two such complex corresponding to two infinite strings $\st{\gamma}, \st{\delta}$.  Then the existence of a \emph{positive crossing from $\st{\gamma}$ to $\st{\delta}$} says that $\Hom(V,U[1])\neq 0$.  For reader familiar with silting complexes, we note that this is precisely how the partial order on (two-term) silting complexes is defined.


Most of our proof is spent on understand the string combinatorics involved.  The key idea in our strategy is to replace torsion classes of module categories by the set of strings associated to the indecomposable module in there.  This translate most of the problem to a combinatorial one, namely, to show the set on the left-hand side of Theorem \ref{thma} is in correspondence with `combinatorial torsion classes'.  Another key ingredient to the proof is the {\it lattice property of the set of torsion classes} ordered by inclusion; see \cite{DIRRT} and subsection \ref{subsec:lattice}.

Finally, a well-versed reader may suspect that a $k$-parametrised maximal non-crossing set is just a combinatorial description of some `big (co)silting modules' of \cite{AHMV}.  We expect this is true - indeed, the case of an annulus without interior marked point is already shown in \cite{BL}.

This article is structured as follows.  In Section \ref{sec:prelim}, we recall various basic results concerning complete lattice and torsion classes.  In Section \ref{sec:basicstr}, we give definitions of various notions in string combinatorics that we need for our later exposition.  Section \ref{sec:torset} is devoted to showing how a certain combinatorial version of torsion classes over a gentle quiver $(Q,R)$ correspond to maximal non-crossing sets of strings, as well as the combinatorial meaning of inclusion of torsion classes in terms of the corresponding maximal non-crossing sets.  In Section \ref{sec:gentlealg}, we recall classification of indecomposable modules and canonical basis for morphisms between them.  The proof of Theorem \ref{thma} will occupy all of Section \ref{sec:torcl}.  In the last Section \ref{sec:BGA}, we explain how our correspondence can be used to obtain the classification of torsion classes for Brauer graph algebras.

\section*{Acknowledgement}
AC is supported by JSPS International Research Fellowship and JSPS Grant-in-Aid for Research Activity Start-up program 19K23401.  LD is supported by JSPS Grant-in-Aid for Young Scientists (B) 17K14160.  We thank Osamu Iyama, Toshiya Yurikusa, and Sota Asai for numerous fruitful discussions related to our work.


\section{Preliminaries}\label{sec:prelim}
\subsection{Lattice theory}\label{subsec:lattice}

Let us review some basic definition and properties about lattices.

\begin{definition}[Join, meet, (semi)lattice, completeness]
Let $L$ be a partially ordered set and $x,y\in L$.
The \defn{join} of $x$ and $y$, denoted by $x\vee y$, is an element $z\in L$ satisfying $z\geq x$, $z\geq y$, and is minimal with respect to this property.  If the join of $x,y$ exists, then it is the unique least upper bound of $x,y$.
Dually, the \defn{meet} of $x$ and $y$, denoted by $x\wedge y$, is the unique greatest lower bound of $x,y$.

We call $L$ a \defn{join-semilattice} if $x\vee y$ exists for all $x,y\in L$.  In which case, $\vee$ is an associative and commutative operation on $L$.  We define a \defn{meet-semilattice} analogously.

For an infinite subset $I\subseteq L$, there need not exists a unique least upper bound (resp. greatest lower bound) in $L$.  A join-semilattice (resp. meet-semilattice) $L$ is \defn{complete} if the unique least upper bound $\bigvee_{x\in I} x$ (resp. greatest lower bound $\bigwedge_{x\in I}x$) exists for all subset $I\subseteq L$.

$L$ is called a \defn{lattice} if it is both a join- and meet-semilattice with respect to the same partial order.  Likewise, a \defn{complete lattice} is a poset that is simultaneously compelete join-semilattice and complete meet-semilattice with respect to the same partial order.
\end{definition}

\begin{definition}[Morphism]
Suppose $L, L'$ are join-semilattices.
A map $f:L\to L'$ is a \defn{morphism of join-semilattices} if it is order-preserving and $f(x\vee y) = f(x)\vee f(y)$.
Likewise, $f:L\to L'$ is a \defn{morphism of complete join-semilattices} if $f\left(\bigvee_{x\in I} x\right) = \bigvee_{x\in I} f(x)$ for all $I\subset L$.

We define morphism of meet-semilattices and morphism of complete meet-semilattices similarly.  A \defn{morphism of lattices} is an order-preserving map that is also a morphism of join-semilattice and a morphism of meet-semilattice; similarly for \defn{morphism of complete lattices}.
\end{definition}
\begin{remark}\label{rem:lattice}
In practice, to check whether a map defines an isomorphism of complete lattices, it suffices to show that it is bijective and it is a morphism of complete join-semilattice (or meet-semilattice).
\end{remark}

It is not difficult to see that (as we show below for completeness) that if a morphism of join-semilattice maps preserves the strictness of finite chains, then it is injective.

\begin{lemma}\label{injmor}
Suppose $f:L\to L'$ is a morphism of join-semilattices.
If $f(x)\gneq f(y)$ holds for all $x\gneq y$ in $L$, then $f$ is injective.
\end{lemma}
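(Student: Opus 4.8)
The plan is to give a direct argument: suppose $x,y\in L$ satisfy $f(x)=f(y)$, and show $x=y$. The natural object to introduce is the join $z:=x\vee y$, which exists because $L$ is a join-semilattice. Since $f$ is a morphism of join-semilattices, $f(z)=f(x\vee y)=f(x)\vee f(y)$, and as $f(x)=f(y)$ this equals $f(x)$, using that the join operation is idempotent ($w\vee w=w$ for every $w\in L$, which is immediate from the definition of least upper bound).

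Next I would exploit the chain-strictness hypothesis. From $z=x\vee y\geq x$ we have either $z=x$ or $z\gneq x$; in the latter case the hypothesis on $f$ yields $f(z)\gneq f(x)$, contradicting $f(z)=f(x)$ established above. Hence $z=x$. Applying the same reasoning with the roles of $x$ and $y$ interchanged (legitimate since $x\vee y=y\vee x$ by commutativity of $\vee$) gives $z=y$. Therefore $x=y$, which is exactly the injectivity of $f$.

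I do not anticipate a genuine obstacle here; the proof is a two-line manipulation. The only points that require a word of justification are the idempotency and commutativity of $\vee$, both of which follow at once from the characterisation of the join as the unique least upper bound recorded in the preceding definition. Note that neither completeness nor the meet operation enters, so the statement and its proof are purely at the level of join-semilattices.
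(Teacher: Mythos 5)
Your proof is correct and follows essentially the same route as the paper: consider $z=x\vee y$, use $f(z)=f(x)\vee f(y)=f(x)$, invoke the chain-strictness hypothesis to force $z=x$, and conclude by symmetry. No gaps.
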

\begin{proof}
If $f(x)=f(x')$, then we have
\begin{align*}
f(x\vee x')=f(x)\vee f(x) = f(x)\vee f(x) = f(x).
\end{align*}
Since $x\vee x'\geq x$ by the definition of join, the assumption of lemma implies that $x\vee x' = x$, and hence $x\geq x'$.
Arguing symmetrically yields $x'\geq x$, and so $x=x'$.
\end{proof}

\begin{definition}[Join-irreducible]
Let $L$ be a lattice.  An element $x\in L$ is \defn{join-irreducible} if there is no finite subset $I\subset L$ such that $x=\bigvee_{s\in I}s$ (or equivalently, $x=y\vee z$ implies $x=y$ or $x=z$).  Likewise, if $L$ is moreover complete, then we say that an element $x\in L$ is \defn{completely join-irreducible} when there is no subset $I\subseteq L$ such that $x = \bigvee_{s\in I} s$.
\end{definition}
While we will not use this characterization, one may find it helpful that if $L$ has ``enough arrows" in its Hasse quiver (recall that an arrow $x\to y$ exists if $x>y$ and $x\geq z\geq y\Rightarrow z=x$ or $z=y$) such as the complete lattice of torsion classes (see Proposition \ref{torclass1}), then an element is completely join-irreducible if it has precisely one outgoing arrow in the Hasse quiver.

There is also the dual notion of meet-irreducible and completely meet-irreducible that we will not use.  For us, the importance of join-irreducible is the following easy result.
\begin{lemma}\label{bricksurj}
Let $f:L\to L'$ be a morphism of complete join-semilattice such that any completely join-irreducible element of $L'$ is in the image of $f$.
If any element of $L'$ can be written as a join of completely join-irreducible element, then $f$ is surjective.
\end{lemma}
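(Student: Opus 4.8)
The plan is to prove surjectivity by a direct construction: given an arbitrary element of $L'$, express it as a join of completely join-irreducible elements, lift each of these along $f$, and take the join of the lifts in $L$.

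First I would fix an arbitrary $y\in L'$ and, using the hypothesis that every element of $L'$ is a join of completely join-irreducible elements, choose a (possibly infinite, possibly empty) subset $I\subseteq L'$ consisting of completely join-irreducible elements with $y=\bigvee_{s\in I}s$. Next, for each $s\in I$, the hypothesis that every completely join-irreducible element of $L'$ lies in the image of $f$ lets me select some $x_s\in L$ with $f(x_s)=s$ (this uses the axiom of choice, which is harmless). Since $L$ is a complete join-semilattice, the element $x:=\bigvee_{s\in I}x_s$ exists in $L$.

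Then I would invoke the assumption that $f$ is a morphism of complete join-semilattices to compute $f(x)=f\bigl(\bigvee_{s\in I}x_s\bigr)=\bigvee_{s\in I}f(x_s)=\bigvee_{s\in I}s=y$, so that $y\in\im f$. As $y$ was arbitrary, $f$ is surjective. The only edge case is $I=\emptyset$ (when $y$ is the bottom element of $L'$), which is covered by the same computation, since a morphism of complete join-semilattices sends the empty join to the empty join.

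There is no genuine obstacle here: the statement is essentially a formal consequence of the definitions, in the same spirit as (but easier than) Lemma \ref{injmor}. The one point requiring a little care is purely bookkeeping --- making sure the join $\bigvee_{s\in I}x_s$ is formed in $L$, where it exists by completeness, rather than implicitly in $L'$, and noting that the lifts $x_s$ need not be chosen compatibly in any way, since applying $f$ collapses any incompatibility.
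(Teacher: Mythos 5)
Your proof is correct and takes essentially the same route as the paper: express the target element as a join of completely join-irreducible elements, lift them along $f$, and use that $f$ commutes with arbitrary joins. The extra care about the empty join and the choice of lifts is fine but not needed beyond what the paper's one-line argument already contains.
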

\begin{proof}
Take $x\in L'$, then by the condition we have some set $I$ of completely join-irreducible elements so that $x=\bigvee_{y\in I} y$.
But the assumption says that $f$ maps onto $I$, so the claim follows from $f$ being commutative with the join operation.
\end{proof}

\subsection{Torsion classes}\label{subsec:torclass}

Throughout, fix an abelian length category $\Ar$, i.e. an abelian category consisting only of finite (composition) length objects.  We summarize in below some results about the poset formed by torsion classes in $\Ar$ under the inclusion relation.  In the setting of $\Ar$ being the category of finite-dimensional modules over a finite-dimensional algebra, detailed proofs for (often much stronger version of) these statements can be found in \cite{DIRRT}; see also \cite{IRTT, GM}.  The proofs in the general setting of abelian length are usually in verbatim, so here we only include the most essential arguments for completeness.

\begin{definition}
A full subcategory $\Tr$ of an abelian length category $\Ar$ is called a \emph{torsion class} if it is closed under extensions and taking quotients.  In other words, for any short exact sequence $0\to L\to M\to N\to 0$ in $\Ar$, the terms $L,N\in \Tr$ implies so is $M$, and also $M\in \Tr$ implies so is $N$.  Denote by $\tors\Ar$ the collection of torsion classes in $\Ar$, which is also a poset where the partial order is given by inclusion of subcategories.
\end{definition}

For any class $\Mr$ of objects, we denote by
\begin{itemize}
\item $\Tors(\Mr)$ the smallest torsion class containing $\Mr$;
\item $\Fac(\Mr)$ the class of objects $X$ such that there is an epimorphism $M\twoheadrightarrow X$ with $M\in \Mr$;
\item $\Filt(\Mr)$ the class of objects $X$ that admit finite filtrations $(X_i)_{1\leq i\leq n_X}$ satisfying $X_i/X_{i+1}\cong M_i \in \Mr$ for all $1 \leq i<n_X$.
\item $\brick(\Mr)$ the class of \defn{bricks} in $\Mr$, i.e. objects  whose endomorphisms form a division ring.
\end{itemize}
We avoid overusing brackets, we will remove the curly brackets when $\Mr=\{X\}$ for some $X\in \Acal$ when applying any of the operations above.

\begin{lemma}\label{torclass0}
The following hold for an abelian length category $\Acal$.
\begin{enumerate}[\rm (i)]
\item {\rm \cite[Lemma 3.10]{DIRRT}}A torsion class $\Tr$ in $\Acal$ is uniquely determined by $\brick(\Tr)$, namely, $\Tr=\Filt(\brick(\Tr))$.

\item For any class $\Mr$ of objects in $\Acal$, we have $\Tors(\Mr) = \Filt(\Fac(\Mr))$.
\end{enumerate}
\end{lemma}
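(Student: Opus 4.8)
The statement: for any class $\Mr$ of objects in an abelian length category $\Acal$, the smallest torsion class containing $\Mr$ equals $\Filt(\Fac(\Mr))$.

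The plan is to prove two inclusions. For $\supseteq$: I would first note that $\Fac(\Mr) \subseteq \Tors(\Mr)$ since a torsion class is closed under quotients and contains $\Mr$. Then $\Filt(\Fac(\Mr)) \subseteq \Tors(\Mr)$ because a torsion class is closed under extensions, hence under finite filtrations (by an easy induction on filtration length). For $\subseteq$: it suffices to show that $\Filt(\Fac(\Mr))$ is itself a torsion class, since it clearly contains $\Mr$ (take trivial filtrations / trivial quotients), and $\Tors(\Mr)$ is by definition the smallest torsion class containing $\Mr$.

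So the core of the argument is: $\Filt(\Fac(\Mr))$ is closed under extensions and under quotients. Closure under extensions is the easy part — concatenating two finite filtrations with subquotients in $\Fac(\Mr)$ gives a finite filtration of the extension with subquotients in $\Fac(\Mr)$. Closure under quotients is the main obstacle. Here I would argue by induction on the length $n_X$ of a filtration of $X \in \Filt(\Fac(\Mr))$. Given an epimorphism $X \twoheadrightarrow Y$, one pushes the filtration $X = X_1 \supseteq X_2 \supseteq \cdots$ forward: the top quotient $X/X_2 \in \Fac(\Mr)$ surjects onto $X/(X_2 + \ker)$, and since $\Fac(\Mr)$ is visibly closed under quotients (compose epimorphisms), this top layer lands in $\Fac(\Mr)$; meanwhile $X_2 \in \Filt(\Fac(\Mr))$ has shorter filtration length, and its image in $Y$ is a quotient of $X_2$, hence by induction lies in $\Filt(\Fac(\Mr))$. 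Assembling, $Y$ has a two-step filtration with the sub in $\Filt(\Fac(\Mr))$ and the quotient in $\Fac(\Mr) \subseteq \Filt(\Fac(\Mr))$, so $Y \in \Filt(\Fac(\Mr))$; one should check the filtration indexing/finiteness is respected, which it is since length categories guarantee all relevant objects have finite length. The only subtlety is bookkeeping the subobject lattice of $X$ versus that of $Y$ under the epimorphism (images and preimages of the $X_i$), which is routine diagram-chasing in an abelian category.

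I expect the quotient-closure step to be where essentially all the work is, and the inductive pushforward of filtrations is the key idea; everything else (the two easy inclusions, extension-closure) is formal.
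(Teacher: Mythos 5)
Your argument for (ii) is correct and is essentially the paper's own route: both inclusions are obtained exactly as the paper does, and your induction on filtration length to verify that $\Filt\Fac(\Mr)$ is closed under quotients is precisely the detail the paper elides when it says this is ``easy to see.'' Note only that you do not address part (i) (the paper proves it by induction on length, using a non-invertible endomorphism of a non-brick object), though that part carries a citation to \cite{DIRRT}.
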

Due to heavy usage, from now on, we will omit the bracket between $\Filt$ and $\Fac$.
\begin{proof}
(i) $\Tr\supseteq \Filt(\brick(\Tr))$ is clear as $\Tr$ is extension closed and contains $\brick(\Tr)$.  For the inclusion, we prove by induction on length of object of $X\in \Tr$ (which is possible as $\Acal$ consists only objects of finite length).

Pick $X\in \Tr$ of minimal length that is not in $\Filt(\brick(\Tr))$.
Then $X$ cannot be a brick of $\Tr$, and so there is a morphism $f\in \End_\Acal(X)$ that is not invertible.  This yields a short exact sequence
\[ 0\to \im(f) \to X\to \coker(f)\to 0, \]
with $\im(f), \coker(f)$ both being of smaller length than $X$.
Since $f$ factors through $\im(f)$, both $\im(f)$ are quotients of $X$ and so they are both in $\Tr$.  By minimality of $X$, these two objects have to be in $\Filt(\brick(\Tr))$, but this would mean that $X\in \Filt(\brick(\Tr))$, too; a contradiction.

(ii) The definition of torsion class says that any torsion class containing $\Mr$ must contains all the objects in $\Filt\Fac(\Mr)$.  Conversely, it is easy to see that $\Filt\Fac(\Mr)$ is a torsion class and so it must contain $\Tors(\Mr)$.
\end{proof}

\begin{proposition}\label{torclass1}
Let $\Acal$ be an abelian length category and $\tors\Acal$ the poset of torsion classes in $\Acal$ ordered under inclusion.
Then the following hold.
\begin{enumerate}[\rm(i)]
\item $\tors\mathcal{A}$ is a complete lattice, with maximum $\Acal$ and minimum $\{0\}$, such that for a family $\{\Tr_i\}_{i\in I}$ of torsion classes, we have
\[\bigwedge_{i \in I} \Tr_i = \bigcap_{i \in I} \Tr_i,
\quad \text{ and }\quad 
\bigvee_{i \in I} \Tr_i = \bigwedge_{\Tr'\supseteq \Tr_i\;\forall i\in I} \Tr'.
\]
In particular, for any family $\{\Mr_i\}_{i\in I}$ of sets of modules, we have \begin{align}\label{veecup}
\bigvee_{i\in I} \Tors(\Mr_i) =  \Tors\left(\bigcup_{i\in I}\Mr_i\right).
\end{align}

\item {\rm \cite[Theorem 3.4(a)]{DIRRT}} $\Tr = \bigvee_{S\in\brick(\Tr)} \Tors(S)$ for any $\Tr\in\tors\Acal$.

\item {\rm \cite[Theorem 3.4(a)]{DIRRT}} A torsion class is completely join-irreducible if and only if it is the smallest torsion class $\Tors(S)$ containing a brick $S$.
\end{enumerate}
\end{proposition}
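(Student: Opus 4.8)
The plan is to take the three parts in order: (i) and (ii) are formal once Lemma~\ref{torclass0} is in hand, and essentially everything of substance in (iii) is concentrated in one citation to \cite{DIRRT}.

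\emph{Part (i).} First I would note that an intersection $\bigcap_{i\in I}\Tr_i$ of torsion classes is again a torsion class, checking extension- and quotient-closure term by term on a short exact sequence $0\to L\to M\to N\to 0$. Since $\bigcap_{i\in I}\Tr_i$ is visibly the largest torsion class contained in all the $\Tr_i$, this already shows $\tors\Acal$ is a complete meet-semilattice with $\bigwedge_i\Tr_i=\bigcap_i\Tr_i$, minimum $\{0\}$ and maximum $\Acal$. A complete meet-semilattice with a maximum is a complete lattice: for a family $\{\Tr_i\}_{i\in I}$ the class $\mathcal U$ of common upper bounds is nonempty (it contains $\Acal$), and since each $\Tr_i$ is a lower bound of $\mathcal U$ one gets $\Tr_i\subseteq\bigcap\mathcal U$, so $\bigcap\mathcal U$ is the least upper bound of $\{\Tr_i\}$; this is the displayed formula for $\bigvee_i\Tr_i$. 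Finally \eqref{veecup} follows by two inclusions: $\Tors(\bigcup_i\Mr_i)$ is a torsion class containing every $\Mr_i$, hence every $\Tors(\Mr_i)$, hence their join; conversely $\bigvee_i\Tors(\Mr_i)$ is a torsion class containing every $\Mr_i$, hence $\bigcup_i\Mr_i$, hence $\Tors(\bigcup_i\Mr_i)$.

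\emph{Part (ii), and the forward direction of (iii).} Applying \eqref{veecup} to the singletons and then Lemma~\ref{torclass0}(ii) gives $\bigvee_{S\in\brick(\Tr)}\Tors(S)=\Tors(\brick(\Tr))=\Filt\Fac(\brick(\Tr))$, and this is squeezed between $\Filt(\brick(\Tr))=\Tr$ (Lemma~\ref{torclass0}(i)) and $\Tr$ itself, since $\Tr$ is closed under quotients and extensions and contains $\brick(\Tr)$; hence all three agree. (If $\brick(\Tr)=\emptyset$ then $\Tr=\{0\}$ and both sides are the empty join.) For the forward direction of (iii): if $\Tr$ is completely join-irreducible, writing it as the join in (ii) forces it to coincide with one joinand $\Tors(S)$ for some $S\in\brick(\Tr)$, and here $\brick(\Tr)\neq\emptyset$ because $\{0\}=\bigvee\emptyset$ is not completely join-irreducible.

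\emph{Converse of (iii), and the main obstacle.} Fix a brick $S$ and put $\Tr_\bullet:=\bigvee\{\Tr'\in\tors\Acal:\Tr'\subsetneq\Tors(S)\}$ (this join exists by (i)). If $\Tors(S)=\bigvee_{i\in I}\Tr_i$ then each $\Tr_i\subseteq\Tors(S)$, so either some $\Tr_i$ already equals $\Tors(S)$ (and we are done), or every $\Tr_i\subsetneq\Tors(S)$ and then $\Tors(S)=\bigvee_i\Tr_i\subseteq\Tr_\bullet$. Thus the converse reduces to showing $S\notin\Tr_\bullet$, i.e. $\Tr_\bullet\subsetneq\Tors(S)$. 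Now every proper subtorsion class $\Tr'\subsetneq\Tors(S)$ must omit $S$ (otherwise $\Tors(S)\subseteq\Tr'$), so by (ii) it is a join of classes $\Tors(B)$ with $B$ a brick of $\Tors(S)$ not isomorphic to $S$; hence $\Tr_\bullet\subseteq\Tors(\mathcal B)$, where $\mathcal B$ is the set of bricks of $\Tors(S)$ other than $S$, and by Lemma~\ref{torclass0}(i) one has $S\notin\Tors(\mathcal B)$ exactly when $\Tors(\mathcal B)\subsetneq\Tors(S)$. So everything comes down to the single assertion that $\Tors(S)$ is \emph{not} generated by its remaining bricks --- equivalently, that $\Tors(S)$ admits a (necessarily unique) maximal proper subtorsion class, carrying brick label $S$. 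This is the substantive point and the place I expect a self-contained argument to get stuck; I would instead quote it from \cite[Theorem~3.4(a)]{DIRRT}, whose proof rests on the Hasse quiver of $\tors\Acal$ being weakly atomic and brick-labelled. Everything else above is bookkeeping with Lemma~\ref{torclass0}, \eqref{veecup} and the definitions.
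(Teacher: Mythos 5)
Your parts (i), (ii) and the forward half of (iii) are correct and essentially the paper's argument (the paper dismisses (i) as clear and gets (ii) exactly from \eqref{veecup} and Lemma \ref{torclass0}). The problem is the converse of (iii). Your reduction to the single claim that $S\notin\Tors\bigl(\brick(\Tors(S))\setminus\{S\}\bigr)$ is valid, but you then assert that a self-contained argument would get stuck and outsource precisely this point to \cite[Theorem 3.4(a)]{DIRRT}. That is the one substantive step, and it is the step the paper does prove: the whole purpose of this subsection is that \cite{DIRRT} is stated for finite-dimensional modules over a finite-dimensional algebra, so the essential arguments are reproduced for a general abelian length category; delegating the crux back to that citation leaves a genuine gap in your write-up (even granting the paper's remark that the proofs carry over verbatim).

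The missing argument is short and needs neither weak atomicity nor brick labelling of the Hasse quiver. Suppose $\Tors(S)=\bigvee_{i\in I}\Tr_i$. Since each $\Tr_i$ is quotient-closed, $\bigvee_{i\in I}\Tr_i=\Filt\Fac\bigl(\bigcup_{i\in I}\Tr_i\bigr)=\Filt\bigl(\bigcup_{i\in I}\Tr_i\bigr)$, so the bottom term of a filtration of $S$ is a nonzero subobject of $S$ lying in some $\Tr_i$; applying Lemma \ref{torclass0}(i) to $\Tr_i$ and passing to the bottom of a filtration of that subobject yields a brick $M\in\brick(\Tr_i)$ with an inclusion $M\inj S$. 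On the other hand $M\in\Tr_i\subseteq\Tors(S)=\Filt\Fac(S)$ by Lemma \ref{torclass0}(ii), and the bottom term of a filtration of $M$ lies in $\Fac(S)$, giving a nonzero morphism $S\to M$. Composing $S\to M\inj S$ produces a nonzero endomorphism of the brick $S$, hence an isomorphism; therefore $S\cong M\in\Tr_i$, so $\Tors(S)\subseteq\Tr_i$ and $\Tr_i=\Tors(S)$. This is both more direct than your detour through $\Tr_\bullet$ and the set $\mathcal B$ of remaining bricks, and it shows that the point you expected to be out of reach is an elementary two-line Hom argument.
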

\begin{proof}
(i) Clear.

(ii) By \eqref{veecup}, we have $\bigvee_{S\in \brick(\Tr)} \Tors(S) = \Tors(\brick(\Tr))$.  So it follows from Lemma \ref{torclass0} (i) that this is equal to $\Filt\Fac(\brick(\Tr))=\Tr$.

(iii)  If $\Tr\in\tors\Acal$ is completely join-irreducible, then it follows from (ii) that it is of the form $\Tors(S)$ for some brick $S$.

Conversely, if $\Tors(S)=\bigvee_{i\in I}\Tr_i$ for some family $\{\Tr_i\}_{i\in I}$ of torsion classes in $\Acal$, then Lemma \ref{torclass0} (i) implies that there is an inclusion $M\inj S$ for some $M\in \brick(\Tr_i)$ for some $i\in I$.
On the other hand, as we have $\brick(\Tr_i)\subseteq \Tr_i\subseteq\Tors(S)$ for each $i$, by Lemma \ref{torclass0} (ii), there must exist a non-zero morphism $S\to M$.  

Composing the two morphisms yields a non-zero endomorphism of $S$.  Hence, $S$ being a brick implies that $S\cong M \in \brick(\Tr_i)$.  In particular, the minimality of $\Tors(S)$ implies that $\Tr_i=\Tors(S)$.
\end{proof}


\section{Basics of string combinatorics}\label{sec:basicstr}

We give the necessary combinatorial setup for the main result.  We compose arrows from left to right, i.e. $pq$ a a path for arrows $p, q$ with $t(p)=s(q)$.

\begin{definition}[Gentle quiver]\label{def:gentle}
A \defn{gentle quiver} is a tuple $(Q,R)$ consisting of a finite quiver $Q=(Q_0, Q_1, s, t)$ and a set $R\subset Q_2$ of (generating) \defn{relations} consisting of length-two paths such that the following conditions are satisfied.
\begin{enumerate}[(i)]
\item Any $i\in Q_0$ has at most two incoming and two outgoing arrows.
\item For any $q\in Q_1$, there is at most one $r\in Q_1$ such that $t(q)=s(r)$ and $qr\notin R$.
\item For any $q\in Q_1$, there is at most one $r\in Q_1$ such that $t(q)=s(r)$ and $qr\in R$.
\item For any $q\in Q_1$, there is at most one $r\in Q_1$ such that $t(r)=s(q)$ and $rq\notin R$.
\item For any $q\in Q_1$, there is at most one $r\in Q_1$ such that $t(r)=s(q)$ and $rq\in R$.
\end{enumerate}
\end{definition}

Our gentle quivers are the locally gentle bound quivers in \cite{PPP19}; in particular, the associated (completed) bounded path algebra is \emph{not} necessarily finite dimensional.

\begin{example}\label{eg:first}
We give three examples.  These will be the main running examples of throughout.
\begin{enumerate}[(1)]
\item $\vec{A}_n$-quiver: $Q=\vec{A}_n = (1\to 2 \to 3 \to \cdots \to n)$ and $R=\emptyset$.
\item Kronecker quiver: $Q=\xymatrix@1{1\ar@<2pt>[r]\ar@<-2pt>[r] & 2}$ and $R=\emptyset$.
\item Markov quiver: 
\[
Q: \quad \xymatrix@C=60pt@R=30pt{ 1 \ar@<2pt>[rd]^{\alpha_1}\ar@<-2pt>[rd]_{\beta_1} & & 3 \ar@<2pt>[ll]^{\alpha_3}\ar@<-2pt>[ll]_{\beta_3} \\ & 2 \ar@<2pt>[ru]^{\alpha_2}\ar@<-2pt>[ru]_{\beta_2} & }
\]
and $R=\{\alpha_i\beta_{i+1}, \beta_i\alpha_{i+1}\mid i\in\mathbb{Z}/3\mathbb{Z}\}$.
\end{enumerate}
\end{example}

\begin{definition}[Blossoming]
A \defn{blossoming} of a gentle quiver $(Q,R)$ is another gentle quiver $(Q', R')$ satisfying the following conditions.
\begin{enumerate}[(i)]
\item $Q_0 \subseteq Q'_0$, $Q_1 \subseteq Q'_1$, and $R'\cap Q_2 = R$.

\item For any $q \in Q'_1 \setminus Q_1$, exactly one of $s(q)$ and $t(q)$ is in $Q'_0$.

\item For any $i \in Q_0$, there are exactly two arrows $q \in Q'_1$ such that $s(q) = i$.

\item For any $i \in Q_0$, there are exactly two arrows $q \in Q'_1$ such that $t(q) = i$.

\item For any pair of arrows $q$ and $r$ satisfying $t(q) = s(r) \in Q'_0 \setminus Q_0$, $qr\in R'$.
\end{enumerate}
The \defn{classical blossoming} $(Q^\star, R^\star)$ is a blossoming where $\#Q_1^\star\setminus Q_1 = \#Q_0^\star\setminus Q_0$.
\end{definition}
\begin{remark}\label{rem:blossom}
(1) Two different blossom have the same set of arrows.  Everything we need from blossom quivers are the new arrows only, i.e. \emph{everything we do is independent of the choice of blossoming}.  

(2) Blossomed gentle quiver can be identify with a (suitably defined) \emph{dissection of marked surface}.  This topological interpretation plays no role in any of our arguments and so we will not give details about their construction and refer the reader to the `green-dissection' in \cite[Section 3]{PPP19}.  Roughly speaking, vertices of the blossomed quiver are arcs in the marked surface where those not in the original gentle quiver are boundary maps.  The arrows of the blossomed quiver corresponding to the angles between consecutive arcs along the orientation of the surface.  The polygons in the dissection correspond to maximal paths with consecutive arrows belonging to $R$.  Different choices of blossoming result in different topological models, but the difference is only in the number of marked points on the boundary.  See Example \ref{eg:blossom}.

(3) The conditions of blossoming imply that if $i \in Q'_0 \setminus Q_0$, at most one $q \in Q'_1$ satisfy $s(q) = i$ and at most one $q \in Q'_1$ satisfy $t(q) = i$.

(4) A blossoming is called fringing in \cite{BDMTY}.
\end{remark}

\begin{example}\label{eg:blossom}
\begin{enumerate}[(1)]
\item For the $\vec{A}_2$-quiver $(Q=\vec{\mathbb{A}}_2, R=0)$, we give two possible blossomings.
\begin{enumerate}[(a)]
\item The classical blossoming $Q^\star$ is given by
\[
\xymatrix@R=10pt@C=30pt{ & & \bullet\ar[dd]^{a_1} & & \bullet & &\\
 & & \ar@{.}[ld] & &\ar@{.}[rd] & & \\
\bullet & & 1\ar[rr]^{a_0}\ar[ll]^{b_2} & \ar@{.}[rd] & 2\ar[dd]^{c_2}\ar[uu]^{a_2} & & \bullet\ar[ll]^{c_1}  \\
 &&\ar@{.}[ru] && && \\
  & & \bullet \ar[uu]^{b_1}  & & \bullet & & }
\]
where all $\bullet$-points are distinct vertices, 
and $R^\star$ is given by all the paths $p_iq_j$ for $p\neq q\in\{a, b, c\}$; i.e. the composition given by the dotted lines.  The topological model (the green dissection model in \cite{PPP19}) is shown in the left-hand side of Figure \ref{fig:A2-top}.

\item $Q'$ is given by gluing the two new vertices of $Q^\star$ in the bottom row, i.e.
\[
\xymatrix@R=10pt@C=30pt{ & & \bullet\ar[dd]^{a_1} & & \bullet & &\\
 & & \ar@{.}[ld] & &\ar@{.}[rd] & & \\
\bullet & & 1\ar[rr]^{a_0}\ar[ll]^{b_2} & \ar@{.}[d] & 
2\ar[ddll]^{c_2}\ar[uu]^{a_2} & & \bullet\ar[ll]^{c_1}  \\
 &&\ar@{.}[ru]\ar@{.}[r] && && \\
  & & \bullet \ar[uu]^{b_1}  & & & & }
\]
$R'=R^\star\cup \{2\to\circ\to 1\}$.
One can find this blossoming in the context of triangulation of a pentagon, as shown in the right-hand side of Figure \ref{fig:A2-top}.

\begin{figure}[!htbp]\centering
\begin{tikzpicture}[scale=0.7]
\begin{scope}[shift={(0,0)}, rotate=90]
\penta{0,0}{blue};
\draw (0.2,0.6) node {1} (0.2,-0.6) node {2};
\draw[->] (0.2,0.4) -- (0.2,-0.4) node [midway, below] {\footnotesize $a_0$};
\draw[->, inner sep=2pt] (1.8,0.6) -- (0.6,0.6) node [midway, left] {\footnotesize $a_1$};
\draw[->, inner sep=2pt] (0.6,-0.6) -- (1.8,-0.6) node [midway, right] {\footnotesize $a_2$};
\draw[->, inner sep=2pt] (0.2,0.8) -- (-0.2,1.8) node [midway, below] {\footnotesize $b_2$};
\draw[->, inner sep=2pt] (-1.8,0.2) -- (-0.2,0.6) node [midway, left] {\footnotesize $b_1$};
\draw[->, inner sep=2pt] (-0.2,-0.6) -- (-1.8,-0.2) node [midway, right] {\footnotesize $c_2$};
\draw[->, inner sep=2pt] (-0.2,-1.8) -- (0.2,-0.8) node [midway, below] {\footnotesize $c_1$};
\end{scope}
\begin{scope}[shift={(-5,0)}, rotate=90]
\draw[thick] (0,0) circle (2);
\draw[shift={(0,0)}, blue] (0:2) -- (120:2) (0:2) -- (-120:2);
\draw (0,0) ++(0:2) node{$\bullet$};
\draw (0,0) ++(60:2) node{$\bullet$};
\draw (0,0) ++(120:2) node{$\bullet$};
\draw (0,0) ++(180:2) node{$\bullet$};
\draw (0,0) ++(-120:2) node{$\bullet$};
\draw (0,0) ++(-60:2) node{$\bullet$};
\draw (0.4,0.8) node {1} (0.4,-0.8) node {2};
\draw[->] (0.4,0.6) -- (0.4,-0.6) node [midway, below] {\footnotesize $a_0$};
\draw[->, inner sep=2pt] (1.6,0.8) -- (0.8,0.8) node [midway, left] {\footnotesize $a_1$};
\draw[->, inner sep=2pt] (0.8,-0.8) -- (1.6,-0.8) node [midway, right] {\footnotesize $a_2$};
\draw[->, inner sep=2pt] (0.4,1.2) -- (0.4,1.8) node [midway, below] {\footnotesize $b_2$};
\draw[->, inner sep=2pt] (-1.6,0.8) -- (0,0.8) node [midway, right] {\footnotesize $b_1$};
\draw[->, inner sep=2pt] (0,-0.8) -- (-1.6,-0.8) node [midway, left] {\footnotesize $c_2$};
\draw[->, inner sep=2pt] (0.4,-1.8) -- (0.4,-1.2) node [midway, below] {\footnotesize $c_1$};
\end{scope}
\end{tikzpicture}
\caption{Topological models for different blossomings of the $\vec{A}_2$-quiver}\label{fig:A2-top}
\end{figure}
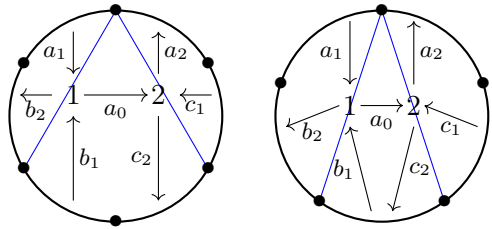
\end{enumerate}

\item For the Kronecker quiver $K_2$, we can choose blossoming $K_2'$ as follows.
\begin{align*}
K_2&=(\xymatrix{1\ar@<2pt>[r]^{a_1}\ar@<-2pt>[r]_{b_1} & 2}\;\;,\;\;\emptyset) \\ K_2'&=\Big(\vcenter{\vbox{\xymatrix@R=8pt{  & \circ \ar[dr]^{a_0}& \\ 2\ar[ur]^{a_2}\ar[dr]_{b_2} & & 1\ar@<2pt>[ll]_{a_1}\ar@<-2pt>[ll]^{b_1} \\ & \bullet\ar[ur]_{b_0} &  }}} \;\;,\;\; \{a_ib_{i+1}, b_ia_{i+1}, a_2a_0, b_2b_0\mid i=0,1\} \Big)
\end{align*}
This choice of blossoming is the one used in triangulated surface, see Figure \ref{fig:kron1}.

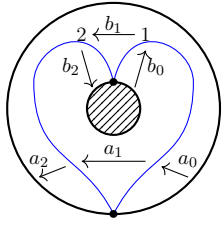
\begin{figure}[!ht]
\centering

\begin{tikzpicture}[scale=0.7,every node/.style={transform shape}]
\draw[blue, shift={(0,-2)}] (0,0) .. controls +(0.6,1.2) and +(0,-1.5) .. +(1.5,2.5);
\draw[blue, shift={(0,0.5)}] (0,0) .. controls +(0.3,1.3) and +(0,0.7) .. +(1.5,0);
\draw[blue, shift={(0,-2)}] (0,0) .. controls +(-0.6,1.2) and +(0,-1.5) .. +(-1.5,2.5);
\draw[blue, shift={(0,0.5)}] (0,0) .. controls +(-0.3,1.3) and +(0,0.7) .. +(-1.5,0);
\cyl{0,0};
\draw[->] (0.4,1.4) -- (-0.4,1.4); \draw[->] (0.6,-1) -- (-0.6,-1);
\node at (0.6,1.4) {1};
\node at (-0.6,1.4) {2};
\draw[->] (0.4,0.4) -- (0.6,1.1); \draw[->] (-0.6,1.1) -- (-0.4, 0.4);
\node at (0.8,0.8) {$b_0$};
\node at (0,1.6) {$b_1$};
\node at (-0.8,0.8) {$b_2$};
\draw[->] (1.4,-1.3) -- (0.9,-1.1); \draw[->] (-0.9,-1.1) -- (-1.4,-1.3);
\node at (1.4,-1) {$a_0$};
\node at (0,-0.8) {$a_1$};
\node at (-1.4,-1) {$a_2$};
\end{tikzpicture}
\caption{A blossoming of the Kronecker quiver}\label{fig:kron1}
\end{figure}

\item For the Markov quiver (or any 2-regular gentle quiver in general), it only admits one blossoming which is itself.  The topological model for the Markov quiver is the triangulation formed by 3 arcs enclosing two triangles in a once-punctured torus.
\end{enumerate}
\end{example}

Until the end of this section, we fix a gentle quiver $(Q,R)$ and a blossoming $(Q',R')$.
For convenience, for arrows $q,r$ in $Q$ or $Q'$, we write $qr=0$ if $qr\in R$ or $R'$; $qr\neq 0$, otherwise.

\begin{definition}[Letters, walks, and strings]\label{def:walks}
For an arrow $q\in Q_1$, the (formal) inverse of $q$ is denoted by $q^-$.
A \defn{letter} of $Q$ (respectively, $Q'$) is an arrow of $Q$ or the (formal) inverse of an arrow of $Q$ (respectively, $Q'$).
We denote the set of letters of $Q$ and $Q'$ by $Q_1^{\pm}$ and $Q_1'^{\pm}$ respectively.

For $q \in Q'_1$, we denote $s(q^{-}) = t(q)$ and $t(q^{-}) = s(q)$.
The vanishing relations on the letters of $Q$ and $Q'$ are defined as follows.
\begin{enumerate}[(i)]
\item For $q, r\in Q_1'$, $r^{-}q^{-}=0$ if and only if $qr=0$.
\item If $t(q) = t(r)$ and $q \neq r$, then $qr^{-}\neq 0$.
\item If $s(q) = s(r)$ and $q \neq r$, then $q^{-}r\neq 0$.
\end{enumerate}

A \defn{stationary walk} in $Q'$ is a symbol $\un{q}{r}$ where $q, r \in Q'_1$ are arrows such that $qr\neq 0$.

A \defn{word} is a sequence of letters indexed by a possibly infinite interval of $\Z$, where letters in it are read from left to right in the increasing order of $\Z$.
A \defn{walk} in $Q'$ is a stationary walk or a (non-empty) word $\gamma = \cdots \alpha_i \alpha_{i+1} \cdots$ constituted of letters of $Q'$ such that $\alpha_i \alpha_{i+1}\neq 0$.

For a walk $\gamma$, $\gamma^{-}$ is defined in the obvious way when $\gamma$ is non-stationary, and is defined uniquely by $\gamma^{-} = \un{q'}{r'}$ when $\gamma = \un{q}{r}$, where $q'r'\notin R'^\pm$, $q' \neq q$ and $t(q') = t(q)$.

For a walk $\gamma$ in $Q'$, we denote by $\st \gamma$ the pair $\{\gamma, \gamma^-\}$, and we call this the associated \defn{string} (in $Q'$).
Obviously, $\st \gamma = \st {\gamma^-}$.
Note that strings induced by stationary walk correspond bijectively to the vertices of $Q$.
\end{definition}

On the marked surface, the distinction between a walk and a string is the same the distinction between the image of a curve (an unoriented geometric object) and the curve itself (where the orientation can be somewhat important).

The following property is probably well-known to algebraists experienced with strings (of bounded length).
\begin{lemma} \label{noselfinverse}
 No walk $\gamma$ satisfies $\gamma = \gamma^-$.
\end{lemma}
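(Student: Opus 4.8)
The plan is to suppose for contradiction that some walk $\gamma$ satisfies $\gamma = \gamma^-$, and to derive an impossible symmetry of the underlying word. First I would dispose of the stationary case: if $\gamma = \un{q}{r}$ is stationary, then $\gamma^- = \un{q'}{r'}$ with $q' \neq q$ and $t(q') = t(q)$, so $\gamma = \gamma^-$ would force $q = q'$, contradicting $q' \neq q$. So we may assume $\gamma = \cdots \alpha_i \alpha_{i+1} \cdots$ is a non-stationary word, indexed by an interval $I \subseteq \Z$. The formal inverse $\gamma^-$ is the word obtained by reversing the order of the letters and replacing each letter $\alpha$ by $\alpha^-$ (with the convention $(q^-)^- = q$). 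Thus $\gamma = \gamma^-$ as \emph{walks} — not merely as strings — would mean there is an order-reversing bijection $\sigma$ of the index set $I$ (an involution, $\sigma^2 = \id$) such that $\alpha_{\sigma(j)} = \alpha_j^-$ for every $j \in I$.

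The key step is then a parity/fixed-point argument on this involution. An order-reversing involution of an interval of $\Z$ has the form $\sigma(j) = c - j$ for a constant $c$; such a map has a fixed point (namely $j = c/2$) precisely when $c$ is even, and the interval is infinite or symmetric enough to contain it. If $\sigma$ has a fixed point $j_0$, then $\alpha_{j_0} = \alpha_{j_0}^-$, which is impossible since a letter is never its own formal inverse (an arrow is not an inverse of an arrow). If $\sigma$ has no fixed point inside $I$, then $I$ has even "length" in the relevant sense and $\sigma$ pairs $j_0$ with $j_0 + 1$ for the central pair, giving $\alpha_{j_0 + 1} = \alpha_{j_0}^-$; but then the consecutivity condition $\alpha_{j_0}\alpha_{j_0+1} \neq 0$ reads $\alpha_{j_0}\alpha_{j_0}^- \neq 0$, and I would check directly from the vanishing relations in Definition \ref{def:walks} that $\alpha\alpha^- = 0$ always (if $\alpha = q$ is an arrow then $qq^-$ has $t(q^-) = s(q)$, but the relevant rule forces this to vanish; dually for $\alpha = q^-$), a contradiction. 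Either way we are done.

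The main obstacle I anticipate is handling the case distinctions cleanly — in particular being careful that $\gamma = \gamma^-$ is being used as equality of \emph{walks} (words), so that the conclusion really is the existence of the order-reversing involution $\sigma$, and then checking the two boundary sub-cases (fixed point versus adjacent central pair) against the precise vanishing conventions. The only genuinely new input beyond bookkeeping is the observation that neither $\alpha = \alpha^-$ nor $\alpha\alpha^- \neq 0$ can hold for a letter $\alpha$; both follow immediately from the definitions, so the argument should be short. One should also note that the stationary sub-case needs its own treatment since the definition of $\gamma^-$ there is given by a separate clause, but as indicated above it is even easier.
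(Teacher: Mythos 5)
Your proposal is correct and follows essentially the same route as the paper: equality $\gamma=\gamma^-$ of words yields an index symmetry $\alpha_i=\alpha_{n-i}^-$, and the parity split gives either a letter equal to its own inverse (absurd) or two consecutive mutually inverse letters, which violates the walk condition. Your parenthetical justification that $\alpha\alpha^-=0$ is stated a bit loosely (rules (ii)--(iii) of Definition \ref{def:walks} only assert non-vanishing for $q\neq r$, so the vanishing of $qq^-$ and $q^-q$ is the implicit convention), but this is exactly the reading the paper itself uses when it says the odd case ``contradicts the definition of a walk.''
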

\begin{proof}
 If $\gamma$ is stationary, $\gamma = \gamma^-$ is impossible by definition. Otherwise, let us write $\gamma = \cdots a_i a_{i+1} \cdots$ where indices runs over an interval $I$ of $\Z$. If $\gamma = \gamma^-$, it implies that there exists $n \in \Z$ such that $a_i = a_{n-i}^-$ for any $i \in I$. If $n$ is an even number, we get $a_{n/2} = a_{n/2}^-$, which is absurd. If $n$ is odd, we get $a_{(n+1)/2} = a_{(n-1)/2}^-$, which contradicts the definition of a walk.
\end{proof}

As a consequence of Remark \ref{rem:blossom} (2) and the requirement on the consecutive letters in a walk (Definition \ref{def:walks}), we have the following.
\begin{proposition}\label{blo-indep(walk)}
For a gentle quiver $(Q,R)$, any choice of blossoming $(Q', R')$ gives rise to the same set of confined strings, and the same set of strings.
\end{proposition}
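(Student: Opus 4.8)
The plan is to reduce the statement to a single local fact about walks: two consecutive letters of a walk of $Q'$ always meet at a vertex of $Q_0$, so that a walk never runs through a blossom vertex except, at most, at one of its two extremities. (Topologically, by Remark~\ref{rem:blossom}(2) the blossom vertices sit on the boundary of the surface and a curve cannot cross the boundary; what follows is the purely combinatorial incarnation of this.) Granting the local fact, I will argue that whether a sequence of letters is a walk depends only on data carried by $(Q,R)$ and not on how the blossom arrows are glued, so that the set of walks --- hence the set of strings $\st\gamma = \{\gamma,\gamma^-\}$, and a fortiori the set of confined strings --- is blossoming-independent.

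\emph{Step 1: the local fact.} I would prove that if $\alpha\beta$ is a valid consecutive pair in a walk of $Q'$ (thus $t(\alpha) = s(\beta) =: v$ and $\alpha\beta \neq 0$) then $v \in Q_0$, by a short case analysis on the signs of $\alpha,\beta$. If $\alpha = q$ and $\beta = r$ are arrows and $v \notin Q_0$, then $qr$ is a length-two path through the blossom vertex $v$, hence $qr \in R'$ by condition~(v) of the definition of blossoming, so $\alpha\beta = 0$, a contradiction; the case of two inverse letters is symmetric, using condition~(v) together with the vanishing relation~(i) of Definition~\ref{def:walks}. If $\alpha = q$ and $\beta = r^-$ with $v \notin Q_0$, then $q$ and $r$ are two arrows with common target the blossom vertex $v$, so $q = r$ by Remark~\ref{rem:blossom}(3), and $\alpha\beta = qq^-$ is then excluded by the definition of a walk (no backtracking, as in the last line of the proof of Lemma~\ref{noselfinverse}); the case $\alpha = q^-,\ \beta = r$ is identical with ``source'' replacing ``target''.

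\emph{Step 2: blossoming-independence.} A sequence $(\alpha_i)_{i\in I}$ of letters is a walk precisely when $\alpha_i\alpha_{i+1}\neq 0$ for all consecutive indices, and this only ever tests a product at the vertex $t(\alpha_i) = s(\alpha_{i+1})$. By Step~1, if the sequence is a walk then all these vertices lie in $Q_0$, and such a vertex is then the common $Q_0$-endpoint of $\alpha_i$ and $\alpha_{i+1}$, which is intrinsic to those two letters and does not move with the blossoming. By Remark~\ref{rem:blossom}(1) the set $Q_1'^{\pm}$ of letters is the same for every blossoming; and at each $i \in Q_0$ the local relation data --- which pairs $\alpha\beta$ with $t(\alpha) = s(\beta) = i$ satisfy $\alpha\beta \neq 0$ --- is determined by $(Q,R)$: compositions of two arrows of $Q$ are governed by $R' \cap Q_2 = R$, while the remaining pairings at $i$ (an arrow of $Q$ with a blossom arrow, or two blossom arrows, at $i$) are forced by the gentle conditions of Definition~\ref{def:gentle} at $i$ together with $R$, up to the canonical labelling of the blossom arrows at $i$ furnished by Remark~\ref{rem:blossom}(1). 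The only relations of $R'$ that genuinely depend on the blossoming are the condition-(v) relations supported at blossom vertices, and by Step~1 these never intervene in deciding whether a sequence is a walk. Hence a given sequence of letters is a walk for one blossoming iff it is for any other; passing to $\{\gamma,\gamma^-\}$ yields the equality of the sets of strings, and the same argument applies without change to confined strings.

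The point I expect to be the main obstacle is the assertion in Step~2 that the blossom arrows incident to a fixed $i \in Q_0$, together with their partnership under $R'$, are canonically attached to $(Q,R)$ --- that is, extracting from Remark~\ref{rem:blossom}(1) precisely what is needed, using that the two gentle conditions at $i$ pin down the relation-matching between the incoming and outgoing arrows at $i$ once those arrows are named intrinsically. Step~1 itself is a mechanical check of four sign combinations.
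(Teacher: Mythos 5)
Your proof is correct and is essentially the paper's own (unwritten) argument: the paper asserts Proposition~\ref{blo-indep(walk)} as an immediate consequence of Remark~\ref{rem:blossom} and the consecutive-letter requirement in Definition~\ref{def:walks}, and your Step~1 (consecutive letters of a walk only meet at vertices of $Q_0$, since compositions at blossom vertices are killed by condition~(v) of blossoming, Remark~\ref{rem:blossom}(3) and the no-backtracking convention) together with Step~2 (the relation data at each $i\in Q_0$, including the matching involving blossom arrows, is forced by $(Q,R)$ and the gentle conditions, so the walk condition is blossoming-independent) is exactly the elaboration of that assertion. One phrasing nitpick: a walk may touch blossom vertices at \emph{both} extremities, not ``at most at one of its two extremities,'' but this does not affect your argument, whose real content is the claim about consecutive letters.
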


Note that for each vertex $v\in Q$, there are precisely two stationary walks as there are precisely two outgoing and two incoming arrows at $v$ in the blossoming $Q'$.  Moreover, stationary strings naturally correspond to vertices of $Q$ (not $Q'$!).  Hence, a stationary walk should be think of as one-half of the trivial path - an \emph{oriented trivial path}.

\begin{example}
The stationary walks in the $\vec{A}_2$-quiver (see Example \ref{eg:blossom}) are $\un{a_1}{a_0}$, $\un{b_1}{b_2}$, $\un{a_0}{a_2}$, and $\un{c_1}{c_2}$.  Note that $\st{\un{a_1}{a_0}}=\{\un{a_1}{a_0}, \un{b_1}{b_2}\}$ corresponds to vertex $1$ of $\vec{A}_2$, and $\st{\un{a_0}{a_2}}$ corresponds the vertex $2$.
\end{example}

In the topological picture, we can canonically identify stationary walks with the \emph{interior} half-edges of the dissection associated to $(Q,R)$.  A walk on its own, and likewise for its associated string, should be thought as a `0-dimensional curve' (a.k.a. a point) lying inside the corresponding arc.   Whenever concatenation of walks are involved (such as in Definition \ref{def:ext-fac}), it will be more suitable to think of a stationary walk as a turn around a marked point across an arc.

The topological interpretation of a non-stationary walk is an oriented curve on the marked surface. Note that, as opposed to most literature on topological model of gentle algebras, we allow curves whose defining domain is a ray or the real line, instead of just interval or the circle only.  In the following, we distinguish walks that involve the extra arrows in the blossomed quiver.

\begin{definition}[Confined, infinite, and periodic walks]
Let $\gamma$ be a walk.
\begin{itemize}
\item $\gamma$ is \defn{left-infinite} if 
  \begin{itemize}
  \item it is \defn{left-unbounded}, i.e. the indexing set of $\gamma$ has no finite lower bound;
  \item or it is left-bounded $\gamma = \alpha_i\alpha_{i+1}\alpha_{i+2}\cdots $ with $s(\alpha_i)\in Q'_0 \setminus Q_0$.
  \end{itemize}
Similarly, a walk is \defn{right-infinite} if it is either right-unbounded, or right-bounded with target of the last arrow in $Q_0'\setminus Q_0$.

\item $\gamma$ is \defn{infinite} if it is both left-infinite and right-infinite.

\item $\gamma$ is \defn{periodic} if it is unbounded in both ends, and there is some $r\in \Z$ such that the $i$-th letter in $\gamma$ is equal to the $(i\pm r)$-th letters in $\gamma$ for all $i\in \Z$.

\item $\gamma$ is \defn{left-confined} (respectively, \defn{right-confined}) if it is not left-infinite (respectively, right-inifinite).

\item $\gamma$ is \defn{confined} if it is left-confined and right-confined; in particular, it consists of only finitely many letters of $Q$.
\end{itemize}
We also say that a string is confined (resp. infinite, resp. periodic) if so is its underlying walk.
\end{definition}
In the case of walks with bounded length (and the associated gentle algebra is of finite dimension), infinite strings are the long strings of \cite{BDMTY}.  The term `infinite' here has nothing to do with the length of a string.  This is also one of the reasons why we use the term `confined' as opposed to `finite'; after all, the complement of the set of infinite strings is \emph{not} the set of confined string.  The distinction between these two notion is better explained from the topological and algebraic point of view as follows.

Topologically, confined strings correspond to curve which starts and ends at an internal arc.  A curve corresponding to an infinite string is one that satisfies (L1) or (L2') in the Introduction; c.f. Figure \ref{fig:A2-all-walks} for the case of $\vec{A}_2$-quiver.  

For algebraist, it would be helpful to think of confined strings as  indecomposable modules, and infinite strings as projective presentation of indecomposable modules - here we think of any paths (or its inverse) of unbounded length, or consisting of arrows in the blossoming, as zero maps.  For example, Figure \ref{fig:A2-P1} shows the modules and complexes arising from the projective module $P_1$ associated to the vertex $1\in \vec{A}_2$, and what their string interpretations are.

\begin{figure}[!htbp]
\centering
\begin{tikzpicture}[scale=0.4]
\begin{scope}[rotate=90, shift={(0,2)}]
\node at (9,0) {$\mathrm{mod}\, k\vec{A}_2$}; 
\node at (5,0) {$P_1 \leftrightarrow \st{a_0}$};
\penta{0,0}{gray};
\draw[thick, red] (0.2,0.6) -- (0.2,-0.6);
\end{scope}

\node at (11,9) {`$\mathsf{K}^{\mathrm{b}}(\mathrm{proj}\, k\vec{A}_2)/[2]$'};
\begin{scope}[shift={(6,0)}, rotate=90]
\draw[thick, blue] (-35:2) -- (120:2); 
\node at (6,0) {$P_1$ `=' $\left[\phantom{\begin{array}{c}cccccccccc\\c\\c\end{array}} \right]$};
\node at (3.3,1) {$\leftrightarrow \st{b_2^-a_0a_2}$};
\node (p1at0) at (6,-3.5) {$P_1$};\node (01) at (7,0.5) {$0$};\node (02) at (5,0.5) {$0$};
\draw[->] (01) -- (p1at0) node [midway,above] {\scriptsize `$b_2$'};
\draw[->] (02) -- (p1at0) node [midway,below] {\scriptsize `$a_0a_2$'};
\penta{0,0}{gray};
\end{scope}

\begin{scope}[shift={(16.5,0)}, rotate=90]
\draw[thick, blue] (35:2) -- (170:2); 
\node at (6,0) {$P_1[1]$ `=' $\left[\phantom{\begin{array}{c}ccccccccc\\c\\c\end{array}} \right]$};
\node at (3.3,0.7) {$\leftrightarrow \st{a_1^-c_1}$};
\node (p1at0) at (6,0) {$P_1$};\node (01) at (7,-4) {$0$};\node (02) at (5,-4) {$0$};
\draw[->] (p1at0) -- (01)  node [midway,above] {\scriptsize `$a_1$'};
\draw[->] (p1at0) -- (02) node [midway,below] {\scriptsize `$c_1$'};
\penta{0,0}{gray};
\end{scope}

\draw (1,9.5) -- (1,-2.5);
\draw (11,7.5) -- (11,-2.5);
\end{tikzpicture}
\caption{Algebraic and topological interpretations of confined and infinite strings}\label{fig:A2-P1}
\end{figure}
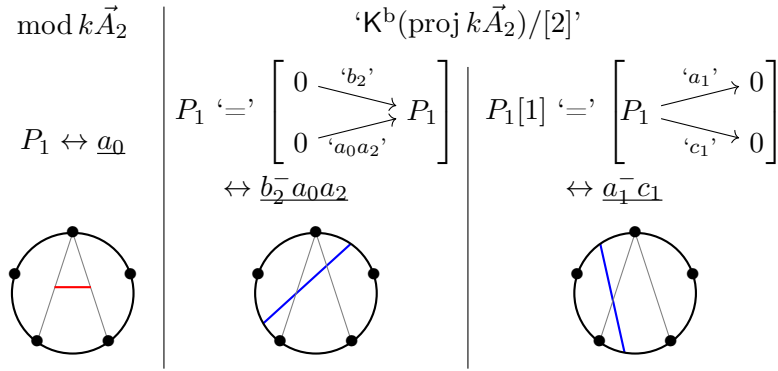

Periodic (infinite) strings can be identified with closed curves that are not isotopic to a point.  For example, the Kronecker quiver $K_2$ has an infinite string given by infinitely repeating the word $a_1^{-}b_1$.  This corresponds to the unique closed curve of the annulus - as shown in the first configuration of Figure \ref{fig:lam-eg}.

\begin{example}\label{eg:A2 walks strings}
For the $\vec{A}_2$-quiver, there are only 3 confined strings and 8 infinite strings.  We display all of these in Figure \ref{fig:A2-all-walks}.  The first configuration in Figure \ref{fig:A2-all-walks} on the left shows that 3 confined strings, and the remaining three configurations on the right shows all the infinite strings.
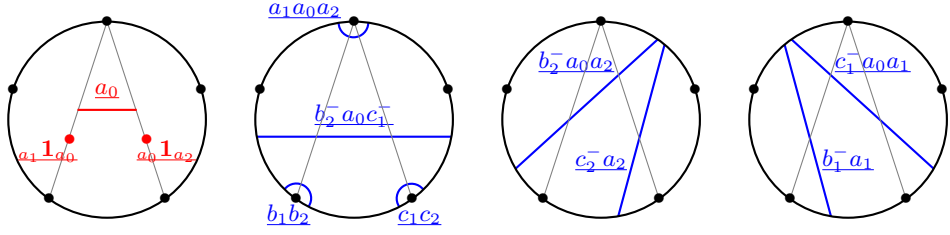
\begin{figure}[!htbp]\centering
\begin{tikzpicture}[scale=0.65, every node/.style={font=\footnotesize }]
\begin{scope}[shift={(0,0)}, rotate=90]
\draw[thick, blue] (100:2) -- (260:2) node [midway, above] {$\st{b_2^-a_0c_1^-}$}; 
\draw[thick, blue] (9:2) arc (91:269:0.3); \node[blue] at (2.1,1) {$\st{a_1a_0a_2}$};
\draw[thick, blue, rotate=144] (8.5:2) arc (91:269:0.3);\node[blue] at (148:2.5) {$\st{b_1b_2}$};
\draw[thick, blue, rotate=-144] (8.5:2) arc (91:269:0.3); \node[blue] at (-148:2.5) {$\st{c_1c_2}$};
\penta{0,0}{gray};
\end{scope}

\begin{scope}[shift={(5,0)}, rotate=90]
\draw[thick, blue] (-40:2) -- (-170:2); \node[blue] at (-1,0) {$\st{c_2^-a_2}$};
\draw[thick, blue] (-35:2) -- (120:2); \node[blue] at (1,0.5) {$\st{b_2^-a_0a_2}$};
\penta{0,0}{gray};
\end{scope}

\begin{scope}[shift={(10,0)}, rotate=90, yscale=-1]
\draw[thick, blue] (-40:2) -- (-170:2) (-35:2) -- (120:2);
\node[blue] at (-1,0) {$\st{b_1^-a_1}$};\node[blue] at (1,0.5) {$\st{c_1^-a_0a_1}$};
\penta{0,0}{gray};
\end{scope}

\begin{scope}[shift={(-5,0)}, rotate=90]
\penta{0,0}{gray};
\draw[thick, red] (0.2,0.6) -- (0.2,-0.6) node [midway, above] {\footnotesize $\st{a_0}$} ;
\draw[red] node at (-0.4,0.76) {$\bullet$} node at (-0.4,-0.8) {$\bullet$};
\node[red] at (-0.8,1.2) {$\st{\un{a_1}{a_0}}$} ;
\node[red] at (-0.8,-1.2) {$\st{\un{a_0}{a_2}}$} ;
\end{scope}
\end{tikzpicture}
\caption{All confined strings and infinite strings on the $\vec{A}_2$-quiver}\label{fig:A2-all-walks}
\end{figure}
\end{example}

Since a vertex of $Q$ correspond to a stationary string, which consists of \emph{two} walks, it will be helpful (in fact, essential) to enhance the source and target functions of a gentle quiver by keeping in mind the orientation as follows.

\begin{definition}[Source and targets for walks]
Suppose $\gamma$ is a walk.
\begin{itemize}
\item If $\gamma$ is left-confined non-stationary, we define $\us(\gamma)$ to be the unique stationary walk $\un{q}{r}$ such that $q \gamma$ is a walk.

\item If $\gamma$ is right-confined non-stationary, we define $\ut(\gamma)$ to be the unique stationary walk $\un{q}{r}$ such that $\gamma r$ is a walk.

\item For a stationary walk $\un{q}{r}$, we define $\us(\un{q}{r}) = \ut(\un{q}{r}) = \un{q}{r}$.
\end{itemize}
\end{definition}

Note that we have immediately $\us(\gamma^{-}) = \ut(\gamma)^{-}$ and $\ut(\gamma^{-}) = \us(\gamma)^{-}$.

\begin{definition}[Concatenation]
Consider two walks $\gamma$ and $\delta$.
If $\gamma$ is right-confined, $\delta$ is left-confined, and $\ut(\gamma) = \us(\delta)$, then the \defn{concatenation} of $\gamma$ and $\delta$ as words gives rise to a well-defined walk, denoted by $\gamma \delta$.
\end{definition}

\begin{example}\label{eg:concat}
If $\gamma$, $\gamma\gamma$ are both confined walks, then we have 
\begin{itemize}
\item a confined walk $\gamma^m$ given by concatenating $\gamma$ with itself $m$ times;
\item a right-infinite walk $\gamma^\infty := \gamma\gamma\gamma\cdots$;
\item a left-infinite walk ${}^\infty\gamma:=\cdots \gamma\gamma\gamma$;
\item an infinite walk ${}^\infty \gamma^\infty:= ({}^\infty\gamma)(\gamma^\infty) = \cdots \gamma\gamma \cdots$.
\end{itemize}
For convenience, let us denote by ${}^{-\infty}\gamma$ the string ${}^\infty(\gamma^{-1}) = (\gamma^\infty)^{-1}$, and $\gamma^{-\infty}:=(\gamma^{-1})^\infty = ({}^\infty \gamma)^{-1}$.

For a concrete example, in the Kronecker quiver $K_2$ of Example \ref{eg:blossom} (2), there is an infinite string $\st\gamma:=\st{a_2^-(a_1^-b_1)^\infty}$.  Unlike the case of the periodic string $\st\delta:=\st{{}^\infty a_1^-b_1^\infty}$, we display $\st\gamma$ topologically represented by a curve that keeps winding along $\st\delta$, as shown in blue (the `outer curve') in the second and third configurations of Figure \ref{fig:lam-eg}.
\end{example}

Example \ref{eg:concat} (1) shows how one can extend certain left/right-confined walks to left/right-infinite ones; this can be done more generally as follows.

\begin{definition}[Infinite extensions of confined walks]
Suppose $\gamma$ is a right-confined walk.
Then there is
\begin{itemize}
\item a unique right-infinite walk $\extr\gamma$ consisting only of arrows such that $\gamma \extr\gamma$ is a walk, and also 
\item a unique right-infinite walk $\extir\gamma$  consisting only of inverses of arrows such that $\gamma \extir\gamma$ is a walk.
\end{itemize}
Dually, for a left-confined walk $\gamma$, we can define $\extl \gamma = \extir{\gamma^-}^-$ and $\extil \gamma = \extr{\gamma^-}^-$.
\end{definition}
Note that the notation is designed so that `direction' of the bracket is the same as the direction of the letters; see the following Example \ref{eg:inf-extn} (3).  Reader experienced with string combinatorics should be noted that these operations above are related to, but not the same as, the notion of attaching and removing (co)hooks.

\begin{example}\label{eg:inf-extn}
(1) Consider a stationary walk $\gamma:=\un{q}{r}$.  If $q\in Q'\setminus Q$, then $\extl \gamma=q$.

(2) If $\gamma$ is a walk given by an oriented cycle in $Q$ and $\gamma^2\neq 0$, then we have 
\[
 \extr\gamma=\gamma^\infty,\;\; \extir\gamma=\gamma^{-\infty},\;\; \extl\gamma={}^\infty \gamma,\;\; \extil\gamma={}^{-\infty}\gamma.
\]

(3) Consider the $\vec{A}_2$-quiver as in Example \ref{eg:blossom}.  Let $\gamma:= a_0$.  Then we have $\extl\gamma = a_1, \extr\gamma = a_2$, $\extil\gamma = b_2^-, \extir\gamma = c_1^-$; c.f. Figure \ref{fig:A2-all-walks}.
\end{example}

\begin{definition}[Overlap]
Consider two walks $\gamma$ and $\delta$ with a \emph{maximal} common subwalk $\omega$ between them.
The associated \defn{overlap} is the corresponding pair of decomposition $\gamma = \gamma_1 \omega \gamma_2$ and $\delta = \delta_1 \omega \delta_2$, which will be denoted in the following for clarity:
 \[
  \crosswn {\gamma}{\gamma_1} {\omega} {\gamma_2}{\delta}{\delta_1}{\delta_2.} 
 \]

If $\omega$ is stationary, we may just abbreviate to the form:
 \[
  \crosswn {\gamma}{\gamma_1} {} {\gamma_2}{\delta}{\delta_1}{\delta_2.}
 \]
\end{definition}

 We stress that overlap depends on the position of $\omega$ appearing in $\gamma$ and in $\delta$.  

\begin{definition}[Crossings]\label{def:cross}
 For two walks $\gamma$ and $\delta$, a \defn{positive crossing from $\gamma$ to $\delta$} is an overlap of the form
 \begin{align}
  \crosswn{\gamma}{\gamma_1 q^{-}}{\omega}{q' \gamma_2}{\delta}{\delta_1 r}{r'^{-} \delta_2}\label{eq-crossing}
 \end{align}
 where $q$, $q'$, $r$ and $r'$ are arrows. We denote by $c^+(\gamma, \delta)$ the set of positive crossings from $\gamma$ to $\delta$. Notice that there is an immediate identification $c^+(\gamma, \delta) = c^+(\gamma^{-}, \delta^{-})$. For two strings $\st \gamma$ and $\st \delta$, we write $c^+(\st \gamma, \st \delta) = c^+(\gamma, \delta) \sqcup c^+(\gamma, \delta^{-})$. We say that $\st \gamma$ and $\st \delta$ are \defn{crossing} if $c^+(\st \gamma, \st \delta) \neq \emptyset$ or $c^+(\st \delta, \st \gamma) \neq \emptyset$.
\end{definition}
\begin{remark}
For the readers experienced with $\tau$-tilting theory \cite{AIR}, the direction we chosen matches with the partial order of support $\tau$-tilting modules.  Namely, let $\Lambda$ be the associated (finite-dimensional, for simplicity) gentle algebra, and $M, N$ be the indecomposable $\tau$-rigid modules corresponding to \emph{infinite} strings $\st\gamma$ and $\st\delta$ respectively, then $c^+(\st{\gamma}, \st{\delta})\neq \emptyset$ is equivalent to $\Hom_\Lambda(M,\tau N)\neq 0$.
Note also that this is also equivalent to $\Hom_{\mathcal{T}}(V,U[1])\neq 0$ for the projective presentation $U,V$ of $M,N$ respectively and $\mathcal{T}$ the bounded homotopy category of finitely generated projective $\Lambda$-modules.
In fact, $c^+(\st\gamma, \st\delta)$ is a canonical basis of $\Hom_{\mathcal{T}}(V,U[1])$, c.f. \cite{EJR}.
\end{remark}

\begin{example}
Consider the walks $\gamma=c_2^-a_2$, $\delta=b_2^-a_0c_1$, $\eta=b_2^-a_0a_2$ on the $\vec{A}_2$-quiver.
Then there is a positive crossing from $\gamma$ to $\delta$ given by
\[
\crosswn {\gamma}{c_2^-}{}{a_2}{\delta}{b_2^-a_0}{c_1}
\]
(with overlap $\un{a_0}{a_1}$).  On the other hand, one can check that there is no positive crossing from $\gamma$ to $\eta$, and vice versa.  In practice, it is often easier to find crossings between \emph{infinite} strings using the topological model, because of the existence of a crossing implies the existence of an intersection between the corresponding curves; see $\gamma$ and $\delta$ using Figure \ref{fig:A2-all-walks}.
\end{example}

We will freely swap the rows in the diagram \eqref{eq-crossing}.
In particular, if we have not fixed the choice of walk representatives $\gamma, \delta$ of the strings $\st\gamma, \st\delta$, then $\st\gamma, \st\delta$ is crossing means that we can write a diagram of the form:
\[
\crosswn{\gamma}{\gamma_1 q^\pm}{\omega}{q'^\mp\gamma_2}{\delta}{\delta_1r^\mp}{r'^{\pm}\delta_2},
\]
where the signs on the four letters adjacent to $\omega$ are determined by whether the crossing turns out to be a positive crossing from $\st\gamma$ to $\st\delta$ (in which case, we have $q^-, q', r, r'^-$), or a positive crossing from $\st\delta$ to $\st\gamma$ (in which case, we have $q, q'^-, r^-, r'$).

The following will be useful to reduce the verification process on whether a string crosses itself.
\begin{lemma} \label{samedir} 
 Consider a walk $\gamma \delta$. Then we have a possibly non-disjoint union of sets 
 \[c^+(\gamma \delta, \delta^- \gamma^-) = c^+(\gamma, \delta^- \gamma^-) \cup c^+(\delta, \delta^- \gamma^-) \cup c^+(\gamma \delta, \gamma^-)
 \cup c^+(\gamma \delta, \delta^-)\]
 where the identification are obvious.
\end{lemma}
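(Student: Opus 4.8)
The strategy is to rephrase both sides of the equality purely in terms of the data of positive crossings (Definition~\ref{def:cross}), and then to run a case analysis on the position of the overlap relative to the two ``junctions'': the place where $\gamma$ meets $\delta$ inside $\gamma\delta$, and the place where $\delta^{-}$ meets $\gamma^{-}$ inside $\delta^{-}\gamma^{-}$.

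First I would record a small but useful observation: in a positive crossing the maximality of the overlap $\omega$ is automatic from the sign pattern. Indeed, the letter immediately to the left of $\omega$ is an inverse of an arrow on one strand and an arrow on the other, so they cannot agree, and likewise on the right; hence ``overlap with that sign pattern'' and ``positive crossing'' amount to the same thing, and the sign pattern is insensitive to enlarging the ambient walks. Since the letters of $\gamma$ occupy the left part of $\gamma\delta$ and those of $\delta$ the right part, while the letters of $\delta^{-}$ and of $\gamma^{-}$ occupy the left and right parts of $\delta^{-}\gamma^{-}$, this observation produces four injections into $c^{+}(\gamma\delta,\delta^{-}\gamma^{-})$ — from $c^{+}(\gamma,\delta^{-}\gamma^{-})$ and $c^{+}(\delta,\delta^{-}\gamma^{-})$ by keeping $\omega$ and its flanking letters in place and extending the decomposition of the \emph{first} walk, and from $c^{+}(\gamma\delta,\gamma^{-})$ and $c^{+}(\gamma\delta,\delta^{-})$ by extending the decomposition of the \emph{second} walk. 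These are the ``obvious identifications'' of the statement, and they at once give the inclusion ``$\supseteq$''. (If $\gamma$ or $\delta$ is stationary, every set in sight is either empty or reduces to another one on the list, so that case is dismissed immediately.)

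For ``$\subseteq$'', take a positive crossing from $\gamma\delta$ to $\delta^{-}\gamma^{-}$ with overlap $\omega$, flanked in $\gamma\delta$ by $q^{-}$ and $q'$ and in $\delta^{-}\gamma^{-}$ by $r$ and $r'^{-}$. If the finite subword $q^{-}\omega q'$ of $\gamma\delta$ lies inside the $\gamma$-part, restricting the decomposition of the first walk to $\gamma$ presents the crossing as an element of $c^{+}(\gamma,\delta^{-}\gamma^{-})$; if it lies inside the $\delta$-part, we get an element of $c^{+}(\delta,\delta^{-}\gamma^{-})$. Dually, if the subword $r\omega r'^{-}$ of $\delta^{-}\gamma^{-}$ lies inside $\gamma^{-}$ we get an element of $c^{+}(\gamma\delta,\gamma^{-})$, and if it lies inside $\delta^{-}$ an element of $c^{+}(\gamma\delta,\delta^{-})$. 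Since each of these two subwords either avoids its junction (hence lies in one of the two halves) or meets it, the only configuration not yet covered by one of these four sets is that $q^{-}\omega q'$ meets the junction of $\gamma\delta$ \emph{and} $r\omega r'^{-}$ meets the junction of $\delta^{-}\gamma^{-}$.

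The hardest part will be excluding this ``doubly-straddling'' configuration. Write $\epsilon$ for the last letter of $\gamma$ and $\zeta$ for the first letter of $\delta$, so $\epsilon\zeta\neq0$, the junction of $\gamma\delta$ sits between $\epsilon$ and $\zeta$, and the junction of $\delta^{-}\gamma^{-}$ sits between $\zeta^{-}$ and $\epsilon^{-}$. Straddling both junctions forces $\omega$ (enlarged by its flanking letters if a junction happens to fall on $q^{-}$ or on $q'$) to contain both the two-letter block $\epsilon\zeta$ read off at the first junction and the block $\zeta^{-}\epsilon^{-}=(\epsilon\zeta)^{-}$ read off at the second; equivalently, $\omega$ acquires two factorisations, $\omega=\omega_{1}\omega_{2}$ with $\omega_{1}$ a non-empty suffix of $\gamma$ and $\omega_{2}$ a non-empty prefix of $\delta$, and $\omega=\rho_{1}\rho_{2}$ with $\rho_{1}$ the inverse of a non-empty prefix of $\delta$ and $\rho_{2}$ the inverse of a non-empty suffix of $\gamma$. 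If these two factorisations split $\omega$ at the same spot then $\omega_{1}=\rho_{1}$, so comparing final letters gives $\epsilon=\zeta^{-}$, whence the two-letter factor $\epsilon\zeta$ of $\gamma\delta$ satisfies $\epsilon\zeta=(\epsilon\zeta)^{-}$, contradicting Lemma~\ref{noselfinverse}. If they split at different spots, the non-empty word squeezed strictly between the two split points is, read inside the first factorisation, a non-empty factor of $\gamma$ or of $\delta$, and, read inside the second, equal to its own inverse — again contradicting Lemma~\ref{noselfinverse}. The few degenerate sub-cases and the possibility that $\omega$ is stationary are ruled out by the same computation. Hence the doubly-straddling configuration never occurs, the four cases above exhaust $c^{+}(\gamma\delta,\delta^{-}\gamma^{-})$, and the claimed (possibly non-disjoint) equality follows.
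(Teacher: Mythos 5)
Your proposal is correct and follows essentially the same route as the paper: reduce the containment $\subseteq$ to the configuration where the relevant segment straddles the junction of $\gamma\delta$ \emph{and} the junction of $\delta^{-}\gamma^{-}$, then compare the two readings of the overlap to extract a subword equal to its own inverse, contradicting Lemma~\ref{noselfinverse} (your ``squeezed word'' is exactly the paper's $\omega_2$). Your extra remarks on automatic maximality of the overlap and on the degenerate/stationary sub-cases are fine and, if anything, slightly more careful than the paper's write-up.
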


\begin{proof}
 Via the obvious identification, $\supseteq$ is trivial. For the other inclusion, notice that any crossing $\eta \in c^+(\gamma \delta, \delta^- \gamma^-)$ that is not in the right hand side has the form
 \[\crosswn{\gamma \delta}{\gamma_1 q^-}{\omega_1 \omega_2 \omega_3}{r \gamma_2}{\delta^- \gamma^-}{\gamma'_1 q'}{r'^- \gamma'_2}\]
 where
 \begin{itemize}
  \item[] $\gamma = \gamma_1 q^- \omega_1$, $\delta = \omega_2 \omega_3 r \gamma_2$, $\delta^- = \gamma'_1 q' \omega_1 \omega_2$ and $\gamma^- = \omega_3 r'^- \gamma'_2$;
  \item[or] $\gamma = \gamma_1 q^- \omega_1 \omega_2$, $\delta = \omega_3 r \gamma_2$, $\delta^- = \gamma'_1 q' \omega_1$ and $\gamma^- = \omega_2 \omega_3 r'^- \gamma'_2$.
 \end{itemize}
 In both cases, $\omega_2 = \omega_2^-$, contradicting Lemma \ref{noselfinverse}.
\end{proof}


\section{Torsion sets and maximal non-crossing sets}\label{sec:torset}


The first step in classifying torsion classes of gentle algebras is to play a similar game \emph{only} on strings, which is the purpose of this section.
Namely, by mimicking the combinatorics of constructing modules in torsion classes, we can label a (combinatorial) torsion classes by `generators' which are certain special sets of infinite strings - this combinatorial labelling is what we called `refined lamination' in the introduction.

\subsection{Torsion sets and non-crossing sets}

We introduce the following combinatorial analogue of torsion class.

\begin{definition}[Factor and extension of strings]\label{def:ext-fac}
For a string $\st \gamma$ in $(Q',R')$, a \defn{factor of $\st \gamma$} is any string $\st \omega$ such that $\gamma = \omega$ or there exist a decomposition $\gamma = \gamma_1 q^{-} \omega r \gamma_2$ or $\gamma = \gamma_1 q^{-} \omega$ or $\gamma = \omega r \gamma_2$, with $q,r\in Q_1'$.

For two strings $\st \gamma$ and $\st \delta$ and $q$ an arrow in $Q_1$ such that $\st{\gamma q \delta}$ is also a string, then we say that $\st{\gamma q \delta}$ is an \defn{extension of $\st \delta$ by $\st\gamma$} (or extension of $\st\delta$ and $\st\gamma$ if it is clear how $q$ is concatenated to the two strings). 
\end{definition}

As a consequence of Proposition \ref{blo-indep(walk)}, we have the following.
\begin{proposition}\label{blo-indep(fac,ext)}
The notions of factor and extension are both independent of the choice of blossoming.
\end{proposition}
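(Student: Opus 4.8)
The plan is to derive this immediately from Proposition \ref{blo-indep(walk)}. Fix two blossomings $(Q',R')$ and $(Q'',R'')$ of $(Q,R)$. By Remark \ref{rem:blossom}(1) they have the same set of arrows, the only difference being the identification of the vertices outside $Q_0$; in particular there is a canonical bijection between walks in $Q'$ and walks in $Q''$ which is compatible with inversion, with passing to the associated strings, and with the distinction between a direct letter and the inverse of a letter, and which by Proposition \ref{blo-indep(walk)} induces the identity on the (common) set of strings of $(Q,R)$. We also have $R'\cap Q_2 = R = R''\cap Q_2$. I would then check that both ``factor'' and ``extension'' are expressible using only this shared data.

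For extensions this is almost immediate: an extension $\st{\gamma q\delta}$ uses an arrow $q\in Q_1$, which lies in every blossoming, and the only further requirement in Definition \ref{def:ext-fac} is that $\st{\gamma q\delta}$ be a string, which by Proposition \ref{blo-indep(walk)} is blossoming-independent. For factors, I would unwind Definition \ref{def:ext-fac}: ``$\st\omega$ is a factor of $\st\gamma$'' holds exactly when a representative of $\st\gamma$ can be written as a concatenation $\gamma_1\omega\gamma_2$ of words, with $\omega$ a representative of $\st\omega$, such that whenever $\gamma_1$ (resp.\ $\gamma_2$) is non-empty, the letter of $\gamma_1$ (resp.\ $\gamma_2$) adjacent to $\omega$ is the inverse of an arrow (resp.\ a direct arrow). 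Now the occurrences of (a representative of) $\st\omega$ inside (a representative of) $\st\gamma$ coincide for $Q'$ and for $Q''$ by Proposition \ref{blo-indep(walk)}; a letter adjacent to such an occurrence that is flanked on both sides inside $\gamma$ is necessarily a letter of $Q$ (indeed, new arrows and their inverses only occur as the first or last letter of an infinite walk) and hence unaffected by the choice of blossoming, whereas a terminal letter of an infinite walk $\gamma$ is again preserved, together with its direct-versus-inverse nature, by the canonical bijection above. This shows the factor relation is blossoming-independent as well.

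I expect the step needing the most care is the assertion in the first paragraph that the canonical identification of the arrows of different blossomings (Remark \ref{rem:blossom}(1)) really does respect, letter by letter in a walk, the direct/inverse dichotomy and the validity condition ``$\alpha\beta\neq 0$'' for consecutive letters. Concretely one must check that the local picture of a blossoming at a vertex $i\in Q_0$ --- how many new arrows point into and out of $i$, and which length-two paths through $i$ belong to the relation set --- is forced by $(Q,R)$ through Definition \ref{def:gentle}(ii)--(v) and the blossoming axioms, together with the small observation that new arrows, being added at original vertices to fill ``incoming'' or ``outgoing'' slots, have a well-defined orientation. Everything else is routine bookkeeping with the definitions, and in fact the whole statement is little more than repackaging Proposition \ref{blo-indep(walk)}.
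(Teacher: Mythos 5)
Your argument is correct and follows essentially the same route as the paper, which states Proposition \ref{blo-indep(fac,ext)} as an immediate consequence of Proposition \ref{blo-indep(walk)} together with Remark \ref{rem:blossom}(1) (different blossomings share the same arrows, so factor and extension only refer to data common to all blossomings); your write-up merely makes the implicit bookkeeping explicit. One cosmetic slip: new arrows and their inverses occur only as the first or last letter of a walk that is left- or right-infinite on that side, not necessarily of an \emph{infinite} walk, but this does not affect the argument.
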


\begin{definition}[Torsion set]
A set $\Tr$ of strings in $(Q', R')$ is called a \defn{torsion set} of $(Q,R)$ if it is closed under factors and extensions. 
If, moreover, $\Tr$ consists only of confined strings, we say that $\Tr$ is a \defn{confined torsion set}.
Denote by $\torsQ$ the set of confined torsion sets in $(Q',R')$.  
\end{definition}

If the reader is a representation theorist, we remark that confined torsion set are pretty much the same as the usual torsion class, except that we are taking ``bands without parameter'' instead.
Much of the classification of torsion classes can be done by ignoring the choice of parameters first, and add these extra information back in later; this will be done in the last subsection.

\begin{example}\label{eg:torset}
Consider the case of $\vec{A}_2$-quiver in Example \ref{eg:blossom} (1).  
Define a set \[
\tilde{\Tr} :=\left\{ \begin{array}{ccccc}
\st{(b_2)^{-}a_0a_2}, & \st{(b_2)^{-1}a_0(c_1)^{-}}, & \st{a_1a_0c_1^{-}}, & \st{a_1b_1^{-}}, & \\
\st{b_2}, & \st{(b_2)^{-1}a_0}, &  \st{a_0a_2}, & \st{a_0c_1^{-}}, & \st{a_1},  \\
\st{a_0}, & \st{\un{a_1}{a_0}} & & &
\end{array}\right\}.
\]
Then one can check that $\tilde{\Tr}$ is a torsion set of the blossoming $(Q^\star,R^\star )$ (as well as $(Q', R')$).
Note that $\tilde{\Tr}$ contains a maximal confined torsion set $\Tr$ given by the set of confined strings, i.e. $\Tr = \{\st{a_0}, \st{\un{a_1}{a_0}}\}$.
There are two other confined torsion sets strictly contained in $\Tr$, namely, $\{\st{\un{a_1}{a_0}}\}$ and $\emptyset$.
\end{example}

\begin{proposition}
$\torsQ$ is a complete lattice where the partial order is given by inclusion.
\end{proposition}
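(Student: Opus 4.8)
The plan is to verify directly that $\torsQ$ is closed under arbitrary intersections, and then invoke the standard fact that a poset closed under arbitrary meets (and possessing a maximum) is automatically a complete lattice. First I would check that the intersection of any family $\{\Tr_i\}_{i\in I}$ of confined torsion sets is again a confined torsion set: the defining closure conditions (closed under factors, closed under extensions, consisting only of confined strings) are all of the form ``if certain strings lie in $\Tr$, then certain other strings lie in $\Tr$,'' so they are preserved under intersection. Concretely, if $\st\gamma\in\bigcap_i\Tr_i$ and $\st\omega$ is a factor of $\st\gamma$, then $\st\omega\in\Tr_i$ for every $i$ since each $\Tr_i$ is closed under factors, hence $\st\omega\in\bigcap_i\Tr_i$; the argument for extensions is identical; and each element of the intersection is confined because it lies in (say) $\Tr_{i_0}$ for any fixed $i_0$. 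The empty intersection is the set of all confined strings of $(Q',R')$, which is itself a confined torsion set (trivially closed under factors and extensions by definition, and a factor of a confined string is confined, an extension of confined strings along an arrow of $Q_1$ is confined); this is the maximum element. Thus $\torsQ$ has a maximum and is closed under arbitrary intersection.

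Next I would observe that in a poset $L$ with a maximum element in which every subset has an infimum, every subset $S$ also has a supremum, namely $\bigvee S=\bigwedge\{x\in L\mid x\geq s\text{ for all }s\in S\}$; this set of upper bounds is nonempty (it contains the maximum), so its infimum exists by hypothesis, and one checks routinely that it is the least upper bound of $S$. Hence $L$ is a complete lattice. Applying this with $L=\torsQ$, using that $\bigwedge_i\Tr_i=\bigcap_i\Tr_i$ exists in $\torsQ$ by the previous paragraph, gives that $\torsQ$ is a complete lattice, with minimum $\emptyset$ and maximum the set of all confined strings.

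The only mildly delicate point — and the one I would spell out carefully — is that the intersection genuinely lands in $\torsQ$ rather than merely in the larger poset of all torsion sets: one must confirm that ``confined'' is inherited by intersections, which is immediate, but also that the closure operations do not force in non-confined strings. For factors this is clear since a factor of a confined string is confined; for extensions it is clear since the extension is by concatenating along an arrow $q\in Q_1$, and a concatenation of finitely-many-letters-of-$Q$ walks via an arrow of $Q$ has again only finitely many letters of $Q$, hence is confined. So in fact the closure operations defining ``torsion set'' preserve confinedness, and no obstruction arises; the proof is essentially the routine lattice-theoretic argument once this bookkeeping is noted. I do not expect a genuine obstacle here — the statement is formal once the closure conditions are recognized as intersection-stable.
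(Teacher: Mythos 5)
Your argument is correct and is essentially the paper's own: the paper likewise observes that (confined) torsion sets are stable under arbitrary intersection, takes meets to be intersections, and defines joins as the meet of all upper bounds, exactly as in the torsion-class case of Proposition \ref{torclass1}. Your extra bookkeeping that factors and extensions of confined strings remain confined (so the maximum element and the intersections stay inside $\torsQ$) is a reasonable elaboration of what the paper leaves implicit.
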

\begin{proof}
Observe that torsion sets are preserved under taking intersection, so we have meets and joins given by
\[\bigwedge_{i \in I} \Tr_i = \bigcap_{i \in I} \Tr_i,
\quad \text{ and }\quad 
\bigvee_{i \in I} \Tr_i = \bigwedge_{\Tr'\supseteq \Tr_i\;\forall i\in I} \Tr',
\]
which is the same as in the case of torsion classes in an abelian length category verbatim; c.f. Proposition \ref{torclass1}.
\end{proof}

\begin{example}\label{eg:torset2}
Consider the Kronecker quiver $K_2$ in Example \ref{eg:blossom} (2).
Any confined string in $K_2$ comes from precisely one of the following six walks.
\begin{align*}
(1)\,\, \pi_0 &:= {\un{b_{1}}{b_2}}^{-} = \un{a_{1}}{a_2}, & (2)\,\,a_1,& & (3)\,\,\iota_0 &:= {\un{b_{0}}{b_1}}^- = \un{a_{0}}{a_1},\\
(4)\,\,\pi_n&:= (b_1^-a_1)^n, & (5)\,\,b_1, & &  (6)\,\,\iota_n &:= (b_1a_1^-)^n, 
\end{align*}
with $n\in \mathbb{N}_{\geq 1}$.
Let $\Tr_0$ be the confined torsion set that contains all of these confined strings.
Then the complete lattice of confined torsion set has the following Hasse quiver (c.f. \cite[Example 3.6]{DIRRT}):
\[
\xymatrix@R=12pt{
&\Tr_0\ar[lddddd] \ar[rd] & &\\
&& \Tr_1 \ar[d] & \\
&& \Tr_2 \ar[d] &\\
&& \vdots &\\
&& \Tr_\infty\ar[ld]\ar[rd] & \\
\{\st{\pi_0}\}\ar[rddddd] & \Tr_{\infty}\setminus\{\st{a_1}\}\ar[rd] & &\Tr_{\infty}\setminus\{\st{b_1}\}\ar[ld]\\
&& \Tr_\infty' & \\
&& \vdots \ar[d]&\\
&& \Tr_2' \ar[d] &\\
&& \Tr_1' \ar[ld] & \\
& \Tr_0':=\emptyset  && 
}
\]
where
\begin{align*}
\Tr_m & := \Tr_0\setminus\{\st{\pi_r}\mid 0\leq r<m\} \text{ for all } m\in \mathbb{Z}_+\cup\{\infty\}\\
\text{and }\quad \Tr_n' & := \{ \st{\iota_r} \mid 0\leq r < n+1 \} \text{ for all } n\in \mathbb{Z}_+\cup\{\infty\}.
\end{align*}
\end{example}

\begin{definition}[Non-crossing set]
 A set $\Sr$ of strings in $(Q',R')$ is called \defn{non-crossing} if for any $\st \gamma, \st \delta \in \Sr$, $c^+(\st \gamma, \st \delta) = \emptyset$. In particular, it consists of non-self-crossing strings.

 A set of strings is called \defn{maximal non-crossing} if 
 \begin{itemize}
  \item It consists only of infinite strings. 
  \item It is non-crossing.
  \item It is a maximal set with respect to the above properties.
 \end{itemize}
As in the case of torsion sets, since the set of arrows in a blossoming is always the same, maximal non-crossing sets of strings are dependent only on $(Q,R)$, and not on the choice of blossoming $(Q', R')$.  Therefore, we denote by $\maxstr$ the set of such maximal non-crossing sets of strings.
\end{definition}

\begin{definition}[Null strings]\label{def:nullstr}
We call an infinite walk $\gamma$ \defn{null} if all its letter are arrows, or all its letters are inverse of arrows.   An infinite string is null if so is its underlying walk.  Denote by $\nstr=\nstr(Q,R)$ the set of all null infinite strings.  Note that a maximal non-crossing set always contains $\nstr$.
\end{definition} 

\begin{example}\label{eg:NCset}
Consider the $\vec{A}_2$-quiver in Example \ref{eg:blossom} (1).  An example of a maximal non-crossing set of infinite strings is $\{\st{b_2^-a_0a_2}, \st{b_2^-a_0c_1^-}\}\cup \nstr$, where $\nstr=\{\st{a_1a_0a_2}, \st{b_1b_2}, \st{c_1c_2}\}$.  In fact, it is not difficult to find all the maximal non-crossing sets; see Example \ref{eg:mainthm1}.
\end{example}

A null walk consisting of ordinary arrow is more traditionally called \emph{maximal path} of the gentle algebra associated to the \emph{blossoming} quiver.  The terminology can be justified both topologically and algebraically as follows.

In the topological model, null strings correspond marked points that are attached to at least one non-boundary arc; see Figure \ref{fig:A2-all-walks} for the null strings appearing in the above Example \ref{eg:NCset}.  In the literature \cite{FG,FT}, these curves are included in laminations, but it is customary to omit them when drawing out the laminations.  Algebraically, we interpret a null string (as well as any arrow in $Q'\setminus Q$) as a zero map between projective modules - hence the choice of the name.

\medskip

We are now going to detail the combinatorial construction that translates between torsion sets and maximal non-crossing sets.

Let $\Sr$ be a set of strings. We denote by $\fin(\Sr)$ the confined part of $\Sr$, i.e. the set of all confined strings in $\Sr$.
We define
\[
\Tors^\infty(\Sr) := \bigcap_{\text{torsion set }\Tr\supseteq \Sr} \Tr\quad \text{ and }\quad  \Tors(\Sr):=\fin\Tors^\infty(\Sr).
\]
Note that $\Tors^\infty(\Sr)$ is the smallest torsion set containing $\Sr$.
Also, if $\Sr$ contains non-confined strings, then $\Tors^\infty(\Sr)$ is not confined.
On the other hand, $\Tors(\Sr)$ is the maximum confined torsion set contained in $\Tors^\infty(\Sr)$, and so the two torsion sets coincide if $\Sr$ consists only of confined strings.

Consider now a confined torsion set $\Tr\in \torsQ$ and define 
\begin{align*}
L(\Tr)  &:=\{\st \gamma \text{ string in } (Q',R')\mid \fin\{\text{all factors of }\st\gamma\}\subseteq \Tr\} \\
\text{ and }G(\Tr) &:= \{\st \gamma \in L(\Tr) \text{ infinite} \mid  c^+(\st \delta, \st \gamma) = \emptyset \text{ for all }\st\delta\in L(\Tr)\}.
\end{align*}

\begin{example}\label{eg:L(T)G(T)}
Consider the $\vec{A}_2$-quiver and the set $\Sr:=\{\st{b_2^-a_0a_2}, \st{b_2^-a_0c_1^-}\}$ as in Example \ref{eg:NCset}.
Then $\Tors^\infty(\Sr)$ is the torsion set $\tilde{\Tr}$ of Example \ref{eg:torset} and $\Tors(\Sr)=\{a_0, \st{\un{a_1}{a_0}}\}$.
On the other hand, if we take the confined torsion set $\Tr=\{a_0, \st{\un{a_1}{a_0}}\}$, then we have $L(\Tr)=\tilde{\Tr}\cup \nstr$ and $G(\tilde{\Tr}) = \Sr\cup \nstr$.
\end{example}

We will see that $G(\Tr)$ is the set of all infinite strings so that everything in $\Tr$ can be \defn{generated} (hence, the choice of notation $G$) by iterated extensions of their factors.
Note that while $G(\Tr)$ is by definition a non-crossing set of infinite string, {\it a priori} they are not necessarily maximal; but it turns out this is true, and is part of the following the main result of this section.

\begin{theorem} \label{mainthmstrings}
Let $(Q,R)$ be a gentle quiver.
 \begin{enumerate}[\rm (a)]
  \item The set $\maxstr$ of all maximal non-crossing sets of infinite strings is in one-to-one correspondence with the set $\torsQ$ of confined torsion sets given by
\begin{align*} 
\xymatrix@R=-1pt@C=50pt{\maxstr  \ar@{<->}[r]^>(.75){1:1} & \torsQ.\\
\Sr  \ar@{|->}[]!<8ex,0ex>;[r]!<-5ex,0ex>  & \Tors(\Sr) \\
G(\Tr) & \ar@{|->}[]!<-5ex,0ex>;[l]!<8ex,0ex> \Tr}
\end{align*}
  \item Let $\geq$ be the partial order on $\maxstr$ induced by the bijection in (a).
  Then we have $\Sr \geq \Sr'$ if and only if $c^+(\Sr', \Sr) = \emptyset$.
 \end{enumerate} 
\end{theorem}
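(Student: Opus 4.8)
The plan is to establish (a) by exhibiting the two assignments as mutually inverse maps between the complete lattices $\torsQ$ and $\maxstr$, and then to read (b) off the construction. First I would dispose of the routine structural facts. For $\Tr\in\torsQ$ one checks that $L(\Tr)$ is itself a torsion set: closure under factors is immediate since a factor of a factor is a factor, and closure under extensions follows because every factor of an extension $\st{\gamma q\delta}$ is a factor of $\st\gamma$, a factor of $\st\delta$, or an extension of two such factors. Moreover $L(\Tr)$ is the largest torsion set $\Tr'$ with $\fin\Tr'\subseteq\Tr$, and $\fin L(\Tr)=\Tr$ (a factor of a confined string is confined and $\Tr$ is factor-closed); hence $L$ and $\fin$ restrict to mutually inverse, order-preserving bijections between $\torsQ$ and its image, so $\Tr\subseteq\Tr'\iff L(\Tr)\subseteq L(\Tr')$. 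Writing $\Tors^\infty(\Sr)=L(\Tors(\Sr))$ (valid once (a) is in place), the order transported along (a) is therefore $\Sr\geq\Sr'\iff\Tors^\infty(\Sr)\supseteq\Tors^\infty(\Sr')\iff\Sr'\subseteq\Tors^\infty(\Sr)$; I will use this reformulation throughout. Finally $G(\Tr)\subseteq L(\Tr)$ is, by construction, a non-crossing set of infinite strings, and $\nstr\subseteq G(\Tr)$, since a null string has no confined factor and, by the shape of diagram \eqref{eq-crossing}, can never occur as the target of a positive crossing (that diagram requires both a forward and an inverse letter flanking the overlap in the target).

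The technical heart is the \textbf{generation lemma}: $\Tors^\infty(G(\Tr))=L(\Tr)$ for every $\Tr$. The inclusion $\subseteq$ is clear from $G(\Tr)\subseteq L(\Tr)$ and $L(\Tr)$ being a torsion set. For $\supseteq$ I would argue by Noetherian induction on a complexity measure of a string $\st\sigma\in L(\Tr)$ (essentially the number of its letters of $Q$, with a suitable secondary invariant). If $\st\sigma$ is confined, replace it by one of its four infinite completions $\extil\sigma\,\sigma\,\extr\sigma$, $\extil\sigma\,\sigma\,\extir\sigma$, etc.; the attached tails are null, hence contribute no new confined factors, so such a completion still lies in $L(\Tr)$ and has $\st\sigma$ as a factor. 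If $\st\sigma$ is infinite and $\st\sigma\notin G(\Tr)$, then by the definition of $G$ there is $\st\delta\in L(\Tr)$ with $c^+(\st\delta,\st\sigma)\neq\emptyset$; smoothing this positive crossing at its overlap replaces $\st\sigma,\st\delta$ by strictly simpler strings that still lie in $L(\Tr)$ and from which $\st\sigma$ and $\st\delta$ are recovered by a mixture of extensions and passage to factors, so $\st\sigma\in\Tors^\infty(G(\Tr))$ follows by induction. Carrying out this smoothing cleanly — in particular when the overlap straddles the gluing arrow of an extension, and in bookkeeping which letters flank the overlap — is the main obstacle of the whole proof; Lemma \ref{samedir} and the operations $\extr{-},\extil{-}$ are the prototype for what is needed.

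Granting the generation lemma, (a) follows quickly. First $\Tors(G(\Tr))=\fin\Tors^\infty(G(\Tr))=\fin L(\Tr)=\Tr$, so $\Tors\circ G=\id$ and $G$ is injective. To see that $G(\Tr)$ is \emph{maximal} non-crossing (so that $G$ lands in $\maxstr$) and that $G\circ\Tors=\id$, I would prove a \textbf{propagation lemma}: if $\st\eta\in\Tors^\infty(X)$ and $c^+(\st\eta,\st\gamma)\neq\emptyset$ (respectively $c^+(\st\gamma,\st\eta)\neq\emptyset$), then some element of $X$ positively crosses $\st\gamma$ in the same direction — again an induction along the construction of $\Tors^\infty(X)$ from $X$ via factors and extensions, the straddling case being the delicate point. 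Applied with $X=G(\Tr)$: any non-crossing set $\Sr$ of infinite strings with $\Sr\supseteq G(\Tr)$ satisfies $c^+(G(\Tr),\st\gamma)=\emptyset$ for each $\st\gamma\in\Sr$, hence $c^+(L(\Tr),\st\gamma)=c^+(\Tors^\infty(G(\Tr)),\st\gamma)=\emptyset$, hence $\st\gamma\in G(\Tr)$; thus $\Sr=G(\Tr)$. Then, for $\Sr$ a maximal non-crossing set and $\Tr:=\Tors(\Sr)$, one checks $\Sr\subseteq G(\Tr)$ — membership in $L(\Tr)$ holds since the confined factors of any $\st\gamma\in\Sr$ lie in $\Tors^\infty(\Sr)$ and are confined, hence lie in $\Tr$; the non-crossing-into condition holds by the definition of $G$ together with $L(\Tr)=\Tors^\infty(\Sr)$, non-crossedness of $\Sr$, and the propagation lemma — and maximality of $\Sr$ then forces $\Sr=G(\Tr)=G(\Tors(\Sr))$.

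For (b), the direction $\Sr\geq\Sr'\Rightarrow c^+(\Sr',\Sr)=\emptyset$ is now immediate: $\Sr'\subseteq\Tors^\infty(\Sr)$ and $\Sr=G(\Tors(\Sr))$, whose very definition gives $c^+(\Tors^\infty(\Sr),\st\delta)=\emptyset$ for every $\st\delta\in\Sr$. For the converse, suppose $c^+(\Sr',\Sr)=\emptyset$ but $\Sr\not\geq\Sr'$; then $\Tors(\Sr')\not\subseteq\Tors(\Sr)$, so there is a confined string $\st\omega\in\Tors(\Sr')\setminus\Tors(\Sr)$. Choose an infinite completion $\hat\omega$ of $\st\omega$ having $\st\omega$ as a factor; then $\hat\omega\notin\Sr$, since otherwise $\st\omega$ would be a factor of an element of $\Sr\subseteq\Tors^\infty(\Sr)$, forcing $\st\omega\in\fin\Tors^\infty(\Sr)=\Tors(\Sr)$. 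By maximality of $\Sr$, some $\st\delta\in\Sr$ positively crosses $\hat\omega$; because the tails of $\hat\omega$ are null, this crossing is confined to a neighbourhood of $\st\omega$, and, combined with $\st\omega\in\Tors^\infty(\Sr')$ and the propagation lemma, it produces a positive crossing from some $\st\delta'\in\Sr'$ to $\st\delta\in\Sr$, contradicting $c^+(\Sr',\Sr)=\emptyset$. The delicate part here — as with the generation lemma — is the precise crossing bookkeeping: choosing the right completion of $\st\omega$ and tracking which direction each crossing points.
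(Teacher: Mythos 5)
Your architecture hinges on the ``generation lemma'' $\Tors^\infty(G(\Tr))=L(\Tr)$, and both the lemma and the key step of its proposed proof fail. The step in question is the completion of a confined $\st\sigma\in L(\Tr)$ by null tails, claimed to ``contribute no new confined factors''. The only null-tail completion that actually has $\st\sigma$ as a factor is $\extil{\sigma}\,\sigma\,\extr{\sigma}$ (inverse arrows on the left, arrows on the right), and precisely this one acquires new confined factors mixing tail letters with letters of $\sigma$, which need not lie in $\Tr$. Concretely, for $\vec{A}_2$ take $\Tr=\{\st{\un{a_1}{a_0}}\}$ and $\sigma=\un{a_1}{a_0}$: the completion is $\st{b_2^-a_0a_2}$, which has the confined factor $\st{a_0}\notin\Tr$, so it leaves $L(\Tr)$; the infinite strings one must use instead are $\st{a_1a_0c_1^-}$ and $\st{b_2^-a_0c_1^-}$ (Example \ref{eg:mainthm1}), and no null-tail completion produces them --- finding such strings is exactly the content of the smoothing machinery (Lemmas \ref{NCextension}, \ref{cutstay}, \ref{extendable}, \ref{infextend} for completions, and \ref{ext2}, \ref{ext3} for generation), i.e.\ the part you defer as ``bookkeeping'' is where the proof lives. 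Moreover the identity itself is false even with perfect completions: for $\vec{A}_2$ and $\Tr$ the largest confined torsion set, $L(\Tr)$ consists of all strings, yet $\st{a_1b_1^-}\notin\Tors^\infty(G(\Tr))$, since among factors of elements of $G(\Tr)$ the letter $b_1$ occurs only inside the null string $\st{b_1b_2}$, which is infinite and hence can never take part in an extension. Only the weaker statement $\fin\Tors^\infty(G(\Tr))=\Tr$ is true (Proposition \ref{sgenerate}), and its proof goes through the auxiliary set $G^\circ(\Tr)$ and the extension lemmas of Subsection \ref{subsec:surj}, not through null tails.

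There is a second, independent gap in your maximality argument: from $c^+(L(\Tr),\st\gamma)=\emptyset$ you conclude $\st\gamma\in G(\Tr)$, but membership in $G(\Tr)$ also requires $\st\gamma\in L(\Tr)$, i.e.\ that every confined factor of the hypothetical new string lies in $\Tr$, and ``not being crossed into'' gives no control over factors (factors correspond to the source-side pattern $\gamma_1q^-\omega r\gamma_2$, crossings into $\st\gamma$ to the target-side pattern). Supplying a witness to non-maximality that does lie in $L(\Tors(\Sr))$ is exactly Proposition \ref{tomax}, again proved by the smoothing construction; the same defect recurs in your converse of (b), where the null-tail completion $\hat\omega$ is assumed to behave well. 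The parts of your plan that are sound --- the lattice scaffolding around $L$ and $\fin$, the easy direction of (b), and the ``propagation lemma'', which is Lemma \ref{condp1} and is proved there directly for all of $L(\Tors(\Sr))$ without any appeal to the false identity --- match the paper, whose route to the converse of (b) is then the short Lemma \ref{crosstoorder} built on \ref{condp1} and Proposition \ref{sgenerate}, with no completion of $\st\omega$ needed.
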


\begin{example}\label{eg:mainthm1}
For $\vec{A}_2$, we have the following 5 confined torsion sets forming the following complete lattice.
\[
\xymatrix@R=8pt{
 & \{\st{a_0}, \st{\un{a_1}{a_0}}, \st{\un{a_0}{a_2}}\} \ar[ldd]\ar[rd]& \\
 & & \{\st{a_0}, \st{\un{a_1}{a_0}}\} \ar[dd]\\ 
\{\st{\un{a_0}{a_2}}\}\ar[rdd] & & \\ & & \{\st{\un{a_1}{a_0}}\}\ar[ld] \\ & \emptyset &
}
\]
The 5 corresponding maximal non-crossing sets of strings are shown as follows with their subset $\nstr$ of null strings removed.
\[
\xymatrix@R=8pt@C=0pt{
& \left\{{\begin{array}{l}\st{\pi_1}:= \st{b_2^-a_0a_1}\;\;, \\ \st{\pi_2}:= \st{c_2^-a_2} \end{array}}\right\} \ar[ldd]\ar[rd]&  \\
& & \left\{{\begin{array}{l}\st{\pi_1}\;\;, \\ \st{\gamma}:=\st{b_2^-a_0c_1^-} \end{array}}\right\}  \ar[dd] \\
\left\{{\begin{array}{l}\st{\kappa_1}:=\st{a_1^-b_1}\;\;, \\ \st{\pi_2} \end{array}}\right\} \ar[rdd] & & \\
& & \left\{{\begin{array}{l}\st{\kappa_2}:=\st{a_1a_0c_1^-}\;\;, \\ \st{\gamma} \end{array}}\right\}  \ar[ld] \\
& \left\{\st{\kappa_1}, \st{\kappa_2}\right\} &
}
\]
\end{example}

\begin{example}\label{eg:mainthm2}
Consider $K_2, K_2^\star$ and the list of confined torsion sets given in Example \ref{eg:torset2}.
We have $\nstr=\{\st{a_0a_1a_2},\; \st{b_0b_1b_2}\}$; note that the corresponding curves are the orange ones surrounding the marked points in Figure\ref{fig:lam-eg}.
Let $\Sr_m, \Sr_n'$ be the maximal non-crossing set of string corresponding to $\Tr_m, \Tr_n'$, respectively, for non-negative integers $m,n$.
Then we have
\begin{align*}
\Sr_m & := \{\st{b_2^-\pi_ma_2}, \;\;\st{b_2^-\pi_{m+1}a_2} \}\cup \nstr\\
\text{and }\quad \Sr_n' &:=\{ \st{b_0\iota_na_0^-},\;\; \st{b_0\iota_{n+1}a_0^-}\}\cup \nstr.
\end{align*}
The case of the remaining five confined torsion sets are listed as follows
\begin{align*}
\{\st{\pi_0}\} & \leftrightarrow \{\st{b_2^-a_2},\;\;\, \st{b_0a_0^-} \}\cup \nstr,\\
\Tr_\infty & \leftrightarrow \{\st{b_2^-\pi_1^\infty},\;\; \st{a_2^-\pi_1^{-\infty}},\;\; \st{{}^\infty\pi_1^\infty} \}\cup \nstr,\\
\Tr_\infty\setminus\{\st{a_1}\} & \leftrightarrow \{\st{b_2^-\pi_1^\infty},\;\; \st{a_0\iota_1^{-\infty}},\;\; \st{{}^\infty\pi_1^\infty} \}\cup \nstr,\\
\Tr_\infty\setminus\{\st{b_1}\} & \leftrightarrow \{\st{a_2^-\pi_1^{-\infty}},\;\; \st{b_0\iota_1^{\infty}},\;\; \st{{}^\infty\pi_1^\infty} \}\cup \nstr,\\
\Tr_\infty' & \leftrightarrow \{\st{a_0\iota_1^{-\infty}},\;\; \st{b_0\iota_1^{\infty}},\;\; \st{{}^\infty\pi_1^\infty} \}\cup \nstr.
\end{align*}
Note that the periodic walk ${}^\infty \pi_1^\infty$ is the same as ${}^\infty \iota_1^\infty$.

Again, for reader with experience working with surface combinatorics of gentle algebras, we present in Figure \ref{fig:kron2} a way to visualise these maximal non-crossing sets without the null strings as `refined laminations' in the terminology used in the Introduction section.
\end{example}

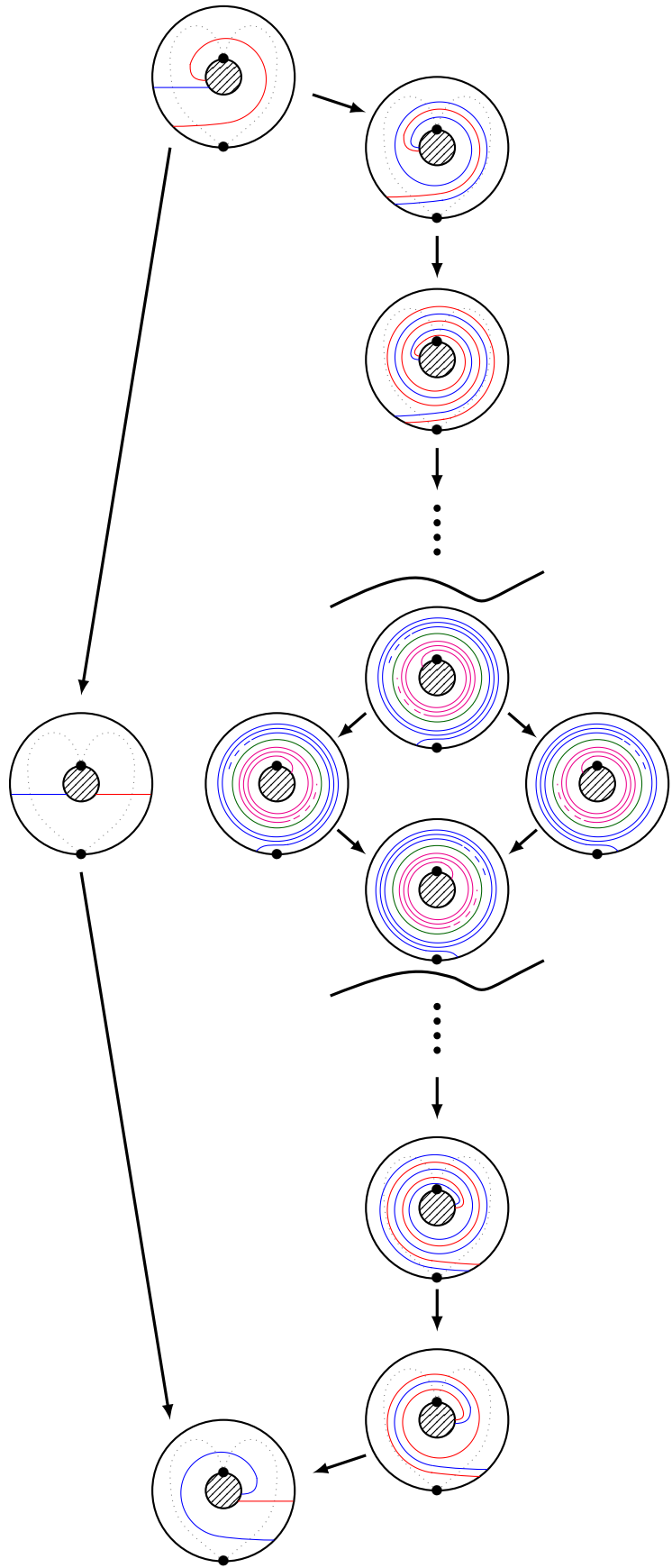
\begin{figure}[!pht]
\centering
\begin{tikzpicture}[scale=0.52]

\hrt{0,0}; 
\cyl{0,0}; 
\strPa{0,0}{red};  
\strPb{0,0}{blue};
\hrt{-4,-20}; \cyl{-4,-20}; \strPb{-4,-20}{blue}; \strPb{0,-20}{red, xscale=-1, shift={(4,0)}}; 
\hrt{0,-40}; \cyl{0,-40}; \strPa{0,-40}{blue, xscale=-1}; \strPb{0,-40}{red, xscale=-1};
 \hrt{6,-2}; \cyl{6,-2}; \strPa{6,-2}{red}; \strPba{6,-2}{blue};
\begin{scope}[shift={(6,-38)}]\hrt{0,0}; \cyl{0,0}; \strPba{0,0}{red, xscale=-1}; \strPa{0,0}{blue, xscale=-1};\end{scope}
\hrt{6,-8}; \cyl{6,-8}; \strPab{6,-8}{red}; \strPba{6,-8}{blue};
\begin{scope}[shift={(6,-32)}]\hrt{0,0}; \cyl{0,0}; \strPba{0,0}{red, xscale=-1}; \strPab{0,0}{blue, xscale=-1};\end{scope}

\draw[black!60!green] (6,-17) circle (1.25);
\cyl{6,-17}; 
\strInftyInner{6,-17}{magenta};
\strInftyOuter{6,-17}{blue}; 

\draw[black!60!green] (1.5,-20) circle (1.25);
\cyl{1.5,-20}; \strInftyOuter{1.5,-20}{blue}; 
\begin{scope}[shift={(1.5,-20)}]\begin{scope}[xscale=-1]\strInftyInner{0,0}{magenta};\end{scope}\end{scope}

\draw[black!60!green] (10.5,-20) circle (1.25);
\cyl{10.5,-20}; \strInftyInner{10.5,-20}{magenta}; 
\begin{scope}[shift={(10.5,-20)}]\begin{scope}[xscale=-1]\strInftyOuter{0,0}{blue};\end{scope}\end{scope}

\draw[black!60!green] (6,-23) circle (1.25);
\cyl{6,-23};
\begin{scope}[shift={(6,-23)}]\begin{scope}[xscale=-1]\strInftyInner{0,0}{magenta};\end{scope}\end{scope}
\begin{scope}[shift={(6,-23)}]\begin{scope}[xscale=-1]\strInftyOuter{0,0}{blue};\end{scope}\end{scope}

\draw[-latex, very thick] (-1.5,-2) -- (-4,-17.5);
\draw[-latex, very thick] (-4, -22.5) -- (-1.5,-38);
\draw[-latex, very thick] (2.5,-0.5) -- (4,-1);
\draw[-latex, very thick] (4,-39) -- (2.5,-39.5);
\draw[-latex, very thick] (6,-4.5) -- (6, -5.7);
\draw[-latex, very thick] (6,-34.3) -- (6,-35.5);
\draw[-latex, very thick] (6,-10.5) -- (6,-11.7);
\draw[-latex, very thick] (6,-28.3) -- (6,-29.5);

\draw [line width=3pt, line cap=round, dash pattern=on 0pt off 2\pgflinewidth] (6, -12.2) -- (6, -13.7) (6,-26.3)--(6,-27.8); 
\draw [-latex, very thick] (4,-18) -- (3.2,-18.7);
\draw [-latex, very thick] (8,-18) -- (8.8,-18.7);
\draw [-latex, very thick] (3.2, -21.3) -- (4,-22);
\draw [-latex, very thick] (8.8,-21.3) -- (8,-22);

\draw [very thick](3,-15) .. controls (5,-14) and (5.5,-14) .. (6.5,-14.5) .. controls (7.5,-15) and (7,-15) .. (9,-14);
\draw [very thick, shift={(0,-11)}](3,-15) .. controls (5,-14.2) and (5.5,-14.2) .. (6.5,-14.5) .. controls (7.5,-15) and (7,-15) .. (9,-14);
\end{tikzpicture}
\caption{Maximal non-crossing sets for the Kronecker quiver}\label{fig:kron2}
\end{figure}

We separate the ingredients of the proof of Theorem \ref{mainthmstrings} into subsections as follows.  
In the next subsection \ref{subsec:sets}, we will give two useful description of the operations $\Tors, \Tors^\infty, L(-)$.
Then in subsection \ref{subsec:completion} and \ref{subsec:max}, we show that $G(\Tr)$ is indeed maximal.
We will then prove that $\Tors:\maxstr\to \torsQ$ is surjective in subsection \ref{subsec:surj}.
In the last subsection \ref{subsec:partb}, we will present the remaining arguments needed to finish the proof of Theorem \ref{mainthmstrings}.

\subsection{Characterisation of various torsion sets}\label{subsec:sets}

We first give a construction of $\Tors(\Sr)$ and $\Tors^\infty(\Sr)$ for any set of strings $\Sr$.
Let us setup some notation before explaining how.

For a set of strings $\Sr$, we define
\begin{align*}
\Fac^\infty(\Sr) :=\{ \text{factors of }\st\gamma\mid \st\gamma\in\Sr \}\quad\text{ and }\quad \Fac(\Sr) := \fin (\Fac^\infty(\Sr)).
\end{align*}
In particular, $L(\Tr)$ is the set of all strings $\st\gamma$ such that $\Fac\{\st\gamma\}\subset \Tr$.
Let us define also a set $\Filt(\Sr)$ as the smallest set of strings containing $\Sr$ and closed under extensions, i.e. the set of all strings obtained by iterated extensions of strings in $\Sr$.

These constructions are parallel to usual construction in representation theory as presented in subsection \ref{subsec:torclass}.
The following is then a natural expectation from the way we setup the combinatorics.

\begin{lemma} \label{filtfac}
 For a set $\Sr$ of strings, we have \[\Tors^\infty(\Sr) = \Filt \Fac^\infty(\Sr) \quad \text{and} \quad \Tors(\Sr) = \Filt \Fac(\Sr).\]
\end{lemma}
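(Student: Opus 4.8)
# Proof proposal for Lemma \ref{filtfac}

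The plan is to prove the two displayed equalities in parallel, mirroring the proof of Lemma \ref{torclass0}(ii) for ordinary torsion classes. It suffices to establish $\Tors^\infty(\Sr) = \Filt\Fac^\infty(\Sr)$; the second identity follows by applying $\fin$ to both sides, using that $\fin$ commutes appropriately with the constructions ($\Tors(\Sr) = \fin\Tors^\infty(\Sr)$ by definition, $\Fac(\Sr) = \fin\Fac^\infty(\Sr)$ by definition, and $\Filt\Fac(\Sr) = \fin\Filt\Fac^\infty(\Sr)$ because an extension $\st{\gamma q \delta}$ is confined exactly when both $\st\gamma$ and $\st\delta$ are confined, so the confined part of $\Filt$ of a set is $\Filt$ of its confined part).

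For the inclusion $\Tors^\infty(\Sr) \subseteq \Filt\Fac^\infty(\Sr)$: since $\Tors^\infty(\Sr)$ is by definition the smallest torsion set containing $\Sr$, it is enough to check that $\Filt\Fac^\infty(\Sr)$ is itself a torsion set containing $\Sr$. It clearly contains $\Sr$ (a string is a factor of itself, and $\Filt$ only adds strings). Closure under extensions is immediate, since $\Filt$ of anything is closed under extensions by construction. The real content is closure under factors: I would argue that if $\st\eta \in \Filt\Fac^\infty(\Sr)$, then every factor of $\st\eta$ lies in $\Filt\Fac^\infty(\Sr)$. Proceed by induction on the number of extension steps used to build $\st\eta$ from $\Fac^\infty(\Sr)$. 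The base case ($\st\eta \in \Fac^\infty(\Sr)$) is the transitivity of the ``factor of'' relation: a factor of a factor of $\st\gamma\in\Sr$ is again a factor of $\st\gamma$ (this is a direct check on the decompositions $\gamma = \gamma_1 q^- \omega r \gamma_2$, etc., in Definition \ref{def:ext-fac}). For the inductive step, write $\st\eta = \st{\eta_1 q \eta_2}$ with $\st{\eta_1}, \st{\eta_2}$ built in fewer steps, and analyze a factor $\st\omega$ of $\st\eta$: the subwalk realizing $\st\omega$ either lies entirely in $\eta_1$ (up to the boundary arrow), entirely in $\eta_2$, or straddles the middle arrow $q$. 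In the first two cases $\st\omega$ is a factor of $\st{\eta_1}$ or of $\st{\eta_2}$, hence in $\Filt\Fac^\infty(\Sr)$ by induction. In the straddling case, $\st\omega$ decomposes as an extension (via $q$) of a factor of $\st{\eta_2}$ by a factor of $\st{\eta_1}$ — here one must check that the bracketing letters $q^-, r$ cutting out $\omega$ interact correctly with $q$ — so $\st\omega$ again lies in $\Filt\Fac^\infty(\Sr)$.

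For the reverse inclusion $\Filt\Fac^\infty(\Sr) \subseteq \Tors^\infty(\Sr)$: any torsion set containing $\Sr$ is, by definition, closed under factors and extensions, hence contains $\Fac^\infty(\Sr)$ and then all iterated extensions thereof, i.e.\ contains $\Filt\Fac^\infty(\Sr)$; intersecting over all such torsion sets gives the claim. The main obstacle is the straddling case of the factor-closure induction: one has to be careful that ``factor'' in Definition \ref{def:ext-fac} is defined via the specific shapes $\gamma_1 q^- \omega r \gamma_2$, $\gamma_1 q^- \omega$, $\omega r \gamma_2$, and verify that cutting a factor out of $\st{\eta_1 q \eta_2}$ across the junction arrow $q$ genuinely produces an \emph{extension} (in the precise sense of Definition \ref{def:ext-fac}, with an arrow of $Q_1$ in the middle) of one factor by another, rather than something that merely looks like a concatenation. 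A small amount of bookkeeping with the vanishing relations on letters (Definition \ref{def:walks}) handles the case distinctions on whether the cut points coincide with the endpoints of $\eta_1$ or $\eta_2$. Independence of all this from the choice of blossoming is guaranteed by Propositions \ref{blo-indep(walk)} and \ref{blo-indep(fac,ext)}, so no separate argument is needed there.
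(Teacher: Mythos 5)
Your proposal is correct and takes essentially the same route as the paper: both prove the first equality by checking that $\Filt\Fac^\infty(\Sr)$ is a torsion set containing $\Sr$ (the only nontrivial point being closure under factors, handled by decomposing an iterated extension and locating the cut relative to the extension arrows, with transitivity of the factor relation as the base case), and both deduce the confined statement from the observation that an extension $\gamma q \delta$ with $q \in Q_1$ is confined exactly when both pieces are, so that $\fin$ commutes with $\Filt$ here. Your inductive three-case organization of the factor-closure step, including the flagged edge cases where the cut lands on the junction arrow (which are resolved via stationary strings), is just a more explicit bookkeeping of the paper's decomposition $\gamma q \delta = [\gamma_1 q'^\pm]\,\gamma_2 q \delta_1\,[r'^\pm\delta_2]$.
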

\begin{proof}
 First of all, it is immediate by definition of $\Tors^\infty(\Sr)$ that $\Fac^\infty(\Sr) \subseteq \Tors^\infty(\Sr)$ and therefore that $\Filt \Fac^\infty(\Sr) \subseteq \Tors^\infty(\Sr)$. 

 \begin{claim}
  $\Filt \Fac^\infty(\Sr)$ is a torsion set.
 \end{claim}
 \begin{cproof}
  The only thing to check is that, if $\st{\gamma q \delta} \in  \Filt \Fac^\infty(\Sr)$, then $\st \gamma \in \Filt \Fac^\infty(\Sr)$. It is immediate by definition of $\Filt \Fac^\infty(\Sr)$ that we can write $\gamma q \delta = [\gamma_1 q'^\pm] \gamma_2 q \delta_1 [r'^\pm \delta_2]$, where $\gamma_1 q'^\pm$ and $r'^\pm \delta_2$ may appear or not, $\st{\gamma_1}, \st{\delta_2} \in \Filt \Fac^\infty(\Sr)$ if they appear, and $\st{\gamma_2 q \delta_1} \in \Fac^\infty(\Sr)$. By definition of $\Fac^\infty(\Sr)$, we get $\st{\gamma_2} \in \Fac^\infty(\Sr)$. Therefore, by definition of extensions, $\st{\gamma} = \st{[\gamma_1 q'^\pm] \gamma_2} \in \Filt \Fac^\infty(\Sr)$.
 \end{cproof}
 As $\Sr \subseteq \Filt \Fac^\infty(\Sr)$, we deduce $\Tors^\infty(\Sr) = \Filt \Fac^\infty(\Sr)$. 
 \begin{claim}
  $\Filt \Fac(\Sr)=\fin\Filt \Fac^\infty(\Sr)$.
 \end{claim}
 \begin{cproof}
  It is immediate that $\Filt \Fac(\Sr) \subseteq \Filt \Fac^\infty(\Sr)$ consists only of confined strings. Moreover, if $\st{\gamma} \in \Filt \Fac^\infty(\Sr)$, we have $\gamma = \gamma_0 q_1^\pm \gamma_1 q_2^\pm \cdots q_\ell^\pm \gamma_\ell$ with $\st{\gamma_i} \in \Fac^\infty(\Sr)$ for all $i$. It is immediate that if $\st{\gamma}$ is confined then each $\st{\gamma_i}$ is confined, hence $\st{\gamma_i} \in \Fac(\Sr)$. Therefore $\st \gamma \in \Filt \Fac(\Sr)$.
 \end{cproof}
 Using the claim, we deduce that $\Tors(\Sr) = \Filt \Fac(\Sr)$. 
\end{proof}

Recall that $G(\Tr)$ is defined by looking at infinite strings in a set $L(\Tr)$, and $L(\Tr)$ is formed by looking at the set of all (confined) factors of strings.
In fact, we can show that $L(\Tr)$ is a torsion set; so one can think of $L$ stands for lifting or large.

\begin{lemma}\label{lifttor}
For any torsion set $\Tr$, $L(\Tr)$ is the biggest torsion set such that $\fin(L(\Tr))=\fin(\Tr)$.
\end{lemma}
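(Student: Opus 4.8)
The plan is to establish four facts and then combine them: (1) $\fin(L(\Tr))=\fin(\Tr)$; (2) $L(\Tr)$ is closed under factors; (3) $L(\Tr)$ is closed under extensions, hence is a torsion set; and (4) every torsion set $\Tr'$ with $\fin(\Tr')\subseteq\fin(\Tr)$ is contained in $L(\Tr)$. Granting these, (1) and (3) say that $L(\Tr)$ is a torsion set whose confined part equals $\fin(\Tr)$, and (4) says that it contains every other such torsion set — which is exactly the assertion. Note that confinedness of $\Tr$ is never needed, matching the stated generality.

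Facts (1), (2) and (4) should be quick, each a one-line use of the closure properties. For (1): if $\st\gamma\in\fin(\Tr)$ then, since $\Tr$ is closed under factors, \emph{all} factors of $\st\gamma$ (in particular the confined ones) lie in $\Tr$, so $\st\gamma\in L(\Tr)$ and, being confined, $\st\gamma\in\fin(L(\Tr))$; conversely $\st\gamma$ is a factor of itself, so $\st\gamma\in\fin(L(\Tr))$ forces $\st\gamma\in\Tr$. For (2): the factor relation is transitive — a factor of a factor of $\st\gamma$ is again a factor of $\st\gamma$, by concatenating the two decomposition data — so if $\st\gamma\in L(\Tr)$ and $\st\omega$ is a factor of $\st\gamma$, every confined factor of $\st\omega$ is a confined factor of $\st\gamma$, hence in $\Tr$, giving $\st\omega\in L(\Tr)$. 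For (4): if $\st\gamma\in\Tr'$ then all its factors lie in $\Tr'$ (factor-closure), so all its confined factors lie in $\fin(\Tr')\subseteq\fin(\Tr)\subseteq\Tr$, whence $\st\gamma\in L(\Tr)$.

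The substantive point, and where I expect the main difficulty, is (3). I would take $\st\gamma,\st\delta\in L(\Tr)$ and $q\in Q_1$ with $\st{\gamma q\delta}$ a string, and show that every confined factor $\st\omega$ of $\st{\gamma q\delta}$ lies in $\Tr$. Realising $\omega$ as a sub-word of the word $\gamma q\delta$, this sub-word either misses the distinguished letter $q$ or contains it. If it misses $q$, then $\omega$ sits wholly inside $\gamma$ or wholly inside $\delta$, and inspecting the possible positions shows $\st\omega$ is a factor of $\st\gamma$ or of $\st\delta$ (the only slightly delicate point is when $\omega$ abuts $q$, where one uses that $q$ is an arrow, as the definition of factor requires on that side); being confined, $\st\omega\in\Tr$ since $\st\gamma,\st\delta\in L(\Tr)$. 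If the sub-word contains $q$, write $\omega=\omega_1 q\omega_2$, where $\omega_1$ is the part of $\omega$ to the left of $q$ — a suffix of $\gamma$, or, if $\omega$ begins at $q$, the stationary string $\ut(\gamma)$ — and symmetrically $\omega_2$ is a prefix of $\delta$ or $\us(\delta)$. In every case $\st{\omega_1}$ is a factor of $\st\gamma$ and $\st{\omega_2}$ a factor of $\st\delta$ (in the degenerate cases using that $\ut(\gamma)$ and $\us(\delta)$ are themselves factors of $\gamma$ and of $\delta$). Both $\st{\omega_1}$ and $\st{\omega_2}$ are confined, hence in $\Tr$, and since $q\in Q_1$ and $\st{\omega_1 q\omega_2}=\st\omega$ is a string, $\st\omega$ is an extension of $\st{\omega_2}$ by $\st{\omega_1}$, so $\st\omega\in\Tr$ by extension-closure of $\Tr$.

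The main obstacle is thus purely the combinatorial book-keeping in this last step: one must enumerate how $\omega$ can sit relative to the cut between $\gamma$ and $q\delta$, and in particular handle the degenerate cases where $\omega$ begins or ends with $q$ by replacing the "missing" end-piece with the appropriate stationary string, which is still a factor of $\gamma$ (resp.\ $\delta$) so that extension-closure of $\Tr$ applies verbatim. No topology, and nothing beyond the definitions of factor, extension and torsion set, is required.
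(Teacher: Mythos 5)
Your proposal is correct and follows essentially the same route as the paper's proof: the confined part computation, factor-closure via transitivity of the factor relation, extension-closure via splitting a confined factor of $\st{\gamma q \delta}$ either into a factor of $\st\gamma$ or $\st\delta$ or as $\st{\omega_1 q \omega_2}$ with $\st{\omega_1}\in\Fac\{\st\gamma\}$, $\st{\omega_2}\in\Fac\{\st\delta\}$ and then using extension-closure of $\Tr$, and the one-line maximality argument all match the paper. One small remark: your blanket claim that $\ut(\gamma)$ and $\us(\delta)$ are always factors of $\gamma$ and $\delta$ is not true in general (it fails when the adjacent letter is an arrow), but in the degenerate cases you actually invoke it the factor condition on $\omega$ inside $\gamma q\delta$ forces that adjacent letter to be an inverse arrow, so the argument goes through.
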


\begin{proof}
 First of all, it is immediate that $\Tr \subseteq L(\Tr)$.

 Let us prove that $L(\Tr)$ is a torsion set. First, if $\st \gamma, \st \delta \in L(\Tr)$, for any $\st \omega \in \Fac \{\st{\gamma q \delta}\}$, we have two possibilities:
 \begin{itemize}
  \item $\st \omega \in \Fac\{\st \gamma, \st \delta\}$. Then, by definition of $L(\Tr)$, we have $\st \omega \in \Tr$.
  \item Otherwise, we can decompose $\omega = \omega_1 q \omega_2$ with $\st \omega_1 \in \Fac \{\gamma\}$ and $\st \omega_2 \in \Fac \{\delta\}$. Again by definition of $L(\Tr)$, we have $\st \omega_1, \st \omega_2 \in \Tr$. As $\Tr$ is a torsion set, we get that $\st \omega = \st{\omega_1 q \omega_2} \in \Tr$.
 \end{itemize}
 Hence, by definition of $L(\Tr)$, we have $\st{\gamma q \delta} \in L(\Tr)$. Secondly, if $\st{\gamma q \delta} \in L(\Tr)$, we have $\Fac \{\st \gamma\} \subseteq \Fac \{\st{\gamma q \delta}\} \subseteq \Tr$, so by definition of $L(\Tr)$, $\st \gamma \in L(\Tr)$. We have now finished proving that $L(\Tr)$ is a torsion set.

 Since $\Tr$ is a torsion set, $\Fac\{\st\gamma\}\subset \fin\Tr\subseteq \fin L(\Tr)$ for any confined $\st\gamma\in \fin\Tr$.
 Conversely, any $\st \omega \in \fin L(\Tr)$ is, by definition of the construction $(-)^\infty$, we also have $\st\omega\in \fin\Tr$.
 Thus, we have $\fin\Tr=\fin(L(\Tr))$.

 Consider now a torsion set $\Tr'$ such that $\fin\Tr'=\fin \Tr$.  Let $\st \gamma \in \Tr'$ and $\st \omega \in \Fac \{\gamma\}$. By definition of a torsion set, $\st \omega \in \Tr'$.  As $\st \omega$ is confined, $\st \omega \in \Tr$. Therefore, by definition of $L(\Tr)$, we get that $\st \gamma \in L(\Tr)$, so $\Tr' \subseteq L(\Tr)$.
\end{proof}

\subsection{A completion of a non-crossing set of infinite strings}\label{subsec:completion}
The goal in this subsection is Proposition \ref{tomax}.  A consequence of this is that if $\Sr$ is a non-crossing set of infinite strings that is not maximal, then we can always complete it into a maximal one using strings in $L(\Tors(\Sr))$ so that the confined torsion set generated by the completed set is $\Tors(\Sr)$.  To help digest the proof, which spans the whole subsection, let us now think about how one would possibly approach this problem.

Firstly, being non-maximal means that one can adjoin a new infinite string $\st\gamma$ so that $\Sr\cup\{\st\gamma\}$ is still non-crossing.  So now we need to consider a way to guarantee  $\st\gamma$ is in $L(\Tors(\Sr))\setminus \Sr$.  In other words, if we have a new string $\st\gamma\notin L(\Tors(\Sr))$ but $\Sr\cup\{\st\gamma\}$ is non-crossing, then we should ask `can we modify $\st\gamma$ to some $\st{\delta}\in L(\Tors(\Sr))$ so that $\Sr\cup\{\st{\delta}\}$ is non-crossing?'.

By the definition of $L(\Tors(\Sr))$, the assumption of $\st\gamma$ not being in this set says that there is factor $\st\omega$ of $\st\gamma$ not in $\Tors(\Sr)$.  It turns out that $q\omega r^-$ for some arrows $q,r$ so that $\Sr\cup \{\st{q\omega r^-}\}$ is non-crossing.  Since $\st{q\omega r^-}$ is not necessarily infinite, we want to find something to concatenate $q\omega r^-$ with which results in the desired infinite string $\st\delta$.  The natural candidate $\st{\delta_0}$ for the desired $\st\delta$ is to concatenate on the left of $\omega$ a left-infinite walk which is a path in $Q'$, and dually on the right (see Lemma \ref{NCextension} (c); for algebraists, one guiding example is to take $\omega=\un{q}{r}$, in which case this enlarged string corresponds to the stalk complex concentrated in degree $-1$  given by an indecomposable projective module corresponding to $t(q)$).

\begin{figure}[!hbtp]
\begin{tikzpicture}
\draw (0,-1) .. controls (0,0) and (2.7,-0.1) .. (3,1);
\draw (0,1) .. controls (0.4,0.1) and (3,-0.5) .. (3,-1);
\node at (-0.7,-1) {$S\ni \eta$};
\node at (-0.5,1) {$\delta_0$};
\draw (5,-1) .. controls (5,0) and (7.7,-0.1) .. (8,1);
\draw[dotted] (5,1) .. controls (5.3,0.1) and (8,-0.5) .. (8,-1);
\node at (4,0) {Smooth out};
\draw [->,decorate, decoration={zigzag, amplitude=.9, post=lineto, post length=2pt}]  (3,-0.3) --(5,-0.3);
\draw (5,1.2) .. controls (5.5,-0.1) and (7.7,0.1) .. (8,1.2);
\end{tikzpicture}
\caption{Smoothing out a crossing}\label{fig:smooth}
\end{figure}
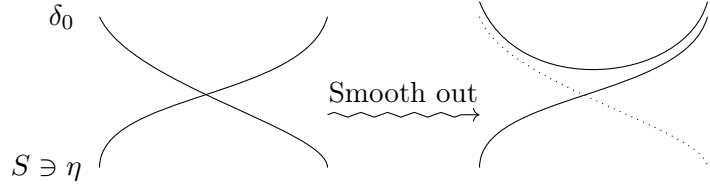

Such naive concatenation does not necessarily result in $\Sr\cup\{\st{\delta_0}\}$ being non-crossing.  Fortunately, if crossing exists, it turns out that we can `smooth out' the crossing (like in knot theory) using strings in $\Sr$ (see Figure \ref{fig:smooth}).  Moreover, one also need to check the smoothed out string $\st\delta$ generates a torsion set that is contained in $\Tors(\Sr)$.  The following three lemmas will be dedicated to show that smoothing out crossing while preserving in the same torsion set is possible.

\begin{lemma} \label{cutstay}
 Let $\Sr$ be a set of strings and $\st \gamma \in L(\Tors(\Sr))$. For any subwalk $\alpha$ of $\gamma$ that does not cross any string of $\Sr$, there exists a subwalk $\hat{\alpha}$ of $\gamma=[\gamma_1]\hat{\alpha}[\gamma_2]$, where $\gamma_1$ or $\gamma_2$ may not appear, such that 
 \begin{itemize}
  \item $\alpha$ is a subwalk of $\hat{\alpha}$;
  \item $\st{\hat{\alpha}}$ does not cross any string in $\Sr$;
  \item any substring of $\st \gamma$ containing strictly $\st{\hat{\alpha}}$ crosses a string of $\Sr$;
  \item $\st{\hat{\alpha}} \in L(\Tors(\Sr))$.
 \end{itemize}
\end{lemma}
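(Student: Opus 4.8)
The plan is to take $\hat\alpha$ to be a \emph{maximal} subwalk of $\gamma$ — maximal, for inclusion among subwalks of $\gamma$ sitting in the given place, among those that contain $\alpha$ and cross no string of $\Sr$ — and then verify the four assertions. The construction rests on a monotonicity principle: if $\beta \subseteq \beta'$ are subwalks of one walk and $\st\beta$ crosses $\st\delta$, then $\st{\beta'}$ crosses $\st\delta$ too. Indeed a positive crossing is recorded by its overlap $\nu$ together with the two letters of $\beta$ bordering $\nu$ — a finite subwalk — which persists in $\beta'$; and $\nu$ is still a maximal common subwalk with $\delta$ there, because those bordering letters (now lying inside $\beta'$) already obstruct any extension of the overlap. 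Hence the family $\mathcal{F}$ of non-$\Sr$-crossing subwalks of $\gamma$ through $\alpha$ is closed under unions of chains — a crossing of a union would be witnessed inside a single member — so Zorn's lemma (or, concretely, extending $\alpha$ greedily, leftwards and then rightwards) produces a maximal such $\hat\alpha$, with a decomposition $\gamma = [\gamma_1]\hat\alpha[\gamma_2]$. The first three bullet points are now immediate: $\alpha$ is a subwalk of $\hat\alpha$ and $\st{\hat\alpha}$ crosses nothing in $\Sr$ by construction, and any substring of $\st\gamma$ strictly containing this copy of $\st{\hat\alpha}$ is a subwalk of $\gamma$ strictly larger than $\hat\alpha$, hence not in $\mathcal{F}$, hence crosses a string of $\Sr$.

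The fourth point is the substance. By the definition of $L(-)$ it suffices to show that every confined factor $\st\omega$ of $\st{\hat\alpha}$ lies in $\Tors(\Sr)$, and I would prove this by induction on the length of $\omega$, using $\Tors(\Sr) = \Filt\Fac(\Sr)$ (Lemma~\ref{filtfac}) — so $\Tors(\Sr)$ contains all confined factors of strings of $\Sr$ and is closed under factors and extensions. Fix a decomposition of $\hat\alpha$, of one of the four shapes of Definition~\ref{def:ext-fac}, exhibiting $\omega$ as a factor. If $\omega$ is cut out of $\hat\alpha$ with a border letter within $\hat\alpha$ on each side, or reaches an end of $\hat\alpha$ that is an end of $\gamma$, or reaches an end of $\hat\alpha$ whose neighbouring letter in $\gamma$ has the arrow-direction demanded of a factor of $\gamma$, then $\st\omega$ is a confined factor of $\st\gamma$, so $\st\omega \in \Tors(\Sr)$ because $\st\gamma \in L(\Tors(\Sr))$. (I will repeatedly use the elementary fact that a factor of $\hat\alpha$ cut on both sides at letters of $\hat\alpha$ is a factor of $\gamma$.)

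Otherwise, after possibly replacing $\omega$ and $\hat\alpha$ by their formal inverses, $\omega$ reaches the left end of $\hat\alpha$ while $\gamma$ continues to the left with a letter $y$ that is an arrow. By maximality of $\hat\alpha$, $\st{y\hat\alpha}$ crosses some $\st\delta \in \Sr$; since $\st{\hat\alpha}$ does not, this crossing involves $y$, and $y$ cannot lie in its overlap $\nu$ (which would leave $\nu$ without a predecessor in $y\hat\alpha$), so $y$ borders $\nu$, $\nu$ is a prefix of $\hat\alpha$, and — reading the arrow-directions off the shape of the crossing on both sides — $\nu$ is a confined factor of $\st\delta$ (whence $\st\nu \in \Fac(\Sr)$) followed in $\hat\alpha$ by an inverse arrow $b^{-}$. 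Comparing the two prefixes $\omega$ and $\nu$ of $\hat\alpha$: if $\omega$ is a prefix of $\nu$, a short check of borders shows $\st\omega$ is itself a confined factor of $\st\delta$, so $\st\omega \in \Tors(\Sr)$; if $\nu$ is a proper prefix of $\omega$, then $\omega = \nu\, b^{-}\omega'$, the string $\st\omega = \st{\omega^{-}}$ is the extension of the factor $\st\nu$ of $\st\delta$ by $\st{\omega'}$ through the arrow $b$, and $\omega'$ is a confined factor of $\hat\alpha$ of smaller length, so $\st{\omega'} \in \Tors(\Sr)$ by the inductive hypothesis — whence $\st\omega \in \Tors(\Sr)$ by closure under extensions. (When $\omega'$ degenerates to a stationary string the relevant piece of $\hat\alpha$ or of $\gamma$ turns out still to be a factor of $\gamma$, so the argument goes through; and this left-end analysis, applied after inversion, also covers the case $\omega = \hat\alpha$.)

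I expect the main obstacle to be exactly this inductive casework: tracking which factor-shape and which end of $\hat\alpha$ are involved, where the overlap $\nu$ of the maximality-crossing sits relative to $\omega$, and the arrow-directions of the four letters bordering $\nu$ in $\delta$ and in $\hat\alpha$ — together with the degenerate boundary sub-cases (empty or stationary pieces, ends of $\gamma$ at blossom vertices), all of which must be checked so that the induction visibly terminates inside $\Tors(\Sr)$. By contrast the monotonicity of crossings, the appeal to Zorn, and the first three bullet points are routine.
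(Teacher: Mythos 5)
Your proposal is correct, and it follows the paper for the construction and the first three bullets (greedy/maximal enlargement of $\alpha$ plus the observation that a crossing persists, with the same maximal overlap, in any larger subwalk). For the last bullet, however, you take a genuinely different route. The paper does not unfold the definition of $L(-)$: it uses Lemma~\ref{lifttor}, that $L(\Tors(\Sr))$ is itself a torsion set (closed under factors and extensions), and proves the one-sided claim that $\st{[\gamma_1]\hat{\alpha}}\in L(\Tors(\Sr))$ by a single dichotomy at the cut letter --- if the letter of $\gamma_2$ adjacent to $\hat{\alpha}$ is an arrow, the truncation is a factor of $\st\gamma$; if it is an inverse arrow, the crossing forced by maximality of $\hat{\alpha}$ exhibits $\st{[\gamma_1]\hat{\alpha}}$ as an extension of a factor of $\st\gamma$ by a factor of $\st\delta\in\Sr$, both in the torsion set $L(\Tors(\Sr))$ --- and then applies the same claim to the reversed walk to cut the other side. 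You instead verify the definition of $L$ directly, showing every confined factor $\st\omega$ of $\st{\hat{\alpha}}$ lies in $\Tors(\Sr)=\Filt\Fac(\Sr)$ by induction on the length of $\omega$, using at a ``bad'' end exactly the same boundary-crossing mechanism (the overlap $\nu$ is a confined factor of some $\st\delta\in\Sr$, adjoined in $\hat{\alpha}$ by an inverse arrow $b^-$, so $\st\omega$ is an extension of $\st\nu$ by a strictly shorter factor $\st{\omega'}$ of $\st{\hat{\alpha}}$). What the paper's route buys is economy: invoking Lemma~\ref{lifttor} collapses all of your casework into two cases at one letter, with no induction and no tracking of stationary pieces or of the situation where both ends of $\hat{\alpha}$ are bad. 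What your route buys is self-containedness at the level of the definitions (you never need $L(\Tors(\Sr))$ to be a torsion set), at the price of the delicate bookkeeping you yourself flag; I checked the points you leave compressed --- the case $\omega=\hat{\alpha}$ with both ends bad (the first bad-end fix produces a shorter confined factor whose other end is then handled by the mirrored argument), the degenerate stationary $\nu$ or $\omega'$, and the fact that the extension arrow $b$ lies in $Q_1$ because $\omega$ is confined --- and they do go through, so there is no gap, only extra work that the paper's appeal to Lemma~\ref{lifttor} avoids.
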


\begin{proof}
 Let us take $\alpha_1$ as long as possible such that $\st{\alpha_1 \alpha}$ is a substring of $\st \gamma$ that does not cross any string in $\Sr$ (it is always possible as crossings only involve bounded parts of strings). Then, take $\alpha_2$ as long as possible such that $\st{\hat{\alpha}}:=\st{\alpha_1 \alpha \alpha_2}$ is a substring of $\st \gamma$ that does not cross any string in $\Sr$. Then, the first three points of the lemma are immediate.

 \begin{claim}
  We have $\st{[\gamma_1] \hat{\alpha}} \in L(\Tors(\Sr))$.
 \end{claim}
 \begin{cproof}
  If $\gamma = [\gamma_1] \hat{\alpha}$, it is immediate by assumption.  Otherwise, by maximality of $\hat{\alpha}$, there exists a crossing
  \[\crosswn{\hat{\alpha} \gamma_2}{\gamma'_1 q^\mp}{\omega}{r^\pm \gamma'_2}{\delta}{\delta_1 q'^\pm}{r'^\mp \delta_2}\]
  with $\st \delta \in \Sr$ and $\hat{\alpha} = \gamma'_1 q^\mp \omega$. 
  Recall from Lemma \ref{lifttor} that $L(\Tors(\Sr))$ is a torsion set (containing $\st\gamma$), so it suffices to show that $\st{[\gamma_1]\hat{\gamma}}$ can be obtained from extensions of factors of $\st\gamma$.

  Suppose that $r^\pm = r$.
  Since $\st \gamma = \st{[\gamma_1] \hat{\alpha} r \gamma'_2} \in L(\Tors(\Sr))$, $\st{[\gamma_1]\hat{\alpha}}$ is a factor of $\st{\gamma}\in \Sr$, and the result follows.

  Suppose that $r^\pm = r^{-}$.
  Then we have $q^\mp = q$, $q'^\pm = q'^-$ and $r'^\mp = r'$. 
  In particular, $\st{\gamma}$ is a factor of $\st{\delta}$ and $\st{[\gamma_1]\gamma_1'}$ is a factor of $\st{\gamma}$.
  Since $L(\Tors(\Sr))$ is a torsion set containing $\st{\gamma}$ and $\st\delta$, by definition of a torsion set we have $\st \omega, \st{[\gamma_1]\gamma_1'} \in L(\Tors(\Sr))$.  Hence $\st {[\gamma_1] \gamma'_1 q \omega} = \st{[\gamma_1] \hat{\alpha}} \in L(\Tors(\Sr))$.
 \end{cproof}

  To finish the proof, we applies the claim with $\gamma$ replaced by $\alpha^-[\gamma_1^-]$, which gives $\st{\hat{\alpha}^-} \in L(\Tors(\Sr))$, as required.
\end{proof}

\begin{example}\label{eg:comp1}
Consider $(Q,R)=(K_2,\emptyset)$ with the classical blossoming $(Q^\star, R^\star)$ as in Example \ref{eg:mainthm2}.
Our goal is to adjoin a new infinite string to $\Sr=\{\st{{}^\infty\pi_1^\infty} = \st{\cdots a_1^-b_1 a_1^-b_1 \cdots}\}$ so that the resulting set generate the same torsion class as $\Sr$.
Let us consider $\gamma = b_0b_1a_1^-a_0^-$.
Since $\Fac(\st{\gamma})=\emptyset$, we have $\st\gamma \in L(\Tors(\Sr))$.
Take $\alpha=b_1$, it is easy to see that $\alpha$ does not cross $\st{{}^\infty\pi_1^\infty}$.  In this case the subwalk $\hat{\alpha}$ of Lemma \ref{cutstay} is given by $b_1a_1^-$.  Note that $\Fac(\st{b_1a_1^-}) = \{ \st{\un{b_0}{b_1}}=\st{\un{a_0}{a_1}} \}\subset \Tors(\Sr)$, so $\Tors(\Sr\cup \{\hat{\alpha}\}) = \Tors(\Sr)$.

Since any subwalk containing $\hat{\alpha}$ crosses $\st{{}^\infty\pi_1^\infty}$, in order to obtain a new infinite string from $\hat{\alpha}$, we smooth out positive-crossings from $\st{{}^\infty\pi_1^\infty}$ to $\st\gamma$.  Any such crossing will be of the following form up to shifting letters of $\st{{}^\infty\pi_1^\infty}$:
\[
\crosswn{\gamma}{b_0}{b_1a_1^-}{a_0^-}{{}^\infty\pi_1^\infty}{\cdots b_1a_1^-}{b_1a_1^-\cdots}
\]
According to Figure \ref{fig:smooth}, the smoothed out string will be $b_0(b_1a_1^-)^\infty$.
Note that if we reverse the two walks, then smoothing out gives another infinite string ${}^\infty(b_1a_1^-)a_0^-$.
In either cases, the new infinite string does not cross $\Sr$.
Let us now explain rigourous the construction of $b_0(b_1a_1^-)^\infty$ from smoothing the displayed crossing in the following two lemmas.
\end{example}

\begin{lemma} \label{extendable}
 Let $\Sr$ be a non-crossing set of strings and $\gamma$ be a right-confined walk such that $\Sr \cup \{\st \gamma\}$ is non-crossing. Then there is a walk $\gamma \alpha$ such that
 \begin{enumerate}[\rm(a)]
  \item $\alpha$ is not a stationary walk.
  \item $\Sr \cup \{\st{\gamma \alpha}\}$ is non-crossing;
  \item $\Tors(\Sr \cup \{\st{\gamma \alpha}\}) \subseteq \Tors(\Sr \cup \{\st\gamma\})$.
 \end{enumerate} 
\end{lemma}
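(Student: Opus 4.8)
The plan is to extend $\gamma$ on the right by one arrow and, if that introduces a crossing with $\Sr$, to repair it by a ``smoothing'' move along the offending string of $\Sr$, in the spirit of Figure~\ref{fig:smooth} and Example~\ref{eg:comp1}. First I set up the combinatorics. Since $\gamma$ is right-confined, $\ut(\gamma)=\un{q_0}{r_0}$ for arrows $q_0,r_0$ with $q_0r_0\notin R'$, and its underlying vertex $t(q_0)=s(r_0)$ lies in $Q_0$ because stationary strings sit at the vertices of $Q$; by the blossoming axioms there is then a unique arrow $p\neq q_0$ with $t(p)=t(q_0)$, and one checks (vanishing rules together with gentleness) that $\gamma p^{-}$ is again a walk. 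Thus $\gamma r_0$ and $\gamma p^{-}$ are the only two one-letter right-extensions of $\gamma$, both non-stationary, so condition~(a) is automatic for any $\alpha$ of this shape (or an extension of it). Moreover, by Lemma~\ref{filtfac}, $\Tors^{\infty}(\Sr\cup\{\st\gamma\})=\Filt\Fac^{\infty}(\Sr\cup\{\st\gamma\})$ is a torsion set and $\Tors=\fin\circ\Tors^{\infty}$; hence to obtain~(c) it is enough to produce $\gamma\alpha$ with $\st{\gamma\alpha}\in\Filt\Fac^{\infty}(\Sr\cup\{\st\gamma\})$, after which applying $\fin$ gives $\Tors(\Sr\cup\{\st{\gamma\alpha}\})\subseteq\Tors(\Sr\cup\{\st\gamma\})$.

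Now the construction. Put $\alpha:=r_0$. If $\Sr\cup\{\st{\gamma r_0}\}$ is already non-crossing we keep $\alpha=r_0$, iterating the argument with $\gamma r_0$ in place of $\gamma$ until the walk reaches a vertex of $Q'\setminus Q_0$, at which point it becomes right-infinite and an all-arrow tail ending at such a vertex produces no new \emph{confined} factors, which secures~(c). Otherwise, since $\Sr\cup\{\st\gamma\}$ is non-crossing, every new positive crossing must use $r_0$ as one of its four distinguished arrows, which forces a string $\st\delta\in\Sr$ and a decomposition $\gamma=\gamma_1q^{-}\omega$ (so the overlap $\omega$ is a suffix of $\gamma$) with $\delta$ reading $\delta_1r\,\omega\,r'^{-}\delta_2$, up to swapping rows and passing to $\delta^{-}$. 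Choosing $\omega$ as long as possible, I then smooth out this crossing by replacing $\alpha:=r_0$ with $\alpha:=r'^{-}\delta_2$, so that $\gamma\alpha=\gamma_1q^{-}\omega\,r'^{-}\delta_2$ agrees with $\delta$ along the suffix $\omega\,r'^{-}\delta_2$.

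It remains to verify (b) and (c) for this $\gamma\alpha$. For (b): a positive crossing of $\st{\gamma\alpha}$ with some $\st\epsilon\in\Sr$ must lie inside the prefix $\gamma$ (ruled out since $\Sr\cup\{\st\gamma\}$ is non-crossing), or inside the suffix of $\delta$ (ruled out since $\Sr$ is non-crossing), or straddle the junction after $\omega$; the last case is excluded using the maximality of $\omega$, the mismatch of distinguished letters across the junction ($q^{-}$ on the $\gamma$-side versus $r$ on the $\delta$-side, and $r'^{-}$ directly following $\omega$ in $\gamma\alpha$), and Lemmas~\ref{noselfinverse}--\ref{samedir} to handle self-crossings.

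For (c): every confined factor of $\st{\gamma\alpha}$ is either a factor of $\st\gamma$, or a factor of a suffix of $\delta$ (hence a factor of $\st\delta$, by chopping a longer prefix), or a string obtained at the junction by extending a factor of $\st\gamma$ along an arrow by a factor of $\st\delta$; in each case it lies in the torsion set $\Filt\Fac^{\infty}(\Sr\cup\{\st\gamma\})$, whence so does $\st{\gamma\alpha}$. The heart of the proof—and its main obstacle—is precisely this junction analysis: besides the no-fresh-crossing check of (b), one must show the rerouted walk stays inside $\Tors^{\infty}(\Sr\cup\{\st\gamma\})$, the subtlety being that $r'^{-}\delta_2$ is only a \emph{suffix} of $\delta\in\Sr$ and a suffix need not be a factor, so recognising the relevant tail of $\delta$ as a factor-plus-extension forces one to use the local gentleness constraints at the vertex where $\omega$ meets $r'^{-}$. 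The crossing-free sub-case is comparatively easy but still needs the observation that extending by arrows until one hits a vertex of $Q'\setminus Q_0$ introduces no new confined factors.
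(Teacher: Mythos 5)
There is a genuine gap, and it sits in the very first (crossing-free) branch of your construction: you extend $\gamma$ on the right by \emph{arrows} ($\alpha=r_0$, then an all-arrow tail until a blossom vertex), claiming this "produces no new confined factors". That claim is false. A confined factor only needs to be \emph{followed} by an arrow (and preceded by an inverse arrow, or start at the left end of the walk), so as soon as you append arrows, every suffix of $\gamma$ glued to a proper prefix of the appended arrow-tail becomes a new confined factor, and these need not lie in $\Tors(\Sr\cup\{\st\gamma\})$. Concretely, take $\vec{A}_2$, $\Sr=\emptyset$ and $\gamma=\un{a_1}{a_0}$ (the stationary walk at vertex $1$, so $\Tors(\Sr\cup\{\st\gamma\})=\{\st{\un{a_1}{a_0}}\}$). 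Your procedure appends $r_0=a_0$ (and then $a_2$), and already $\st{a_0}$ is a confined factor of the result, but $\st{a_0}\notin\Tors(\{\st{\un{a_1}{a_0}}\})$; in module terms you have adjoined $P_1$ to the torsion class generated by the simple $S_1$, so (c) fails at the first step. The correct move, which is what the paper does, is the opposite one: first try the all-\emph{inverse}-arrow extension $\extir\gamma$; a right-infinite tail of inverse letters contributes no new confined factors precisely because a confined factor must be followed by an arrow, and this is what secures (c) in the unobstructed case.

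Your second (smoothing) branch is closer in spirit to the paper, and your remark that a suffix of $\delta$ is only a factor-plus-extension is the right observation for (c); note, however, that because the paper extends by inverse arrows the obstructing crossing points from $\st\delta$ \emph{to} the extension (not from the extension to $\st\delta$ as in your setup), and the paper does not splice in all of $\delta_2$: it cuts at $z^-$, the maximal inverse-arrow prefix of $\delta_2$, and chooses the crossing with a double minimality (first the overlap continuation $\omega''$, then $z^-$) which is exactly what makes the "no new crossing / no self-crossing" claims provable (via Lemma \ref{noselfinverse} and Lemma \ref{samedir}). In your write-up these non-crossing verifications are asserted from a single "choose $\omega$ as long as possible", which is not enough as stated. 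But the decisive flaw is the arrow-extension in the crossing-free case; as long as that branch extends by arrows rather than by $\extir\gamma$, condition (c) is simply false.
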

\begin{proof}
 Suppose that $\Sr \cup \{\st{\gamma \extir \gamma}\}$ is non-crossing. We put $\alpha = \extir \gamma$, so that (a) anb (b) clearly hold. Consider a string $\st \omega \in \Fac(\Sr \cup \{\st{\gamma \alpha}\})$. If $\st \omega \in \Fac(\Sr)$ then $\st \omega \in \Tors(\Sr \cup \{\st\gamma\})$. Otherwise, $\st \omega$ is a confined factor of $\st{\gamma \alpha}$. As $\st \alpha$ consists only of inverses of arrows and is right-infinite, it implies, by definition of a confined factor, that $\st \omega$ is actually a factor of $\st{\gamma}$, so that $\st \omega \in \Tors(\Sr \cup \{\st\gamma\})$ again. Then, we deduce immediately that (c) holds in this case.

 Suppose now that there is a crossing
 \[\crosswn{\gamma \extir \gamma}{\gamma_1 q^{\pm}}{\omega}{r^{\mp} \gamma_2}{\delta}{\delta_1 q'^{\mp}}{r'^{\pm} \delta_2}\]
 with $\st{\delta}\in \Sr\cup \{\st{\gamma\extir{\gamma}}\}$.
 As $\Sr \cup \{\st\gamma\}$ is a non-crossing set, we have $r^{\mp}$ being in $\extir \gamma$.

 By definition of $\extir{\gamma}$, $r^\pm$ is an inverse arrow $r^-$, and so $q^{\pm} = q$, $q'^{\mp} = q'^{-}$, and $r'^{\pm} = r'$.  In particular, if $\delta=\gamma\extir{\gamma}$, then $\delta_1q'^-\omega$ is a strict prefix of $\gamma$.
 Likewise, if $\delta = (\gamma\extir{\gamma})- = \extl{\gamma^-}\gamma$, then $\omega r'\delta_2$ is a strict suffix of $\gamma^-$.
 Hence, we can always replace $\st \delta$ by $\st \delta \in \Sr \cup \{\st \gamma\}$.
  Moreover, we have decompositions $\gamma = \gamma_1 q \omega'$ and $\extir \gamma = \omega'' r^{-} \gamma_2$ with $\omega = \omega' \omega''$.
  
  Let us write $\delta_2 = z^{-} [\delta_2']$, where $z^{-}$ is the maximal prefix of $\delta_2$ that involves only inverses of arrows (so $\delta_2'$ does not appear if $z^-$ is right-infinite).
  Note that $z$ is stationary if there is no inverse arrow to the right of $r'$ in $\delta$. 
  Writing these new information into the crossing yields
 \begin{equation}
  \crosswn{\gamma \extir \gamma}{\gamma_1 q}{\omega' \omega''}{r^{-} \gamma_2}{\delta}{\delta_1 q'^{-}}{r' z^{-} [\delta'_2]}. \label{cra}
 \end{equation}

 Suppose that $\delta$ and the crossing \eqref{cra} have been chosen in such a way that 
 \begin{itemize}
  \item $\omega''$ in \eqref{cra} is of minimal possible length;
  \item for such an $\omega''$, $z^{-}$ is of minimal length. 
 \end{itemize}

 We define $\alpha := \omega'' r' z^{-}$, and so (a) clearly holds. 
 Let us split the proof of (b) into the following two claims.
 \begin{claim}
  There is no crossing between $\st{\gamma \alpha}$ and any string in $\Sr \cup \{\st \gamma\}$.
 \end{claim}
 \begin{cproof}
  Consider such a crossing:
 \begin{equation}\crosswn{\gamma \alpha = \gamma_1 q \omega' \omega'' r' z^{-}}{z_1 s^{\pm}}{\bar \omega}{t^{\mp} z_2}{\epsilon}{\epsilon_1 s'^{\mp}}{t'^{\pm} \epsilon_2} \label{crb} \end{equation}
 where $\st{\epsilon}\in \Sr\cup \{\st{\gamma}\}$.
 Since $\Sr\cup\{\st{\gamma}\}$ is non-crossing, $t^{\mp}$ is a letter of $\alpha = \omega'' r' z^-$.  Like, since $\delta\in \Sr\cup\{\st{\gamma}\}$, $s^{\pm}$ is a letter of $\gamma_1 q$.  Hence, $\omega'$ appears as a subwalk of $\bar \omega$, i.e. we can write $\bar \omega = \bar \omega_1 \omega' \bar \omega_2$.  Let us consider the following cases.
  \begin{itemize}
   \item If $\bar \omega_1$ is stationary, then $s^{\pm} = q$.
   \item If $\bar \omega_1$ is non-stationary, $\bar \omega_1 = \bar \omega_1^\circ q$, and \eqref{cra} and \eqref{crb} induce an overlap
  \[\crosswn{\delta}{\delta_1 q'^{-}}{\omega' \bar \omega_2}{t^{\mp} z_2 [\delta'_2]}{\epsilon}{\epsilon_1 s'^{\mp} \bar \omega_1^\circ q }{t'^{\pm} \epsilon_2,}\]
  which is not a crossing as $\st \delta, \st \epsilon \in \Sr \cup \{\st \gamma\}$, so that $t^{\mp} = t^{-}$, $t'^{\pm} = t'$, $s^{\pm} = s$ and $s'^{\mp} = s'^{-}$.
  \end{itemize}
 So, in both cases, \eqref{crb} becomes:
 \begin{equation}\crosswn{\gamma \alpha = \gamma_1 q \omega' \omega'' r' z^{-}}{z_1 s}{\bar \omega_1 \omega' \bar \omega_2}{t^- z_2}{\epsilon}{\epsilon_1 s'^-}{t' \epsilon_2} \label{crc} \end{equation}

 If $t^{-}$ is in $\omega''$, \eqref{cra} and \eqref{crc} induce a crossing 
 \[\crosswn{\gamma \extir \gamma}{z_1 s}{\bar \omega_1 \omega' \bar \omega_2}{t^{-} \gamma_2'}{\epsilon}{\epsilon_1 s'^{-}}{t' \epsilon_2}\]
 which contradicts the minimality of $\omega''$ in \eqref{cra}.
 On the other hand, if $t^{-}$ is in $r' z^{-}$, then \eqref{cra} and \eqref{crc} induce a crossing
 \[\crosswn{\gamma \extir \gamma}{z_1 s}{\bar \omega_1 \omega' \omega''}{r^{-} \gamma_2}{\epsilon}{\epsilon_1 s'^{-}}{r' z'^{-} t' \epsilon_2}\]
 which contradicts the minimality of $z^{-}$ in \eqref{cra} as $z'^-$ is shorter than $z^-$.  
 \end{cproof}
 \begin{claim}
  The string $\st{\gamma \alpha}$ has no self-crossing.
 \end{claim}
 \begin{cproof}
  As $\st{\gamma \alpha}$ does not cross $\st \gamma$ or $\st \alpha$ which is a substring of $\st \delta$, by Lemma \ref{samedir}, the walks $\gamma \alpha$ and $\alpha^- \gamma^-$ are not crossing. So any self-crossing of $\st{\gamma \alpha}$ would have the form
 \begin{equation}\crosswn{\gamma \alpha = \gamma_1 q \omega' \omega''r' z^{-}}{z_1 s^{-}}{\bar \omega_1 q \omega' \omega''}{r' z^{-}}{\gamma \alpha = \gamma_1 q \omega' \omega''r' z^{-}}{\gamma'_1 s'}{t'^{-} \gamma'_2.} \end{equation}
 Combining with \eqref{cra}, we get a crossing
 \begin{equation}\crosswn{\delta}{\delta_1 q'^{-}}{\omega' \omega''}{r' z^{-} [\delta'_2]}{\gamma \alpha}{\gamma'_1 s' \bar \omega_1 q}{t'^{-} \gamma'_2,} \end{equation}
 and it contradicts the fact that $\st{\gamma \alpha}$ does not cross $\st \delta \in \Sr$. 
 \end{cproof}
  It remains to prove (c); it suffices to prove that $\st{\gamma \alpha} \in \Tors^\infty(\Sr \cup \{\st \gamma \})$. As $z^{-}$ is defined as the maximal prefix of $\delta_2=z^{-} [\delta'_2]$ that involves only inverses of arrows, and $\st \delta = \st{\delta_1 q'^{-} \omega' \omega'' r' z^{-} [\delta'_2]} \in \Tors^\infty(\Sr \cup \{\st \gamma\})$, we obtain, by definition of a torsion set, $\st{\omega' \omega'' r' z^{-}} \in \Tors^\infty(\Sr \cup \{\st \gamma\})$. On the other hand, the factor $\st{\gamma_1}$ of $\st{\gamma}= \st{\gamma_1 q \omega'} \in \Tors^\infty(\Sr \cup \{\st \gamma\})$ clearly belongs to $\Tors^\infty(\Sr \cup \{\st \gamma\})$ by definition of a torsion set.
  Since the decomposition $\gamma \alpha = \gamma_1 q \omega' \omega'' r' z^{-}$ says $\st{\gamma\alpha}$ is and extension of $\gamma_1$ and $\omega'\omega''r'z^-$, the string $\st{\gamma\alpha}$ must also be in $\Tors^\infty(\Sr \cup \{\st \gamma\})$.
\end{proof}

\begin{example}\label{eg:comp2}
For the case of Example \ref{eg:comp1}, the walks $\omega''$, $r'$, $z$, $\delta_2$ in proof of the lemma are $\un{a_0}{a_1}$, $b_1$, $a_1$, and $(b_1a_1^-)^\infty$ respectively.
\end{example}

Example \ref{eg:comp1} and \ref{eg:comp1} presents a rather small example; we perform the operation in the proof once and obtain the desired infinite string that does not cross $\Sr$ (and preserving generated torsion class) straight away.  In general, we may need to apply this operation repeatedly.

\begin{lemma} \label{infextend}
 Let $\Sr$ be a non-crossing set of strings. Suppose that $\gamma$ is a  right-confined walk such that $\Sr \cup \{\st \gamma\}$ is non-crossing. Then there exists an infinite walk $\tilde\gamma$ such that
 \begin{enumerate}[\rm(a)]
  \item $\gamma$ is a subwalk of $\tilde\gamma$;
  \item $\Sr \cup \{\st{\tilde\gamma}\}$ is non-crossing;
  \item $\Tors(\Sr \cup \{\st{\tilde\gamma}\}) \subseteq \Tors(\Sr \cup \{\st\gamma\})$.
 \end{enumerate}
\end{lemma}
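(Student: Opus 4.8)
The plan is to reduce to a one-sided version and then apply it at both ends. First I would isolate the following statement: \emph{for a right-confined walk $\gamma$ with $\Sr\cup\{\st\gamma\}$ non-crossing, there is a right-infinite walk $\mu$, obtained from $\gamma$ by appending letters only on the right, such that $\Sr\cup\{\st{\mu}\}$ is non-crossing and $\Tors(\Sr\cup\{\st{\mu}\})\subseteq\Tors(\Sr\cup\{\st\gamma\})$.} Call this $(\star)$. Granting $(\star)$, Lemma~\ref{infextend} follows quickly: apply $(\star)$ to $\gamma$ to get a right-infinite $\mu$; if $\mu$ is already infinite, take $\tilde\gamma=\mu$. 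Otherwise $\mu$ is left-confined (its left end is that of $\gamma$), so $\mu^-$ is right-confined and $\Sr\cup\{\st{\mu^-}\}=\Sr\cup\{\st{\mu}\}$ is non-crossing; applying $(\star)$ to $\mu^-$ yields a right-infinite $\delta$ obtained from $\mu^-$ by appending only on the right, and since $\mu^-$ is left-infinite, $\delta$ is infinite. Then $\tilde\gamma:=\delta^-$ works: it contains $\mu$, hence $\gamma$, as a subwalk, and using $\st{\tilde\gamma}=\st\delta$, the identification $c^+(\eta,\eta')=c^+(\eta^-,\eta'^-)$, the equality $\Tors(\Sr\cup\{\st{\mu^-}\})=\Tors(\Sr\cup\{\st{\mu}\})$, and transitivity of the inclusions, all three required conclusions hold.

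To prove $(\star)$ I would iterate Lemma~\ref{extendable}. Put $\gamma_0=\gamma$; while $\gamma_n$ is right-confined, Lemma~\ref{extendable} supplies a non-stationary walk $\alpha_{n+1}$ with $\gamma_{n+1}:=\gamma_n\alpha_{n+1}$ a walk, $\Sr\cup\{\st{\gamma_{n+1}}\}$ non-crossing, and $\Tors(\Sr\cup\{\st{\gamma_{n+1}}\})\subseteq\Tors(\Sr\cup\{\st{\gamma_n}\})$. If some $\gamma_n$ is right-infinite, set $\mu=\gamma_n$; the three properties follow by composing finitely many steps. If the process never stops, set $\mu=\gamma\alpha_1\alpha_2\cdots$, the direct limit of the $\gamma_n$ read as words; since each $\alpha_i$ contributes at least one letter, $\mu$ is right-unbounded, hence right-infinite, and it is obtained from $\gamma$ by appending only on the right.

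The crux is to transport the non-crossing property and the torsion-set inclusion to the limit $\mu=\gamma\alpha_1\alpha_2\cdots$, and the point is that both are ``bounded'' phenomena. In a positive crossing as in \eqref{eq-crossing} the overlap $\omega$ is flanked inside each walk by single letters (namely $q^-$ and $q'$ inside the first walk, $r$ and $r'^-$ inside the second), so $\omega$ occupies a bounded interval of positions; hence a crossing of $\st{\mu}$ with some $\st\delta\in\Sr$, or a self-crossing of $\st{\mu}$, is witnessed by a finite subword of $\mu$, which already appears inside $\gamma_n$ for $n$ large enough, contradicting non-crossingness of $\Sr\cup\{\st{\gamma_n}\}$; thus $\Sr\cup\{\st{\mu}\}$ is non-crossing. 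For the inclusion, by Lemma~\ref{filtfac} it suffices to check $\Fac(\Sr\cup\{\st{\mu}\})\subseteq\Tors(\Sr\cup\{\st\gamma\})$: a confined factor of $\st{\mu}$ is again a bounded subword (the end-cases $\mu=\omega$ and $\mu=\gamma_1q^-\omega$ are excluded, since $\mu$ is right-infinite while $\omega$ is confined), hence a factor of some $\st{\gamma_n}$, hence lies in $\Tors(\Sr\cup\{\st{\gamma_n}\})\subseteq\Tors(\Sr\cup\{\st\gamma\})$; as $\Tors(\Sr\cup\{\st\gamma\})=\Filt\Fac(\Sr\cup\{\st\gamma\})$ is closed under extensions, applying $\Filt$ gives $\Tors(\Sr\cup\{\st{\mu}\})\subseteq\Tors(\Sr\cup\{\st\gamma\})$. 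The step I expect to need the most care is precisely this bookkeeping of ``finite windows'': checking that the relevant subwords and factors really do materialise at some finite stage $\gamma_n$, treating the boundary cases, and confirming that $\mu$ keeps the left end of $\gamma$ so that the second application of $(\star)$ indeed delivers an infinite walk.
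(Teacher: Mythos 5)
Your proposal is correct and follows essentially the same route as the paper: iterate Lemma \ref{extendable} on the right, pass to the limit using the fact that crossings and confined factors only involve bounded windows (so they descend to some finite stage $\gamma_n$), apply Lemma \ref{filtfac} for the torsion-set inclusion, and then repeat the construction on the reversed walk to make the left end infinite. Your explicit one-sided statement $(\star)$ is just a packaging of the paper's first phase, so there is no substantive difference.
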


\begin{proof}
The construction goes in the same way as in Lemma \ref{ext3}.
Let $\gamma_0:=\gamma$.
Inductively define a walk $\gamma_i$ for $i\geq 1$ as follows.
\begin{itemize}
\item If $\gamma_{i-1}$ is right-infinite, then $\gamma_{i}:=\gamma_{i-1}$;
\item otherwise, it follows from Lemma \ref{extendable} that we can define $\gamma_{i}:=\gamma_{i-1}\alpha_i$ for some non-stationary walk $\alpha_i$ so that $\Sr\cup \{\st {\gamma_i}\}$ is non-crossing and $\Tors(\Sr\cup \{\st {\gamma_i}\})\subseteq \Tors(\Sr\cup \{\st\gamma\})$.
\end{itemize}

Let us take $\gamma_\infty$ the natural right-infinite walk obtained as a limit of the $\gamma_i$'s.  
Note that it follows from Lemma \ref{extendable} (a) that at least one letter of $\gamma_\infty$ is an inverse of letter.
If there is a crossing in $\Sr\cup\{\st{\gamma_\infty}\}$, since the overlap of a crossing is confined, such a crossing can be restrict to one in $\Sr\cup \{\st{\gamma_i}\}$ for some big enough $i$; hence, a contradiction.

Since any confined string in $\Fac^\infty(\{\st{\gamma_\infty}\})$ must belong to $\Fac(\{\st{\gamma_i}\})\subset \Fac(\Sr\cup \{\st{\gamma_i}\})$ for some big enough $i$, it follows from Lemma \ref{filtfac} that $\Tors(\Sr\cup \{\st{\gamma_\infty}\})\subseteq \Tors(\Sr\cup \{\st{\gamma_i}\}) \subseteq \Tors(\Sr\cup \{\st\gamma\})$.

Now, if $\gamma_\infty$ is already left-infinite, then taking $\tilde{\gamma}:=\gamma_\infty$ finishes the proof.
Otherwise, we run through the inductive construction above replacing $\gamma_0$ by $\gamma_\infty^-$ and take $\tilde{\gamma}$ to be the limiting point of the new sequence.  It is immediate that (a) holds.  By similar arguments as in the previous two paragraphs, we also get (b), (c).
\end{proof}

\begin{example}\label{eg:kron-completion}
Consider the Kronecker quiver $K_2$ and blossoming $K_2'$ as in Example \ref{eg:torset2} and \ref{eg:mainthm2}.
Consider the infinite periodic walk $\iota_\infty := {}^\infty \iota_1^\infty = {}^\infty (b_1^-a_1)^\infty$.  Then $\Sr:=\{\st{\iota_\infty}\}$ is a non-crossing set.  Take any $n\geq 0$, then we have $\Sr\cup \{\st{\iota_n}\}$ also non-crossing.  If we take $\gamma_0:= \iota_0 = \un{a_0}{a_1} $ and follow the first step in the proof of Lemma \ref{infextend} (i.e. the procedure described in the proof of Lemma \ref{extendable}), then we get that $\gamma_i=\gamma_\infty = \extir{\un{a_0}{a_1}} = a_0^-$ for all $i\geq 1$.  Now take $\gamma_0:= a_0$ and run through the same procedure again.  As $\extir{a_0} = b_0^-$ crosses $\iota_\infty$, so the proof of Lemma \ref{extendable} tells us that $\gamma_1 = a_0a_1b_1^-= a_0\iota_1^-$.  Similarly, we have $\gamma_ i = a_0\iota^{-i}$ for all $i\geq 0$, and so $\Sr\cup \{a_0\iota_1^{-\infty}\}$ is a non-crossing set for which one can easily check that $\Tors(\Sr) =  \Tors(\Sr \cup \{\st{\un{a_0}{a_1}}\}) = \Tors(\Sr\cup \{a_0\iota_1^{-\infty}\})$. 

We encourage the reader to try to play the same game with $\gamma_0 = a_1$.  In such a case, we have $\gamma_1 = a_1b_1^-$, $\gamma_2 = (a_1b_1^-)^2$, $\ldots$, and the resulting infinite string $\st{\tilde{\gamma}}$ is $\st{a_2^-\pi_1^{-\infty}}$. 
\end{example}

Before proving the desired completion of a non-crossing set of infinite strings, we need one more lemma which will help keeping track of changes in torsion sets when we perform the `infinite-enhancement' in Lemma \ref{infextend}.

\begin{lemma} \label{NCextension}
\begin{enumerate}[\rm (a)]
  \item For any string of the form $\st{u v^{-}}$ where $u$ and $v$ are paths, $\st{\extl u u v^{-} \extir{v^-}}$ is a non-self-crossing string, \[\Tors(\st{\extl u u v^{-}}) \subseteq \Tors(\st{v}) \quad\text{ and }\quad \Tors(\st{\extl u u v^{-} \extir{v^-}}) = \emptyset.\]

  \item Let $\st\gamma$ is a non-self-crossing string. If $\gamma=[u] q^-\omega$ with $q$ an arrow in $Q$ and $u$ a path in $Q$ that may not appear, then there exists some right-confined walk $\alpha$ involving only arrows such that 
	\begin{enumerate}[\rm (i)]
	\item $\st{\alpha \gamma}$ is a non-self-crossing string;
	\item $\Tors(\st{\alpha \gamma}) = \Tors(\st \omega)$.
	\end{enumerate}

  \item Let $\st\gamma$ is a non-self-crossing string.  If $\gamma:=[u] q^-\omega r [v^-]$ with $q,r$ being arrows in $Q$ and $u,v$ being paths in $Q$ that may not appear, then there exist some right-confined walks $\alpha$ and $\beta$ involving only arrows such that

	\begin{enumerate}[\rm (i)]
	\item $\st{\alpha \gamma \beta^-}$ is a non-self-crossing string;
	\item $\Tors(\st{\alpha \gamma \beta^-}) = \Tors(\st \omega)$.
	\end{enumerate}
\end{enumerate}
\end{lemma}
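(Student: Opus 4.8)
The plan is to prove (a) directly by a factor computation and then to obtain (b) and (c) by the same device: cap off the end(s) of $\gamma$ with the unique arrow-only infinite extensions $\extl{(\cdot)}$ and $\extir{(\cdot)}$.

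\emph{Part (a).} The walk $\extl u\, u v^- \extir{v^-}$ reads, from left to right, as an infinite run of arrows, then $v^-$, then an infinite run of inverses of arrows; so every arrow in it precedes every inverse arrow. A positive crossing (Definition~\ref{def:cross}) \emph{from} a string requires that string to contain a subwalk immediately preceded by an inverse arrow and immediately followed by a direct arrow, which is impossible for such a ``monotone'' walk; hence $c^+(\eta,\eta) = c^+(\eta,\eta^-) = \emptyset$ for $\eta = \extl u\, u v^- \extir{v^-}$, i.e. $\st\eta$ is non-self-crossing. For the torsion sets I would read off the confined factors from Definition~\ref{def:ext-fac}: a confined factor of $\extl u\, u v^-$ must avoid the left-infinite arrow part, so it is the reverse of an initial subpath of $v$ of length strictly less than $|v|$ (or the stationary string corresponding to the source vertex of $v$); thus $\Fac(\st{\extl u\, u v^-}) \subseteq \Fac(\st v)$ and so $\Tors(\st{\extl u\, u v^-}) = \Filt\Fac(\st{\extl u\, u v^-}) \subseteq \Filt\Fac(\st v) = \Tors(\st v)$ by Lemma~\ref{filtfac}, with $\st v$ itself absent on the left (so the inclusion may be strict). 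And $\extl u\, u v^- \extir{v^-}$ has no confined factor at all, since a left-cut factor must begin just after an inverse letter (all of which lie to the right of $u$) and a right-cut factor must end just before a direct letter (all of which lie to the left of $v^-$), making a both-sided confined factor impossible; hence $\Tors(\st{\extl u\, u v^- \extir{v^-}}) = \emptyset$.

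\emph{Parts (b) and (c).} For (b), with $\gamma = [u]q^-\omega$, put $\alpha := \extl\gamma$; this is well defined since $\gamma$ is left-confined, and it is right-confined and uses only arrows, as required. The same bookkeeping as in (a) gives $\Fac(\st{\alpha\gamma}) = \Fac(\st\omega)$: no prefix factor of $\alpha\gamma$ is confined (each contains the infinite arrow part $\alpha$); a confined suffix factor is cut just after an inverse letter, which is either $q^-$ — yielding exactly $\st\omega$, itself a factor of $\st\omega$ — or a letter of $\omega$ — yielding a factor of $\omega$; and a both-sided confined factor is cut between an inverse letter and a \emph{later} direct letter, which forces it inside $q^-\omega$, again producing a factor of $\omega$. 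Hence $\Tors(\st{\alpha\gamma}) = \Filt\Fac(\st{\alpha\gamma}) = \Filt\Fac(\st\omega) = \Tors(\st\omega)$ by Lemma~\ref{filtfac}, which is (ii). Part (c) is identical, with an additional cap $\beta^- := \extir\gamma$ on the right (so $\beta$ is right-confined and uses only arrows); the confined factors of $\alpha\gamma\beta^-$ must now avoid both infinite ends, and the same analysis yields $\Fac(\st{\alpha\gamma\beta^-}) = \Fac(\st\omega)$ and hence $\Tors(\st{\alpha\gamma\beta^-}) = \Tors(\st\omega)$.

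\emph{Main obstacle.} The delicate part is clause (i) of (b) and (c): that $\st{\alpha\gamma}$ (resp. $\st{\alpha\gamma\beta^-}$) is non-self-crossing. Since any positive crossing \emph{from} $\alpha\gamma$ is anchored at an inverse letter of $\gamma$, one has $c^+(\alpha\gamma,\cdot) = c^+(\gamma,\cdot)$; combined with Lemma~\ref{samedir} applied to the decomposition $\alpha\gamma = \alpha\cdot\gamma$, a hypothetical self-crossing of $\alpha\gamma$ reduces to a positive crossing inside $\alpha$ (impossible — no inverse letters), inside $\gamma$ (impossible by hypothesis), into $\alpha^-$ (impossible — no direct letters), or one whose overlap straddles the junction of $\gamma$ with $\alpha$. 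This straddling case is the crux, and I expect it to be settled exactly as in the proof of Lemma~\ref{samedir}: because $\extl\gamma$ is the \emph{unique} arrow-only left-completion of $\gamma$, an overlap reaching into it forces the two strands to agree on an arbitrarily long stretch of arrows, and following the forced continuations yields a subword equal to its own inverse, contradicting Lemma~\ref{noselfinverse}. The same argument on the right, using $\beta^- = \extir\gamma$, handles the remaining straddling cases for $\st{\alpha\gamma\beta^-}$.
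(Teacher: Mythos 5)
Part (a) of your proposal is fine and is essentially the paper's argument (monotone walk, factor bookkeeping, Lemma \ref{filtfac}). The problem is in (b) (and hence (c)): the obstacle you flag as the crux is not merely delicate, it is fatal to your choice of $\alpha$. It is simply false that $\st{\extl\gamma\,\gamma}$ is non-self-crossing whenever $\st\gamma$ is, so no argument via Lemma \ref{samedir} and Lemma \ref{noselfinverse} can close the gap. Concretely, take the gentle quiver with one vertex and two loops $a,b$ with $R=\{a^2,b^2\}$ (it is $2$-regular, hence its own blossoming), and $\gamma=a^-bab$, which has the required form $q^-\omega$ with $q=a$, $\omega=bab$. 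One checks that $\st\gamma$ is non-self-crossing: the only inverse letter of $\gamma$ is its first letter and the only arrow of $\gamma^-$ is its last, so no occurrence of any subwalk is flanked by (arrow, inverse). However $\extl\gamma={}^\infty(ab)$, and already the one-letter extension $b\gamma=ba^-bab$ is self-crossing: the stationary walk $\un{a}{b}$ occurs in $b\gamma$ flanked by $(a^-,b)$ and in $(b\gamma)^-=b^-a^-b^-ab^-$ flanked by $(a,b^-)$, and these two occurrences form a maximal common subwalk, i.e.
\[\crosswn{b\gamma}{b\,a^{-}}{\un{a}{b}}{b\,ab}{(b\gamma)^-}{b^-a^-b^-\,a}{b^{-}}\]
is a positive crossing from $\st{b\gamma}$ to itself. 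Since this configuration is local, it persists in $\st{\extl\gamma\,\gamma}$, so your $\alpha=\extl\gamma$ violates clause (i).

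This is exactly why the paper does not take the full extension: it takes $\alpha$ to be the \emph{longest} right-confined path of arrows such that $\st{\alpha\gamma}$ is non-self-crossing (possibly trivial, as in the example above), so (i) holds by construction; the price is that $\Fac(\st{\alpha\gamma})$ may now be strictly larger than $\Fac(\st\omega)$, and the real content of the paper's proof of (b) is the inclusion $\Tors(\st{\alpha\gamma})\subseteq\Tors(\st\omega)$, obtained by exploiting the self-crossing of $\st{s\alpha\gamma}$ forced by maximality: its overlap $\alpha u\omega'$ lies in $\Tors(\st\omega)$ by the second row of the crossing, and the first row exhibits $\st{\alpha\gamma}$ as an extension landing in $\Tors^\infty(\st\omega)$. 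Your $\Fac$-computation (which gives $\Tors(\st{\alpha\gamma})=\Tors(\st\omega)$ for the full extension) is correct as far as it goes, but it only applies in the case the paper dismisses as "immediate" (when $\alpha u$ is left-infinite); the case where the arrow extension must be cut short is the one that needs the torsion-set argument, and your proposal has no mechanism for it. Part (c) inherits the same gap, since you again cap with $\extir\gamma$ rather than arguing as in the paper by applying (b) twice.
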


\begin{proof}
(a) The string $\st{\extl u u v^{-} \extir{v^-}}$ is not self-crossing as it is a path followed by the inverse of path. Any factor of $\st{\extl u u v^{-}}$ is a factor of $\st{\extl u u}$ or a factor of $\st{v}$. As no factor of $\st{\extl u u}$ is confined, $\Fac(\st{\extl u u v^{-}}) \subseteq \Fac(\st v)$ so that by Lemma \ref{filtfac}, $\Tors(\st{\extl u u v^{-}}) \subseteq \Tors(\st{v})$. The second claim follows in the same way.

(b) By the condition on $\gamma$, there is some arrow $p\in Q'$ with $p\gamma$ a walk.  So we have a walk $\alpha$ given by the longest possible right-confined path involving only arrows such that $\st\eta := \st{\alpha \gamma}$ is a non-self-crossing string.  It is immediate by definition of $\st{\eta}$ that $\Tors(\st\eta) \supseteq \Tors(\st\omega)$.

 If $\alpha u$ is left-infinite, then $\Tors(\st\eta) = \Tors(\st\omega)$ is immediate by Lemma \ref{filtfac}. Otherwise, by maximality of $\alpha$, there is a crossing
 \[\crosswn{s \eta = s \alpha u q^{-} \omega}{s}{\alpha u \omega'}{t^{-} \varepsilon_2}{(s\eta)^{\pm}}{\eta_1 s'^{-}}{t' \eta_2,}\]
 where $s$ is the only possible arrow. Comparing the signs of letters on both sides, we deduce the following crossing:
 \[\crosswn{s \eta = s \alpha u q^{-} \omega}{s}{\alpha u \omega'}{t^{-} \varepsilon_2}{(q^{-} \omega)^{\pm}}{\eta_1' s'^{-}}{t' \eta_2'.}\]
 Using the second row, we deduce that $\st{\alpha u \omega'} \in \Tors(\st \omega)$.  On the other hand, using the first row, we have $\st{\varepsilon_2} \in \Tors^\infty(\st \omega)$, and so $\st \eta = \st{\alpha u \omega' t^{-} \varepsilon_2} \in \Tors^\infty(\st \omega)$. Hence, we have $\Tors(\st\eta) \subseteq  \Tors(\st \omega)$.

 (c) By (b), there exists $\alpha$ such that $\Tors(\st{\alpha \gamma}) = \Tors(\st\omega)$. Then, using (b) with $\gamma$ replaced by $\gamma^-\alpha^-$ we obtain a $\beta$ such that $\Tors(\st{\alpha \gamma \beta^{-}}) = \Tors(\st{\alpha \gamma}) = \Tors(\st \omega)$.
\end{proof}

We can finally prove the goal of this section.

\begin{proposition} \label{tomax} 
 Let $\Sr$ be a non-crossing set of infinite strings that is not maximal. Then there exists an infinite string $\st \delta\in L(\Tors(\Sr))\setminus \Sr$ so that $\Sr\cup \{\st\delta\}$ is non-crossing.
 In particular, there is always some $\hat{\Sr}\in \maxstr$ so that $\Tors(\hat{\Sr}) = \Tors(\Sr)$.
\end{proposition}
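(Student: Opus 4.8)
The plan is to adjoin one suitably chosen infinite string to $\Sr$ and then to iterate via Zorn's lemma. First I would use non-maximality to fix an infinite string $\st\gamma\notin\Sr$ with $\Sr\cup\{\st\gamma\}$ non-crossing. If $\st\gamma\in L(\Tors(\Sr))$ we are done with $\st\delta:=\st\gamma$, so assume $\st\gamma\notin L(\Tors(\Sr))$; then some confined factor of $\st\gamma$ lies outside $\Tors(\Sr)$, and I would choose such a factor $\st\omega$ of \emph{minimal length}. Since $\gamma$ is infinite and $\omega$ confined, the occurrence of this factor has the form $\gamma=\gamma_1 q^{-}\omega r\gamma_2$ with $q,r$ arrows (both flanking parts present), so in particular $\st\omega$ is a subwalk of $\gamma$, hence not self-crossing and crossing no string of $\Sr$.

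Next I would take $\st{\delta_0}:=\st{\extl\omega\,\omega\,\extir{\omega}}$, so that inside $\delta_0$ the subwalk $\omega$ is flanked by a direct letter (the last letter of the all-direct path $\extl\omega$) on the left and an inverse letter (the first letter of the all-inverse path $\extir\omega$) on the right. A direct inspection of the factor condition shows that the confined factors of $\st{\delta_0}$ are precisely the subwalks of $\omega$ flanked \emph{inside $\omega$} by an inverse letter on the left and a direct letter on the right; each such is a proper factor of $\omega$, hence a shorter confined factor of $\gamma$, hence lies in $\Tors(\Sr)$ by minimality of $\omega$. Therefore $\st{\delta_0}\in L(\Tors(\Sr))$; it is non-self-crossing (by Lemma \ref{samedir} applied to the decomposition $\extl\omega\cdot\omega\cdot\extir\omega$, and directly by Lemma \ref{NCextension}(a) in the case where $\omega$ is a path followed by an inverse path). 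Moreover, inside $\gamma$ the subwalk $\omega$ is flanked by $q^{-}$ (inverse) and $r$ (direct) while inside $\delta_0$ it is flanked the other way around, and $\gamma$ and $\delta_0$ diverge immediately at both ends of $\omega$; so the overlap $\omega$ witnesses $c^+(\st\gamma,\st{\delta_0})\neq\emptyset$.

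The only remaining defect of $\delta_0$ is that it may cross strings of $\Sr$. I would remove these crossings by the smoothing procedure underlying Lemmas \ref{extendable} and \ref{infextend}: reroute each of the two infinite tails of $\delta_0$ along a crossing string $\st\varepsilon\in\Sr$ past the overlap, keeping the core $\omega$ and its two flanking letters rigid, and pass to the limit. Since $\Sr\subseteq L(\Tors(\Sr))$ (the confined factors of a string of $\Sr$ already lie in $\Fac(\Sr)\subseteq\Tors^\infty(\Sr)$, hence in $\Tors(\Sr)$), the pieces of the $\varepsilon$'s used contribute only confined factors in $\Tors(\Sr)$, and — combined with the analysis of the core and minimality of $\omega$ — so do the straddling factors; one thereby obtains an infinite non-self-crossing string $\st\delta$ with $\Sr\cup\{\st\delta\}$ non-crossing, $\st\delta\in L(\Tors(\Sr))$, and $c^+(\st\gamma,\st\delta)\neq\emptyset$ still holding. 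In particular $\st\delta\notin\Sr$, for otherwise $\st\delta$ would be a string of $\Sr$ crossing $\st\gamma$, contradicting that $\Sr\cup\{\st\gamma\}$ is non-crossing. The step I expect to be the real obstacle is exactly this smoothing: carrying the rerouting of both tails to infinity while simultaneously not creating new crossings, staying inside $L(\Tors(\Sr))$, and keeping the core rigid so the crossing with $\gamma$ survives — precisely the bookkeeping that Lemmas \ref{cutstay}, \ref{extendable}, \ref{infextend}, \ref{NCextension} are built to support, the only new point being rigidity of the core.

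Finally, for the last assertion I would apply Zorn's lemma to the poset of non-crossing sets $\Sr'$ of infinite strings with $\Sr\subseteq\Sr'\subseteq L(\Tors(\Sr))$; it is closed under unions of chains, since any crossing involves only two strings, so it has a maximal element $\hat\Sr$. From $\hat\Sr\subseteq L(\Tors(\Sr))$ we get $\Fac(\hat\Sr)\subseteq\Tors(\Sr)$, hence $\Tors(\hat\Sr)=\Filt\Fac(\hat\Sr)\subseteq\Tors(\Sr)$, and $\Sr\subseteq\hat\Sr$ gives the reverse inclusion, so $\Tors(\hat\Sr)=\Tors(\Sr)$ and $L(\Tors(\hat\Sr))=L(\Tors(\Sr))$. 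If $\hat\Sr$ were not maximal among all non-crossing sets of infinite strings, the first part applied to $\hat\Sr$ would produce an infinite $\st\delta\in L(\Tors(\hat\Sr))\setminus\hat\Sr=L(\Tors(\Sr))\setminus\hat\Sr$ with $\hat\Sr\cup\{\st\delta\}$ non-crossing, enlarging $\hat\Sr$ inside the poset — a contradiction. Hence $\hat\Sr\in\maxstr$ with $\Tors(\hat\Sr)=\Tors(\Sr)$.
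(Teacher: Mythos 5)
Your skeleton does follow the paper's (a minimal bad factor $\omega$, an infinite string containing the rigid core, removal of crossings with $\Sr$, and a Zorn argument for the final claim, which is fine and indeed more explicit than the paper). The genuine gap is the assertion that $\st{\delta_0}=\st{\extl\omega\,\omega\,\extir\omega}$ is non-self-crossing. Lemma \ref{samedir} does not give this: it only rewrites a potential self-crossing of a concatenation as crossings of pieces against the whole, and it concerns $c^+(\gamma\delta,\delta^-\gamma^-)$ only; Lemma \ref{NCextension}(a) covers only the special shape $\omega=uv^-$. Without using minimality the statement is simply false: in the gentle quiver with vertices $1,2$, arrows $p,z\colon 1\to 2$, $c,x\colon 2\to 1$ and $R=\{px,zc,xp,cz\}$ (which equals its own blossoming), the string $\st{\omega}$ with $\omega=z^-x^-c$ is non-self-crossing, yet $\extl\omega=\cdots pcp$ and $\extir\omega=x^-z^-x^-\cdots$, and in $\delta_0=\cdots pcp\;z^-x^-c\;x^-z^-\cdots$ the stationary walk $\un{p}{c}$ occurs once flanked by $(x^-,c)$ inside $\omega$ and once flanked by $(p,z^-)$ at the junction of $\extl\omega$ with $\omega$; these two occurrences form a positive crossing of $\delta_0$ with itself. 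Your justification makes no use of the minimality of $\omega$ at this point, so the cited lemmas cannot carry it; whether minimality excludes every such configuration for the \emph{full} one-sided extensions is precisely what would have to be proved, and the two Claims inside the paper's proof of Proposition \ref{tomax} only establish it for the short core $q'\omega r'^-$ (one extra letter on each side), not for $\extl\omega\,\omega\,\extir\omega$.

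This gap then blocks the rest of the argument: a non-crossing set consists of non-self-crossing strings, so the smoothing machinery you defer to (Lemmas \ref{cutstay}, \ref{extendable}, \ref{infextend}) cannot accept $\delta_0$ as input, and your smoothing step only contemplates crossings with $\Sr$, not self-crossings of $\delta_0$ itself. The paper is organized exactly to avoid this issue: it first shows, via minimality, that $\st{q'\omega r'^-}$ is non-self-crossing and does not cross $\Sr$, and then Lemma \ref{NCextension}(b),(c) extend by arrows only as far as non-self-crossing permits, using the crossing that obstructs a longer extension to retain control of the generated torsion set ($\Tors(\st{\alpha q'\omega r'^-\beta^-})\subseteq\Tors(\Sr)$); only afterwards are Lemma \ref{cutstay} and Lemma \ref{infextend} applied, and the core survives these steps, which is what yields $\st\delta\notin\Sr$. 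So the missing content in your write-up is a replacement for Lemma \ref{NCextension}(b): either a proof that under minimality the full extensions never self-cross (not obvious, and not what the paper does), or the truncation-plus-blocking-crossing analysis itself. The factor analysis of $\delta_0$ and the concluding Zorn argument are correct, but they do not fill this hole.
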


\begin{proof}
 As $\Sr$ is not maximal, there exists an infinite non-self-crossing  string $\st \gamma$ that is not in $\Sr$ and that does not cross $\Sr$. If $\st \gamma \in L(\Tors(\Sr))$, then take $\st \delta = \st \gamma$ and we are done.

 Let us consider the case when $\st\gamma\notin L(\Tors(\Sr))$.  By definition of $L(-)$, there exists a decomposition $\gamma = \gamma_1 q^{-} \omega r \gamma_2$ such that $\st \omega \notin \Tors(\Sr)$. Let us choose such a decomposition with $\omega$ of minimal length. 

 Let $q'$ and $r'$ be the two arrows such that $q' \omega r'^{-}$ is a walk. 
 \begin{claim}
  $\st{q' \omega r'^{-}}$ is not self-crossing.
 \end{claim}
 \begin{cproof}
 Since $\st\gamma$ (hence $\st\omega$) is not self-crossing, if on the contrary that $\st{q'\omega r'^{-}}$ was self-crossing, the crossing would be of the form (under an appropriate choice of the underlying walk $\omega$):
 \[\crosswn{q' \omega r'^{-}}{q'}{\omega'}{t^{-} \varepsilon_2 r'^{-}}{\omega^{\pm}}{\varepsilon_1' s'^{-}}{t' \varepsilon_2'}.\]
 By minimality of $\omega$, using the second row, $\st{\omega'} \in \Tors(\Sr)$. We also get that $t^{-} \varepsilon_2 r$ is a subwalk of $\gamma$, so by minimality again, $\st{\varepsilon_2} \in \Tors(\Sr)$. By definition of a torsion set, we deduce that $\st \omega = \st{\omega' t^{-} \varepsilon_2} \in \Tors(\Sr)$, which is a contradiction.  
 \end{cproof}
 \begin{claim}
  $\st{q' \omega r'^{-}}$ does not cross any string $\st \varepsilon$ of $\Sr$.
 \end{claim}
 \begin{cproof}
 As $\st \omega$ does not cross $\st \varepsilon$, such a crossing would be, without loss of generality,
 \[\crosswn{q' \omega r'^{-}}{q'[\omega_1 t]}{\omega_2}{r'^{-}}{\varepsilon}{\varepsilon_1 t'^{-}}{r \varepsilon_2,}\]
 where $\omega_1 t$ may not appear.
 By definition of a torsion set, as $\st \varepsilon \in \Sr$, we get $\st{\omega_2} \in \Tors(\Sr)$. If $\omega_1 t$ appears, then $\st{\omega_1}$ is a factor of $\st{\gamma}$ whose length is smaller than that of $\st{\omega}$, which contradicts minimality.  Hence, we have $\st \omega=\st{\omega_2} \in \Tors(\Sr)$ - a contradiction.
 \end{cproof}

 Consider the case when $\omega$ is not a stationary walk.  By Lemma \ref{NCextension} (c) (with $\gamma$ therein being the $q' \omega r'^-$ here), we have two right-confined walks $\alpha,\beta$ both involving only arrows, such that
\begin{enumerate}[\rm (i)]
\item $\st{\alpha q' \omega r'^{-} \beta^{-}}$ is a non-self-crossing string;
\item $\Tors(\st{\alpha q' \omega r'^{-} \beta^{-}}) = \Tors(\st{\omega'})$, where $\omega'$ is a walk so that there is a decomposition $\omega = u s^{-} \omega' t v^{-}$.
\end{enumerate}
 Similarly, if $\omega$ is a stationary walk, then using Lemma \ref{NCextension} (a) (with $\gamma$ therein being the $q'r'^- = q'\omega r'^-$ here),  we can take right-confined walks $\alpha:=\extl{q'}, \beta:=\extl{r'}$ involving only arrows such that the property (i) above is satisfied, and property (ii) is replaced by $\Tors(\st{\alpha q' \omega r'^{-} \beta^{-}}) = \emptyset$.

By minimality of $\omega$, we deduce that $\Tors(\st{\alpha q' \omega r'^{-} \beta^{-}}) \subseteq \Tors(\Sr)$. 
 
 By Lemma \ref{cutstay} (on the substring $\st{q'\omega r'}$ of  $\st{\alpha q' u r'^{-} \beta^{-}}$), there exists a (non-self-crossing) substring $\st{\alpha' q' \omega r'^{-} \beta'^{-}}$ of $\st{\alpha q' u r'^{-} \beta^{-}}$ that is in $L(\Tors(\Sr))$ and that $\Sr\cup\{\st{\alpha q' u r'^{-} \beta^{-}}\}$ is non-crossing.
 Hence, by Lemma \ref{infextend}, there exists an infinite string $\st \delta = \st{\delta_1 q' \omega r'^{-} \delta_2}$ so that $\Sr\cup \{\st{\delta}\}$ is non-crossing and $\Tors(\Sr\cup\{\st\delta\})\subseteq \Tors(\Sr\cup\{\st{\alpha q' u r'^{-} \beta^{-}}\})\subseteq L(\Tors(\Sr))$.  Thus, $\st{\delta}\in L(\Tors(\Sr))$.  As $\st\delta$ crosses $\st \gamma$, it is not in $\Sr$; this finishes the proof.
\end{proof}

\subsection{Maximality of $G(\Tr)$} \label{subsec:max}

Now we want to consider the case of $\Sr=G(\Tr)$ in Proposition \ref{tomax} for some confined torsion set $\Tr$.  The idea is to use the algebraic theory: a non-crossing set of infinite strings is the combinatorial analogue of (the projective presentation $P_U$ of) Ext-projective $U$ of the torsion class $\Tors(U)$, i.e. $\Ext^1(U,M)=0$ (or $\Hom(P_U, M[1])=0$ in the bounded homotopy category) for all $M\in \Tors(U)$.  The combinatorial meaning of this Ext-orthogonality will be given in Lemma \ref{condp}.  We need an intermediate step first.

\begin{lemma} \label{condp1}
 Let $\Sr$ be a set of infinite strings and $\st \gamma$ be a string such that $c^+(\st \delta, \st \gamma) = \emptyset$ for any $\st \delta \in \Sr$. Then $c^+(\st \eta, \st \gamma) = \emptyset$ for any $\st\eta \in L(\Tors(\Sr))$.
\end{lemma}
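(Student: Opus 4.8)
The plan is to reduce a positive crossing $c^+(\st\eta,\st\gamma)$ with $\st\eta\in L(\Tors(\Sr))$ to a positive crossing $c^+(\st\delta,\st\gamma)$ with $\st\delta$ ranging over $\Sr$, via the inductive description of $L(\Tors(\Sr))=\Filt\Fac^\infty(\Sr)$ supplied by Lemma \ref{filtfac} (together with Lemma \ref{lifttor}, which says $L(\Tors(\Sr))$ is a torsion set, hence closed under factors and extensions of strings in $\Sr$). So first I would write $\eta = \eta_0 p_1^\pm \eta_1 p_2^\pm \cdots p_\ell^\pm \eta_\ell$ with each $\st{\eta_i}\in\Fac^\infty(\Sr)$, and argue by induction on $\ell$: the base case $\ell=0$ is that $\st\eta$ is itself a factor of some $\st\delta\in\Sr$, and the inductive step handles one further extension.

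Second, the heart of the argument is the following two reductions, which I would phrase as claims. \textbf{Reduction to factors.} If $\st\omega$ is a factor of $\st\delta$ and $c^+(\st\omega,\st\gamma)\neq\emptyset$, then $c^+(\st\delta,\st\gamma)\neq\emptyset$: this is because a positive crossing from $\st\omega$ to $\st\gamma$ is an overlap of the form \eqref{eq-crossing} with $\omega$ playing the role of $\gamma$, and since $\delta = \delta_1 q_0^- \omega r_0 \delta_2$ (or one of the one-sided degenerate shapes) with $q_0,r_0$ arrows — precisely the shape in Definition \ref{def:ext-fac} — the same letters $q,q'$ to the left of the overlap subwalk and $r,r'$ to its right survive inside $\delta$, so the overlap is still a positive crossing. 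The one-sided degenerate cases (where $\delta_1 q_0^-$ or $r_0\delta_2$ is absent, or $\delta=\omega$) only make the verification easier. \textbf{Reduction across extensions.} If $\st\eta = \st{\eta' p^\pm \omega}$ is an extension (with $\st{\eta'},\st\omega$ already known to satisfy $c^+(-,\st\gamma)=\emptyset$, where $\st{\eta'}$ is obtained from strictly fewer pieces and $\st\omega\in\Fac^\infty(\Sr)$), then any positive crossing $c^+(\st\eta,\st\gamma)$ has an overlap subwalk lying inside $\eta$; since $\gamma$ (hence $\st\gamma$, which does not self-cross — it appears on the right of every $c^+$ we consider, but we only need that the overlap is a \emph{connected} subwalk of $\eta$) contributes a single connected subwalk, the overlap subwalk of $\st\eta$ is either entirely inside $\eta'$, entirely inside $\omega$, or straddles the connecting arrow $p$. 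In the first two cases we contradict the inductive hypothesis directly (the same overlap is a positive crossing from $\st{\eta'}$ or from $\st\omega$ to $\st\gamma$). In the straddling case, the overlap contains the letter $p^\pm$; but $p$ is an arrow of $Q$, and for the overlap to be part of a \emph{positive} crossing in the sense of \eqref{eq-crossing} the letters immediately flanking the overlap subwalk on $\gamma$'s side must be an inverse-arrow on the left and an arrow on the right — comparing with the shape forced on $\eta$'s side, I would extract a shorter overlap lying entirely within $\eta'$ or within $\omega$ that is still a positive crossing against $\st\gamma$, again contradicting the inductive hypothesis. (This last bookkeeping is exactly the style of the sign-chasing in the proof of Lemma \ref{samedir} and Lemma \ref{extendable}, and I would mimic that.)

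I expect the straddling sub-case of the extension step to be the main obstacle: one has to be careful that when the maximal common subwalk $\omega_{\mathrm{cross}}$ of $\st\eta$ and $\st\gamma$ contains the glue-letter $p$, restricting it to a sub-overlap on one side of $p$ may fail to be \emph{maximal} as a common subwalk, so strictly speaking it need not itself be an overlap of the original two strings. The fix is to not insist on maximality but rather observe that a positive crossing only requires the \emph{displayed} local data $\gamma_1 q^-\,\omega\,q'\gamma_2$ versus $\delta_1 r\,\omega\,r'^-\delta_2$ with the four flanking arrows as in \eqref{eq-crossing}; one can always shrink $\omega_{\mathrm{cross}}$ to a maximal common subwalk on the appropriate side while keeping the four flanking letters, because on $\st\gamma$'s side these flanking letters are fixed once we land inside $\eta'$ (resp. $\omega$) — and then Lemma \ref{filtfac}'s description plus the factor-reduction closes the loop. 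Throughout I would use, without further comment, the identification $c^+(\gamma,\delta)=c^+(\gamma^-,\delta^-)$ and the splitting $c^+(\st\gamma,\st\delta)=c^+(\gamma,\delta)\sqcup c^+(\gamma,\delta^-)$ from Definition \ref{def:cross} to pass freely between strings and walks.
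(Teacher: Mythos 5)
Your first step already contains a genuine gap: the induction is built on the identification $L(\Tors(\Sr)) = \Filt\Fac^\infty(\Sr)$, but Lemma \ref{filtfac} only gives $\Filt\Fac^\infty(\Sr) = \Tors^\infty(\Sr)$, the \emph{smallest} torsion set containing $\Sr$, whereas $L(\Tors(\Sr))$ is, by Lemma \ref{lifttor}, the \emph{largest} torsion set whose confined part equals $\Tors(\Sr)$, and it is in general strictly bigger. For instance, any string of the form $\st{\extl u u v^{-} \extir{v^-}}$ (a path followed by an inverse path, cf. Lemma \ref{NCextension}(a)) has no confined factors at all, so it lies in $L(\Tors(\Sr))$ for every $\Sr$, yet it is typically not an iterated extension of factors of strings of $\Sr$. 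Hence a general $\st\eta \in L(\Tors(\Sr))$ admits no decomposition $\eta = \eta_0 p_1^\pm \eta_1 \cdots p_\ell^\pm \eta_\ell$ with $\st{\eta_i} \in \Fac^\infty(\Sr)$, your induction on $\ell$ cannot start, and at best the argument proves the conclusion for $\st\eta \in \Tors^\infty(\Sr)$, which is strictly weaker than the statement and insufficient for its later uses (Lemma \ref{condp} needs it for all of $L(\Tors(\Sr))$).

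Even granting such a decomposition, the extension step does not close as sketched. Write $\eta = \eta' p\,\omega$ and suppose the overlap $\omega_c$ of a positive crossing from $\st\eta$ to $\st\gamma$ straddles the glue arrow, say $\omega_c = \alpha p \beta$, so $\gamma = \gamma_1 r\,\alpha p \beta\, r'^{-}\gamma_2$. The piece $\beta$ then has the correct flanking letters on $\gamma$'s side (arrow $p$ on its left, inverse $r'^{-}$ on its right), but on $\eta$'s side it is a prefix of $\omega$ and so has no left flanking inverse letter; no positive crossing from $\st\omega$ (nor from $\st{\eta'}$) to $\st\gamma$ results, and shrinking the overlap cannot create the missing letter. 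The same failure occurs when $\omega_c$ is a suffix of $\eta'$ whose right flanking letter in $\eta$ is $p$. The paper's proof avoids both problems by inducting on the other side of the crossing: the overlap is a confined factor of $\st\eta$, hence lies in $\Tors(\Sr)$ by the very definition of $L(-)$, and a separate minimal-length argument shows that $\gamma$ admits no decomposition $\gamma = \gamma_1 s\,\omega\, t^{-}\gamma_2$ with $\st\omega \in \Tors(\Sr)$: if $\omega$ is a factor of some infinite $\st\delta \in \Sr$ one gets a positive crossing from $\st\delta$ to $\st\gamma$, contradicting the hypothesis, while if $\omega = \omega_1 q \omega_2$ is an extension one passes to the strictly shorter middle piece $\omega_2$, which stays inside $\gamma$ with the same forbidden flanking shape, so no straddling case ever arises. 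To repair your route you would have to replace the induction on the filtration length of $\eta$ by exactly this induction on the length of the confined overlap, which is essentially the paper's argument.
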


\begin{proof}
 Let us first prove the following claim.
 \begin{claim}
   There is no decomposition $\gamma = \gamma_1 s \omega t^{-} \gamma_2$ with $\st \omega \in \Tors(\Sr)$
 \end{claim}
 \begin{cproof}
 Suppose the contrary.  Take $\st\omega$ so it is of minimal length with respect to the existence of such decomposition.  As $\Tors(\Sr) = \Filt \Fac(\Sr)$ by Lemma \ref{filtfac}, $\omega \in \Fac\{\delta\}$ for some $\st \delta \in \Sr$ or $\omega = \omega_1 q \omega_2$ with $\st{\omega_1}, \st{\omega_2} \in \Tors(\Sr)$.

 In the first case, since $\omega$ is confined and $\delta$ is infinite, we have a decomposition $\delta = \delta_1's'^-\omega t'\delta_2$ for some arrows $s'$ and $t'$.  This gives a positive crossing in $c^+(\delta, \gamma)$, which contradict the assumptions.
 In the second case, we have $\gamma = \gamma_1 s \omega_1 q \omega_2 t^{-} \gamma$, which contradict the induction hypothesis for $\st{\omega_2}$.
 \end{cproof}

 Consider now $\st\eta\in L(\Tors(\Sr))$.
 Assume on the contrary that $c^+(\st{\eta}, \st{\gamma})\neq \emptyset$.
 Then (up to reversing the orientation on the underlying walks) there is a crossing 
 \[\crosswn{\gamma}{\gamma_1 r}{\omega}{r^{-} \gamma_2}{\eta}{\eta_1 q'^{-}}{r' \eta_2}.\]
 The second row gives $\st \omega \in \Tors(\Sr)$, and so the first row gives a decomposition of $\gamma$ that contradicts the above observation.
\end{proof}

\begin{lemma} \label{condp}
 Let $\Sr$ be a non-crossing set of infinite strings. Then, for any $\st \gamma \in \Sr$ and $\st\delta\in L(\Tors(\Sr))$, $c^+(\st \delta, \st \gamma) = \emptyset$.
\end{lemma}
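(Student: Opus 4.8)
The plan is to obtain this as an immediate consequence of Lemma \ref{condp1}, which has already done the substantive work. Fix a string $\st\gamma \in \Sr$. Since $\Sr$ is by hypothesis a non-crossing set of infinite strings, the very definition of \emph{non-crossing} gives $c^+(\st\delta, \st\gamma) = \emptyset$ for every $\st\delta \in \Sr$; note that this includes the case $\st\delta = \st\gamma$, because a non-crossing set consists in particular of non-self-crossing strings. This is precisely the hypothesis required to apply Lemma \ref{condp1} to the set $\Sr$ and the string $\st\gamma$. That lemma then yields $c^+(\st\eta, \st\gamma) = \emptyset$ for every $\st\eta \in L(\Tors(\Sr))$, and specialising $\st\eta$ to an arbitrary $\st\delta \in L(\Tors(\Sr))$ is exactly the asserted statement.

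The only point demanding any care is bookkeeping with the asymmetry of the positive-crossing relation $c^+(-,-)$: one must keep $\st\gamma \in \Sr$ in the second argument throughout, which is consistent both with the way non-crossing of $\Sr$ is phrased (no positive crossing \emph{into} $\st\gamma$ from anything in $\Sr$) and with the conclusion of Lemma \ref{condp1} (no positive crossing into $\st\gamma$ from anything in $L(\Tors(\Sr))$). I therefore expect no real obstacle here; the content sits in Lemma \ref{condp1}, whose proof rules out any decomposition $\gamma = \gamma_1 s \omega t^{-} \gamma_2$ with $\st\omega \in \Tors(\Sr)$ using the description $\Tors(\Sr) = \Filt\Fac(\Sr)$ from Lemma \ref{filtfac}, and then converts a hypothetical crossing from $L(\Tors(\Sr))$ into such a forbidden decomposition. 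The proof of Lemma \ref{condp} itself is then a one-line invocation.
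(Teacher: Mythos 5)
Your proposal is correct and follows exactly the paper's argument: the non-crossing hypothesis on $\Sr$ (including non-self-crossing) supplies the hypothesis of Lemma \ref{condp1}, which then immediately gives $c^+(\st\delta,\st\gamma)=\emptyset$ for all $\st\delta\in L(\Tors(\Sr))$. The paper's own proof is the same one-line invocation of Lemma \ref{condp1}.
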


\begin{proof}
 As $\Sr$ is non-crossing, $c^+(\st \delta, \st \gamma) = \emptyset$ for any $\st \gamma, \st \delta \in \Sr$, so by Lemma \ref{condp1}, $c^+(\st \delta, \st \gamma) = \emptyset$ for any $\st \gamma \in \Sr$ and $\st\delta\in L(\Tors(\Sr))$.
\end{proof}

\begin{proposition}\label{GisMax}
For a confined torsion set $\Tr$, $G(\Tr)$ is a maximal non-crossing set of infinite strings.
\end{proposition}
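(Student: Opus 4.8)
The plan is to first dispatch the easy half and then reduce everything to a single claim. That $G(\Tr)$ is a non-crossing set of infinite strings is immediate from the definition: it consists of infinite strings, and if $\st\gamma,\st\delta\in G(\Tr)$ then $\st\delta\in L(\Tr)$, so $c^{+}(\st\delta,\st\gamma)=\emptyset$. The real content is maximality, and it will rest on the following point, which I would establish first as a preliminary claim: $\Tors(G(\Tr))=\Tr$.

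One inclusion of the claim is cheap. Since $G(\Tr)\subseteq L(\Tr)$ and $L(\Tr)$ is a torsion set (Lemma~\ref{lifttor}), we have $\Tors^{\infty}(G(\Tr))\subseteq L(\Tr)$, hence $\Tors(G(\Tr))=\fin\Tors^{\infty}(G(\Tr))\subseteq\fin L(\Tr)=\Tr$, using Lemma~\ref{lifttor} once more and that $\Tr$ is confined. The reverse inclusion $\Tr\subseteq\Tors(G(\Tr))=\Filt\Fac(G(\Tr))$ (Lemma~\ref{filtfac}) is where the genuine work lies, and it is the step I expect to be the main obstacle. Here I would argue that each $\st\omega\in\Tr$ is obtained by iterated extensions of confined factors of infinite strings in $G(\Tr)$. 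Since $\Tr$ is closed under factors, every confined factor of $\st\omega$ lies in $\Tr$, so $\st\omega\in L(\Tr)$; one inducts on the number of letters of $\st\omega$, splitting $\st\omega$ at an interior arrow whenever possible (both $\Tors(G(\Tr))$ and $\Tr$ are closed under extensions) so as to reduce to the case where $\st\omega$ does not split further, attaches to the two ends of $\st\omega$ the maximal path and inverse-path continuations as in Lemma~\ref{NCextension}(a),(c), and then runs the smoothing-out machinery of Lemmas~\ref{cutstay}, \ref{extendable}, \ref{infextend} to produce an infinite string $\st\eta$ with $G(\Tr)\cup\{\st\eta\}$ non-crossing, with $\Tors(G(\Tr)\cup\{\st\eta\})=\Tors(G(\Tr))$, and with $\st\omega$ a confined factor of $\st\eta$. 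Completing $G(\Tr)\cup\{\st\eta\}$ to a maximal non-crossing set (via Proposition~\ref{tomax}, or Zorn's lemma on non-crossing sets of infinite strings, keeping $\Tors$ equal to $\Tors(G(\Tr))$ throughout) then places $\st\omega$ in $\Fac(G(\Tr))\subseteq\Tors(G(\Tr))$. The two delicate points are handling a self-crossing $\st\omega$ — taken care of by the length induction, since a shorter self-crossing witness of $\st\omega$ is one of its confined factors — and ensuring that no step in the smoothing enlarges the generated torsion set, which is precisely what the part~(c) statements of Lemmas~\ref{extendable} and~\ref{infextend} and all of Lemma~\ref{NCextension} are for.

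Granting the claim, maximality follows quickly. Suppose $G(\Tr)$ is not maximal; applying Proposition~\ref{tomax} to $\Sr=G(\Tr)$ gives $\hat\Sr\in\maxstr$ with $\Tors(\hat\Sr)=\Tors(G(\Tr))=\Tr$ by the claim. We have $\hat\Sr\neq G(\Tr)$ (otherwise $G(\Tr)$ would itself be maximal), and since $G(\Tr)$ is a non-crossing set of infinite strings, $\hat\Sr\subseteq G(\Tr)$ would force $\hat\Sr=G(\Tr)$ by maximality of $\hat\Sr$; hence $\hat\Sr\not\subseteq G(\Tr)$, and we may pick $\st\gamma\in\hat\Sr\setminus G(\Tr)$, which is infinite. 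Because $\Tors^{\infty}(\hat\Sr)$ is a torsion set whose confined part is $\Tors(\hat\Sr)=\Tr$ and $L(\Tr)$ is the largest torsion set with confined part $\Tr$ (Lemma~\ref{lifttor}), we get $\st\gamma\in\hat\Sr\subseteq\Tors^{\infty}(\hat\Sr)\subseteq L(\Tr)$. Since $\st\gamma\in L(\Tr)$ is infinite and not in $G(\Tr)$, the definition of $G(\Tr)$ supplies some $\st\eta\in L(\Tr)$ with $c^{+}(\st\eta,\st\gamma)\neq\emptyset$. On the other hand $\hat\Sr$ is non-crossing and contains $\st\gamma$, so $c^{+}(\st\delta,\st\gamma)=\emptyset$ for all $\st\delta\in\hat\Sr$, and Lemma~\ref{condp1} applied to the set $\hat\Sr$ and the string $\st\gamma$ gives $c^{+}(\st{\eta'},\st\gamma)=\emptyset$ for every $\st{\eta'}\in L(\Tors(\hat\Sr))=L(\Tr)$, contradicting $c^{+}(\st\eta,\st\gamma)\neq\emptyset$. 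Hence $G(\Tr)$ is maximal, and together with the fact that $G(\Tr)$ is a non-crossing set of infinite strings this yields $G(\Tr)\in\maxstr$. (Lemma~\ref{condp} is not needed for this argument, but it offers a cleaner route to $G(\Tr)\subseteq\hat\Sr$: for $\st\gamma\in G(\Tr)\subseteq L(\Tors(\hat\Sr))$ and $\st\delta\in\hat\Sr$ it forces $c^{+}(\st\gamma,\st\delta)=\emptyset$, while $c^{+}(\st\delta,\st\gamma)=\emptyset$ by definition of $G(\Tr)$, so $G(\Tr)\cup\hat\Sr$ is non-crossing and therefore equals $\hat\Sr$.)
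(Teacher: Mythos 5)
Your overall skeleton is the paper's: reduce maximality to the equality $\Tors(G(\Tr))=\Tr$ and then run Proposition~\ref{tomax} together with Lemma~\ref{condp1}/\ref{condp} to force the new string back into $G(\Tr)$. That second half of your argument is correct (and the paper's own proof of Proposition~\ref{GisMax} also silently uses $\Tors(G(\Tr))=\Tr$, which in the paper is Proposition~\ref{sgenerate}, proved in the next subsection by an argument independent of Proposition~\ref{GisMax}). The genuine gap is in your inline proof of the hard inclusion $\Tr\subseteq\Tors(G(\Tr))$, which is a real theorem of the paper with dedicated machinery (Lemmas~\ref{s0generate}, \ref{ext2}, \ref{ext3}), not something the completion lemmas of subsection~4.2 deliver.

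Concretely, two steps fail. First, ``splitting $\st\omega$ at an interior arrow'': writing $\omega=\omega_1 q\omega_2$ only exhibits $\st{\omega_1}$ as a factor of $\st\omega$, while $\st{\omega_2}$ sits in submodule position and need not lie in $\Tr$, so your length induction cannot be applied to it; the paper avoids this by splitting along a positive crossing from an element of $\Tr$ (proof of Lemma~\ref{s0generate}), which puts all three pieces in $\Tr$. Second, the endgame: Lemmas~\ref{cutstay}, \ref{extendable}, \ref{infextend} are the wrong tools here. Their hypothesis is two-sided non-crossing with $\Sr=G(\Tr)$, but a string $\st\omega\in\Tr$ is only guaranteed to satisfy $c^+(\st\omega,G(\Tr))=\emptyset$; positive crossings \emph{from} strings of $G(\Tr)$ to $\st\omega$ are allowed, so the hypotheses may simply fail. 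Even when they apply, their conclusion is only $\Tors(G(\Tr)\cup\{\st\eta\})\subseteq\Tors(G(\Tr)\cup\{\st{\gamma_0}\})=\Tors(G(\Tr))\vee\Tors(\st\omega)$, which is vacuous for the goal; and $\st\omega\in\Fac(\{\st\eta\})$ with $G(\Tr)\cup\{\st\eta\}$ non-crossing does not place $\st\omega$ in $\Fac(G(\Tr))$ unless $\st\eta\in G(\Tr)$, i.e.\ unless no string of $L(\Tr)$ positively crosses $\st\eta$ --- precisely the kind of statement you are trying to prove, so the argument becomes circular. The fix is the paper's: work with $G^\circ(\Tr)$, whose defining invariant is ``no positive crossing from anything in $L(\Tr)$'', prove $\Tr=\Tors(\fin G^\circ(\Tr))$ by the crossing-guided induction of Lemma~\ref{s0generate}, and extend confined strings of $G^\circ(\Tr)$ to infinite ones \emph{inside} $G^\circ(\Tr)$ via Lemmas~\ref{ext2} and \ref{ext3} (the paper explicitly flags that these replace \ref{extendable}/\ref{infextend} because the condition to be preserved is different).
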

\begin{proof}
Note that by definition, $G(\Tr)$ is a non-crossing set of infinite strings.  Suppose $G(\Tr)$ is not maximal, then by Proposition \ref{tomax}, there exists an infinite string $\st \gamma \in L(\Tr) \setminus G(\Tr)$ such that $G(\Tr) \cup \{\st \gamma\}$ is non-crossing.  So, by Lemma \ref{condp}, $c^+(\st \delta, \st \gamma) = 0$ for any $\st \delta \in \Tors(G(\Tr)) = \Tr$ so $\st \gamma \in G(\Tr)$. It is a contradiction.
\end{proof}

\subsection{Generation of torsion sets}\label{subsec:surj}

This subsection is dedicated to showing that $G(\Tr)$ does generate $\Tr$, i.e. $\Tr=\Tors(G(\Tr))$ (in particular, $\Tors$ is a surjective map).
Instead of working directly with $G(\Tr)$, let us consider a larger set of strings:
$$G^\circ(\Tr) := \{\st \gamma \in L(\Tr) \mid c^+(\st \delta, \st \gamma) = \emptyset \text{ for all }\st\delta\in L(\Tr)\}.$$
In particular, $G(\Tr)$ is just the set of infinite strings in $G^\circ(\Tr)$.

\begin{lemma} \label{s0generate}
 We have $\Tr = \Tors(\fin( G^\circ(\Tr)))$.
\end{lemma}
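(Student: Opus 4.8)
The plan is to establish the two inclusions of $\Tr = \Tors(\fin(G^\circ(\Tr)))$ separately, the nontrivial direction by an induction on string length. For $\Tors(\fin(G^\circ(\Tr))) \subseteq \Tr$: by definition $G^\circ(\Tr) \subseteq L(\Tr)$, so $\fin(G^\circ(\Tr)) \subseteq \fin(L(\Tr)) = \Tr$ by Lemma \ref{lifttor}. Since $\Tr$ is itself a torsion set and $\Tors^\infty(-)$ returns the smallest torsion set containing its argument, $\Tors^\infty(\fin(G^\circ(\Tr))) \subseteq \Tr$, and applying $\fin$ gives $\Tors(\fin(G^\circ(\Tr))) \subseteq \Tr$.

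For the reverse inclusion I would first record that $\fin$ of a torsion set is again a torsion set (a factor of a confined string is confined, and an extension $\st{\gamma q \delta}$ of confined strings is confined), so $\Tors(\fin(G^\circ(\Tr))) = \fin\Tors^\infty(\fin(G^\circ(\Tr)))$ is a torsion set containing $\fin(G^\circ(\Tr))$; in particular it is closed under extensions. Then fix $\st\omega \in \Tr$ and induct on $\length(\st\omega)$. The base case handles strings that are stationary or of length $1$: for such $\st\omega$ no decomposition $\omega = \omega_1 r \nu r'^{-} \omega_2$ with $r, r'$ arrows exists, so $c^+(\st\delta, \st\omega) = \emptyset$ for every string $\st\delta$; as $\st\omega \in \Tr \subseteq L(\Tr)$ this puts $\st\omega \in G^\circ(\Tr)$, hence $\st\omega \in \fin(G^\circ(\Tr)) \subseteq \Tors(\fin(G^\circ(\Tr)))$. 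The same remark lets us assume $\st\omega \notin G^\circ(\Tr)$ in the inductive step.

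So in the inductive step there is $\st\delta \in L(\Tr)$ with $c^+(\st\delta, \st\omega) \neq \emptyset$, which, after a suitable choice of walk representatives, we write as
\[\crosswn{\delta}{\delta_1 q^{-}}{\nu}{q' \delta_2}{\omega}{\omega_1 r}{r'^{-} \omega_2}\]
with $q, q', r, r'$ arrows, so that $\delta = \delta_1 q^{-} \nu q' \delta_2$ and $\omega = \omega_1 r \nu r'^{-} \omega_2$, and $\nu$ is confined, being a subwalk of the confined walk $\omega$. Reading off factors: $\st\nu$ is a confined factor of $\st\delta$, hence lies in $\Tr$ because $\st\delta \in L(\Tr)$; $\st{\omega_1}$ is a factor of $\st\omega$ via $\omega = \omega_1 \cdot r \cdot (\nu r'^{-} \omega_2)$ and $\st{\omega_2}$ is a factor of $\st\omega$ via $\omega = (\omega_1 r \nu) \cdot r'^{-} \cdot \omega_2$, so both lie in $\Tr$ — when $\omega_1$ (resp. $\omega_2$) is empty, read $\st{\omega_1}$ (resp. $\st{\omega_2}$) as the stationary string at $s(r)$ (resp. $s(r')$), which is still a factor of $\st\omega$. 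Each of $\st\nu, \st{\omega_1}, \st{\omega_2}$ has length strictly less than $\length(\st\omega)$ since $\omega$ carries the extra letters $r$ and $r'^{-}$, so by the induction hypothesis all three lie in $\Tors(\fin(G^\circ(\Tr)))$. Finally $\st{\nu r'^{-} \omega_2}$ is the extension of $\st\nu$ by $\st{\omega_2}$ along $r'$ (as $(\nu r'^{-} \omega_2)^{-} = \omega_2^{-} r' \nu^{-}$), and $\st\omega = \st{\omega_1 r (\nu r'^{-} \omega_2)}$ is the extension of $\st{\nu r'^{-} \omega_2}$ by $\st{\omega_1}$ along $r$; since $\Tors(\fin(G^\circ(\Tr)))$ is extension-closed, $\st\omega \in \Tors(\fin(G^\circ(\Tr)))$, completing the induction.

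I expect the main work to be in this inductive step: making the passage between the positive-crossing data and the extension/factor structure precise — in particular reading off from the sign conventions of Definition \ref{def:cross} that the overlap $\nu$ is flanked on each side by exactly one inverse arrow and one arrow in the pattern recorded above — and treating cleanly the degenerate configurations (empty $\omega_1$, empty $\omega_2$, or stationary $\nu$) so that the induction measure strictly decreases in every branch. The remaining verifications are routine bookkeeping with the definitions from Sections \ref{sec:basicstr} and \ref{sec:torset}.
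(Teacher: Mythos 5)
Your proof is correct and takes essentially the same route as the paper's: induction on the length of a confined string in $\Tr$, splitting off the overlap of a positive crossing together with the two flanking factors (all shorter and in $\Tr$) and reassembling $\st\omega$ by two extensions inside $\Tors(\fin(G^\circ(\Tr)))$. If anything, your write-up is slightly more careful than the paper's, since you quantify the no-crossing condition over $L(\Tr)$ (as the definition of $G^\circ(\Tr)$ actually requires, the paper's wording using $\Tr$ there) and you treat the degenerate configurations and the easy inclusion explicitly.
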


\begin{proof}
 It is immediate that $\Tr \supseteq \Tors(G^\circ(\Tr))$. Let us prove the converse inclusion.

 Let $\st \delta \in \Tr$. We prove $\st \delta \in \Tors(\{\st \gamma \in G^\circ(\Tr) \text{ confined}\})$ by induction on the length of $\delta$. If $c^+(\st \epsilon, \st \delta) = \emptyset$ for any $\st \epsilon \in \Tr$, by definition, $\st \delta \in G^\circ(\Tr)$.  Otherwise, there is a crossing
 \[\crosswn{\delta}{\delta_1 q}{\omega}{r^{-} \delta_2}{\epsilon}{\epsilon_1 q'^{-}}{r' \epsilon_2}\]
 for some $\st \epsilon \in \Tr$. The second row gives $\st \omega \in \Tr$, and the first row gives $\st \delta_1, \st \delta_2 \in \Tr$. By induction hypothesis, $\st \delta_1, \st \delta_2, \st \omega \in \Tors(\fin( G^\circ(\Tr)))$. So, as $\Tors(\fin( G^\circ(\Tr)))$ is torsion set, we deduce $\st \delta = \st{\delta_1 q \omega r^{-} \delta_2} \in \Tors(\fin( G^\circ(\Tr)))$.
\end{proof}

In order to replace $\fin(G^\circ(\Tr))$ in Lemma \ref{s0generate} by the set $G(\Tr)$ of infinite strings in $G^\circ(\Tr)$, which means that  strings in $\fin(G^\circ(\Tr))$ are in $\Tors(G(\Tr))$.  To do this, we  show that strings in $\fin(G^\circ(\Tr))$ can be concatenated into the infinite ones in $G^\circ(\Tr)$.
Hence, the strategy is analogous to what we do in completing a non-crossing set, i.e. Lemma \ref{extendable} and Lemma \ref{infextend}, but this time we have a different set of conditions to check (instead of `$\Sr\cup\{\st\gamma\}$ non-crossing $\Rightarrow\;\Sr\cup\{\st{\gamma\beta}\}$ non-crossing, we want `$\gamma\in G^\circ(\Tr)\Rightarrow \gamma\beta\in G^\circ(\Tr)$').
The following lemma is analogous to Lemma \ref{extendable}.

\begin{lemma} \label{ext2}
 For any right-confined walk $\gamma$ with $\st \gamma \in G^\circ(\Tr)$,
 we have a string $\st{\gamma q \eta}\in G^\circ(\Tr)$ for some walk $\eta$ and some arrow $q$ .
\end{lemma}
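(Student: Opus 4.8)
The plan is to run the argument of Lemma~\ref{extendable}, but extending $\gamma$ to the right along \emph{arrows} instead of along inverses of arrows; the first such arrow will be the $q$ of the statement. The first candidate I would try is $\gamma\extr{\gamma}$, with $q$ its first new letter (the unique forward continuation of $\gamma$). Choosing the forward extension makes the ``$c^+$-half'' of membership in $G^\circ(\Tr)$ essentially automatic: in any positive crossing from $\st\delta$ to $\st{\gamma\extr{\gamma}}$, so one with $\delta = \delta_1 q_0^{-}\omega q_0'\delta_2$ and $\gamma\extr{\gamma} = \epsilon_1 r\,\omega\,r'^{-}\epsilon_2$ for arrows $q_0,q_0',r,r'$, the letter $r'^{-}$ following the overlap $\omega$ inside $\gamma\extr{\gamma}$ is an inverse arrow; this is impossible inside $\extr{\gamma}$ and impossible at the junction (whose first letter is $q$, an arrow), so $\omega$ together with $r$ and $r'^{-}$ lies entirely inside $\gamma$, and the crossing is already a positive crossing from $\st\delta$ to $\st\gamma$ — excluded because $\st\gamma\in G^\circ(\Tr)$. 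Taking $\delta=\gamma\extr{\gamma}$ in the same reasoning shows $\gamma\extr{\gamma}$ is non-self-crossing. Hence the only obstruction to $\st{\gamma\extr{\gamma}}\in G^\circ(\Tr)$ is membership in $L(\Tr)$: every confined factor of $\gamma\extr{\gamma}$ either lies in $\gamma$ (hence in $\Tr$, since $\st\gamma\in L(\Tr)$) or straddles the junction, having the shape $\omega_1\omega_2$ with $\omega_1$ a confined factor of $\gamma$ and $\omega_2$ a non-empty prefix of $\extr{\gamma}$.

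If no straddling confined factor lies outside $\Tr$, then $\st{\gamma\extr{\gamma}}\in G^\circ(\Tr)$ and we are done, taking $\eta$ to be $\extr{\gamma}$ with its first letter $q$ removed. Otherwise I would pick, among all straddling confined factors $\omega_1\omega_2\notin\Tr$, one with $\omega_2$ of minimal length (imposing later, for that $\omega_2$, minimality on a secondary datum, exactly as with $\omega''$ then $z^{-}$ in the proof of Lemma~\ref{extendable}); write $\extr{\gamma}=\omega_2 u$ and let $v$ be the vertex at the end of $\omega_2$. At $v$ one turns downwards: follow $\gamma\omega_2$ and continue as far as possible along inverses of arrows, i.e.\ pass to $\gamma\omega_2\extir{(\gamma\omega_2)}$, or to the minimal extension thereof produced by the inductive construction of Lemma~\ref{extendable}. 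One then checks that the resulting string $\st{\gamma q\eta}$, with $\eta$ everything after $q$, (i) is non-self-crossing, using Lemma~\ref{samedir} once the forward part (inside $\gamma$) and the inverse-arrow tail are under control; (ii) still has $c^+(\st\delta,-)=\emptyset$ for every $\st\delta\in L(\Tr)$, because the only new peak is the one at $v$, and minimality of $\omega_2$ lets any incoming positive crossing through it be pulled back either to a crossing through a strictly shorter straddling factor or to a crossing inside $\gamma$, neither of which exists; and (iii) lies in $L(\Tr)$, since every confined factor of it lies in $\gamma\omega_2$ — hence is a factor of $\gamma$, or is an extension that factors through the strictly shorter strings $\omega_1$, $\omega_1\omega_2'$, $\omega_2'$ (for $\omega_2'$ a proper prefix of $\omega_2$), all in $\Tr$ by minimality — or has an inverse-arrow tail and therefore reduces to such a factor.

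The main obstacle is step (iii) of the second case: keeping the modified walk inside $L(\Tr)$ while turning. In Lemma~\ref{extendable} the delicate bookkeeping concerned \emph{crossings}; here crossings are cheap (the forward tail kills incoming ones, Lemma~\ref{samedir} kills self-crossings once the two halves are controlled), but \emph{factors} are expensive, so the double minimality must be arranged so that every confined factor of the turned string, and every incoming positive crossing through the peak it creates, traces back to a strictly shorter straddling factor of $\gamma\extr{\gamma}$ or to an obstruction for $\st\gamma$ itself — the former excluded by minimality, the latter by hypothesis. I expect the verification to be a close mirror of the two internal claims in the proof of Lemma~\ref{extendable}.
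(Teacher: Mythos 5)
Your first case is essentially sound: if every ``straddling'' confined factor of $\gamma\extr{\gamma}$ lies in $\Tr$, then $\st{\gamma\extr{\gamma}}\in L(\Tr)$, and your observation that the post-overlap inverse letter of any incoming positive crossing must sit inside $\gamma$ correctly reduces crossings from $L(\Tr)$ to crossings into $\st{\gamma}$ (and separate non-self-crossing is then not needed, since taking $\st\delta=\st{\gamma\extr{\gamma}}\in L(\Tr)$ is covered). But note that your stated justification of non-self-crossing --- ``taking $\delta=\gamma\extr{\gamma}$ in the same reasoning'' --- is not valid by itself: the reduction controls only the \emph{target} side of a self-crossing, and what you obtain is a positive crossing from $\st{\gamma\extr{\gamma}}$ to $\st{\gamma}$, which contradicts nothing because $\st{\gamma\extr{\gamma}}$ need not lie in $L(\Tr)$. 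This is precisely why the paper's proof of the analogous claim for $\st{\gamma q}$ is not automatic and has to manufacture an auxiliary string in $L(\Tr)$ via Lemma \ref{NCextension}.

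The genuine gap is in your second case, and it sits in (ii), not (iii). For $N=\gamma\,\omega_2\,\extir{(\gamma\omega_2)}$ point (iii) is in fact automatic: no confined factor can end inside a right-infinite all-inverse tail, so every confined factor of $N$ is a factor of $\gamma$ or a straddling factor whose arrow-part is a \emph{proper} prefix of $\omega_2$, hence lies in $\Tr$ by your minimality. By contrast, (ii) is exactly where the work is: an incoming positive crossing from some $\st\delta\in L(\Tr)$ whose overlap ends strictly inside the new inverse tail (post-overlap inverse at tail position $k\geq 2$, pre-overlap arrow in $\gamma\omega_2$) does not ``pull back'' to a crossing into $\gamma$ nor to a shorter straddling factor; running the factor/extension-closure argument only yields $\st{\gamma\omega_2\zeta}\in L(\Tr)$ for $\zeta$ the overlapped piece of the tail, and this contradicts neither the minimality of $\omega_2$ nor $\omega_1\omega_2\notin\Tr$ (the bad factor is not a factor of $\gamma\omega_2\zeta$ once $\zeta$ is nonempty). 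Your minimality is on the wrong parameter for this step. The paper's device is different and does not transfer to your construction: it appends a \emph{finite} inverse tail $v^-$ supplied by Lemma \ref{NCextension} so that $\st{\gamma q v^-}\in L(\Tr)$, then truncates to the shortest prefix $\eta$ of $qv^-$ with $\st{\gamma\eta}\in L(\Tr)$, and it is the minimality of this cut, combined with $L(\Tr)$ being a torsion set, that kills incoming crossings through the tail. Without an argument ruling out tail-overlapping incoming crossings (or a replacement minimality of that kind), your case 2 is not proved; the deferred ``secondary datum as with $\omega''$ and $z^-$'' is unspecified, and the smoothing construction of Lemma \ref{extendable} (which follows the crossing partner) is a different operation from turning into $\extir{(\gamma\omega_2)}$.
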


\begin{proof}
 As $\gamma$ is right-confined, there exists an arrow $q$ such that $\gamma q$ is a walk. 
 \begin{claim}
  The string $\st{\gamma q}$ is non-self-crossing.
 \end{claim}
 \begin{cproof}
  If it was self-crossing, the crossing would have the form
 \begin{equation} \crosswn{\gamma q}{\alpha_1 s^{-}}{\omega}{q}{\gamma^\pm}{\alpha'_1 s'}{q'^{-} \alpha'_2}. \label{eqcrd} \end{equation}
 Let us choose $\omega$ of minimal length. 

 The string $\st{s^{-} \omega}$ is not self-crossing because it is a substring of $\st \gamma$, so a self-crossing of $\st{s^{-} \omega q}$ would have the form
 \[\crosswn{s^{-} \omega q}{\varepsilon_1 s''^{-}}{\omega'}{q}{\omega^\pm}{\varepsilon_1' s'''}{q''^{-} \varepsilon_2}.\]
 Combining with \eqref{eqcrd}, we obtain a crossing
 \[\crosswn{\gamma q}{\alpha_1 \varepsilon_1 s''^{-}}{\omega'}{q}{\gamma^\pm = (\alpha_1 s^{-} \omega)^\pm}{[\alpha_1 s^{-}]\varepsilon_1' s'''}{q''^{-} \varepsilon_2[s \alpha_1^{-}]},\]
 contradicting the fact that $\omega$ has been taken minimal, so that in fact $\st{s^{-} \omega q}$ is not self-crossing. 
 
 We define a walk $\epsilon:=[\alpha]s^-\omega q[\beta^-]$, where $\alpha$ (resp. $\beta$) appears if and only if $s\in Q$ (resp. $q\in Q$), in which case $\alpha$ (resp. $\beta$) is given as in (resp. the left-analogue of) Lemma \ref{NCextension} (b).

 In any case, we have $\st{\epsilon} \in {L(\Tors(\st\omega))}$ using Lemma \ref{NCextension} and Lemma \ref{lifttor}.
 Since $\omega$ is a factor of $\gamma$, we have $L(\Tors(\st \omega)) \subseteq {L(\Tors(\st \gamma))} \subseteq {L(\Tr)}$ and so $\st{\epsilon}\in L(\Tr)$.

 Using \eqref{eqcrd}, we can find a crossing
 \[\crosswn{ [\alpha]s^{-} \omega q [\beta^{-}]}{[\alpha] s^{-}}{\omega}{q [\beta^{-}]}{\gamma^\pm}{\alpha'_1 s'}{q'^{-} \alpha'_2,}\]
 which contradicts the defining property of $\st \gamma \in G^\circ(\Tr)$. 
 \end{cproof}

 Since $q\in Q$, by Lemma \ref{NCextension} (c), there exists a  walk $v$ involving only arrows so that $\st{\gamma q v^{-}}$ is not self-crossing and $\st{\gamma q v^{-}} \in L(\Tors(\st \gamma)) \subseteq L(\Tr)$.  Define $\eta$ to be the shortest prefix of $q v^-$ so that $\st{\gamma \eta^-} \in L(\Tr)$.
 
 To conclude the proof, it suffices to see that $\st{\gamma \eta} \in G^\circ(\Tr)$. If it was not the case, there would be a crossing
 \[\crosswn{\gamma \eta}{\gamma_1 q'}{\omega}{r'^{-} \gamma_2}{\delta}{\delta_1 q''^{-}}{r'' \delta_2} \]
 for some $\st \delta \in L(\Tr)$. As $c^+(\st \gamma, \st \delta) = \emptyset$, $r'^{-}$ is necessarily a letter of $\alpha$, so we are done if the subwalk $\gamma_1q'\omega$ of $\gamma \eta$ is in $L(\Tr)$, as this will contradict the minimality of $\eta$.
 Indeed, as $L(\Tr)$ is a torsion set (by Lemma \ref{lifttor}),  if $\st{\gamma q \alpha} \in L(\Tr)$, then so is its factor $\st{\gamma_1}$.
 Likewise, as $\st\delta\in L(\Tr)$, we have its (confined) factor $\st \omega\in L(\Tr)$.
 Hence, we have the extension $\st{\gamma_1 q' \omega} \in L(\Tr)$.  
\end{proof}

Now we have the analogue of Lemma \ref{infextend}.

\begin{lemma}\label{ext3}
Let $\gamma$ be a right-confined walk with $\st{\gamma}\in G^\circ(\Tr)$.
Then there is a right-infinite walk $\gamma_\infty$ so that $\Fac^\infty\{\st{\gamma}\}\subset \Fac^\infty\{\st{\gamma_\infty}\}$ and $\st{\gamma_\infty}\in G^\circ(\Tr)$.
\end{lemma}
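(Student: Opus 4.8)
The plan is to run a one-sided version of the construction used for Lemma \ref{infextend}, with Lemma \ref{ext2} playing the role of Lemma \ref{extendable} and the condition ``lies in $G^\circ(\Tr)$'' replacing ``adjoining it to $\Sr$ preserves non-crossingness''. Concretely, I would set $\gamma_0:=\gamma$ and define walks $\gamma_i$ for $i\geq 1$ inductively: if $\gamma_{i-1}$ is right-infinite put $\gamma_i:=\gamma_{i-1}$; otherwise $\gamma_{i-1}$ is right-confined with $\st{\gamma_{i-1}}\in G^\circ(\Tr)$, and Lemma \ref{ext2} yields an arrow $q_i$ and a walk $\eta_i$ with $\st{\gamma_i}:=\st{\gamma_{i-1}q_i\eta_i}\in G^\circ(\Tr)$. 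Each $\gamma_{i-1}$ is an initial segment of $\gamma_i$, extended only on the right and by at least the arrow $q_i$ whenever $\gamma_{i-1}$ is right-confined. Then I would let $\gamma_\infty$ be the limit $\bigcup_i\gamma_i$.

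First I would check that $\gamma_\infty$ has the right shape. If the sequence stabilises at some right-infinite $\gamma_N$, then $\gamma_\infty=\gamma_N$ is right-infinite; otherwise $\gamma_\infty$ gains a letter at every step, hence is right-unbounded, hence right-infinite. In either case $\gamma$ is a subwalk of $\gamma_\infty$ obtained by right-concatenation, and the letter following $\gamma$ in $\gamma_\infty$, if any, is an arrow; consequently every factor of $\st\gamma$ is still a factor of $\st{\gamma_\infty}$ (in the borderline case $\gamma=\gamma_1q^{-}\omega$, the letter following $\omega$ in $\gamma_\infty$ is the arrow appended at the first step, so $\st\omega$ remains a factor). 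Thus $\Fac^\infty\{\st\gamma\}\subseteq\Fac^\infty\{\st{\gamma_\infty}\}$.

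Next I would verify $\st{\gamma_\infty}\in G^\circ(\Tr)$. For membership in $L(\Tr)$: any confined factor $\st\omega$ of $\st{\gamma_\infty}$ involves only finitely many letters, hence is a factor of $\st{\gamma_i}$ for some large $i$; since $\st{\gamma_i}\in G^\circ(\Tr)\subseteq L(\Tr)$ and $L(\Tr)$ is a torsion set by Lemma \ref{lifttor}, we get $\st\omega\in L(\Tr)$, and being confined it lies in $\fin(L(\Tr))=\fin(\Tr)=\Tr$. Hence $\Fac(\st{\gamma_\infty})\subseteq\Tr$, i.e. $\st{\gamma_\infty}\in L(\Tr)$; equivalently, one may argue via Lemma \ref{filtfac} that $\Tors(\st{\gamma_\infty})=\Filt\Fac(\st{\gamma_\infty})\subseteq\Tr$. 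For the crossing condition, suppose $c^+(\st\delta,\st{\gamma_\infty})\neq\emptyset$ for some $\st\delta\in L(\Tr)$. By Definition \ref{def:cross} the overlap of such a crossing is flanked by a letter on each side within $\gamma_\infty$, so this overlap together with its two flanking letters is a confined subwalk of $\gamma_\infty$; it therefore lies inside $\gamma_i$ for some large $i$, producing a crossing in $c^+(\st\delta,\st{\gamma_i})$ and contradicting $\st{\gamma_i}\in G^\circ(\Tr)$. Hence $c^+(\st\delta,\st{\gamma_\infty})=\emptyset$ for all $\st\delta\in L(\Tr)$, which completes the argument.

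The only genuinely delicate points are the two ``bounded support'' observations: that the overlap of a crossing is confined (because, by Definition \ref{def:cross}, it is flanked by an arrow or an inverse of an arrow on each side in each walk), and that a confined factor of $\gamma_\infty$ already appears in some approximant $\gamma_i$. Granting these, the proof is a routine one-sided adaptation of that of Lemma \ref{infextend}; the applicability of Lemma \ref{ext2} at each stage and the behaviour of factors under right-concatenation are immediate.
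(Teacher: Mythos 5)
Your proposal is correct and follows essentially the same route as the paper: iterate the extension of Lemma \ref{ext2}, pass to the limit $\gamma_\infty$, and rule out crossings $c^+(\st\delta,\st{\gamma_\infty})$ by restricting their confined overlaps to some approximant $\st{\gamma_i}\in G^\circ(\Tr)$. Your explicit verification that $\st{\gamma_\infty}\in L(\Tr)$ (via confined factors living in some $\gamma_i$) and that factors survive because each appended segment starts with an arrow are exactly the points the paper treats as immediate, so there is no substantive difference.
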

\begin{proof}
Let $\gamma_0:=\gamma$.
Inductively define a walk $\gamma_i$ for $i\geq 1$ as follows.
\begin{itemize}
\item If $\gamma_{i-1}$ is right-infinite, then $\gamma_{i}:=\gamma_{i-1}$;
\item otherwise, applying the construction in Lemma \ref{ext2} to define $\gamma_{i}:=\gamma_{i-1}\alpha_i\in G^\circ(\Tr)$ for some non-stationary walk $\alpha_i$ whose first letter is an arrow.
\end{itemize}
In particular, $\st{\gamma}$ is a factor of $\st{\gamma_i}$ for all $i\geq 0$.

Let us take $\gamma_\infty$ the natural right-infinite walk obtained as a limit of the $\gamma_i$'s.  It is immediate that $\st {\gamma}$ is a factor of $\st{\gamma_\infty}$, and so the claimed inclusion of sets follows.

It remains to argue that $\st {\gamma_\infty} \in G^\circ(\Tr)$. Indeed, any crossing in $c^+(\st \delta, \st{\gamma_\infty})$ for $\st\delta\in L(\Tr)$ can be restricted to a prefix of $\gamma_\infty$, hence to one of the $\gamma_i$'s, which contradicts the construction that $\st{\gamma_i}\in G^\circ(\Tr)$.
\end{proof}

We have enough tools to prove the goal of this subsection.

\begin{proposition} \label{sgenerate}
For any confined torsion class $\Tr\in \torsQ$, we have $\Tr = \Tors(G(\Tr))$.
\end{proposition}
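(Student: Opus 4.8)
The plan is to establish the two inclusions $\Tors(G(\Tr)) \subseteq \Tr$ and $\Tr \subseteq \Tors(G(\Tr))$ separately. The first is formal: by definition $G(\Tr) \subseteq L(\Tr)$, and $L(\Tr)$ is a torsion set by Lemma \ref{lifttor}, so $\Tors^\infty(G(\Tr)) \subseteq L(\Tr)$; hence $\Tors(G(\Tr)) = \fin \Tors^\infty(G(\Tr)) \subseteq \fin L(\Tr) = \fin \Tr = \Tr$, where the penultimate equality is part of Lemma \ref{lifttor} and the last one holds because $\Tr$ is confined.

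For the reverse inclusion the starting point is Lemma \ref{s0generate}, which gives $\Tr = \Tors(\fin(G^\circ(\Tr)))$. Since $\Tors(G(\Tr))$ is itself a confined torsion set, and $\Tors(-) = \Filt\Fac(-)$ (Lemma \ref{filtfac}) is monotone and fixes confined torsion sets, it suffices to prove the inclusion of generators $\fin(G^\circ(\Tr)) \subseteq \Tors(G(\Tr))$. So I would take a confined string $\st\omega \in G^\circ(\Tr)$, fix a confined (hence right-confined) walk $\omega$ representing it, and apply Lemma \ref{ext3} to obtain a right-infinite walk $\omega_\infty$ with $\st{\omega_\infty} \in G^\circ(\Tr)$ and $\Fac^\infty\{\st\omega\} \subseteq \Fac^\infty\{\st{\omega_\infty}\}$.

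The remaining point is to upgrade $\omega_\infty$ to an \emph{infinite} (i.e.\ also left-infinite) walk without leaving $G^\circ(\Tr)$, which again uses Lemma \ref{ext3}. If $\omega_\infty$ is already left-infinite then it is infinite, so $\st{\omega_\infty} \in G(\Tr)$ by definition of $G(\Tr)$, and $\st\omega$ is a confined factor of it. Otherwise $\omega_\infty$ is left-confined, so $\omega_\infty^-$ is right-confined and represents $\st{\omega_\infty} \in G^\circ(\Tr)$; applying Lemma \ref{ext3} to $\omega_\infty^-$ gives a right-infinite walk $\gamma$ with $\st\gamma \in G^\circ(\Tr)$ and $\Fac^\infty\{\st{\omega_\infty}\} = \Fac^\infty\{\st{\omega_\infty^-}\} \subseteq \Fac^\infty\{\st\gamma\}$. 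Since $\omega_\infty^-$ is left-infinite (being the inverse of a right-infinite walk) and $\gamma$ is obtained from it by appending letters on the right, $\gamma$ is still left-infinite, hence infinite, so $\st\gamma \in G(\Tr)$ and $\st\omega$ is again a confined factor of $\st\gamma$. In both cases I have produced $\st\eta \in G(\Tr)$ with $\st\omega \in \Fac^\infty\{\st\eta\}$; as $\st\omega$ is confined this means $\st\omega \in \Fac\{\st\eta\} \subseteq \Fac(G(\Tr)) \subseteq \Filt\Fac(G(\Tr)) = \Tors(G(\Tr))$ by Lemma \ref{filtfac}. This gives $\fin(G^\circ(\Tr)) \subseteq \Tors(G(\Tr))$ and completes the argument.

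I do not expect a genuine obstacle here, because the substantive work has already been carried out: enlarging a confined string of $G^\circ(\Tr)$ to an infinite one that still lies in $G^\circ(\Tr)$ is precisely Lemma \ref{ext3}, which itself rests on the one-step extension of Lemma \ref{ext2} and on the non-self-crossing infinite extensions of Lemma \ref{NCextension}. Granting those, the proposition reduces to bookkeeping about confined factors, the closure properties of torsion sets, and the definition of $\Tors$.
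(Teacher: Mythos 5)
Your proposal is correct and follows essentially the same route as the paper: the inclusion $\Tors(G(\Tr))\subseteq\Tr$ via $G(\Tr)\subseteq L(\Tr)$ and Lemma \ref{lifttor}, and the reverse inclusion by reducing to $\fin(G^\circ(\Tr))$ via Lemma \ref{s0generate} and then applying Lemma \ref{ext3} twice (once to the confined walk, once to the inverse of the resulting right-infinite walk) to embed each confined string of $G^\circ(\Tr)$ as a factor of an infinite string in $G(\Tr)$, concluding with Lemma \ref{filtfac}. The only difference is your slightly more explicit case distinction on whether the intermediate walk is already left-infinite, which the paper glosses over.
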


\begin{proof}
  Since $G(\Tr)=\{\st \gamma \in G^\circ(\Tr) \text{ infinite}\}\subseteq G^\circ(\Tr) \subseteq L(\Tr)$, we have
\[
 \Tors(G(\Tr)) \subseteq \Tors(G^\circ(\Tr)) \subseteq \Tors(L(\Tr)).
\]
 By Lemma \ref{lifttor}, $L(\Tr)$ is already a torsion set, so $\Tors(L(\Tr))=\fin(L(\Tr))$, and by Lemma \ref{lifttor} again, this in turn is equal to $\Tr$.
 Hence, we have $\Tors(G(\Tr))\subseteq \Tr$.

 To show that $\Tors(G(\Tr))\supseteq \Tr$, by Lemma \ref{s0generate} it is sufficient to prove that any $\st\gamma\in \fin(G^\circ(\Tr))$ also belongs to $\Tr'$.  Indeed, since $\st{\gamma}$ is confined, Lemma \ref{ext3} yields a left-confined walk $\gamma_\infty$ with $\st{\gamma}\in \Fac^\infty\{\st{\gamma_\infty}\}$ and $\st{\gamma_\infty}\in G^\circ(\Tr)$.
 Apply Lemma \ref{ext3} again with the right-confined walk $\gamma_\infty^-$ then yields an infinite walk $\tilde{\gamma}$ with 
 $\st{\tilde{\gamma}}\in G^\circ(\Tr)$ and $\st{\gamma} \in \Fac^\infty\{\st{\gamma_\infty}\}\subseteq \Fac^\infty\{ \st{\tilde{\gamma}}\}$.
 By Lemma \ref{filtfac} and $\st{\gamma}$ being confined, we have $\st{\gamma}\in \Tors(\st{\tilde{\gamma}}) \subset \Tr'$, as required.
\end{proof}

\subsection{The proof}\label{subsec:partb}
We are one more lemma away from finishing the proof of Theorem \ref{mainthmstrings}.
The following provides the translation of the partial order structure to $\maxstr$.

\begin{lemma} \label{crosstoorder}
 Let $\Sr \in \maxstr$ and any set $\Sr'$ of strings such that $c^+(\st\gamma', \st\gamma) = \emptyset$ for all $\st\gamma\in \Sr$ and all $\st\gamma'\in \Sr'$. Then $\Tors(\Sr') \subseteq \Tors(\Sr)$. 
\end{lemma}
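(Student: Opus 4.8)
The plan is to show $\Tors(\Sr') \subseteq \Tors(\Sr)$ by checking each string $\st\gamma' \in \Sr'$ lies in $\Tors(\Sr)$, using $\Tors(\Sr) = \fin(L(\Tors(\Sr)))$ from Lemma~\ref{lifttor} together with the characterisation of $G(\Tors(\Sr))$. The key observation is that, because $\Sr \in \maxstr$ is a maximal non-crossing set, Proposition~\ref{sgenerate} gives $\Tors(G(\Tors(\Sr))) = \Tors(\Sr)$, and more importantly the maximality forces $\Sr = G(\Tors(\Sr))$: indeed $\Sr \subseteq G(\Tors(\Sr))$ since every $\st\gamma \in \Sr$ is an infinite string in $L(\Tors(\Sr))$ with $c^+(\st\delta,\st\gamma)=\emptyset$ for all $\st\delta \in L(\Tors(\Sr))$ by Lemma~\ref{condp}; and since $G(\Tors(\Sr))$ is itself non-crossing (and consists of infinite strings) while $\Sr$ is maximal non-crossing, the two sets coincide. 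Hence the hypothesis can be rephrased as: $c^+(\st\gamma', \st\gamma) = \emptyset$ for all $\st\gamma' \in \Sr'$ and all $\st\gamma \in G(\Tors(\Sr))$.

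First I would fix $\st\gamma' \in \Sr'$; since $\Tors(\Sr')$ is the smallest torsion set containing $\Sr'$, and torsion sets are closed under the operations generating $\Tors(\Sr')$ from $\Sr'$ (Lemma~\ref{filtfac}), it suffices to show $\st\gamma' \in \Tors(\Sr)$ for each such $\st\gamma'$ — but one must be careful, because $\st\gamma'$ may be an infinite string while $\Tors(\Sr)$ consists only of confined strings. So the precise claim to prove is $\Fac(\st\gamma') \subseteq \Tors(\Sr)$, i.e. $\st\gamma' \in L(\Tors(\Sr))$; then $\Tors(\st\gamma') = \fin\Fac^\infty(\{\st\gamma'\}) \cdots$ is contained in $\Tors(\Sr)$ by Lemma~\ref{filtfac}, and taking the union over all $\st\gamma' \in \Sr'$ and applying \eqref{veecup} gives $\Tors(\Sr') \subseteq \Tors(\Sr)$.

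So the heart of the matter is: if $\st\gamma'$ is a string with $c^+(\st\gamma', \st\gamma) = \emptyset$ for all $\st\gamma \in G(\Tors(\Sr))$, then $\st\gamma' \in L(\Tors(\Sr))$. Suppose not; by definition of $L(-)$ there is a confined factor $\st\omega$ of $\st\gamma'$ with $\st\omega \notin \Tors(\Sr)$, and I would choose such $\st\omega$ of minimal length, appearing in a decomposition $\gamma' = \gamma'_1 q^- \omega r \gamma'_2$ (with the end pieces possibly absent; handle those degenerate cases in parallel). Following the template of Proposition~\ref{tomax}, the arrows $q',r'$ with $q'\omega r'^-$ a walk give a non-self-crossing string $\st{q'\omega r'^-}$ which, by the minimality of $\omega$, does not cross any string of $\Tors(\Sr)$, hence (via Lemmas~\ref{NCextension} and \ref{cutstay}) extends to an infinite string $\st\delta \in L(\Tors(\Sr))$ whose generated torsion set is contained in $\Tors(\Sr)$, with $\st\delta$ containing $q'\omega r'^-$ as a subwalk and not crossing $G(\Tors(\Sr))$. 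By Lemma~\ref{condp} (and the fact that $\st\delta \in L(\Tors(\Sr))$, so $c^+(\st\eta,\st\delta)=\emptyset$ for $\st\eta\in L(\Tors(\Sr))$ cannot be directly invoked here — rather, one checks $\st\delta$ does not cross $\Sr = G(\Tors(\Sr))$ either way, which by maximality forces $\st\delta \in \Sr$). But then $\st\delta \in \Sr$ and $\st\gamma'$ has a subwalk $\omega$ flanked by $q^-,r$ while $\st\delta$ has the same subwalk flanked by $q',r'^-$ — comparing signs, this produces a positive crossing in $c^+(\st\gamma', \st\delta)$ with $\st\delta \in \Sr$, contradicting the hypothesis. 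The main obstacle I anticipate is the careful bookkeeping of the sign patterns on the four letters flanking $\omega$ in the two walks $\gamma'$ and $\delta$ — ensuring the crossing one extracts is genuinely a \emph{positive} crossing from $\st\gamma'$ to $\st\delta$ in the sense of Definition~\ref{def:cross}, rather than the reverse — together with correctly treating the degenerate cases where $\omega$ is stationary or sits at an end of $\gamma'$.
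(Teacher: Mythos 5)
Your reduction to single strings, the identification $\Sr = G(\Tors(\Sr))$, and the endgame (extracting a positive crossing in $c^+(\st{\gamma'},\st\delta)$ from the flanking pattern $q^-\omega r$ versus $q'\omega r'^-$) are fine, but the middle of your argument has a genuine gap: the completion of the minimal bad factor $\st\omega$ to an infinite string $\st\delta\in L(\Tors(\Sr))$ that maximality forces into $\Sr$ cannot be carried out from the one-directional hypothesis. The machinery you invoke (Lemmas \ref{NCextension}, \ref{cutstay}, \ref{extendable}, \ref{infextend}, following Proposition \ref{tomax}) needs \emph{two-sided} non-crossing as input: in Proposition \ref{tomax} the claims that $\st{q'\omega r'^-}$ is non-self-crossing and does not cross $\Sr$ are proved using that $\st\gamma$ is non-self-crossing and that $\st\omega$ does not cross strings of $\Sr$ in either direction, and the hypotheses of Lemmas \ref{cutstay} and \ref{infextend} are that $\Sr\cup\{\cdot\}$ is non-crossing. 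Your hypothesis only excludes $c^+(\st{\gamma'},\Sr)$; nothing prevents some $\st\varepsilon\in\Sr$ from positively crossing \emph{into} the interior of $\st\omega$ (such a crossing merely exhibits a strictly shorter factor of $\st\omega$, which lies in $\Tors(\Sr)$ by minimality, so no contradiction arises), and members of $\Sr'$ are not assumed non-self-crossing. Hence $\Sr\cup\{\st{q'\omega r'^-}\}$ need not be non-crossing, the extension lemmas do not apply, and even granted an infinite $\st\delta\supseteq q'\omega r'^-$ in $L(\Tors(\Sr))$, maximality of $\Sr$ does not force $\st\delta\in\Sr$, since $\Sr\cup\{\st\delta\}$ may fail to be non-crossing because of crossings from $\Sr$ into $\st\delta$. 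A further warning sign: the ``degenerate cases'' you propose to handle in parallel are exactly where the claim breaks. For $\vec{A}_2$, the confined string $\st{a_0}$ satisfies $c^+(\st{a_0},\st\delta)=\emptyset$ for \emph{every} $\st\delta$ (a positive crossing from a one-letter walk is impossible), yet $\Tors(\{\st{a_0}\})\neq\emptyset=\Tors(\Sr)$ for the minimal $\Sr$; so the argument, like the lemma itself in all its applications, must restrict $\Sr'$ to infinite strings, where factors are automatically flanked on both sides.

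The paper's proof is different and uses the hypothesis in the direction it is actually given, namely that nothing in $\Sr\cup\Sr'$ positively crosses into a fixed $\st\gamma\in\Sr$. This is precisely the input of Lemma \ref{condp1}, which then yields $c^+(\st\epsilon,\st\gamma)=\emptyset$ for all $\st\epsilon\in L(\Tors(\Sr\cup\Sr'))$, hence $\Sr\subseteq G(\Tors(\Sr\cup\Sr'))$; maximality of $\Sr$ together with Proposition \ref{GisMax} forces equality, Proposition \ref{sgenerate} gives $\Tors(\Sr)=\Tors(\Sr\cup\Sr')$, and the join formula for confined torsion sets gives $\Tors(\Sr')\subseteq\Tors(\Sr)$. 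To repair your route you would in effect need a one-sided crossing-control statement for factors of $\st{\gamma'}$, which is essentially Lemma \ref{condp1} itself — at which point the paper's shorter lattice-theoretic argument is the natural path.
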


\begin{proof}
 Fix any $\st\gamma\in \Sr$.
 Since the assumption says that $c^+(\st\delta,\st\gamma)=0$ for all $\st\delta\in \Sr\cup \Sr'$, it follows from Lemma \ref{condp1} that implies $c^+(\st\epsilon, \st\gamma) = \emptyset$ for any $\st\epsilon\in L(\Tors(\Sr\cup\Sr'))$.   Hence, by definition of $G(-)$, we have $\Sr \subseteq G(\Tors(\Sr' \cup \Sr))$.  On the other hand, maximality of $\Sr$ forces that $\Sr = G(\Tors(\Sr' \cup \Sr))$.  Therefore, applying $\Tors$ on both side and Proposition \ref{sgenerate} yields $\Tors(\Sr) = \Tors(\Sr \cup \Sr')$.
 By the complete lattice property of confined torsion sets, we have $\Tors(\Sr\cup \Sr')=\Tors(\Sr) \vee \Tors(\Sr')$, and so $\Tors(\Sr) \supseteq \Tors(\Sr')$.
\end{proof}

We are now ready to prove Theorem \ref{mainthmstrings}.

\begin{proof}[Proof of Theorem \ref{mainthmstrings}] \eqloc{mainthmstrings}
 (a)  We showed in Proposition \ref{GisMax} that $G(-)$ is a well-defined map.  It is immediate from Proposition \ref{sgenerate} that $\Tors$ is surjective, and so it remains to show that $G(\Tr)$ is the only maximal non-crossing set of infinite strings that generates $\Tr$.
 But this is just a consequence of Lemma \ref{condp1}, which says that any non-crossing set of infinite strings $\Sr$ such that $\Tr = \Tors(\Sr)$ satisfies $\Sr \subseteq G(\Tr)$.

 (b) Let $\Sr,\Sr'\in \maxstr$.  If we have $\Sr\geq \Sr'$, i.e. $\Tors(\Sr') \subseteq \Tors(\Sr)$, then it is clear that $\Sr' \subseteq L(\Tors(\Sr')) \subseteq  L(\Tors(\Sr))$.  So it follows from the definition of $G(\Tors(\Sr))$ and (a) that $c^+(\Sr', \Sr) = \emptyset$.  Conversely, suppose that $c^+(\Sr', \Sr) = \emptyset$.  Then, by Lemma \ref{crosstoorder}, we have $\Tors(\Sr) \supseteq \Tors(\Sr')$, i.e. $\Sr\geq \Sr'$.
\end{proof}


\subsection{Parametrized version}\label{subsec:wts}

We are going to enhance the non-crossing set of infinite strings, which is needed to apply Theorem \ref{mainthmstrings} to the setting of torsion classes for gentle algebras.

From now on, we fix a set $B$, and call its elements \defn{parameters}.

\begin{definition}[Periodic strings, parametrized set of strings]
 A walk $\gamma$ is \defn{periodic} if there is some $r\in \Z$ such that the $i$-th letter in $\gamma$ is equal to the $(i\pm r)$-th letters  in $\gamma$ for all $i\in \Z$.
 A string is periodic if its underlying walk is so.
 For a set $\Sr$ of strings, denote by $\Sr^p$ the subset of non-null periodic strings in $\Sr$.

 A ($B$-)\defn{parametrized} set of strings is a pair $(\Sr, \blambda)$ consisting of a set $\Sr$ of strings and a ($B$-)\defn{parametrization} map $\blambda: \Sr^p\to 2^B$, where $2^B$ is the power set of $B$.
 Denote by $\maxstr[B]$ the set of parametrized maximal sets of non-crossing (infinite) strings, i.e. 
\[
\maxstr[B] := \{ (\Sr,\blambda) \text{ $B$-parametrized set } \mid \Sr \in \maxstr\}.
\]
 We define a relation $\geq$ on $\maxstr[B]$ given by $(\Sr, \blambda)\geq (\Sr', \blambda')$ if $\Sr \geq \Sr'$ (i.e. $c^+(\st\gamma,\st\delta)=\emptyset$ for all $\st\gamma\in \Sr'$ and $\st\delta\in \Sr$) and for any $\st\gamma\in \Sr^p \cap \Sr'^p$, we have $\blambda(\st\gamma)\supseteq \blambda'(\st\gamma)$.
\end{definition}

\begin{proposition}\label{parametrized}
 The set $\maxstr[B]$ is a complete lattice whose joins are given by
\[\arraycolsep=2pt \bigvee_{i \in I} (\Sr_i, \blambda_i) = (\Sr_\vee, \blambda_\vee)\text{, where } \left\{\begin{array}{rcl} \Sr_\vee &:=& \bigvee_{i \in I} \Sr_i,\\
\blambda_\vee(\st\gamma) &:=& \bigcup_{\Sr_i \ni \st\gamma} \blambda_i(\st\gamma) \;\; \forall \st\gamma \in \Sr_\vee^p,  \end{array}\right.
\]
and whose meets are given by
 \[\arraycolsep=2pt \bigwedge_{i \in I} (\Sr_i, \blambda_i) = (\Sr_\wedge, \blambda_\wedge)\text{, where } \left\{\begin{array}{rcl} \Sr_\wedge &:=& \bigwedge_{i \in I} \Sr_i,\\ \blambda_\wedge(\st\gamma) &:=& \bigcap_{\Sr_i \ni \st\gamma} \blambda_i(\st\gamma)\;\; \forall \st\gamma \in \Sr_\wedge^p. \end{array}\right.\]
\end{proposition}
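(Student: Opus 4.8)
The plan is to deduce this from Theorem \ref{mainthmstrings} combined with the analogous assertion for the power set lattice, using the "check joins + bijection" shortcut recorded in Remark \ref{rem:lattice}. First I would establish that $\maxstr[B]$ carries a well-defined partial order: reflexivity and transitivity of $\geq$ follow because $\Sr\geq \Sr'$ is a partial order on $\maxstr$ by Theorem \ref{mainthmstrings}(b) (it is literally pulled back from inclusion of confined torsion sets), and because the condition $\blambda(\st\gamma)\supseteq \blambda'(\st\gamma)$ on the common periodic strings is compatible with composition; antisymmetry uses antisymmetry of $\geq$ on $\maxstr$ together with the observation that if $\Sr = \Sr'$ then $\Sr^p\cap\Sr'^p = \Sr^p$, so $\blambda \supseteq \blambda' \supseteq \blambda$ pointwise forces $\blambda = \blambda'$.

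Next I would verify that the stated $\Sr_\vee$ and $\Sr_\wedge$ make sense. For the meet, $\Sr_\wedge = \bigwedge_i \Sr_i$ is an element of $\maxstr$ by Theorem \ref{mainthmstrings}; since $\torsQ$ is a complete lattice (Proposition in Section \ref{sec:torset}), $\bigwedge_i \Tors(\Sr_i) = \bigcap_i \Tors(\Sr_i)$ exists and $\Sr_\wedge := G(\bigcap_i \Tors(\Sr_i))$; similarly $\Sr_\vee := G(\bigvee_i \Tors(\Sr_i))$. The only subtlety is that $\blambda_\wedge$ and $\blambda_\vee$ are only defined on the periodic strings of $\Sr_\wedge$ resp. $\Sr_\vee$, so I must check that every $\st\gamma \in \Sr_\wedge^p$ lies in some $\Sr_i$ (so that $\bigcap_{\Sr_i\ni\st\gamma}$ is over a nonempty index set and the value lands in $2^B$), and likewise for $\Sr_\vee^p$. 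This is where a small combinatorial lemma is needed: a periodic infinite string in $G(\Tr)$ is forced to appear in any maximal non-crossing set whose torsion set contains $\Tr$; equivalently, periodic strings in $\Sr_\wedge = G(\bigcap\Tors(\Sr_i))$ belong to each $\Sr_i$, and periodic strings in $\Sr_\vee$ belong to at least one $\Sr_i$ because $\Tors(\Sr_\vee) = \bigvee \Tors(\Sr_i) = \Tors(\bigcup \Sr_i)$ by \eqref{veecup}-type reasoning together with the classification; I would isolate this as a claim and prove it by tracing through $\Tors = \Filt\Fac$ and the fact that a periodic string is not a proper factor of any string and generates a torsion set not generated without it.

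Finally, with these set-theoretic facts in hand, the lattice axioms are routine bookkeeping. I would show $(\Sr_\vee,\blambda_\vee)$ is an upper bound: $\Sr_\vee \geq \Sr_i$ by Theorem \ref{mainthmstrings}(b), and for $\st\gamma\in \Sr_\vee^p\cap\Sr_i^p$ we have $\blambda_\vee(\st\gamma) = \bigcup_{\Sr_j\ni\st\gamma}\blambda_j(\st\gamma)\supseteq \blambda_i(\st\gamma)$; then if $(\Sr',\blambda')\geq (\Sr_i,\blambda_i)$ for all $i$, minimality of the join in $\maxstr$ (hence in $\torsQ$) gives $\Sr'\geq \Sr_\vee$, and for $\st\gamma\in \Sr'^p\cap\Sr_\vee^p$ the claim above puts $\st\gamma$ in some $\Sr_i$, whence $\blambda'(\st\gamma)\supseteq \blambda_i(\st\gamma)$ for every such $i$ and so $\blambda'(\st\gamma)\supseteq \bigcup_{\Sr_i\ni\st\gamma}\blambda_i(\st\gamma) = \blambda_\vee(\st\gamma)$ — here one also needs that $\st\gamma\in\Sr'^p\cap\Sr_i^p$ whenever $\st\gamma\in\Sr_i$ and $\st\gamma\in\Sr'^p$, which is clear since periodicity is intrinsic. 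The meet is dual, using $\bigwedge$ in $\maxstr$ and intersections of parameter sets. Completeness is automatic since all the operations were defined for arbitrary index sets $I$. I expect the main obstacle to be precisely the claim about which periodic strings survive into $\Sr_\vee$ and $\Sr_\wedge$ — i.e., controlling $\Sr^p$ under joins and meets of maximal non-crossing sets — since everything else is a formal consequence of Theorem \ref{mainthmstrings} and the complete-lattice structure on $\torsQ$; I would prove it by showing that for a periodic infinite string $\st\gamma$, membership of $\st\gamma$ in $G(\Tr)$ is equivalent to $\st\gamma\in L(\Tr)$ together with $\st\gamma$ not being "generated" (its proper factors generating a strictly smaller torsion set), a condition that is monotone in $\Tr$ in the appropriate direction.
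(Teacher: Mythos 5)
There is a genuine gap, and it sits exactly where you predicted the ``main obstacle'' would be — but in the opposite sense: the auxiliary lemma you propose is false. You claim that every periodic string of $\Sr_\vee$ lies in some $\Sr_i$ and every periodic string of $\Sr_\wedge$ lies in each $\Sr_i$ (equivalently, that a periodic string in $G(\Tr)$ must appear in every maximal non-crossing set whose torsion set contains $\Tr$). The paper's own Kronecker example refutes this: with the notation of Examples \ref{eg:torset2} and \ref{eg:mainthm2}, no $\Sr_m$ with $m$ finite contains a periodic string, yet their meet corresponds to $\Tr_\infty=\bigcap_{m<\infty}\Tr_m$ and contains $\st{{}^\infty\pi_1^\infty}$; dually, the join of the $\Sr'_n$ with $n$ finite corresponds to $\Tr'_\infty=\bigvee_{n<\infty}\Tr'_n$ and again contains $\st{{}^\infty\pi_1^\infty}$, which lies in no $\Sr'_n$. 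Fortunately the lemma is not needed: in the join formula an empty union is just $\emptyset\in 2^B$ and causes no trouble in either the upper-bound or the least-upper-bound check, while in the meet formula the empty intersection is to be read as $B$ (the top of the complete lattice $2^B$) — and this is precisely the value forced by the greatest-lower-bound property, since a lower bound $(\Sr,\blambda)$ may carry an arbitrary parameter set on a periodic string of $\Sr_\wedge$ shared with no $\Sr_i$. So this part of your plan should be deleted rather than repaired; the remaining bookkeeping for the join and meet is the same as the paper's and goes through without any control on where the periodic strings of $\Sr_\vee$, $\Sr_\wedge$ come from.

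The step you gloss over is the one place the paper does real work: transitivity of $\geq$ on $\maxstr[B]$, i.e.\ that the relation is a partial order at all. If $(\Sr,\blambda)\leq(\Sr',\blambda')\leq(\Sr'',\blambda'')$ and $\st\gamma\in\Sr^p\cap\Sr''^p$, the hypotheses only give containments of parameter sets over $\Sr^p\cap\Sr'^p$ and over $\Sr'^p\cap\Sr''^p$, so to chain them you must show $\st\gamma\in\Sr'$; ``compatible with composition'' does not establish this. The actual argument uses Theorem \ref{mainthmstrings}(b): from $\Sr\leq\Sr'\leq\Sr''$ one gets $c^+(\Sr,\Sr')=\emptyset=c^+(\Sr',\Sr'')$, hence $c^+(\st\gamma,\Sr')=\emptyset=c^+(\Sr',\st\gamma)$, so $\Sr'\cup\{\st\gamma\}$ is non-crossing and maximality of $\Sr'$ forces $\st\gamma\in\Sr'$, whence $\blambda(\st\gamma)\supseteq\blambda'(\st\gamma)\supseteq\blambda''(\st\gamma)$. (Also, your appeal to Remark \ref{rem:lattice} is off target here: this proposition involves no bijection to be checked, only the direct verification of the poset axioms and of the displayed join and meet formulas.)
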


\begin{proof}
 We start by proving that $\maxstr[B]$ is a partially ordered set. The reflexivity and antisymmetry are immediate. For the transitivity, suppose that $(\Sr, \blambda) \leq (\Sr', \blambda')$ and $(\Sr', \blambda') \leq (\Sr'', \blambda'')$. Then we have $\Sr \leq \Sr' \leq \Sr''$. Suppose that $\st\gamma \in \Sr^p \cap \Sr''^p$. By Theorem \ref{mainthmstrings}, as $\Sr \leq \Sr' \leq \Sr''$, $c^+(\Sr, \Sr') = c^+(\Sr', \Sr'') = \emptyset$, so $c^+(\st\gamma, \Sr') = c^+(\Sr', \st\gamma) = \emptyset$, so $\st \gamma \in \Sr'$ by maximality of $\Sr'$. Therefore, we have $\blambda(\st\gamma) \subseteq \blambda'(\st\gamma) \subseteq \blambda''(\st\gamma)$.

 It is immediate that $(\Sr_\vee, \blambda_\vee) \geq (\Sr_i, \blambda_i)$ for all $i \in I$. Suppose that $(\Sr, \blambda) \geq (\Sr_i, \blambda_i)$ for all $i \in I$. Then $\Sr \geq \bigvee_{i \in I} \Sr_i = \Sr_\vee$. Moreover, if $\st\gamma \in \Sr^p \cap \Sr_\vee^p$, for each $i \in I$ such that $\st\gamma \in \Sr_i$, we have $\blambda_i(\st\gamma) \subseteq \blambda(\st\gamma)$. Hence $\blambda_\vee(\st\gamma) \subseteq \blambda(\st\gamma)$. We proved that $(\Sr_\vee, \blambda_\vee) \leq (\Sr, \blambda)$. As a conclusion, we proved that $(\Sr_\vee, \blambda_\vee)$ is the join of all $(\Sr_i, \blambda_i)$. Similarly, $(\Sr_\wedge, \blambda_\wedge)$ is the meet of all $(\Sr_i, \blambda_i)$.
\end{proof}


\section{Reminder on gentle algebras}\label{sec:gentlealg}

In this section, we recall various facts and constructions around the representation theory of gentle algebras.

\subsection{Gentle algebras}

Fix a field $k$ throughout.

\begin{definition}[Gentle algebra]
Let $(Q,R)$ be a gentle quiver.
The \defn{gentle algebra} associated to $(Q,R)$ is the completion of the bound path algebra $kQ/\langle R\rangle$ by the ideal generated by $Q_1$.
\end{definition}
Note that a gentle algebra associated to $(Q,R)$ is finite-dimensional if, and only if, for any cycle (path) $\rho$ in $Q$, $\rho$ is zero as an element of the algebra.
The usual convention in the literature tends to call a gentle algebra (in the sense presented above) a \emph{locally gentle algebra}.
Since finite-dimensionality does not affect any of our arguments, we will use gentle algebra instead for a less bulky exposition.

From now on, we will fix a gentle quiver $(Q,R)$ and a blossoming $(Q',R')$.
Denote by $\Lambda$ the gentle algebra associated to $(Q,R)$ and $\fl \Lambda$ the category of finite-dimensional right $\Lambda$-modules.
By ($\Lambda$-)module, we will always mean the finite-dimensional ones.

\subsection{String and band modules} \label{defmods}

Suppose $\gamma= \cdots a_ia_{i+1} \cdots$ is a walk in $(Q,R)$ with indexing interval $I\subset \Z$.
For convenience, by a \defn{position} in $\gamma$ we mean a number $i$ in the set
\[
\hat{I}:= \begin{cases} I\cup \{1+\max I\}, &\text{if $I$ is bounded above};\\
I, & \text{otherwise.}
\end{cases}
\]
For the position $i \in \hat I$, we denote by $v_i \in Q_0$ the corresponding vertex.  In particular, for $i \in I$, $s(a_i) = v_i$ and $t(a_i) = v_{i+1}$.  Consider the vector space $\bigoplus_{i \in \hat I} k x_i$, we define the action of $\Lambda$ on it as follows:
 \begin{itemize}
  \item for $v\in Q_0$, $x_i e_v = x_i \delta_{v,v_i}$, where $\delta_{v,v_i}=1$ if $v=v_i$ and $0$ otherwise;
  \item for $q \in Q_1$ and $i \in \bar I$,
   \[
    x_iq = \left\{\begin{array}{ll}
                    x_{i+1} & \text{if $q=a_i$,} \\
	            x_{i-1} & \text{if $q=a_i^-$,} \\
                    0 & \text{else.}
                   \end{array}\right.
   \]
 \end{itemize}
It is clear that the resulting $\Lambda$-module is isomorphic to the one defined by $\gamma^-$, and so we denote such a \defn{string module} by $X(\st\gamma)$.
The basis constructed above is called the \defn{canonical basis} of  $X(\st \gamma)$.
Note that if $\gamma = \un{q}{r}$, then $X(\st \gamma)$ is just the corresponding simple.
Clearly, $X(\st\gamma)$ is finite-dimensional if and only if $\st\gamma$ is bounded (equivalently, confined).

Consider now the case when $\gamma$ is periodic that contains at least one arrow and one inverse of arrow in its letters.
A subwalk $\delta$ of $\gamma$ is said to be the \defn{primitive period} of $\gamma$ if $\gamma = {}^\infty\delta^\infty$ and there is no other subwalk $\epsilon$ of $\gamma$ such that $\epsilon^r=\delta$ for some $r>1$.
Fix now such a primitive period $\delta$ and let $J\subset I$ be the indexing set of letters in $\delta$.
Let $\sigma$ be the operation on $\gamma$ given by shifting the letters to the right by $|J|$ places.
Abusing notation, we denoted also by $\sigma$ the induced  \defn{period-shifting} automorphism 
\begin{align*}
\sigma: X(\st\gamma)  & \to X(\st\gamma) \\
 x_i & \mapsto x_{i-|J|}.
\end{align*}
This gives $X(\st\gamma)$ the structure of a $k[T,T^{-1}]$-$\Lambda$-bimodule where $T$ acts as $\sigma$. Therefore, for a finite-dimensional $k[T,T^{-1}]$-module $M$, we can define a $\Lambda$-module 
\[X_M(\st \gamma) := M \otimes_{k[T, T^{-1}]} X(\st \gamma).\]
If $M$ is indecomposable, we call $X_M(\st \gamma)$ a \defn{band module}.

We can describe the action of $\Lambda$ on a band module $X_M(\st\gamma)$ more explicitly.  By construction, $X_M(\st\gamma)$ is free of rank $|J|$ over $k[T,T^{-1}]$ with \defn{canonical basis} $\{x_j\}_{j\in J}$, i.e. we have $X_M(\st \gamma) \cong  \bigoplus_{j \in J} M x_j$ as a vector space.  Then $\Lambda$-module structure is given by 
 \begin{itemize}
  \item for $v\in Q_0$ and $m\in M$, $mx_ie_{v} = mx_i\delta_{v,v_i}$;
  \item for $q \in Q_1$, $i \in J$ and $m \in M$,
   \[
    m x_i q = \left\{\begin{array}{ll}
         m x_{i+1} & \text{if $i < \max J$ and $q=a_i$,} \\
         m x_{i-1} & \text{if $i > \min J$ and $q = a_{i-1}^-$,} \\
        (m\cdot T) x_{\min J} & \text{if $i = \max J$ and $q = a_i$,} \\
        (m\cdot T^{-1}) x_{\max J} & \text{if $i = \min J$ and $q = a_{i-1}^-$,} \\
        0 & \text{else.}
       \end{array}\right.
   \]
 \end{itemize}
Note that $X_M(\st\gamma)\cong X_{\iota(M)}(\st{\gamma^-})$, where $\iota$ is the $k[T,T^{-1}]$-automorphism on $M$ given by swapping the action of $T$ and $T^{-1}$.
In particular, $X_M(\st\gamma)\cong X_{M'}(\st\gamma')$ if and only if (1) $M\cong M'$, and (2) $\gamma'$ can be obtained by $\gamma$ via shifting letters, or reversing the whole walk, or both.
The equivalence class $[\st\gamma, M]$ induced is called a \defn{band with parameter $M$}.


\begin{proposition}{\rm \cite{BR,CB18}}
 The string modules and band modules defined above are indecomposable.
 Moreover, the set of confined strings and bands with parameters in $(Q,R)$ corresponds to the set of isoclasses of indecomposable finite-dimensional $\Lambda$-modules bijectively.
\end{proposition}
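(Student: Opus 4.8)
The plan is to deduce this from the classical classification of indecomposable modules over finite-dimensional string algebras, due to Butler--Ringel \cite{BR} (see also Crawley-Boevey \cite{CB18}), after a reduction that removes the finite-dimensionality hypothesis on $\Lambda$.

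First I would reduce to the finite-dimensional case. Given $M \in \fl\Lambda$, the completed arrow ideal $\m$ of $\Lambda$ (generated by $Q_1$) acts nilpotently on $M$ since $M$ has finite composition length, so $M\m^n = 0$ for some $n \geq 1$ and $M$ is a module over $\Lambda_n := \Lambda/\m^n$, which is a finite-dimensional gentle algebra. Conversely, the string module $X(\st\gamma)$ attached to a confined string and the band module $X_M(\st\gamma)$ attached to a band with parameter are finite-dimensional and annihilated by $\m^n$ for $n$ large, hence are $\Lambda_n$-modules; and the confined strings (resp. bands with parameter) of $(Q,R)$ are exactly those of $\Lambda_n$ once $n$ exceeds their length. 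It therefore suffices to prove the statement for each $\Lambda_n$, and $\Lambda_n$ is a string algebra in the sense of \cite{BR}.

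Next I would verify indecomposability. For a confined string $\st\gamma$, one checks on the canonical basis that every element of $\End_\Lambda(X(\st\gamma))$ is a $k$-combination of the identity and ``graph maps'' arising from overlaps of $\gamma$ with itself, and that the graph maps span a nilpotent ideal; hence $\End_\Lambda(X(\st\gamma))$ is local. For a band module, one uses first that $k[T,T^{-1}]$ is a principal ideal domain, so an indecomposable finite-dimensional $k[T,T^{-1}]$-module $M$ is $k[T,T^{-1}]/(p^{r})$ for some irreducible $p$, with $\End_{k[T,T^{-1}]}(M)$ local; using primitivity of the period one then identifies $\End_\Lambda(X_M(\st\gamma))$ with an extension of $\End_{k[T,T^{-1}]}(M)$ by a nilpotent ideal (again spanned by graph maps), so it is local too.

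The main obstacle is exhaustiveness: every indecomposable $N \in \mod\Lambda_n$ is a string or band module. Here I would invoke the functorial filtration method of \cite{BR}: the string-algebra axioms put a linear order on the set of arrows incident to each vertex, which lets one define for each string $\gamma$ subfunctors $(-)_\gamma^{\pm}$ of the identity on $\mod\Lambda_n$ whose subquotients $N_\gamma^+/N_\gamma^-$ are ``thin''; a refinement argument shows these subquotients exhaust $N$, a one-dimensional subquotient splits off a string summand, and the remaining periodic part is governed by a matrix problem of Kronecker (that is, $\widetilde{\mathbb A}_1$) type whose indecomposables are the $k[T,T^{-1}]/(p^r)$, yielding band summands. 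Uniqueness of the decomposition --- equivalently, that distinct confined strings and distinct bands with parameters give pairwise non-isomorphic modules --- then follows from the Krull--Schmidt theorem together with the explicit bases of $\Hom$-spaces recalled in Section~\ref{sec:gentlealg}. Alternatively one may simply quote \cite{CB18}, which performs this classification for arbitrary string algebras; the only step needing input of our own is the finiteness reduction above.
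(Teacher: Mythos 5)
Your proposal is correct, and in substance it follows the same route as the paper: the paper offers no proof of its own for this Proposition, it simply quotes \cite{BR} and \cite{CB18}, with the accompanying remark that the statement restricted to confined strings is already a consequence of \cite{BR}, while \cite{CB18} treats (possibly infinite-dimensional) string algebras directly and so covers the general case. What you add on your own is the reduction $\fl\Lambda \simeq \varinjlim \operatorname{mod}(\Lambda/\m^n)$: since $\m$ lies in the Jacobson radical of the completed algebra, Nakayama gives $M\m^n=0$ for any finite length $M$, and each quotient $\Lambda/\m^n$ is a finite-dimensional \emph{string} algebra in the sense of Butler--Ringel (not gentle, since the added relations $\m^n$ are not quadratic --- you state ``gentle'' at first but correct yourself a sentence later), whose strings and bands of length $<n$ are exactly those of $(Q,R)$ of that length. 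This lets you deduce the Proposition from the classical finite-dimensional classification of \cite{BR} alone, which is a legitimate and slightly more self-contained alternative to invoking \cite{CB18}; the cost is only the bookkeeping of checking that string/band modules, indecomposability, and isomorphisms are all compatible with passing between $\Lambda$ and $\Lambda/\m^n$, which you handle correctly. The sketches of the local-endomorphism-ring computations and of the functorial filtration method are accurate summaries of the cited arguments rather than new content, which is fine given that the paper itself treats this as a quoted result.
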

\begin{remark}
 If we add the condition of confined to the strings in the first statement, then the entire proposition is a consequence of \cite{BR}.
\end{remark}

\subsection{Morphisms of string and band modules} 

Let $\st \gamma$ be a string and $\mt{d}$ be a decomposition of the form $\gamma = [\gamma_1 q^{-}] \omega [q' \gamma_2]$, where $\gamma_1 q^{-}$ and $q' \gamma_2$ may not appear, and $q$ and $q'$ are arrows if they appear. We fix $\{x_i\}_{i \in \hat I}$ and $\{y_j\}_{j \in \hat J}$ to be the respective canonical bases of $X(\st \gamma)$ and $X(\st \omega)$ as in Subsection \ref{defmods}, where $\hat J$ is identified to a subset of $\hat I$ according to $\mt{d}$. Then, we define 
\begin{align*}
\alpha_{\mt d}: X(\st\gamma)& \to X(\st\omega) \\
x_i & \mapsto  \begin{cases}
y_{i} & \text{ if $i \in \hat J$;}\\
0  & \text{ if $i \in \hat I \setminus \hat J$.}
\end{cases} 
\end{align*}

Similar, for any decomposition $\mt{d}$ of the form $\gamma = [\gamma_1 q] \omega [q'^{-} \gamma_2]$, we define
\begin{align*}
\beta_{\mt d}: X(\st\omega)& \to X(\st\gamma) \\
y_i &  \mapsto x_i .
\end{align*}

%
%



\begin{definition}
 Consider two strings $\st \gamma$ and $\st \delta$. 
 An \emph{almost positive crossing} from $\st \gamma$ to $\st \delta$ is an overlap of the form 
 \[\crosswn{\gamma}{[\gamma_1 q^{-}]}{\omega}{[q' \gamma_2]}{\delta^\pm}{[\delta_1 r]}{[r'^{-} \delta_2]}\] 
 where each of the parts between brackets may not appear. We denote by $c^\geq(\st \gamma, \st \delta)$ the set of almost positive crossings from $\st \gamma$ to $\st \delta$. 
\end{definition}

Notice that $c^+(\gamma, \delta) \subseteq c^\geq(\gamma, \delta)$ in an obvious way. Each choice of an almost positive crossing $\mt d \in c^\geq(\st \gamma, \st \delta)$ induces a morphism $f_{\mt d} : X(\st \gamma) \to X(\st \delta)$ in the following way. We fix directions of $\gamma$ and $\delta$ such that $x$ is an overlap between walks, 
 \[\crosswn{\gamma}{[\gamma_1 q^{-}]}{\omega}{[q' \gamma_2]}{\delta}{[\delta_1 r]}{[r'^{-} \delta_2].}\] 
 Then we have a decomposition $\mt d$ of the form $\gamma = [\gamma_1 q^{-}]\omega[q' \gamma_2]$ and $\mt d'$ of the form $\delta = [\delta_1 r]\omega[r'^{-} \delta_2]$. We define a map
\[f_{\mt d} := \beta_{\mt{d}'}\circ\alpha_{\mt{d}}: X(\st\gamma)\to X(\st\delta).\]


We cite the following result of Crawley-Boevey:
\begin{theorem}{\rm\cite{CB89}} \label{basisMorphString} 
 For two confined strings $\st \gamma$ and $\st \delta$, $\{f_{\mt{d}} \mid \mt{d} \in c^\geq(\st \gamma, \st \delta)\}$
   is a basis of $\Hom_{\Lambda}(X(\st \gamma), X(\st \delta))$.
\end{theorem}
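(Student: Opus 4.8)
The statement is Crawley-Boevey's description of the morphism space between string modules, so the plan is to reprove it by the functorial-filtration bookkeeping, translated into the string calculus of Section~\ref{sec:basicstr} and Subsection~\ref{defmods}. The first point to settle is that each $f_{\mathtt d}$ is genuinely a $\Lambda$-homomorphism. Writing $f_{\mathtt d}=\beta_{\mathtt d'}\circ\alpha_{\mathtt d}$, it suffices to treat $\alpha_{\mathtt d}\colon X(\st\gamma)\to X(\st\omega)$ and $\beta_{\mathtt d'}\colon X(\st\omega)\to X(\st\delta)$ separately. For $\alpha_{\mathtt d}$ one checks that $\ker\alpha_{\mathtt d}=\bigoplus_{i\in\hat I\setminus\hat J}kx_i$ is a submodule: an arrow action can carry this subspace out of itself only at one of the two places where $\omega$ meets the rest of $\gamma$, and the shape $\gamma=[\gamma_1q^{-}]\omega[q'\gamma_2]$ forces the letters adjacent to $\omega$ there (the inverse arrow $q^{-}$ on the left, the arrow $q'$ on the right, whenever they occur) to point away from $\omega$, which is exactly what is needed; the verification for $\beta_{\mathtt d'}$ from $\delta=[\delta_1r]\omega[r'^{-}\delta_2]$ is dual. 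This part is routine once the bracket conventions are unwound, and the stationary cases (where the modules involved are simple) are immediate.

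Next I would prove linear independence, which is the easy half. Each graph map $f_{\mathtt d}$ sends every canonical basis vector of $X(\st\gamma)$ either to $0$ or to a single canonical basis vector of $X(\st\delta)$, the non-killed ones forming precisely the contiguous block indexed by the copy of $\omega$ inside $\gamma$. Assign to $f_{\mathtt d}$ the leading pair $(i_{\mathtt d},j_{\mathtt d})$, where $i_{\mathtt d}$ is the first position of $\omega$ in $\gamma$ and $j_{\mathtt d}$ its first position in $\delta$, so that $f_{\mathtt d}(x_{i_{\mathtt d}})=y_{j_{\mathtt d}}$ while $f_{\mathtt d}(x_i)=0$ for $i<i_{\mathtt d}$. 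The map $\mathtt d\mapsto(i_{\mathtt d},j_{\mathtt d})$ is injective: two almost positive crossings with the same leading pair have overlaps starting at the same position in both walks, so one overlap is a prefix of the other, and if the inclusion were proper the letter of $\gamma$ just after the shorter overlap would have to be an arrow (boundary condition on the $\gamma$-side) while the corresponding letter of $\delta$ would have to be an inverse arrow (boundary condition on the $\delta$-side) — but these are the same letter of the longer overlap, a contradiction. Given this, a relation $\sum_{\mathtt d}\lambda_{\mathtt d}f_{\mathtt d}=0$ with all $\lambda_{\mathtt d}\neq 0$ can be evaluated at $x_{i^{\ast}}$, where $(i^{\ast},j^{\ast})$ is the lexicographically least leading pair occurring; only the $\mathtt d$ with $i_{\mathtt d}=i^{\ast}$ contribute, their images $y_{j_{\mathtt d}}$ are pairwise distinct basis vectors, and independence of these forces the corresponding $\lambda_{\mathtt d}$ to vanish — in particular the one with pair $(i^{\ast},j^{\ast})$ — a contradiction.

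The crux is spanning: every $f\in\Hom_\Lambda(X(\st\gamma),X(\st\delta))$ is a $k$-linear combination of graph maps. I would induct on $\dim_k X(\st\gamma)+\dim_k X(\st\delta)$, which is finite since $\st\gamma,\st\delta$ are confined. Given $f\neq 0$, choose a peak $p$ of $\gamma$, i.e.\ a position with $x_p\notin\rad X(\st\gamma)$ (one exists because a finite string has a highest point), and look at $f(x_p)=\sum_j c_j y_j$, summed over positions $j$ of $\delta$ with $v_j=v_p$. Picking a $j$ with $c_j\neq 0$ that is extremal for a suitable order on the positions of $\delta$, one reads off letter by letter that $\gamma$ and $\delta$ must share a common subwalk $\omega$ beginning at $p$ and at $j$ respectively, with boundary letters of the correct signs — this is where the gentle (hence special biserial, confined) hypotheses do the work, since at each vertex there are at most two arrows in and out and the relations pin down which continuation of a walk can occur inside a string module, so that the compatibility at $p$ and $j$ propagates. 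The resulting datum is an almost positive crossing $\mathtt d$ for which $f(x_p)$ has nonzero $y_j$-component; subtracting $c_j f_{\mathtt d}$ from $f$ and passing to an honest quotient of $X(\st\gamma)$ and submodule of $X(\st\delta)$ on which the difference is defined brings us under the induction hypothesis, and assembling the pieces gives $f=\sum_{\mathtt d}\lambda_{\mathtt d}f_{\mathtt d}$. The main obstacle, and the only genuinely delicate point, is precisely this propagation step: showing that the local matching data at the chosen peak and position assemble into a bona fide almost positive crossing with correctly oriented endpoints; everything around it is bookkeeping in the already-established string combinatorics.
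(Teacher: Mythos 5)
First, note that the paper offers no proof of this statement at all: Theorem \ref{basisMorphString} is quoted from Crawley--Boevey \cite{CB89}, so you are in effect reproving the cited result. The easy parts of your plan are essentially sound. That each $f_{\mt d}=\beta_{\mt d'}\circ\alpha_{\mt d}$ is $\Lambda$-linear follows exactly as you say from the sign conditions in the decompositions ($q^-$ to the left and $q'$ to the right of $\omega$ in $\gamma$ make the complement of the block a submodule, dually for $\delta$). Your leading-pair argument for linear independence also works, with one caveat you pass over: an almost positive crossing is an overlap of $\gamma$ with $\delta^{\pm}$, so when you compare two crossings with the same leading pair you must also exclude the case where one of them reads $\delta$ and the other reads $\delta^-$; this needs Lemma \ref{noselfinverse} together with the sign conditions, whereas your prefix argument tacitly assumes both overlaps traverse $\delta$ in the same direction.

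The spanning half, however, contains a genuine gap, and it sits exactly where the content of the theorem lies. The propagation claim is not correct as stated: from $c_j\neq 0$ for an extremal $j$ you cannot conclude that there is an almost positive crossing whose overlap \emph{begins} at $p$ and at $j$; in general the common subwalk forced by $\Lambda$-linearity extends to both sides of the pair $(p,j)$, and what actually has to be proved is that the coefficients of $f$ are constant along such a maximal diagonal and that the two boundary letters carry the signs $q^-,q'$ on the $\gamma$-side and $r,r'^-$ on the $\delta$-side --- that is the whole theorem, and in your write-up it is only asserted. Worse, the induction mechanism does not function: after subtracting $c_jf_{\mt d}$ you still have a morphism $X(\st\gamma)\to X(\st\delta)$ between the \emph{same} two modules, so the quantity $\dim_kX(\st\gamma)+\dim_kX(\st\delta)$ has not decreased, and there is no justification (and in general no truth) to the claim that the difference ``is defined on'' a proper quotient of the source and a proper submodule of the target. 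To repair this you would have to induct on a different quantity (for instance the number of nonzero matrix entries of $f$ with respect to a suitable total order on pairs of positions) and prove the constancy-along-the-overlap statement explicitly, or else run the functorial-filtration argument of Crawley--Boevey and Krause; either way this is a substantially longer argument than the sketch, so as it stands the surjectivity of the span is not established.
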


Suppose $\st\gamma$ is a periodic string.
We can define morphisms from band modules arising from $\st\gamma$ to string modules in a similar way by introducing a $k[T,T^{-1}]$-twisting the description before.
More precisely, let $\sigma$ be the period-shifting automorphism on $\gamma$ and on $X(\st{\gamma})$ as in Subsection \ref{defmods}, $M$ be an indecomposable finite-dimensional $k[T,T^{-1}]$-module. 
Then for a confined string $\st \delta$, an almost positive crossing $\mt{d} \in c^\geq(\st \gamma, \st \delta)$, and a $k$-linear form $\alpha:M\to k$, we define 
\begin{align*}
g_{\mt{d}, \alpha}^{\mathrm{sb}} : X_M(\st \gamma) & \to X(\st \delta)\\
m \otimes x &\mapsto  \sum_{\ell \in \Z} \alpha(mT^\ell) f_{\mt{d}}(\sigma^{-\ell}(x)).
\end{align*}
Note that there is a $\Z$-action on the set $c^\geq(\st\sigma, \st\delta)$ induced by the period-shifting $\sigma$ on $\gamma$.  This in turn gives $g_{\sigma (\mt{d}), \alpha}^{\mathrm{sb}} = g_{\mt{d},  \alpha\circ(-\cdot T)}^{\mathrm{sb}}$.  We note also that the sum above is finite as the overlap in $\mt{d}$ is a confined substring of $\st \gamma$.

Similarly, if $\mt{d} \in c^\geq(\st \delta, \st \gamma)$ and $m \in M\cong \Hom_k(k,M)$, then we can define a morphism $g_{\mt{d}, m}^{\mathrm{bs}} : X(\st \delta) \to X_M(\st \gamma)$ by 
\[g^{\mathrm{bs}}_{\mt{d}, m}(x) = m \otimes f_{\mt{d}}(x).\]
As before, the $\Z$-action on $c^\geq(\st\delta, \st\gamma)$ yields the equality $g_{\sigma(\mt{d}), m}^{\mathrm{bs}} = g_{\mt{d}, m T}^{\mathrm{bs}}$.

Consider now the case of a periodic string $\st\delta$.
For a positive crossing $\mt{d} \in c^+(\st \gamma, \st \delta)$ and $\alpha: M \to N$ a  $k$-linear (not necessarily $k[T, T^{-1}]$-linear) map, we define $g_{\mt{d}, \alpha}^{\mathrm{bb}}: X_M(\st \gamma) \to X_N(\st \delta)$ by
\[g_{\mt{d}, \alpha}^{\mathrm{bb}}(m \otimes x) = \sum_{\ell \in \Z} \alpha(m T^\ell) \tens f_{\mt{d}}(\sigma^{-\ell}(x)).\]
Since both domain and range are band modules, the period-shifting induces a $\Z\times \Z$-action on $c^+(\st\gamma, \st\delta)$.
The effect induced on the morphisms are $g_{(1,\sigma)(\mt{d}), \alpha}^{\mathrm{bb}} = g_{\mt{d}, (-\cdot T)\circ \alpha}^{\mathrm{bb}}$ and $g_{(\sigma,1)(\mt{d}), \alpha}^{\mathrm{bb}} = g_{\mt{d},  \alpha\circ(-\cdot T)}^{\mathrm{bb}}$.

The superscript on the three types of morphisms introduced above signifies the input and output (\emph{b}and or \emph{s}tring) of it; we may omit them if the context is clear.

Finally, for $\beta\in \Hom_{k[T, T^{-1}]}(M,N)$, we have a $\Lambda$-linear map 
\begin{align*}
h_\beta:=\beta\tens\mathrm{id}_{X(\st\gamma)} : X_M(\st\gamma) & \to X_N(\st\gamma)\\
m \tens x & \mapsto  \beta(m) \tens x.
\end{align*}

\begin{theorem}[\cite{Kr91, Geiss}] \label{basisMorphBand}
 \begin{enumerate}[\rm (i)]
  \item If $X_M(\st \gamma)$ is a band module and $X(\st \delta)$ is a finite-dimensional string module, then
    \begin{align*}
\Hom_\Lambda(X_M(\st \gamma), X(\st \delta)) & = \bigoplus_{\mt{d} \in c^\geq(\st \gamma, \st \delta) / \Z} \{g_{\mt{d}, \alpha}^{\mathrm{sb}} \mid \alpha \in \Hom_k(M, k)\}, \\
\Hom_\Lambda(X(\st \delta), X_M(\st \gamma)) &= \bigoplus_{\mt{d} \in c^\geq(\st \delta, \st \gamma) / \Z} \{g_{\mt{d}, m}^\mathrm{bs} \mid m \in M\} 
	\end{align*}
	where the $\Z$-action on the sets of positive crossings are induced by the period-shift on $\gamma$.

  \item Suppose $\st\gamma,\st\delta$ are both periodic.  Let $\delta_{\st\gamma,\st\delta}$ be zero when $\st\gamma\neq\st\delta$ and one otherwise.
  Then we have 
   \begin{align*}
\Hom_\Lambda(X_M(\st \gamma), X_N(\st \delta)) =&  \{h_\beta \mid \beta \in \Hom_{k[T, T^{-1}]}(M, N)\}\delta_{\st\gamma,\st\delta} \\
 & \oplus \bigoplus_{\mt{d} \in c^+(\st \gamma, \st \delta) / (\Z \times \Z)} \{g_{\mt{d}, \alpha}^{\mathrm{bb}} \mid \alpha \in \Hom_k(M, N)\},
	\end{align*}
  where the $\Z\times\Z$-action on $c^+(\st\gamma,\st\delta)$ are induced by the period-shifts of the periodic strings.
 \end{enumerate}
\end{theorem}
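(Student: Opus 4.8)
The plan is to deduce every assertion from Crawley--Boevey's basis theorem for confined strings (Theorem~\ref{basisMorphString}) by exploiting the bimodule structure used to define band modules. The key elementary input is that, for a periodic string $\st\gamma$ with primitive period indexed by $J$, the (in general infinite dimensional) module $X(\st\gamma)$ is \emph{free of rank $|J|$} as a left $k[T,T^{-1}]$-module, since the canonical basis $\{x_i\}_{i\in\Z}$ is permuted freely by the period-shift $\sigma = T$. Because $k[T,T^{-1}]$ is a PID, one has a length-one free resolution $0\to k[T,T^{-1}]^{a}\xrightarrow{\,\cdot P\,}k[T,T^{-1}]^{b}\to M\to 0$; tensoring it over $k[T,T^{-1}]$ with the flat module $X(\st\gamma)$ gives a short exact sequence of $\Lambda$-modules $0\to X(\st\gamma)^{a}\to X(\st\gamma)^{b}\to X_M(\st\gamma)\to 0$. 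Applying $\Hom_\Lambda(-,X(\st\delta))$ and comparing with $\Hom_{k[T,T^{-1}]}(-,H)$ applied to the presentation of $M$ (with $H := \Hom_\Lambda(X(\st\gamma),X(\st\delta))$ regarded as a $k[T,T^{-1}]$-module via $\sigma$) yields
\[\Hom_\Lambda\!\big(X_M(\st\gamma),X(\st\delta)\big)\;\cong\;\Hom_{k[T,T^{-1}]}\!\big(M,\ \Hom_\Lambda(X(\st\gamma),X(\st\delta))\big),\]
and dually, using that $\Hom_\Lambda(X(\st\delta),X(\st\gamma))$ is $k[T,T^{-1}]$-free (see below),
\[\Hom_\Lambda\!\big(X(\st\delta),X_M(\st\gamma)\big)\;\cong\;M\otimes_{k[T,T^{-1}]}\Hom_\Lambda\!\big(X(\st\delta),X(\st\gamma)\big),\]
the isomorphism $f_{\mt d}\otimes m\mapsto g^{\mathrm{bs}}_{\mt d,m}$ being the tensor--hom comparison map. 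For part~(ii) the same reasoning with $Y=X_N(\st\delta)=N\otimes_{k[T,T^{-1}]}X(\st\delta)$ gives $\Hom_\Lambda(X_M(\st\gamma),X_N(\st\delta))\cong\Hom_{k[T,T^{-1}]}(M,\ N\otimes_{k[T,T^{-1}]}\Hom_\Lambda(X(\st\gamma),X(\st\delta)))$, now involving the \emph{two} commuting shift actions induced by the periods of $\st\gamma$ and $\st\delta$.

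The remaining work is to compute the string-module Hom-spaces $\Hom_\Lambda(X(\st\gamma),X(\st\delta))$ with $\st\gamma$ periodic, as modules over the relevant (one or two) copies of $k[T,T^{-1}]$. This is an extension of Theorem~\ref{basisMorphString} to bi-infinite strings: any $\Lambda$-map out of $X(\st\gamma)$ is, in a neighbourhood of each canonical basis vector, a finite linear combination of the graph maps $f_{\mt d}$ with $\mt d\in c^{\geq}(\st\gamma,\st\delta)$, so globally it is a \emph{locally finite} (possibly infinite) such combination, and conversely every locally finite combination is $\Lambda$-linear since each overlap is a confined subwalk of $\gamma$. Hence, when $\st\delta$ is confined, $\Hom_\Lambda(X(\st\gamma),X(\st\delta))\cong\prod_{\mt d\in c^{\geq}(\st\gamma,\st\delta)/\Z}\big(\prod_{\ell\in\Z}k\big)$ with $\sigma$ acting on each factor $\prod_{\ell\in\Z}k$ as the $\Z$-shift, whereas $\Hom_\Lambda(X(\st\delta),X(\st\gamma))\cong\bigoplus_{\mt d\in c^{\geq}(\st\delta,\st\gamma)/\Z}k[T,T^{-1}]$ is \emph{free} of finite rank, since the source is finite dimensional and only finite sums occur. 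Feeding these into the two displays of the previous paragraph and using that $\Hom_{k[T,T^{-1}]}(M,\prod_{\ell}k)\cong\Hom_k(M,k)$ (a $T$-linear map $M\to\prod_{\ell}k$ is exactly $(\,m\mapsto(\alpha(mT^\ell))_{\ell}\,)$ for a unique $\alpha\in\Hom_k(M,k)$) and $M\otimes_{k[T,T^{-1}]}k[T,T^{-1}]\cong M$, one recovers precisely the operators $g^{\mathrm{sb}}_{\mt d,\alpha}$ and $g^{\mathrm{bs}}_{\mt d,m}$ of part~(i). For part~(ii) one argues with both shifts: if $\st\gamma\neq\st\delta$ then $c^{\geq}(\st\gamma,\st\delta)=c^{+}(\st\gamma,\st\delta)$ (a bi-infinite walk has no free end, so no bracket in an almost positive crossing can be empty), the Hom-space decomposes over $\Z\times\Z$-orbits, and the adjunction yields $\bigoplus_{\mt d\in c^{+}(\st\gamma,\st\delta)/(\Z\times\Z)}\Hom_k(M,N)$ represented by $g^{\mathrm{bb}}_{\mt d,\alpha}$; if $\st\gamma=\st\delta$ there is in addition a free rank-one $k[T,T^{-1}]$-summand of $\End_\Lambda(X(\st\gamma))$ spanned by the powers of $\sigma$, contributing, after tensoring/Hom-ing with $M$ and $N$, the maps $h_\beta$ for $\beta\in\Hom_{k[T,T^{-1}]}(M,N)$ — this is the $\delta_{\st\gamma,\st\delta}$ term.

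Finally, linear independence of the resulting spanning families is automatic: distinct orbit representatives $\mt d$ place the overlap at combinatorially incomparable positions, so for a suitable canonical basis vector of the target exactly one of the operators $g_{\mt d,-}$ (or $h_\beta$) is nonzero, reducing independence to injectivity of $\alpha\mapsto g^{\mathrm{sb}}_{\mt d,\alpha}$, $m\mapsto g^{\mathrm{bs}}_{\mt d,m}$, $\alpha\mapsto g^{\mathrm{bb}}_{\mt d,\alpha}$, $\beta\mapsto h_\beta$, each of which is immediate from the defining formula. The main obstacle is the periodic-string extension of Theorem~\ref{basisMorphString} invoked above: one must rerun Crawley--Boevey's functorial-filtration argument while allowing bi-infinite strings, taking care to distinguish the side on which only locally finite infinite sums are forced (maps \emph{out of} $X(\st\gamma)$) from the side on which genuinely finite sums occur (maps \emph{into} $X(\st\gamma)$) — it is exactly this asymmetry that turns the torsion $k[T,T^{-1}]$-module $M$ into $\Hom_k(M,k)$ on one side and into $M$ on the other. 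A secondary delicate point is the bookkeeping of the two commuting period-shift actions in part~(ii) and the clean separation of the diagonal $h_\beta$-term from the $g^{\mathrm{bb}}$-part when $\st\gamma=\st\delta$.
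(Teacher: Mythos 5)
First, note what the paper actually does with this statement: it does not prove it. Theorem \ref{basisMorphBand} is imported verbatim from Krause \cite{Kr91} and Gei\ss{} \cite{Geiss} (see the remark following it), so your proposal is not ``an alternative to the paper's proof'' but an attempt to reprove a cited result from Theorem \ref{basisMorphString}. As a strategy it is sensible (the bimodule adjunction $\Hom_\Lambda(M\otimes_{k[T,T^{-1}]}X(\st\gamma),Y)\cong\Hom_{k[T,T^{-1}]}(M,\Hom_\Lambda(X(\st\gamma),Y))$ is genuine, and $X(\st\gamma)$ is indeed free over $k[T,T^{-1}]$), but it is not a proof, for two reasons.

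The first gap is the one you flag yourself: everything is reduced to a description of $\Hom_\Lambda(X(\st\gamma),X(\st\delta))$ and $\Hom_\Lambda(X(\st\delta),X(\st\gamma))$ with $\st\gamma$ a bi-infinite periodic string, i.e.\ to an extension of Theorem \ref{basisMorphString} to infinite-dimensional string modules. That extension is not available in the paper (Theorem \ref{basisMorphString} is stated only for confined strings), and establishing it -- that graph maps span, allowing locally finite infinite combinations on the correct side -- is of essentially the same depth as the theorems of Krause and Gei\ss{} you are trying to bypass; ``rerun the functorial-filtration argument'' is a plan, not an argument. The second gap is that several of your base-change steps are presented as formal but are not. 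Only the first display is an honest adjunction. The claimed isomorphism $\Hom_\Lambda(X(\st\delta),M\otimes_{k[T,T^{-1}]}X(\st\gamma))\cong M\otimes_{k[T,T^{-1}]}\Hom_\Lambda(X(\st\delta),X(\st\gamma))$ does not follow from freeness of the Hom-module: since $M$ is a torsion $k[T,T^{-1}]$-module, the comparison map is an isomorphism only if every homomorphism $X(\st\delta)\to X_M(\st\gamma)$ lifts along the presentation $0\to X(\st\gamma)^a\to X(\st\gamma)^b\to X_M(\st\gamma)\to 0$, i.e.\ exactly the surjectivity statement you are trying to prove (an $\Ext^1$-obstruction must be killed). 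The same issue recurs, aggravated, in part (ii), where you need $\Hom_\Lambda(X(\st\gamma),N\otimes_{k[T,T^{-1}]}X(\st\delta))\cong N\otimes_{k[T,T^{-1}]}\Hom_\Lambda(X(\st\gamma),X(\st\delta))$ with both strings bi-infinite; there the Hom-space between the two infinite string modules is itself a mixture of product- and sum-type pieces under the two shift actions, and the asserted identification is not justified. So either these comparison maps must be proved directly (at which point you are redoing Krause/Gei\ss{}), or the appeal to the literature should simply be retained as the paper does.
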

\begin{remark}
Krause proved this result under the setting of $k$ being algebraically closed in \cite{Kr91}; whereas Gei\ss' result \cite{Geiss} is for maps between the more generalised setting of clannish algebras (instead of gentle or string algebras) -  we are only taking a small special case from it.
\end{remark}

We can use this to obtain some canonical short exact sequences that are useful later on.

\begin{lemma} \label{trivext}
\begin{enumerate}[\rm (i)]
\item  Consider a confined string of the form $\st{\gamma_1 q \gamma_2}$. Then there is a short exact sequence:
  \[0 \to X(\st {\gamma_2}) \to X(\st \gamma) \to X(\st {\gamma_1}) \to 0.\]

\item  Consider a confined string $\st\delta$ of a periodic non-null string $\st\gamma$.  If $\st\omega$ is a primitive period of $\st\gamma$ so that we have decompositions $\delta=\omega^n\omega$ and $\omega=\omega_1 r \omega_2 q^-$, then for any $M\in \mod k[T,T^{-1}]$,
$X(\st\delta)$ is a factor module of $X_M(\st\gamma)$ if $n=0$; otherwise, we have a short exact sequence:
  \[
   0 \to X_M(\st \gamma) \to M \tens_k X(\st\delta) \to M \tens_k X(\st {\omega^{n-1} \omega_1}) \to 0.
  \] 
\end{enumerate}
\end{lemma}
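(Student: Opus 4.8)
The plan is to build both statements from the explicit bases of morphisms recorded in Theorems \ref{basisMorphString} and \ref{basisMorphBand}, together with the description of the $\Lambda$-action on (string and band) modules via canonical bases from Subsection \ref{defmods}. For part (i), write $\gamma = \gamma_1 q \gamma_2$ as a walk with indexing interval $I$, let $k$ be the position where the arrow $q$ sits (so $v_k = s(q)$, $v_{k+1}=t(q)$), and split $\hat I$ into the ``left part'' $\{i : i \leq k\}$ (the positions of $\gamma_1$, with the position $k$ included as the source of $q$) and the ``right part'' $\{i : i > k\}$ (the positions of $q\gamma_2$ past $q$). The key observation is that $q = \gamma_1 q \gamma_2$ has the shape $[\gamma_1 q]\,\omega\,$ with $\omega = \gamma_2$ in the notation for $\beta_{\mt d}$, so there is a canonical injection $\beta : X(\st{\gamma_2}) \hookrightarrow X(\st\gamma)$ sending canonical basis vectors to the corresponding canonical basis vectors on the right part; dually $\gamma = \omega[q'^{-}\gamma_2]$ with $\omega = \gamma_1$, $q' = q$, gives a surjection $\alpha : X(\st\gamma) \twoheadrightarrow X(\st{\gamma_1})$. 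First I would check that the image of $\beta$ is exactly the kernel of $\alpha$: by the action formulas, $x_i q = x_{i+1}$ moves from the left part to the right part, and the span of $\{x_i : i > k\}$ is a $\Lambda$-submodule precisely because no arrow or inverse-arrow acting on a right-part basis vector produces a left-part one (here one uses that $qr \in R$ or not is irrelevant: the module action only looks at whether consecutive letters of $\gamma$ agree). Hence $0 \to X(\st{\gamma_2}) \xrightarrow{\beta} X(\st\gamma) \xrightarrow{\alpha} X(\st{\gamma_1}) \to 0$ is exact.

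For part (ii), I would first treat the case $n = 0$, i.e. $\delta = \omega$ is a single primitive period: then $\st\delta$ is a confined factor of the periodic string $\st\gamma$ and Lemma \ref{trivext}(i) applied iteratively along $\omega$ (or directly, a single almost-positive crossing of the overlap type $c^\geq(\st\gamma,\st\delta)$ with $\omega$ itself as the overlap) exhibits $X(\st\delta)$ as a quotient of $X(\st\gamma)$; tensoring with $M$ over $k[T,T^{-1}]$ shows $M \otimes_k X(\st\delta)$ is a quotient of $X_M(\st\gamma)$ — but actually the statement only claims $X(\st\delta)$ itself is a factor module of $X_M(\st\gamma)$, which follows by composing with $M \otimes X(\st\delta) \twoheadrightarrow X(\st\delta)$ coming from any surjection $M \twoheadrightarrow k$ (or by noting the band-to-string morphisms $g^{\mathrm{sb}}_{\mt d,\alpha}$ realize surjections onto factors of the form $X(\st\omega^{n+1})$). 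For $n \geq 1$: here $\delta = \omega^n\omega$ with $\omega = \omega_1 r \omega_2 q^{-}$, so $\delta$ is a confined piece of $\gamma = {}^\infty\omega^\infty$ consisting of $n+1$ copies of $\omega$, and the canonical basis of $X_M(\st\gamma)$ is $\{m x_j : j \in J, m \in M\}$ with $|J| = |\omega|$. The map to construct is the ``universal covering'' morphism $\phi : X_M(\st\gamma) \to M \otimes_k X(\st\delta)$ which, on the canonical basis, sends $m x_j$ to $\sum$ of the corresponding basis vectors at the finitely many positions of $\delta$ lying over $j$, but \emph{twisted} by powers of $T$ according to which period-copy the position belongs to — concretely $m x_j \mapsto \sum_{t=0}^{n} (mT^{t}) \otimes y_{j + t|J|}$ (indices in $\delta$), so that the periodicity relation $m x_{\max J} q = (mT)x_{\min J}$ in the band module matches the relation $y_{(\text{end of copy }t)}\, r = y_{(\text{start of copy }t{+}1)}$ in the string module $X(\st\delta)$ after absorbing the $T$. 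One checks $\phi$ is $\Lambda$-linear directly from the action formulas, injective because the $t=0$ component already embeds the free $k[T,T^{-1}]$-generators, and that the cokernel is spanned by the images of the first $n$ period-copies of $\delta$, i.e. is $M \otimes_k X(\st{\omega^{n-1}\omega_1})$ — the walk $\omega^{n-1}\omega_1$ being $\delta$ with its last copy-and-a-bit $r\omega_2 q^{-}\omega$ removed; that the quotient walk is exactly $\omega^{n-1}\omega_1$ (not $\omega^n$ or $\omega^{n-1}\omega$) is forced by where the arrow $r$ and the inverse $q^{-}$ sit in $\omega$, which is precisely why the decomposition $\omega = \omega_1 r \omega_2 q^{-}$ is part of the hypothesis.

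The main obstacle I expect is bookkeeping in part (ii) for $n \geq 1$: getting the $T$-twists in the definition of $\phi$ consistent across the wrap-around of the periodic walk, and identifying the cokernel walk as $\omega^{n-1}\omega_1$ rather than a string one period longer or shorter. This is essentially the assertion that $\phi$ fits into a short exact sequence $0 \to X_M(\st\gamma) \to M\otimes X(\st\delta) \to M\otimes X(\st{\omega^{n-1}\omega_1}) \to 0$, and the cleanest way to nail it down is to exhibit $\phi$ as (a $k[T,T^{-1}]$-twist of) the morphism $g^{\mathrm{sb}}_{\mt d,\cdot}$ attached to the obvious almost-positive crossing $\mt d \in c^{\geq}(\st\gamma, \st\delta)$ whose overlap is all of $\delta$, and then read the cokernel off the canonical-basis description of its image; the exactness on the right is then the observation that a position of $\delta$ is \emph{not} in the image-span exactly when it lies in the first $n-1$ full copies of $\omega$ together with the initial segment $\omega_1$ of the $n$-th copy. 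I would also double-check the boundary cases where $\omega_1$ or $\omega_2$ is empty (i.e. $\omega$ starts with $r$ or the penultimate letter before $q^{-}$ is $r$) to make sure the short exact sequence degenerates correctly rather than breaking; these are routine but worth a sentence.
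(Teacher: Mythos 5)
Part (i) and the embedding in part (ii) are essentially the paper's own argument: your $\phi$ is exactly the morphism $f(m\tens x)=\sum_{\ell}mT^{\ell}\tens f_{\mt{d}}(\sigma^{-\ell}x)$ attached to the almost positive crossing $\mt{d}\in c^{\geq}(\st\gamma,\st\delta)$ whose overlap is all of $\delta$, and your injectivity argument via the $t=0$ component is sound (the paper instead gets injectivity at the end from a dimension count). For $n=0$ your fallback — check that $g^{\mathrm{sb}}_{\mt{d},\alpha}$ is an epimorphism — is the paper's argument; note that your first suggestion, ``tensor the surjection $X(\st\gamma)\twoheadrightarrow X(\st\delta)$ with $M$ over $k[T,T^{-1}]$'', is not literally meaningful, since that surjection is not $k[T,T^{-1}]$-linear.

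The genuine gap is the right-exactness step for $n\geq 1$. Recording which positions of $\delta$ survive modulo $\im\phi$ identifies $\coker\phi$ only as a vector space of the correct dimension; it does not identify its $\Lambda$-module structure, and the naive identification ``class of the basis vector at a position of the prefix $\omega^{n-1}\omega_1$ $\mapsto$ the corresponding basis vector of $M\tens_k X(\st{\omega^{n-1}\omega_1})$'' is not $\Lambda$-equivariant. Concretely, in the cokernel the arrow $r$ acts nontrivially on the class of the basis vector at the last prefix position: for the Kronecker band $\omega=ba^{-}$ (so $\omega_1,\omega_2$ stationary, $r=b$, $q=a$) and $n=2$, writing $\delta=ba^{-}ba^{-}$ with canonical basis $y_1,\dots,y_5$, the relations coming from $\im\phi$ give $\overline{m\tens y_4}=-\,\overline{(mT^{\pm1})\tens y_2}\neq 0$, hence $\overline{m\tens y_3}\cdot b\neq 0$ in $\coker\phi$, whereas the third basis vector of $X(\st{ba^{-}})$ is annihilated by $b$. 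So the cokernel is isomorphic to $M\tens_kX(\st{\omega^{n-1}\omega_1})$ only after a nontrivial change of basis, and ``reading the cokernel off the image-span'' stalls exactly here. The paper supplies the missing idea by writing the surjection explicitly as a $T$-twisted difference of the two truncations of $\delta=\omega^{n}\omega_1$ onto $\omega^{n-1}\omega_1$ (dropping the initial copy $\omega_1 r\omega_2 q^{-}$ versus dropping the final $r\omega_2 q^{-}\omega_1$), namely $g(m\tens x)=mT\tens f_{\mt{e}}(x)-m\tens f_{\mt{e}'}(x)$, proving $g\circ f=0$ from the identity $f_{\mt{e}'}\circ f_{\mt{d}}=f_{\mt{e}}\circ f_{\mt{d}}\circ\sigma$, and then computing $\ker g$ and counting dimensions. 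Some such explicit surjection (equivalently, the corrected basis of the cokernel) must be added to complete your argument; your boundary-case worries about empty $\omega_1$ or $\omega_2$ are, by comparison, harmless.
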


\begin{proof}
(i) Consider the following almost positive crossings
 \[\crosswn{\mt{d}: \gamma}{}{\gamma_1}{q \gamma_2}{\gamma_1}{}{} \quad \text{and} \quad \crosswn{\mt{e}: \gamma_2}{}{\gamma_2}{}{\gamma}{\gamma_1 q}{.}\]
 Then, using notation of Theorem \ref{basisMorphString}, we have a short exact sequence
 \[0 \to X(\st {\gamma_2}) \xto{f_{\mt e}} X(\st \gamma) \xto{f_{\mt{d}}} X(\st {\gamma_1}) \to 0 \]

(ii)  Let $\mt{d}$ be the almost positive crossing in $c^\geq(\st\gamma, \st\delta)$ with overlap $\delta$:
 \[\crosswn{\mt{d}: \gamma}{{}^\infty\omega \delta r \omega_2 q^-}{\omega^n \delta}{r \omega_2 q^- \omega^\infty}{\delta}{}{}.\]
 Then the case when $n=0$ follows by checking the definition of $g_{\mt{d},\alpha}:X_M(\st\gamma)\to X(\st\delta)$, for any linear form $\alpha: M\to k$, that it is an epimorphism.
  
  Suppose now that $n > 0$ and consider the following two almost positive crossings:
  \[\crosswn{\mt{e}: \omega^n \omega_1}{\omega_1 r \omega_2 q^-}{\omega^{n-1} \omega_1}{}{\omega^{n-1} \omega_1}{}{} ,\quad
   \crosswn{\mt{e}':  \omega^n \omega_1}{}{\omega^{n-1} \omega_1}{r \omega_2 q^- \omega_1}{\omega^{n-1} \omega_1}{}{}.
  \]
  It is immediate that 
   \begin{align}\label{twisteq}
    f_{\mt e'} \circ f_{\mt{d}} = f_{\mt e} \circ f_{\mt{d}} \circ \sigma
   \end{align} where $\sigma: X(\st \gamma) \to X(\st \gamma)$ is the period-shift automorphism. Define 
\begin{align*}
  f: X_M(\st \gamma) = M \tens_{k[T, T^{-1}]} X(\st \gamma) &\to M\tens_k X(\st\delta) \\ m \tens x &\mapsto \sum_{\ell \in \Z} m T^\ell \tens f_{\mt{d}}(\sigma^{-\ell})x, \\
  g: M\tens_kX(\st\delta) &\to M\tens_k X(\st{\omega^{n-1}\omega_1})\\
  m \tens x &\mapsto  m T \tens f_{\mt e}(x) - m \tens f_{\mt e'}(x).
\end{align*}
  Now $g\circ f=0$ follows from the identity \eqref{twisteq}. 

  Observe that $\ker g$ is spanned by elements of the form
\[ m \tens x_{i} + m T^{-1} \tens x_{i+p} + \cdots + m T^{-r} \tens x_{i + rp},\] 
  where $m \in M$, ${x_i}$ is the basis introduced in Subsection \ref{defmods}, $p$ is the length of $\omega$, $i \leq p$ and $r \in \{n-1, n\}$ in such a way that $i + rp \leq \dim X(\st {\omega^n \omega_1}) < i + (r+1)p$.  As $f_{\mt{d}}$ is surjective, there exists $z \in X(\st \gamma)$ such that $x_i = f_{\mt{d}}(z)$. Then we have
  \[f(m \tens z) = \sum_{\ell \in \Z} m T^\ell \tens f_{\mt{d}}(\phi^{-\ell} z) = \sum_{\ell = -r}^0 m T^\ell \tens x_{i - \ell p} = y.\]
  We proved that $\ker g = \im f$. It is easy to prove that $\dim (M \tens_k X(\st {\omega^n \omega_1})) = \dim X_M(\st \gamma) + \dim (M \tens_k X(\st {\omega^{n-1} \omega_1}))$, so $f,g$ yields the short exact sequence as claimed.
\end{proof}

\subsection{Classification of finite-dimensional bricks}

Recall that a module $S \in \mod \Lambda$ is a \emph{brick} if $\End_\Lambda(S)$ is a division ring.
A consequence of Theorem \ref{basisMorphString} and \ref{basisMorphBand} (ii) is a simple combinatorial classification of the finite-dimensional bricks of $\Lambda$.
Before presenting that, let us do some simple linear algebra first.

\begin{lemma} \label{classbrickmat}
 For any finite-dimensional $k[T, T^{-1}]$-module $M$, $M$ is a brick if and only if it is simple.
\end{lemma}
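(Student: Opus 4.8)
The plan is to analyze the structure of finite-dimensional $k[T,T^{-1}]$-modules directly, using the fact that $k[T,T^{-1}]$ is a principal ideal domain (being a localization of $k[T]$). First I would recall that any finite-dimensional $k[T,T^{-1}]$-module $M$ decomposes, by the structure theorem for finitely generated modules over a PID, as a direct sum of cyclic modules $k[T,T^{-1}]/(p(T)^{n})$ for irreducible $p$ and $n \geq 1$ (no free part can occur, since $k[T,T^{-1}]$ itself is infinite-dimensional over $k$).

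The ``only if'' direction: if $M$ is a brick, then $M$ must be indecomposable, so $M \cong k[T,T^{-1}]/(p^n)$ for a single irreducible $p$ and some $n \geq 1$. I would then show that $n = 1$. If $n \geq 2$, the module $k[T,T^{-1}]/(p^n)$ has a nontrivial endomorphism given by multiplication by $p$, which is neither zero (since $p \notin (p^n)$) nor invertible (since $p$ is a zero-divisor modulo $p^n$); this contradicts $\End$ being a division ring. Hence $n = 1$ and $M \cong k[T,T^{-1}]/(p)$ is simple.

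The ``if'' direction is the standard fact that over any ring, the endomorphism ring of a simple module is a division ring (Schur's lemma): any nonzero endomorphism of a simple module is both injective and surjective, hence invertible. So a simple $k[T,T^{-1}]$-module is automatically a brick.

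I do not expect any real obstacle here — the statement is elementary commutative algebra once one observes $k[T,T^{-1}]$ is a PID. The only mild subtlety worth stating carefully is the absence of a free summand in the decomposition of a finite-dimensional module, and the explicit non-invertible non-zero endomorphism (multiplication by $p$) witnessing that $k[T,T^{-1}]/(p^n)$ fails to be a brick when $n \geq 2$; both are routine. One could alternatively phrase the whole argument in terms of the companion matrix / rational canonical form of the action of $T$ on $M$, which is presumably why the lemma is titled ``classbrickmat'', but the module-theoretic phrasing above is cleaner and suffices.
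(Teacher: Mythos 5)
Your proof is correct, but it takes a different route from the paper. You invoke the structure theorem over the PID $k[T,T^{-1}]$: a finite-dimensional module has no free summand, a brick is indecomposable hence cyclic of the form $k[T,T^{-1}]/(p^n)$, and multiplication by $p$ is a nonzero non-invertible endomorphism when $n\geq 2$, forcing $n=1$; the converse is Schur's lemma. All steps are sound (the absence of a free part and the non-invertibility of multiplication by $p$ are exactly the points to state, and you do). The paper instead proves the contrapositive by bare linear algebra, with no appeal to the classification of modules: writing $M$ as an invertible operator $\phi$ on $V$ and choosing an invariant subspace $V_1$ so that $\phi=\begin{bmatrix}\phi_{11}&\phi_{12}\\0&\phi_{22}\end{bmatrix}$, it considers the Sylvester-type map $\Phi(\alpha)=\phi_{11}\circ\alpha-\alpha\circ\phi_{22}$ on $\Hom_k(V_2,V_1)$: a nonzero element of $\ker\Phi$ yields a nonzero nilpotent endomorphism of $M$ (so $M$ is not a brick), while if $\Phi$ is injective (hence surjective) one solves $\Phi(\beta)=\phi_{12}$ and conjugates $\phi$ to block-diagonal form, making $M$ decomposable and again not a brick. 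Your approach buys brevity at the cost of citing the structure theorem; the paper's is self-contained matrix manipulation (likely the reason for the label \texttt{classbrickmat}) and does not need the PID classification. Either argument establishes the lemma.
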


\begin{proof}
 It is immediate that if $M$ is simple then it is a brick. Conversly, suppose that $M$ is given by the action of a linear invertible endomorphism $\phi$ on a $k$-vector space $V$. If $M$ is not simple, then there is a non-trivial decomposition $V = V_1 \oplus V_2$ such that, along this decomposition,
  \[\phi = \begin{bmatrix} \phi_{11} & \phi_{12} \\ 0 & \phi_{22} \end{bmatrix}.\]
 Consider $\Phi: \Hom_k(V_2, V_1) \to \Hom_k(V_2, V_1), \alpha \mapsto \phi_{11} \circ \alpha - \alpha \circ \phi_{22}$. If $0 \neq \beta \in \ker \Phi$, then 
 \[\begin{bmatrix} 0 & \beta \\ 0 & 0 \end{bmatrix}\]
 is a non-zero nilpotent endomorphism of $M$, so $M$ is not a brick. Otherwise, there exists $\beta: V_2 \to V_1$ such that $\Phi(\beta) = \phi_{12}$ and we get
 \[\begin{bmatrix} \Id & \beta \\ 0 & \Id \end{bmatrix} \begin{bmatrix} \phi_{11} & \phi_{12} \\ 0 & \phi_{22} \end{bmatrix} \begin{bmatrix} \Id & \beta \\ 0 & \Id \end{bmatrix}^{-1} = \begin{bmatrix} \phi_{11} & 0 \\ 0 & \phi_{22} \end{bmatrix}\]
 so that $M$ is not indecomposable, hence not a brick.
\end{proof}

\begin{proposition} \label{classbrick}
 The set of all finite-dimensional bricks over $\Lambda$ consist
\begin{enumerate}[\rm (1)]
\item string modules of the form $X(\st \gamma)$ where $\st\gamma$ is a confined string satisfying $\# c^{\geq}(\st \gamma, \st \gamma) = 1$
\item and band modules of the form $X_M(\st \gamma)$ where $[\st, \gamma M]$ is a non-self-crossing band with parameter $M$ being a simple $k[T, T^{-1}]$-module.
\end{enumerate}
\end{proposition}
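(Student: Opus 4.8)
The plan is to characterise when a string or band module is a brick by decomposing its endomorphism algebra using the canonical bases from Theorems \ref{basisMorphString} and \ref{basisMorphBand}, and then checking which of these endomorphism algebras are division rings. First I would dispose of the case of finite-dimensional $k[T,T^{-1}]$-modules with Lemma \ref{classbrickmat}: a finite-dimensional $k[T,T^{-1}]$-module is a brick if and only if it is simple. This lemma will be needed to pin down the parameter in the band case.

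Next I would handle string modules. Let $\st\gamma$ be a confined string, so $X(\st\gamma)$ is finite-dimensional. By Theorem \ref{basisMorphString}, $\Hom_\Lambda(X(\st\gamma),X(\st\gamma))$ has $\{f_{\mt d}\mid \mt d\in c^\geq(\st\gamma,\st\gamma)\}$ as a $k$-basis, and one of these almost positive crossings is the trivial one (the full overlap $\omega=\gamma$), whose associated map is $\id_{X(\st\gamma)}$. Hence $\End_\Lambda(X(\st\gamma))$ is a division ring if and only if it is one-dimensional, i.e.\ $\#c^\geq(\st\gamma,\st\gamma)=1$. One direction is immediate; for the converse, if $\#c^\geq(\st\gamma,\st\gamma)>1$ there is a nontrivial $f_{\mt d}$, and I would argue it is a non-isomorphism (its image is a proper factor of $X(\st\gamma)$ since $\mt d$ corresponds to a proper overlap), so it is a noninvertible nonzero endomorphism and $X(\st\gamma)$ is not a brick. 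This gives exactly the class (1). I should also note that an infinite string $\st\gamma$ gives an infinite-dimensional module, so it does not enter the classification of \emph{finite-dimensional} bricks.

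For band modules $X_M(\st\gamma)$ with $\st\gamma$ periodic non-null and $M$ finite-dimensional indecomposable, Theorem \ref{basisMorphBand}(ii) gives
\[
\End_\Lambda(X_M(\st\gamma)) = \{h_\beta\mid \beta\in\End_{k[T,T^{-1}]}(M)\}\oplus\bigoplus_{\mt d\in c^+(\st\gamma,\st\gamma)/(\Z\times\Z)}\{g^{\mathrm{bb}}_{\mt d,\alpha}\mid \alpha\in\Hom_k(M,M)\}.
\]
So $X_M(\st\gamma)$ is a brick precisely when this algebra is a division ring. The summand $\{h_\beta\}\cong\End_{k[T,T^{-1}]}(M)$ is a subalgebra containing the identity $h_{\id_M}=\id_{X_M(\st\gamma)}$, so a necessary condition is that $\End_{k[T,T^{-1}]}(M)$ is a division ring, i.e.\ (by Lemma \ref{classbrickmat} and the fact that a module over $k[T,T^{-1}]$ is a brick iff its endomorphism ring is a division ring) $M$ is simple. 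Conversely, assuming $M$ is simple, I must also kill the crossing part: if $c^+(\st\gamma,\st\gamma)\neq\emptyset$ (i.e.\ $\st\gamma$ is self-crossing), pick $\mt d\in c^+(\st\gamma,\st\gamma)$ and a suitable nonzero $\alpha$, and show $g^{\mathrm{bb}}_{\mt d,\alpha}$ is a nonzero noninvertible endomorphism — e.g.\ via Lemma \ref{trivext}(ii), the image of such a map, or a length/dimension count, shows it cannot be an isomorphism; hence $X_M(\st\gamma)$ is not a brick. If $\st\gamma$ is non-self-crossing, the crossing part vanishes and $\End_\Lambda(X_M(\st\gamma))\cong\End_{k[T,T^{-1}]}(M)$, which is a division ring exactly when $M$ is simple. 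This yields class (2), and combined with the previous paragraph and the classification of indecomposables (all finite-dimensional indecomposables are string or band modules), finishes the proof.

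The main obstacle I anticipate is the converse direction in the band case: showing that when $\st\gamma$ \emph{is} self-crossing, there really is a noninvertible nonzero endomorphism $g^{\mathrm{bb}}_{\mt d,\alpha}$ even though $M$ is simple — one has to be careful that $g^{\mathrm{bb}}_{\mt d,\alpha}$ is not accidentally invertible for all choices of $\alpha$, which requires understanding the image of a crossing-type endomorphism well enough (again via the short exact sequences of Lemma \ref{trivext}) to see it lands in a proper submodule or has nontrivial kernel. The string-module analogue of this — that a nontrivial $f_{\mt d}$ is never an isomorphism — is the same phenomenon but easier, since there the overlap $\omega$ is a proper substring of $\gamma$.
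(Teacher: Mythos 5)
Your proposal is correct and follows essentially the same route as the paper: classify indecomposables, use the canonical-basis Theorems \ref{basisMorphString} and \ref{basisMorphBand}(ii) to decompose the endomorphism ring, invoke Lemma \ref{classbrickmat} to force the parameter to be simple, and observe that any proper-overlap (self-)crossing yields a nonzero non-invertible endomorphism. The only difference is one of emphasis: the step you flag as the main obstacle (non-invertibility of $g^{\mathrm{bb}}_{\mt d,\alpha}$ for a self-crossing band) is dismissed in the paper as immediate, and your suggested argument via the image/factorization through the overlap string module is exactly the right way to justify it.
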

\begin{proof}
 First of all, by Theorem \ref{basisMorphString}, a finite-dimensional string module $X(\st \gamma)$ is a brick if and only if its only non-trivial almost positive selfcrossing is the one with full overlapping part. Consider now a band module $X_M(\st \gamma)$. By Theorem \ref{basisMorphBand}(iv), if $X_M(\st \gamma)$ is a band, we should have $c^+(\st \gamma, \st \gamma) = \emptyset$ as it is immediate that $g''_{\mt{d}, \alpha}$ is never invertible. Then, we obtain in this case that $\End_{\Lambda}(X_M(\st \gamma)) = H \cong \End_{k[T, T^{-1}]}(M)$ so that $X_M(\st \gamma)$ is a brick if and only if $\st \gamma$ is non-selfcrossing and $M$ is a brick. By Lemma \ref{classbrickmat}, $M$ is a brick if and only if it is simple.
\end{proof}


\section{Classification of torsion classes} \label{sec:torcl}

We are going to combine Theorem \ref{mainthmstrings}, the parametrized analogue of non-crossing set of strings, and the classification of bricks of $\Lambda$ to give a classification of torsion classes of $\fl{\Lambda}$.

\subsection{Setup and statement}

Similar to Section \ref{sec:torset}, for a set $\Mr$ of (representatives of isoclasses of) modules, we denote by $\Tors(\Mr)$ the smallest torsion class in $\fl\Lambda$ that contains all the modules in $\Sr$.

Let us denote by $B := \irr k[T, T^{-1}]$ a set of representative of isoclasses of simple $k[T, T^{-1}]$-modules in the category $\mod k[T,T^{-1}]$ of finitely generated (left) $k[T,T^{-1}]$-modules.
This set $B$ will be the set of parameters in the sense of subsection \ref{subsec:wts}.
For a parametrized set $(\Sr, \blambda:\Sr^p\to 2^B)$ of infinite strings, we define the following sets of (representatives of isoclasses of) indecomposable $\Lambda$-modules
\begin{align*}
 X(\Sr) & := \{X (\st \gamma) \mid \st \gamma \in \Sr\},\\
 X_{\blambda}(\Sr^p) & := \{X_{M}(\st \gamma) \mid \st \gamma \in \Sr^p, M \in \blambda(\st \gamma)\}.
\end{align*}
Now we can state our main theorem.  To save some spaces, let us denote
\[
\maxstr[k] := \maxstr[{\irr k[T,T^{-1}]}].
\]

\begin{theorem} \label{classtors}
 There is a complete lattice isomorphism between  $\maxstr[k]$, the lattice of maximal parametrized non-crossing sets of infinite strings, and  $\tors(\fl\Lambda)$, the lattice of torsion classes in $\fl\Lambda$, via
 \begin{align*}
\Tr_\Lambda:\maxstr[k] & \xrightarrow{\sim} \tors(\fl\Lambda) \\
(\Sr, \blambda) &\mapsto \Tr_\Lambda(\Sr,\blambda):= \Tors\big(X(\Fac\Sr)\cup X_{\blambda}(\Sr^p)\big).
 \end{align*}
\end{theorem}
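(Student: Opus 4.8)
The plan is to verify the three things required by Remark \ref{rem:lattice}: that $\Tr_\Lambda$ is well-defined and commutes with arbitrary joins, that it is surjective, and that it is injective. Before anything else I would set up the dictionary between the string combinatorics of Section \ref{sec:torset} and $\fl\Lambda$. By Lemma \ref{trivext}(i), a concatenation $\st{\gamma q\delta}$ with $q\in Q_1$ produces a short exact sequence $0\to X(\st\delta)\to X(\st{\gamma q\delta})\to X(\st\gamma)\to 0$, so string-extensions become module-extensions; conversely each confined factor of $\st\gamma$ is cut out as a composite of the canonical quotients of $X(\st\gamma)$. Together with $\Tors(\Sr)=\Filt\Fac(\Sr)$ (Lemma \ref{filtfac}) this yields $\Tors\big(X(\Fac\Sr)\big)=\Tors\big(X(\Tors\Sr)\big)$ for every set $\Sr$, so $\Tr_\Lambda(\Sr,\blambda)=\Tors\big(X(\Tors\Sr)\cup X_\blambda(\Sr^p)\big)$. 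Similarly, since $X(\st\gamma)$ is free over $k[T,T^{-1}]$, a composition series of an indecomposable $k[T,T^{-1}]$-module $N$ all of whose factors lie in $\blambda(\st\gamma)$ exhibits $X_N(\st\gamma)$ as an iterated extension of the band modules $X_M(\st\gamma)$, $M\in\blambda(\st\gamma)$; in particular all such $X_N(\st\gamma)$ lie in $\Tr_\Lambda(\Sr,\blambda)$.

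Next I would check monotonicity and preservation of joins. If $(\Sr,\blambda)\geq(\Sr',\blambda')$ then $\Tors\Sr'\subseteq\Tors\Sr$ by Theorem \ref{mainthmstrings}(b), while maximality of $\Sr$ forces every periodic string carrying a nonempty parameter set on the $\Sr'$-side to already lie in $\Sr$; hence the generating set of $\Tr_\Lambda(\Sr',\blambda')$ is contained in that of $\Tr_\Lambda(\Sr,\blambda)$, so $\Tr_\Lambda$ is order-preserving. For joins I would use the explicit formula in Proposition \ref{parametrized} together with the fact that $\Tors\colon\maxstr\to\torsQ$ is an order (hence complete-lattice) isomorphism: writing $\bigvee_i(\Sr_i,\blambda_i)=(\Sr_\vee,\blambda_\vee)$, one has $\Tors\Sr_\vee=\bigvee_i\Tors\Sr_i=\Tors\big(\bigcup_i\Sr_i\big)$, and $X_{\blambda_\vee}(\Sr_\vee^p)=\bigcup_iX_{\blambda_i}(\Sr_i^p)$ because the defining union is empty, hence $\emptyset$, for any periodic string newly appearing in $\Sr_\vee$. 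Combining these with $\bigvee_i\Tors(\Mr_i)=\Tors\big(\bigcup_i\Mr_i\big)$ for modules gives $\Tr_\Lambda(\Sr_\vee,\blambda_\vee)=\bigvee_i\Tr_\Lambda(\Sr_i,\blambda_i)$.

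For surjectivity I would invoke Lemma \ref{bricksurj}: by Proposition \ref{torclass1}(ii)--(iii) every torsion class of $\fl\Lambda$ is a join of the completely join-irreducible ones, which are exactly $\Tors(S)$ for $S$ a brick, so it suffices to realise each $\Tors(S)$ as some $\Tr_\Lambda(\Sr,\blambda)$. By Proposition \ref{classbrick}, $S=X(\st\gamma)$ for a confined string $\st\gamma$ with $\#c^{\geq}(\st\gamma,\st\gamma)=1$, or $S=X_M(\st\gamma)$ for a non-self-crossing band with $M$ simple. In the first case I would take $\Tr:=\Tors\{\st\gamma\}\in\torsQ$, $\Sr:=G(\Tr)$ and $\blambda\equiv\emptyset$: using that confined factors of $\st\gamma$ are quotients of $X(\st\gamma)$ (Lemma \ref{trivext}(i)) and Proposition \ref{sgenerate}, one gets $\Tr_\Lambda(\Sr,\emptyset)=\Tors(X(\Tors\Sr))=\Tors(X(\st\gamma))$. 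In the second case $\st\gamma$ is an infinite non-self-crossing periodic string, so $\Sr:=G(\Tors\{\st\gamma\})$ contains $\st\gamma$ by Proposition \ref{tomax} and \ref{sgenerate}; putting $\blambda(\st\gamma):=\{M\}$ and $\blambda\equiv\emptyset$ elsewhere, Lemma \ref{trivext}(ii) shows each $X(\st\omega)$ with $\st\omega\in\Fac\{\st\gamma\}$ is reachable from $X_M(\st\gamma)$ by quotients and extensions, so $\Tr_\Lambda(\Sr,\blambda)=\Tors(X_M(\st\gamma))$.

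Injectivity then follows from Lemma \ref{injmor} once $\Tr_\Lambda$ is shown to be strict on the order. Given $(\Sr,\blambda)\gneq(\Sr',\blambda')$, either $\Sr\gneq\Sr'$---so some confined $\st\omega\in\Tors\Sr\setminus\Tors\Sr'$ gives $X(\st\omega)\in\Tr_\Lambda(\Sr,\blambda)$---or $\Sr=\Sr'$ and $\blambda(\st\gamma)\supsetneq\blambda'(\st\gamma)$ for some periodic $\st\gamma$, and any $M\in\blambda(\st\gamma)\setminus\blambda'(\st\gamma)$ gives $X_M(\st\gamma)\in\Tr_\Lambda(\Sr,\blambda)$; in each case one must check the relevant module is \emph{not} in $\Tr_\Lambda(\Sr',\blambda')$. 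This is the main obstacle: it amounts to the converse of the dictionary, namely that every indecomposable of $\Tr_\Lambda(\Sr',\blambda')$ is either $X(\st\omega)$ with $\st\omega\in\Tors\Sr'$ or $X_N(\st\gamma)$ with $\st\gamma\in\Sr'^p$ and every composition factor of $N$ in $\blambda'(\st\gamma)$. I would prove this using the morphism bases (Theorems \ref{basisMorphString} and \ref{basisMorphBand}): writing an arbitrary element of $\Filt\Fac$ of the generators, one notes that indecomposable quotients of string modules are string modules on confined factors, and indecomposable quotients of a band module are that band module or string modules on confined factors of the period (distinct band modules do not map onto one another since no positive crossings are available among the strings of $\Sr'$); then one argues by induction on length---using the submodule/quotient structure of string modules and closure of confined torsion sets under factors and string-extensions---that any indecomposable built by iterated extensions from such pieces is again of the asserted form, the non-self-crossing band modules staying rigid because they are bricks (Proposition \ref{classbrick}) and $M$ is simple.
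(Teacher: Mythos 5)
Your skeleton (join-semilattice morphism, surjectivity via bricks and Lemma \ref{bricksurj}, injectivity via Lemma \ref{injmor}) is the paper's, and your surjectivity step is essentially Proposition \ref{brickposs}; but there are two genuine gaps at exactly the points where the real work lies. First, the order/join step: you claim that ``maximality of $\Sr$ forces every periodic string carrying a nonempty parameter set on the $\Sr'$-side to already lie in $\Sr$'', and correspondingly that $X_{\blambda_\vee}(\Sr_\vee^p)=\bigcup_i X_{\blambda_i}(\Sr_i^p)$. Neither holds: the partial order only constrains the parametrizations on $\Sr^p\cap\Sr'^p$, and a periodic string of $\Sr'$ can drop out of $\Sr$ altogether (it is then positively crossed by some string of $\Sr$ while not positively crossing $\Sr$). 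In the Kronecker example, $\Sr_0$ (corresponding to all of $\fl\Lambda$) contains no periodic string at all, yet it lies above sets whose periodic string ${}^\infty\pi_1^\infty$ carries parameters; one must then prove that the band modules $X_M(\st{{}^\infty\pi_1^\infty})$ nevertheless lie in $\Tors\big(X(\Fac\Sr_0)\big)$, i.e.\ are generated by string modules. This is precisely the content of Lemma \ref{createparam} (the explicit epimorphism from copies of $X(\st{\alpha q^-\beta r\alpha})$ onto a band module) combined with Lemma \ref{getband}, and your argument offers no substitute; without it neither monotonicity nor the join formula of Proposition \ref{stjoin} goes through.

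Second, injectivity: your plan hinges on the claim that every indecomposable of $\Tr_\Lambda(\Sr',\blambda')$ is either $X(\st\omega)$ with $\st\omega\in\Tors(\Sr')$ or $X_N(\st\gamma)$ with $\st\gamma\in\Sr'^p$ and all composition factors of $N$ in $\blambda'(\st\gamma)$. This intermediate statement is false in general, for the same reason as above: for the Kronecker quiver the torsion classes $\Tr_\Lambda(\Sr_m,\emptyset)$ contain every band module although $\Sr_m^p=\emptyset$, so the induction you sketch is attempting to prove something untrue and must break down when extensions recreate band modules over strings not in $\Sr'$. The paper avoids classifying the indecomposables of a torsion class entirely: it uses that for comparable torsion classes $\Tr\supseteq\Tr'$ one has $\Tr\supsetneq\Tr'$ if and only if $\Tr\cap\Tr'^{\perp}\neq 0$, reduces the required Hom-vanishing to the generators of $\Tr_\Lambda(\Sr',\blambda')$ (Lemma \ref{shortermorp}), converts a hypothetical nonzero morphism into an (almost) positive crossing via the morphism bases (Lemma \ref{homfromtor}), and contradicts either the non-crossing of $\Sr'$ or the relation $\Sr\geq\Sr'$ (Lemma \ref{separate}). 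You would need this perpendicular-category argument, or an equivalent replacement, for your injectivity step to stand.
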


The remaining of this section is devoted to proving Theorem \ref{classtors}.  Our proof is to gather a series of lemmas that eventually combined to say that $\Tr_\Lambda$ is a bijective morphism of complete join-lattice, and then the theorem, as explained in Remark \ref{rem:lattice}, follows immediately.

\subsection{A morphism of complete join-semilattices}

Let us start with some easy observations that will explain how a confined torsion set $\Tors(\Sr)$ and a torsion class $\Tr_\Lambda(\Sr,\blambda)$ is related.

\begin{lemma} \label{remfilt}
 We have $\Tors(X(\Filt(\Mr))) = \Tors(X(\Mr))$ for any set $\Mr$ of confined strings.  In particular, $\Tors(X(\Tors(\Sr))) = \Tors(X(\Fac(\Sr)))$ for any set of strings $\Sr$.
\end{lemma}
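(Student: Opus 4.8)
The plan is to prove the two equalities in turn, the second being an immediate consequence of the first.

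\textbf{First equality.} We must show $\Tors(X(\Filt(\Mr))) = \Tors(X(\Mr))$ for a set $\Mr$ of confined strings. The inclusion $\supseteq$ is trivial since $\Mr \subseteq \Filt(\Mr)$, hence $X(\Mr) \subseteq X(\Filt(\Mr))$, and $\Tors$ is monotone. For $\subseteq$, it suffices to show that $X(\st\gamma) \in \Tors(X(\Mr))$ for every $\st\gamma \in \Filt(\Mr)$. Recall that $\Filt(\Mr)$ is the smallest set of strings containing $\Mr$ and closed under extensions, so we argue by induction on the number of extension steps used to build $\st\gamma$ from members of $\Mr$. The base case $\st\gamma \in \Mr$ is clear. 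For the inductive step, suppose $\st\gamma = \st{\gamma_1 q \gamma_2}$ is an extension of $\st{\gamma_2}$ by $\st{\gamma_1}$ with $q \in Q_1$, where $\st{\gamma_1}, \st{\gamma_2} \in \Filt(\Mr)$ are obtained with fewer steps. Since all strings involved here are confined (being built from confined strings by concatenation along arrows of $Q$, which does not leave the confined world — note $q \in Q_1$), Lemma~\ref{trivext}(i) gives a short exact sequence
\[0 \to X(\st{\gamma_2}) \to X(\st\gamma) \to X(\st{\gamma_1}) \to 0\]
in $\fl\Lambda$. By the induction hypothesis, $X(\st{\gamma_1}), X(\st{\gamma_2}) \in \Tors(X(\Mr))$; since a torsion class is closed under extensions, $X(\st\gamma) \in \Tors(X(\Mr))$, completing the induction.

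\textbf{Second equality.} Apply the first equality with $\Mr := \Fac(\Sr)$, using $\Tors(\Sr) = \Filt\Fac(\Sr)$ from Lemma~\ref{filtfac}. This gives $\Tors(X(\Tors(\Sr))) = \Tors(X(\Filt\Fac(\Sr))) = \Tors(X(\Fac(\Sr)))$, as claimed. (Implicitly we are using that $\Tors(\Sr) = \fin\,\Filt\Fac^\infty(\Sr)$ consists only of confined strings, so that $X(\Tors(\Sr))$ makes sense as a set of finite-dimensional modules and the first equality applies verbatim.)

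\textbf{Main obstacle.} The only delicate point is bookkeeping: one must be careful that the extension operation on \emph{confined} strings always produces confined strings and that the concatenating letter $q$ lies in $Q_1$ (not in $Q'_1 \setminus Q_1$), so that Lemma~\ref{trivext}(i) genuinely applies and produces an exact sequence of finite-dimensional modules. This is built into the definition of extension of strings (Definition~\ref{def:ext-fac}), where $q \in Q_1$, and into the fact that $\Tors(\Sr)$ was defined as the \emph{confined} part $\fin\Tors^\infty(\Sr)$; so there is no real difficulty, but it is worth stating explicitly to justify invoking Lemma~\ref{trivext}(i).
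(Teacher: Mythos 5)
Your proposal is correct and follows essentially the same route as the paper: the inclusion $\supseteq$ is trivial, and for $\subseteq$ one inducts (the paper uses length of the string, you use the number of extension steps, which is the same argument) on the decomposition $\st\gamma\in\Mr$ or $\st\gamma=\st{\gamma_1 q \gamma_2}$, applies the short exact sequence of Lemma~\ref{trivext}(i), and uses extension-closedness of $\Tors(X(\Mr))$, with the second equality then following from Lemma~\ref{filtfac}. Your explicit remark about confinedness and $q\in Q_1$ is a harmless clarification of what the paper leaves implicit.
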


\begin{proof}
 The inclusion $\supseteq$ is trivial. For $\subseteq$, if $\st \delta \in \Filt(\Mr)$, we prove by induction on its length that $X(\st \delta) \in \Tors(X(\Mr))$.

 Note that we have $\st \delta \in \Mr$ or $\st \delta = \st{\delta_1 q \delta_2}$ with $\st{\delta_1}, \st{\delta_2} \in \Filt(\Mr)$. In the first case, the result is immediate. In the second case, by induction hypothesis we have $X(\st{\delta_1}), X(\st{\delta_2}) \in \Tors(X(\Mr))$. So, as $\Tors(X(\Mr))$ is closed under extensions, we deduce by Lemma \ref{trivext} that $X(\st \delta) \in \Tors(X(\Mr))$.

 The final statement follows from Lemma \ref{filtfac}.
\end{proof}

Now we can see that $\Tr_\Lambda(\Sr,\blambda)$ is generated by string modules arising from strings in $\Tors(\Sr)$ and band modules arising from bands with parameter given by $\Sr^p$ and $\blambda$.

\begin{lemma} \label{facttot}
 For any $(\Sr, \blambda) \in \maxstr[k]$, we have
 \[\Tr_\Lambda(\Sr, \blambda) = \Tors\Big( X (\Tors(\Sr)) \cup X_{\blambda}(\Sr^p) \Big).\]
\end{lemma}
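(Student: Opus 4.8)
The plan is to prove the two inclusions separately, using that both sides are torsion classes in $\fl\Lambda$ and the definition $\Tr_\Lambda(\Sr,\blambda) = \Tors(X(\Fac\Sr) \cup X_\blambda(\Sr^p))$. Since $\Fac(\Sr) \subseteq \Tors(\Sr)$ always holds, one inclusion is essentially immediate: $X(\Fac\Sr) \subseteq X(\Tors(\Sr))$, and $X_\blambda(\Sr^p)$ appears literally on both sides, so $\Tors(X(\Fac\Sr) \cup X_\blambda(\Sr^p)) \subseteq \Tors(X(\Tors(\Sr)) \cup X_\blambda(\Sr^p))$.

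For the reverse inclusion, the key point is to show that every string module $X(\st\delta)$ with $\st\delta \in \Tors(\Sr)$ lies in $\Tr_\Lambda(\Sr,\blambda)$. Recall from Lemma \ref{filtfac} that $\Tors(\Sr) = \Filt\Fac(\Sr)$, so $\st\delta$ is obtained from strings in $\Fac(\Sr)$ by iterated extensions of strings. I would argue by induction on the length of $\st\delta$: if $\st\delta \in \Fac(\Sr)$ then $X(\st\delta) \in X(\Fac\Sr) \subseteq \Tr_\Lambda(\Sr,\blambda)$ directly; otherwise $\delta = \delta_1 q \delta_2$ with $\st{\delta_1}, \st{\delta_2} \in \Filt\Fac(\Sr)$ of strictly smaller length, so by induction $X(\st{\delta_1}), X(\st{\delta_2}) \in \Tr_\Lambda(\Sr,\blambda)$, and Lemma \ref{trivext}(i) gives a short exact sequence $0 \to X(\st{\delta_2}) \to X(\st\delta) \to X(\st{\delta_1}) \to 0$; since $\Tr_\Lambda(\Sr,\blambda)$ is extension-closed, $X(\st\delta) \in \Tr_\Lambda(\Sr,\blambda)$. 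This shows $X(\Tors(\Sr)) \subseteq \Tr_\Lambda(\Sr,\blambda)$, and combining with the trivial observation $X_\blambda(\Sr^p) \subseteq \Tr_\Lambda(\Sr,\blambda)$, we get $\Tors(X(\Tors(\Sr)) \cup X_\blambda(\Sr^p)) \subseteq \Tr_\Lambda(\Sr,\blambda)$, completing the proof. Essentially this whole argument is already packaged in Lemma \ref{remfilt}, which says $\Tors(X(\Tors(\Sr))) = \Tors(X(\Fac(\Sr)))$; applying this and the monotonicity of $\Tors$ under unions gives the statement directly.

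Concretely, the cleanest writeup invokes Lemma \ref{remfilt}: $\Tors(X(\Tors(\Sr)) \cup X_\blambda(\Sr^p)) = \Tors(X(\Tors(\Sr))) \vee \Tors(X_\blambda(\Sr^p)) = \Tors(X(\Fac(\Sr))) \vee \Tors(X_\blambda(\Sr^p)) = \Tors(X(\Fac(\Sr)) \cup X_\blambda(\Sr^p)) = \Tr_\Lambda(\Sr,\blambda)$, where the join equalities use that $\Tors$ of a union is the join of the $\Tors$'s (the analogue of \eqref{veecup} in $\tors(\fl\Lambda)$, which holds by Proposition \ref{torclass1}(i) applied to $\Acal = \fl\Lambda$). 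There is no real obstacle here; the only thing to be careful about is that $X(\Fac\Sr)$ in the definition of $\Tr_\Lambda$ means $X$ applied to the confined part of the factors, which is exactly $X(\Fac(\Sr))$ in the notation of Lemma \ref{remfilt}, so the identification is bookkeeping rather than mathematics. The one genuine input being used is Lemma \ref{trivext}(i) (the canonical short exact sequence realizing a string module as an extension of two shorter ones), which is what makes $\Filt$ on the combinatorial side match extension-closure on the module side.
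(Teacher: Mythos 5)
Your proposal is correct and, in its final "cleanest writeup" form, is exactly the paper's proof: pass from $\Tors$ of a union to the join of torsion classes, apply Lemma \ref{remfilt} to replace $\Tors(X(\Tors(\Sr)))$ by $\Tors(X(\Fac(\Sr)))$, and recombine. The preliminary induction via Lemma \ref{trivext}(i) is just a re-derivation of Lemma \ref{remfilt}, so nothing differs in substance.
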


\begin{proof}
We have
  \begin{align*} \Tors\Big( X (\Tors(\Sr)) \cup X_{\blambda}(\Sr^p) \Big) =& \Tors\Big(X (\Tors(\Sr))\Big) \join \Tors\Big(X_{\blambda}(\Sr^p)\Big) \\
  =& \Tors\Big(X (\Fac(\Sr)\Big) \join \Tors\Big(X_{\blambda}(\Sr^p)\Big)\\ =& \Tors\Big(X (\Fac(\Sr) \cup X_{\blambda}(\Sr^p)\Big)
  = \Tr_\Lambda(\Sr,\blambda).
  \end{align*}
Here, the first equality follows from the defining property of join of torsion classes and the second equality follows from Lemma \ref{remfilt}.
\end{proof}

The first main step is to show that the map $\Tr_\Lambda$ is a morphism of complete join-semilattice.

\begin{lemma} \label{createparam}
 Consider a confined string of the form $\st \gamma = \st{\alpha q^{-} \beta r \alpha}$. Then, for any $M \in \mod k[T, T^{-1}]$,  we have $X_M(\st{{}^\infty(\alpha q^{-} \beta r)^\infty}) \in \Tors(X(\st \gamma))$.
\end{lemma}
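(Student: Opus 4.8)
Write $\omega := \alpha q^{-} \beta r$, so that the band is $X_M(\st{{}^\infty\omega^\infty})$ and we want to show it lies in $\Tors(X(\st\gamma))$ where $\gamma = \omega\alpha = \alpha q^{-}\beta r\alpha$. The torsion class $\Tors(X(\st\gamma))$ is closed under extensions and quotients, so the strategy is to realize $X_M(\st{{}^\infty\omega^\infty})$ by iterated extensions starting from quotients of $X(\st\gamma)$ and its obvious ``powers''. First I would observe that each finite power $\st{\omega^n\alpha}$ is a confined string, and by repeated use of Lemma \ref{trivext}(i) applied to the decompositions $\omega^n\alpha = (\omega^{n-1}\alpha)(q^{-}\beta r\alpha)$ — noting that the letter being cut is an inverse arrow so that this is genuinely the short exact sequence $0\to X(\st{q^{-}\beta r\alpha})\to\cdots$ or rather, reading it the other way, that $X(\st{\omega^n\alpha})$ is built by extensions from copies of $X(\st{\gamma}) = X(\st{\omega\alpha})$ — one shows $X(\st{\omega^n\alpha})\in\Tors(X(\st\gamma))$ for all $n\geq 1$. (Here one has to be slightly careful about which of the two strings appearing in Lemma \ref{trivext}(i) is a sub and which is a quotient; the point is just that $\Tors(X(\st\gamma))$ contains all of them since it is closed under both sub-quotients-that-are-quotients and extensions, and $\omega^n\alpha$ overlaps $\gamma$ in a way that decomposes it into confined pieces each of which is a factor or extension built from $\st\gamma$.)

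The second and main step is to produce the band module itself. Here the natural tool is Lemma \ref{trivext}(ii): taking the periodic non-null string $\st{{}^\infty\omega^\infty}$ with primitive period $\st\omega$ (after possibly replacing $\omega$ by its primitive root, which does not change anything since we may absorb the multiplicity into $M$ — actually cleaner: if $\omega = \omega_0^m$ we run the argument with $\omega_0$), and its confined substring $\st\delta := \st{\omega^n\omega}$ for a decomposition $\omega = \omega_1 r' \omega_2 q'^{-}$ in the notation of that lemma. Matching with our $\omega = \alpha q^{-}\beta r$: we want the period to start and end appropriately so that $\omega = \omega_1 r'\omega_2 q'^{-}$; rotating the period, take $\omega_1 = \alpha$, $r' = q^{-}$... but $r'$ must be an arrow — so instead rotate so that the period reads $r\alpha q^{-}\beta$, i.e. start at an arrow $r$ and end at an inverse arrow $q^{-}$; then Lemma \ref{trivext}(ii) with $n\geq 1$ gives a short exact sequence
\[
0\to X_M(\st{{}^\infty\omega^\infty})\to M\otimes_k X(\st\delta)\to M\otimes_k X(\st{\omega^{n-1}\omega_1})\to 0
\]
where $\st\delta$ and $\st{\omega^{n-1}\omega_1}$ are confined substrings of the form $\st{\omega^{n+1}}$ (up to rotation) and $\st{\omega^{n-1}\omega_1}$, both of which, by Step 1, have their string modules in $\Tors(X(\st\gamma))$; and $M\otimes_k X(\st\eta) = X(\st\eta)^{\oplus\dim M}$ as a $\Lambda$-module, so these tensor products are in $\Tors(X(\st\gamma))$ too (a torsion class is closed under finite direct sums). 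Since a torsion class is closed under submodules-that-appear-as-kernels? — no, torsion classes are \emph{not} closed under submodules in general. So this is the obstacle.

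The fix — and this is the real content — is that $X_M(\st{{}^\infty\omega^\infty})$ is not merely a submodule but appears as a submodule with torsion-\emph{free}-type quotient only if we're in a torsion pair; instead I should use the $n=0$ case of Lemma \ref{trivext}(ii), which states that $X(\st\delta)$ (for $\delta$ of the form $\st{\omega}$, i.e. one full period) is a \emph{factor module} of $X_M(\st{{}^\infty\omega^\infty})$ — wrong direction again. The correct approach: combine the short exact sequence from Lemma \ref{trivext}(ii) read as $X_M(\st{{}^\infty\omega^\infty})$ being a submodule of $M\otimes X(\st\delta)$, together with the observation that the \emph{cokernel} $M\otimes X(\st{\omega^{n-1}\omega_1})$ is also in $\Tors(X(\st\gamma))$, is not enough; rather, one takes the short exact sequence for $n$ and for $n+1$ and uses a limiting/stabilization argument, or — most likely what the authors do — one shows directly that $X_M(\st{{}^\infty\omega^\infty})$ is a \emph{quotient} of a suitable (possibly infinite-looking but finite) string module. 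Concretely: apply Lemma \ref{trivext}(ii) with $n=0$, giving that $M\otimes_k X(\st\omega)$ surjects onto... no. Let me instead plan the honest route: the band module is a quotient of $M\otimes_k X(\st{\omega^n\alpha})$ for large $n$ via ``wrapping up'' the string into the band — this is a standard fact and is exactly encoded by a morphism $g^{\mathrm{bs}}$ or by the surjection in the $n=0$ clause of Lemma \ref{trivext}(ii) applied to a long enough finite power. So: \textbf{the main step} is to exhibit, for suitable $n$, a surjection $X(\st{\omega^n\alpha})^{\oplus d}\twoheadrightarrow X_M(\st{{}^\infty\omega^\infty})$ (with $d = \dim_k M$ up to the module structure of $M$), deduce $X_M(\st{{}^\infty\omega^\infty})\in\Tors(X(\st\gamma))$ from Step 1 and closure under quotients, and finish. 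The hard part is getting the direction of all the short exact sequences in Lemma \ref{trivext} pointed correctly and checking that the period-rotation to match $\omega = \alpha q^{-}\beta r$ against the hypotheses ``$\omega = \omega_1 r\omega_2 q^{-}$'' (arrow at one designated spot, inverse arrow at the other) is legitimate — which it is, because $\st\gamma = \st{\alpha q^{-}\beta r\alpha}$ being a genuine string forces $q, r$ to be honest arrows and $\alpha,\beta$ to be paths of a controlled shape, so ${}^\infty\omega^\infty$ contains both an arrow and an inverse arrow as required for Lemma \ref{trivext}(ii) to apply.
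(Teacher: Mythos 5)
Your final plan is, in outline, exactly the paper's proof: exhibit the band module as a quotient of finitely many copies of a string module already in $\Tors(X(\st\gamma))$ and conclude by closure under finite direct sums and quotients. But the decisive step is only asserted in your write-up (``this is a standard fact \dots exactly encoded by a morphism $g^{\mathrm{bs}}$''), and the one concrete mechanism you offer as an alternative --- the $n=0$ clause of Lemma \ref{trivext}(ii) --- points in the wrong direction (it makes a string module a quotient of the band, not the band a quotient of strings), as you yourself half-notice. What is missing is precisely the content of the lemma: one must write down an explicit almost positive crossing $\mt{d}\in c^\geq(\st\gamma,\st{{}^\infty(\alpha q^{-}\beta r)^{\infty}})$ and verify that the associated canonical morphisms are jointly surjective. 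The paper takes the crossing whose overlap is the whole of $\gamma$, reading the periodic walk as ${}^\infty(\alpha q^{-}\beta r)\cdot\gamma\cdot(q^{-}\beta r\alpha)^{\infty}$; this is an almost positive crossing exactly because the letter of the periodic walk preceding the overlap is the arrow $r$ and the letter following it is the inverse arrow $q^{-}$ --- this is where the hypothesis that $\gamma$ begins \emph{and} ends with the same $\alpha$ enters, a point your proposal never actually uses. Then, for a $k$-basis $m_1,\dots,m_n$ of $M$, the family $(g^{\mathrm{bs}}_{\mt{d},m_i})_{1\leq i\leq n}\colon X(\st\gamma)^{\oplus n}\to X_M(\st{{}^\infty(\alpha q^{-}\beta r)^{\infty}})$ is an epimorphism: since the overlap $\gamma=(\alpha q^{-}\beta r)\alpha$ contains a full period, the images of the canonical basis vectors already span the band module. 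This short verification, via Theorem \ref{basisMorphBand}, is what your ``standard fact'' has to amount to; without it the proposal does not prove the statement.

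Once this is in place, your Step 1 (the modules $X(\st{\omega^{n}\alpha})$, the repeated use of Lemma \ref{trivext}(i), and the worry about replacing $\omega$ by a primitive root) is superfluous: the source of the paper's surjection is $X(\st\gamma)^{\oplus n}$ itself, so no powers of the period, no extensions, and no appeal to Lemma \ref{trivext} are needed, and primitivity of $\alpha q^{-}\beta r$ is irrelevant because the morphisms $g^{\mathrm{bs}}$ are defined directly for the periodic string.
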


\begin{proof}
 Let $\mt{d}$ be the following almost positive crossing:
 \[\crosswn{\gamma}{}{\gamma}{}{{}^\infty(\alpha q^{-} \beta r)^\infty}{{}^\infty(\alpha q^- \beta r)}{(q^- \beta r \alpha)^\infty}\]
 and let $\{m_1, m_2, \dots, m_n\}$ be a $k$-basis of $M$. Then, using notations of Theorem \ref{basisMorphBand}, it is immediate that $(g_{\mt{d}, m_i}^{\mathrm{bs}})_{1 \leq i \leq n}$ is an epimorphism from $X(\st \gamma)^{\oplus n}$ to $X_M(\st{{}^\infty(\alpha q^{-} \beta r)^\infty})$.
\end{proof}

\begin{lemma} \label{getband}
 Let $\st \gamma$ be a periodic string, $M \in \mod k[T, T^{-1}]$ and $\Sr \in \maxstr$.
 If $c^+(\st \gamma, \st\epsilon) = \emptyset$ for all $\st\epsilon\in \Sr$ and $c^+(\st\delta, \st \gamma)\neq \emptyset$ for some $\st\delta\in \Sr$, then $X_M(\st \gamma) \in \Tors(X (\Fac(\Sr)))$.
\end{lemma}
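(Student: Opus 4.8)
The plan is to leverage Lemma \ref{createparam} together with a careful analysis of the positive crossing $\mt{d}\in c^+(\st\delta,\st\gamma)$ that we are given. First I would unpack the hypothesis: since $\st\gamma$ is periodic, we may write $\gamma = {}^\infty\omega^\infty$ for a primitive period $\omega$, and because $c^+(\st\delta,\st\gamma)\neq\emptyset$ there is a positive crossing of the form displayed in \eqref{eq-crossing} whose overlap is some confined subwalk of $\gamma$; by enlarging the overlap along the periodic repetition of $\omega$, I can arrange the overlap to contain a full period, say the overlap is $\omega^n$ for $n$ large, or more precisely to be a confined walk of the shape $\alpha q^{-}\beta r\alpha$ where $q^{-}\beta r$ together with the two copies of $\alpha$ assemble into a complete period of $\gamma$. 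The point of this normalization is to match exactly the combinatorial situation of Lemma \ref{createparam}: I want to produce a \emph{confined} string $\st{\alpha q^{-}\beta r\alpha}$ which (a) is a factor of $\st\delta$, hence $X(\st{\alpha q^{-}\beta r\alpha})\in X(\Fac(\Sr))$, and (b) has period-repeat equal to $\st\gamma$, i.e. ${}^\infty(\alpha q^{-}\beta r)^\infty = \gamma$ (up to shift/reversal).

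The key steps, in order, would be: (1) From the given positive crossing $\crosswn{\gamma}{\gamma_1 q^{-}}{\omega_0}{q'\gamma_2}{\delta}{\delta_1 r_0}{r_0'^{-}\delta_2}$, use the periodicity of $\gamma$ to extend the overlap so it spans a full primitive period; the crossing being positive ensures the boundary letters have the right signs (an inverse arrow on the $\gamma$-side before $\omega_0$, an arrow after). (2) Read off from this a confined factor $\st\eta$ of $\st\delta$ of the form $\st{\alpha q^{-}\beta r\alpha}$ with ${}^\infty(\alpha q^{-}\beta r)^\infty$ equal to $\gamma$ as strings; here I would need that $n\geq 1$, which follows once the overlap has been extended to include at least one full period. (3) Apply Lemma \ref{createparam} to $\st\eta$ to get $X_M(\st{{}^\infty(\alpha q^{-}\beta r)^\infty}) = X_M(\st\gamma)\in\Tors(X(\st\eta))$. (4) Since $\st\eta\in\Fac(\Sr)$, we have $X(\st\eta)\in X(\Fac(\Sr))$, and torsion classes are closed under the passage from a module to the smallest torsion class it generates, so $\Tors(X(\st\eta))\subseteq\Tors(X(\Fac(\Sr)))$; chaining the inclusions gives $X_M(\st\gamma)\in\Tors(X(\Fac(\Sr)))$, as required.

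The hypothesis $c^+(\st\gamma,\st\epsilon)=\emptyset$ for all $\st\epsilon\in\Sr$ is, I suspect, needed only to guarantee that $\st\gamma$ itself is not forced out of the picture — more precisely, it should be used (perhaps implicitly via maximality of $\Sr$, or in an auxiliary claim) to ensure that the crossing with $\st\delta$ really does wrap $\st\delta$ around a full period of $\gamma$ rather than merely touching it in a bounded way that could be ``undone''. I would double-check whether this hypothesis is genuinely invoked or whether $c^+(\st\delta,\st\gamma)\neq\emptyset$ alone suffices for the construction; if it is needed, the natural place is in step (2), to rule out the degenerate case where the overlap cannot be extended to a full period.

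The main obstacle I anticipate is step (2): cleanly extracting, from an arbitrary positive crossing between a finite string $\st\delta$ and the periodic string $\st\gamma$, a confined factor of $\st\delta$ whose period-repeat is \emph{exactly} $\st\gamma$ (not a proper power or a shift that breaks primitivity). This requires careful bookkeeping with the primitive period $\omega$ of $\gamma$, the positions where $\delta$ enters and exits the overlap, and the sign conventions in the definition of positive crossing — in particular making sure the letters $q, q', r, r'$ of the crossing glue up correctly so that the resulting word $\alpha q^{-}\beta r$ is a genuine period of $\gamma$ and $\alpha q^{-}\beta r\alpha$ is a genuine subwalk of $\delta$. The topological intuition (a curve $\st\delta$ that spirals into the closed curve $\st\gamma$ must wind around it at least once, and one full winding records a period) makes the statement believable, but the combinatorial verification of this winding is where the real work lies.
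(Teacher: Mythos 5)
There is a genuine gap at your steps (1)--(2), and it is exactly the obstacle you flag at the end. The overlap of a positive crossing is by definition a \emph{maximal} common subwalk, so it cannot be ``enlarged along the periodic repetition'': the walk $\delta$ genuinely diverges from $\gamma$ at both ends of the overlap, and nothing in the hypotheses forces the overlap to contain a full primitive period of $\gamma$. It can be a short piece of one period, or even a stationary walk (already for the Kronecker quiver the string $\st{b_2^-a_2}$ crosses $\st{{}^\infty(b_1^-a_1)^\infty}$ positively with stationary overlap, and for longer bands, e.g.\ on an annulus of type $\widetilde{\mathbb{A}}_3$, \emph{every} string of the relevant maximal non-crossing set may meet the band in strictly less than one period). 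In such cases $\st\delta$ simply has no confined factor of the form $\st{\alpha q^{-}\beta r\alpha}$ whose period-repeat is $\st\gamma$, so there is nothing to feed into Lemma \ref{createparam}, and the argument cannot be repaired by choosing a different $\st\delta\in\Sr$. Your guess about the role of the hypothesis $c^+(\st\gamma,\st\epsilon)=\emptyset$ is also off: it is not there to force the crossing to wrap around a full period.

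The paper's proof circumvents this by \emph{manufacturing} the period-wrapping string inside the combinatorial torsion set rather than finding it inside $\delta$. Writing a primitive period as $\omega=\gamma_a r^{-}\gamma_b q$ so that the crossing from $\st\delta$ to $\st\gamma$ has overlap $\omega^m\gamma_a$ (with $m=0$ allowed), the overlap gives $\st{\omega^m\gamma_a}\in\Fac(\Sr)$, while the complementary piece $\st{\gamma_b}$ is only a factor of $\st\gamma$ itself. This is where the hypothesis $c^+(\st\gamma,\st\epsilon)=\emptyset$ for all $\st\epsilon\in\Sr$ enters: via Lemma \ref{condp1} and Theorem \ref{mainthmstrings}(b) it yields $\Tors(\st\gamma)\subseteq\Tors(\Sr)$, hence $\st{\gamma_b}\in\Tors(\Sr)$. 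Since $\Tors(\Sr)$ is closed under extensions, the confined string $\st{\omega^m\gamma_a r^{-}\gamma_b q\,\omega^m\gamma_a}$ lies in $\Tors(\Sr)$; Lemma \ref{createparam} applied to it gives $X_M(\st\gamma)\in\Tors(X(\st\epsilon))$ for this $\st\epsilon$, and finally Lemma \ref{remfilt} supplies the bridge $\Tors(X(\Tors(\Sr)))=\Tors(X(\Fac(\Sr)))$, which your version did not need only because you assumed the stronger (and unavailable) fact that the wrapping string is a factor of $\delta$. Your steps (3)--(4) are fine and coincide with the paper's endgame, but they must be applied to a string produced by extension in $\Tors(\Sr)$, not to a factor of $\st\delta$.
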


\begin{proof}
 Consider $\st \delta \in \Sr$ as given in the assumption.  Then we can  choose a primitive period $\omega = \gamma_a r^{-} \gamma_b q$ of $\gamma$ in such a way that a positive crossing from $\st \delta$ to $\st \gamma$ can be written as
 \[\crosswn{\gamma}{\gamma_1 q}{\omega^m \gamma_a}{r^{-} \gamma_bq\omega^\infty}{\delta}{\delta_1 q'^{-}}{r' \delta_2.}\]
 Using the second row, we get that $\st{\omega^m \gamma_a} \in \Fac(\Sr)$, whereas the first row gives $\st{\gamma_b} \in \Fac(\{\st{\gamma_1}\}) \subseteq \Fac(\{\st \gamma\})$. 

 As $c^+(\st \gamma, \epsilon) = \emptyset$ for all $\st\epsilon\in \Sr$, it follows form Lemma \ref{condp1} that $c^+(\st\eta, \st\epsilon)=\emptyset$ for all $\st\eta\in L(\Tors(\st\gamma))\supset \Sr'$, where $\Sr'\in \maxstr$ is the maximal non-crossing set corresponding to $\Tors(\st\gamma)$.
 Hence, by Theorem \ref{mainthmstrings} (b), we get that $\Tors(\st \gamma) \subseteq \Tors(\Sr)$, and it follows that $\st{\gamma_b}$ is in $\Tors(\Sr)$.

 Therefore, the string $\st \epsilon := \st{\omega^m \gamma_a r^{-} \gamma_b q \omega^m \gamma_a} = \st{\gamma_a  r^{-} \gamma_b q \omega^{2m} \gamma_a}$ is in $\Tors(\Sr)$.  By Lemma \ref{createparam}, we have $X_M(\st \gamma) \in \Tors(X(\st \epsilon))$.
 But the previous sentence says that $\Tors(X(\st \epsilon)) \subseteq \Tors(X(\Tors(\Sr))) = \Tors(X(\Fac(\Sr)))$, where the equality is from Lemma \ref{remfilt}.
\end{proof}

\begin{proposition} \label{stjoin}
 For a family $(\Sr_i, \blambda_i)_{i \in I}$ of $\maxstr[k]$, we have
 \[\bigvee_{i \in I} \Tr_\Lambda(\Sr_i, \blambda_i) = \Tr_\Lambda\left( \bigvee_{i \in I} (\Sr_i, \blambda_i)\right).\]
\end{proposition}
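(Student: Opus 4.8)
\textbf{Proof plan for Proposition \ref{stjoin}.}

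The plan is to prove the two inclusions separately, with the inclusion $\supseteq$ being the substantial one. First I would record that $\bigvee_{i\in I}(\Sr_i,\blambda_i)=(\Sr_\vee,\blambda_\vee)$ with $\Sr_\vee=\bigvee_{i\in I}\Sr_i$ and $\blambda_\vee(\st\gamma)=\bigcup_{\Sr_i\ni\st\gamma}\blambda_i(\st\gamma)$, as in Proposition \ref{parametrized}. For $\subseteq$: each $(\Sr_i,\blambda_i)\leq(\Sr_\vee,\blambda_\vee)$, so it suffices to show $\Tr_\Lambda$ is order-preserving; but this reduces to checking the generators of $\Tr_\Lambda(\Sr_i,\blambda_i)$ lie in $\Tr_\Lambda(\Sr_\vee,\blambda_\vee)$. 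The string-module generators are handled by $\Fac(\Sr_i)\subseteq\Fac(\Sr_\vee)$ together with Lemma \ref{remfilt} and $\Tors(\Sr_i)\subseteq\Tors(\Sr_\vee)$ (from Theorem \ref{mainthmstrings}); the band-module generators $X_M(\st\gamma)$ for $\st\gamma\in\Sr_i^p$, $M\in\blambda_i(\st\gamma)$ are handled case-by-case depending on whether $\st\gamma\in\Sr_\vee$ or not. If $\st\gamma\in\Sr_\vee^p$ then $M\in\blambda_i(\st\gamma)\subseteq\blambda_\vee(\st\gamma)$ and we are done directly; if $\st\gamma\notin\Sr_\vee$ then $\st\gamma$ must cross $\Sr_\vee$ while (by $\Sr_i\leq\Sr_\vee$) we have $c^+(\st\gamma,\st\epsilon)=\emptyset$ for all $\st\epsilon\in\Sr_\vee$, so Lemma \ref{getband} applies with $\Sr=\Sr_\vee$ to put $X_M(\st\gamma)\in\Tors(X(\Fac(\Sr_\vee)))\subseteq\Tr_\Lambda(\Sr_\vee,\blambda_\vee)$.

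For the reverse inclusion $\supseteq$, using Lemma \ref{facttot} I would rewrite $\Tr_\Lambda(\Sr_\vee,\blambda_\vee)=\Tors\big(X(\Tors(\Sr_\vee))\cup X_{\blambda_\vee}(\Sr_\vee^p)\big)$ and show each generator lies in $\bigvee_{i\in I}\Tr_\Lambda(\Sr_i,\blambda_i)$. A band generator $X_M(\st\gamma)$ with $\st\gamma\in\Sr_\vee^p$ and $M\in\blambda_\vee(\st\gamma)=\bigcup_{\Sr_i\ni\st\gamma}\blambda_i(\st\gamma)$ lies in $\Tr_\Lambda(\Sr_j,\blambda_j)$ for the index $j$ realising $M\in\blambda_j(\st\gamma)$, hence in the join. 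For a string generator $X(\st\delta)$ with $\st\delta\in\Tors(\Sr_\vee)$: by Theorem \ref{mainthmstrings}(i) and \eqref{veecup} we have $\Tors(\Sr_\vee)=\bigvee_i\Tors(\Sr_i)=\Tors(\bigcup_i\Sr_i)=\Filt\Fac(\bigcup_i\Sr_i)$, so $\st\delta$ is an iterated extension of confined factors $\st\delta_k$ of strings in $\bigcup_i\Sr_i$; I would induct on the length of $\st\delta$ using Lemma \ref{trivext}(i) (extension-closure of the join torsion class) to reduce to the case $\st\delta\in\Fac(\Sr_i)$ for some $i$, which lies in $\Tr_\Lambda(\Sr_i,\blambda_i)$ by definition. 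Assembling, both inclusions give the equality.

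The main obstacle I anticipate is the band-module bookkeeping in the $\subseteq$ direction: one must carefully separate the case $\st\gamma\in\Sr_\vee$ (where the parametrization genuinely grows along the join, $\blambda_i(\st\gamma)\subseteq\blambda_\vee(\st\gamma)$) from the case $\st\gamma\notin\Sr_\vee$ (where $\st\gamma$ has been ``resolved'' into non-periodic infinite strings in $\Sr_\vee$ and the band module is only recovered via the crossing-and-generation argument of Lemma \ref{getband}). Verifying that the hypotheses of Lemma \ref{getband} hold — namely $c^+(\st\gamma,\st\epsilon)=\emptyset$ for all $\st\epsilon\in\Sr_\vee$ and $c^+(\st\delta,\st\gamma)\neq\emptyset$ for some $\st\delta\in\Sr_\vee$ — uses that $\st\gamma\in\Sr_i$ together with $\Sr_i\leq\Sr_\vee$ for the first, and that a periodic string not in a maximal non-crossing set $\Sr_\vee$ (containing all null strings) must be crossed by $\Sr_\vee$, for the second; pinning this last point down is the only genuinely delicate step.
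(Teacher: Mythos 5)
Your proposal is correct and matches the paper's proof essentially step for step: both inclusions are handled via the generators supplied by Lemma \ref{facttot}, with the band-module case split ($\st\gamma\in\Sr_\vee^p$ versus $\st\gamma\notin\Sr_\vee$, the latter via Lemma \ref{getband}) in one direction, and the string-module side in the other direction handled through $\Tors(\Sr_\vee)=\Tors(\bigcup_i\Sr_i)=\Filt\Fac(\bigcup_i\Sr_i)$ together with Lemma \ref{remfilt}/Lemma \ref{trivext}(i), exactly as in the paper. The delicate point you flag — that a periodic string of $\Sr_i$ absent from $\Sr_\vee$ must admit a positive crossing from some string of $\Sr_\vee$ — is settled just as you indicate: $\st\gamma$ is infinite and non-self-crossing, $c^+(\st\gamma,\Sr_\vee)=\emptyset$ since $\Sr_\vee\geq\Sr_i$, so maximality of $\Sr_\vee$ forces a crossing in the other direction, which is precisely (if tersely) the paper's argument.
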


\begin{proof}
 Let $(\Sr, \blambda) := \bigvee_{i \in I} (\Sr_i, \blambda_i)$ and fix some $i \in I$.  
 We first show that $\bigvee_{i \in I} \Tr_\Lambda(\Sr_i, \blambda_i) \subseteq \Tr_\Lambda( \Sr, \blambda)$.  By Lemma \ref{facttot}, it suffices to show that $X(\Tors(\Sr_i))\cup X_{\blambda_i}(\Sr_i^p) \subseteq \Tr_\Lambda(\Sr,\blambda)$.  Let us consider the string modules first. By Proposition \ref{parametrized} and Theorem \ref{mainthmstrings}, we have $\Tors(\Sr_i)\subseteq \Tors(\Sr)$, and so $X(\Tors(\Sr_i))\subseteq X(\Tors(\Sr))\subseteq \Tr_\Lambda(\Sr,\blambda)$.

 Let us consider now the case $\st \gamma \in \Sr_i^p$ and $M \in \blambda_i(\gamma)$.  There are two possibilities - the first being $\st \gamma \in \Sr^p$, in which case, we have $M \in \blambda(\st \gamma)$ by Proposition \ref{parametrized}, and so $X_M(\st \gamma) \in \Tr_\Lambda(\Sr, \blambda)$.  For the second possibility, i.e. $\st \gamma \notin \Sr^p$, by the description of $\Sr \geq \Sr_i$ in Theorem \ref{mainthmstrings}, we have $c^+(\st \gamma, \st\epsilon) = \emptyset$ for all $\st\epsilon\in \Sr$ whereas there is some $\st\delta\in \Sr$ so that $c^+(\st\delta, \st \gamma) \neq \emptyset$.  It now follows from Lemma \ref{getband} that $X_M(\st \gamma) \in \Tors(X(\Fac(\Sr)))$.
 Consequently, by Lemma \ref{remfilt}, we have $X_M(\st\gamma)\in \Tors(X(\Tors(\Sr))) \subset \Tr_\Lambda(\Sr, \blambda)$.

 Now it remains to show the containment $\bigvee_{i \in I} \Tr_\Lambda(\Sr_i, \blambda_i) \supseteq \Tr_\Lambda( \Sr, \blambda)$.
 First observe that $\Tors(\Sr) = \Tors\left(\bigcup_{i\in I} \Tors(\Sr_i)\right)= \Tors\left(\bigcup_{i \in I} \Sr_i\right)$, so combining with Lemma \ref{filtfac} yields 
\[
  X(\Tors(\Sr)) =  X( \Tors\left(\bigcup_{i\in I} \Sr_i\right) ) =  X( \Filt\Fac\left(\bigcup_{i\in I} \Sr_i\right) ).  \]
  Using the fact that $\Fac\left(\bigcup_{i\in I}\Sr_i\right) = \bigcup_{i\in I} \Fac(\Sr_i)$ and Lemma \ref{remfilt}, we can then see that $X(\Filt\Fac\left(\bigcup_{i\in I} \Sr_i\right))$ is equal to
 \[
 X( \Filt\left(\bigcup_{i \in I} \Fac (\Sr_i)\right) ) = X( \bigcup_{i \in I} \Fac (\Sr_i) ) = \bigcup_{i \in I} X(  \Fac (\Sr_i) ).
 \]
 So combining with the defining property of join in the complete lattice of torsion classes, we get that
 \[
   \Tors(X(\Tors(\Sr))) = \Tors \left( \bigcup_{i \in I} X(  \Fac (\Sr_i) )\right) =  \bigvee_{i \in I} \Tors \left( X( \Fac (\Sr_i) )\right),
 \]
 which is a subset of $\bigvee_{i \in I} \Tr_\Lambda(\Sr_i, \blambda_i)$ by definition of $\Tr_\Lambda$.

 Suppose now that $\st \gamma \in \Sr^p$ and $M \in \blambda(\st \gamma)$. By definition of $\blambda$, there exists $i \in I$ such that $\st \gamma \in \Sr_i^p$ and $M \in \blambda_i(\st \gamma)$.  Hence, for such an $i$, we have $X_M(\st \gamma) \in \Tr_\Lambda(\Sr_i, \blambda_i)$. This finishes the proof of $\Tr_\Lambda(\Sr, \blambda) \subseteq \bigvee_{i \in I} \Tr_\Lambda(\Sr_i, \blambda_i)$. 
\end{proof}

\subsection{Completely join-irreducible torsion classes}

The next ingredient needed is that completely join-irreducible torsion classes can be realised by $\Tr_\Lambda$; hence, we get that the morphism $\Tr_\Lambda$ of complete join-semilattice is surjective.

\begin{lemma} \label{getbackstrings}
 We have $\Tors(X(\Tors(\{\st\gamma\}))) \subset \Tors(X_M(\st\gamma))$ for any non-null periodic string and any $M\in\mod k[T,T^{-1}]$.
\end{lemma}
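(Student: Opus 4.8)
The plan is to show that every string module $X(\st\delta)$ with $\st\delta\in\Tors(\{\st\gamma\})$ lies in $\Tors(X_M(\st\gamma))$, and then invoke that $\Tors(X_M(\st\gamma))$ is closed under extensions and quotients to conclude $\Tors(X(\Tors(\{\st\gamma\})))\subseteq\Tors(X_M(\st\gamma))$. By Lemma \ref{filtfac}, $\Tors(\{\st\gamma\})=\Filt\Fac(\{\st\gamma\})$, and by Lemma \ref{remfilt} it suffices to treat the confined factors $\st\delta\in\Fac(\{\st\gamma\})$, since once all $X(\st\delta)$ for $\st\delta\in\Fac(\{\st\gamma\})$ are in the (extension-closed) torsion class $\Tors(X_M(\st\gamma))$, Lemma \ref{trivext}(i) propagates this to all of $\Filt\Fac(\{\st\gamma\})$ by induction on length, exactly as in the proof of Lemma \ref{remfilt}.

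So the crux is: for a confined factor $\st\delta$ of the periodic non-null string $\st\gamma$, show $X(\st\delta)\in\Tors(X_M(\st\gamma))$. Here is where I would use Lemma \ref{trivext}(ii). Write a primitive period $\st\omega$ of $\st\gamma$; since $\st\gamma$ is non-null and periodic, $\omega$ contains both an arrow and an inverse of an arrow, so we may write $\omega=\omega_1 r\omega_2 q^-$. Any confined factor $\st\delta$ of $\st\gamma$ is a subwalk of $\gamma$, hence a factor of $\omega^{n}\omega_1$ (equivalently of some power of $\omega$) for $n$ large enough, possibly after cyclically rotating the choice of primitive period so that $\delta$ starts and ends at the appropriate spots — i.e. we can arrange $\delta$ to be of the form appearing in Lemma \ref{trivext}(ii), namely $\delta=\omega^{n}\omega$ in the notation there (up to first passing to a factor, handled by Lemma \ref{trivext}(i) again). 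Lemma \ref{trivext}(ii) then gives: if $n=0$, $X(\st\delta)$ is a \emph{quotient} of $X_M(\st\gamma)$, hence in $\Tors(X_M(\st\gamma))$; if $n>0$, there is a short exact sequence
\[
0\to X_M(\st\gamma)\to M\tens_k X(\st\delta)\to M\tens_k X(\st{\omega^{n-1}\omega_1})\to 0.
\]
Since $\dim_k M<\infty$, the module $M\tens_k X(\st\delta)$ is a finite direct sum of copies of $X(\st\delta)$, and likewise $M\tens_k X(\st{\omega^{n-1}\omega_1})$ is a finite direct sum of copies of $X(\st{\omega^{n-1}\omega_1})$; the latter is a factor of $\st\gamma$ with strictly smaller "period count", so by induction on $n$ it lies in $\Tors(X_M(\st\gamma))$. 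Then the short exact sequence shows $M\tens_k X(\st\delta)$ is an extension of something in $\Tors(X_M(\st\gamma))$ by $X_M(\st\gamma)$, hence in $\Tors(X_M(\st\gamma))$; being a direct summand of a module in a torsion class (torsion classes are closed under quotients, hence under direct summands), $X(\st\delta)\in\Tors(X_M(\st\gamma))$.

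Assembling: induction on the length of $\st\delta\in\Filt\Fac(\{\st\gamma\})$, with the base case (confined factors $\st\delta\in\Fac(\{\st\gamma\})$) handled by the nested induction on the period count $n$ via Lemma \ref{trivext}(ii) as above, and the inductive step ($\delta=\delta_1 q\delta_2$) handled by Lemma \ref{trivext}(i). The main obstacle I anticipate is purely bookkeeping: matching an arbitrary confined factor $\st\delta$ of $\st\gamma$ to the normal form $\delta=\omega^{n}\omega$ with $\omega=\omega_1 r\omega_2 q^-$ required by Lemma \ref{trivext}(ii) — one must be careful that replacing $\st\delta$ by a factor of it (to achieve the normal form) only loses information that is recovered by Lemma \ref{trivext}(i), and that the choice of primitive period can always be cyclically rotated so that $\delta$ both begins right after an inverse letter and ends right before an arrow of the period, matching the decomposition $\omega=\omega_1 r\omega_2 q^-$. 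Once this combinatorial normalisation is set up cleanly, the rest is a routine double induction.
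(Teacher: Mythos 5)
Your proposal is correct and follows essentially the same route as the paper: reduce via Lemmas \ref{filtfac} and \ref{remfilt} to the confined factors of $\st\gamma$, then induct on the number of primitive periods using the short exact sequence of Lemma \ref{trivext}(ii), the isomorphism $M\tens_k X(\st\delta)\cong X(\st\delta)^{\oplus\dim_k M}$, and closure of torsion classes under extensions and direct summands. The only remark is that your hedge about ``first passing to a factor'' is unnecessary: since $\gamma=\gamma_1 q^-\delta r\gamma_2$, any confined factor $\st\delta$ is automatically of the normal form $\delta=\omega^n\omega_1$ with $\omega=\omega_1 r\omega_2 q^-$ after cyclically rotating the primitive period, which is exactly the decomposition the paper uses.
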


\begin{proof}
 We have $\Tors(X(\Tors(\{\st\gamma\}))) = \Tors(X(\Filt\Fac(\{\st\gamma\}))) = \Tors(X(\Fac(\{\st\gamma\})))$, where the first equality follows from Lemma \ref{filtfac} and the second follows from Lemma \ref{remfilt}.
 So it is enough to show that $X(\st\delta)$ with $\st\delta\in\Fac(\{\st\gamma\})$ is also in $\Tors(X_M(\st\gamma))$.

 This assumption implies that we have a decomposition $\gamma = \gamma_1 q^{-}\delta r \gamma_2$ for some arrows $q,r$.  Let us write $\delta = \omega^n \omega_1$ where $\omega = \omega_1 r \omega_2 q^{-}$ is a primitive period of $\gamma$. We prove by induction on $n\geq 0$ that $X(\st \delta) \in \Tors(X_M(\st \gamma))$.  For $n=0$, this follows immediately from Lemma \ref{trivext} (ii).  For $n>0$, it follows by induction hypothesis that  $X (\st {\omega^{n-1} \omega_1}) \in  \Tors(X_M(\st \gamma))$, and hence so is $M\tens_k X (\st {\omega^{n-1} \omega_1}) \cong X (\st {\omega^{n-1} \omega_1})^{\oplus \dim_k M}$.  Now the short exact sequence in Lemma \ref{trivext} (ii) implies that $M \tens_k X (\st {\omega^{n} \omega_1}) \in  \Tors(X_M(\st \gamma))$. Again, since $M \tens_k X (\st {\omega^{n} \omega_1}) \cong X (\st {\omega^{n} \omega_1})^{\oplus \dim M}$, we conclude that the direct summand (which is a factor) $X (\st {\omega^{n} \omega_1}) \in \Tors(X_M(\st \gamma))$.
\end{proof}

\begin{proposition} \label{brickposs}
 If $S\in \mod\Lambda$ is a brick, then there exists some $(\Sr, \blambda) \in \maxstr[k]$ such that $\Tors (S) = \Tr_\Lambda(\Sr, \blambda)$.
\end{proposition}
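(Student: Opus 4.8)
The plan is to use the classification of finite-dimensional bricks from Proposition~\ref{classbrick}, which presents $S$ in one of two forms: either $S=X(\st\gamma)$ for a confined string $\st\gamma$ with $\#c^\geq(\st\gamma,\st\gamma)=1$, or $S=X_M(\st\gamma)$ for a non-self-crossing periodic non-null string $\st\gamma$ and a simple $k[T,T^{-1}]$-module $M$. In both cases I would take the same combinatorial datum: let $\Tr:=\Tors(\{\st\gamma\})\in\torsQ$ and $\Sr:=G(\Tr)$, so that by Propositions~\ref{GisMax} and~\ref{sgenerate} (equivalently Theorem~\ref{mainthmstrings}(a)) we have $\Sr\in\maxstr$ and $\Tors(\Sr)=\Tr$. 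What is left is to choose a parametrization $\blambda\colon\Sr^p\to 2^B$ and to check $\Tr_\Lambda(\Sr,\blambda)=\Tors(S)$; recall a parametrization is allowed to take the value $\emptyset$ on a periodic string, in which case that string contributes no band module to $\Tr_\Lambda(\Sr,\blambda)$.

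For the first case take $\blambda\equiv\emptyset$. By Lemma~\ref{facttot} (with $X_\blambda(\Sr^p)=\emptyset$) and the last assertion of Lemma~\ref{remfilt} we get $\Tr_\Lambda(\Sr,\blambda)=\Tors(X(\Tr))=\Tors(X(\Fac(\{\st\gamma\})))$. As $\st\gamma\in\Fac(\{\st\gamma\})$ this contains $\Tors(X(\st\gamma))=\Tors(S)$, and for the reverse inclusion I would argue that every confined factor $\st\omega$ of $\st\gamma$ gives a quotient module of $X(\st\gamma)$: applying Lemma~\ref{trivext}(i) once or twice, reversing orientation where needed, splits off $X(\st\omega)$ as a successive quotient, using that the letters cut off in a factor decomposition of a confined string are honest arrows of $Q$. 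Hence $X(\Fac(\{\st\gamma\}))\subseteq\Tors(X(\st\gamma))$ and the two torsion classes coincide.

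For the second case I would first show $\st\gamma\in\Sr$. Since $\st\gamma$ is non-self-crossing and periodic, $\{\st\gamma\}$ is a non-crossing set of infinite strings; all confined factors of $\st\gamma$ lie in $\Fac(\{\st\gamma\})\subseteq\Tr$, so $\st\gamma\in L(\Tr)$; and Lemma~\ref{condp} (with the set there taken to be $\{\st\gamma\}$) gives $c^+(\st\delta,\st\gamma)=\emptyset$ for every $\st\delta\in L(\Tr)$. Therefore $\st\gamma\in G(\Tr)=\Sr$, and being periodic non-null it lies in $\Sr^p$. Now set $\blambda(\st\gamma):=\{[M]\}$ — a subset of $B=\irr k[T,T^{-1}]$ because $M$ is simple — and $\blambda(\st\delta):=\emptyset$ for the remaining $\st\delta\in\Sr^p$. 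Then Lemma~\ref{facttot} gives $\Tr_\Lambda(\Sr,\blambda)=\Tors(X(\Tr)\cup\{X_M(\st\gamma)\})$, and since Lemma~\ref{getbackstrings} yields $\Tors(X(\Tr))=\Tors(X(\Tors(\{\st\gamma\})))\subseteq\Tors(X_M(\st\gamma))$ while $X_M(\st\gamma)\in\Tors(X_M(\st\gamma))$ trivially, both inclusions hold and $\Tr_\Lambda(\Sr,\blambda)=\Tors(X_M(\st\gamma))=\Tors(S)$.

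The main thing to be careful about is the string-module step's use of the standard fact that a string-combinatorial factor of a confined string corresponds to a quotient module; this rests on the easy but necessary observation that a confined string has no blossom letters in its interior, so that Lemma~\ref{trivext}(i) is directly applicable. Beyond that the argument is an assembly of results already in hand: Theorem~\ref{mainthmstrings}, Lemma~\ref{facttot}, Lemma~\ref{remfilt}, Lemma~\ref{condp}, and Lemma~\ref{getbackstrings}.
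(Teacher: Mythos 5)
Your proof is correct and follows essentially the same route as the paper's: split into the string/band cases via Proposition \ref{classbrick}, take the maximal non-crossing set generating $\Tors(\{\st\gamma\})$, set $\blambda$ to be empty (resp.\ $\{M\}$ on $\st\gamma$), and conclude with Lemmas \ref{facttot}, \ref{remfilt} and \ref{getbackstrings}. The only cosmetic differences are that you realise $\Sr$ as $G(\Tors(\{\st\gamma\}))$ and verify $\st\gamma\in\Sr$ via Lemma \ref{condp}, where the paper instead completes $\{\st\gamma\}$ using Proposition \ref{tomax}, and that you spell out, via Lemma \ref{trivext}(i), the step $\Tors(X(\Tors(\{\st\gamma\})))=\Tors(X(\st\gamma))$ which the paper leaves implicit.
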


\begin{proof}
 First of all, suppose that $S$ is a string module, say $S = X(\st \gamma)$. By Proposition \ref{tomax}, there exists $\Sr \in \maxstr$ such that $\Tors(\Sr) = \Tors(\{\st \gamma\})$. Then, by Lemma \ref{facttot}, $\Tr_\Lambda(\Sr, \blambda) = \Tors(S)$ where $\blambda(\st \delta) = \emptyset$ for any $\st \delta \in \Sr^p$.

 Suppose now that $S$ is a band module, say $S = X_M (\st \gamma)$. By Corollary \ref{classbrick}, $\st \gamma$ is not self-crossing and $M$ is in $B$. As in the previous paragraph, there exists $\Sr \in \maxstr$ such that $\Sr$ contains $\st\gamma$ and $\Tors(\Sr) = \Tors(\{\st \gamma\})$.  In particular, we have $\st \gamma \in \Sr^p$. Consider $\blambda: \Sr^p \to 2^B$ defined by $\blambda(\st \gamma) = \{M\}$ and $\blambda(\st \delta) = \emptyset$ for all $\st \delta \neq \st \gamma$. It is immediate by definition that $\Tors(S) \subseteq \Tr_\Lambda(\Sr, \blambda)$.  Conversely, we have by Lemma \ref{remfilt} and the assumption on $(\Sr,\blambda)$ that
\[ \Tr_\Lambda(\Sr, \blambda)=\Tors\Big(X(\Tors(\Sr))\cup X_{\blambda}(\Sr^p)\Big)=\Tors\Big(X(\Tors(\{\st\gamma\}))\cup \{S\}\Big), \] and also $X(\Tors(\{\st\gamma\})) \subseteq \Tors(S)$ by Lemma \ref{getbackstrings}, so we conclude that $\Tors(S) = \Tr_\Lambda(\Sr, \blambda)$.
\end{proof}

\subsection{Preserving strictness of finite chains}

In this subsection, we show that $\Tr_\Lambda$ preserves strictness of finite chains, and so it is an injective map by Lemma \ref{injmor}.
For this purpose, let us recall how one can tell whether a chain of two torsion classes $\Tr\supseteq \Tr'$ is strict.  Recall that for a full subcategory $\Xr\subseteq\mod \Lambda$, we have a right perpendicular class
\[
\Xr^\perp := \{ Y\in \mod\Lambda\mid \Hom_\Lambda(X,Y)=0\}.
\]
We remind the reader the we can determine whether two comparable torsion classes $\Tr\supseteq \Tr'$ are distinct, namely, $\Tr \supsetneqq \Tr'\Leftrightarrow \Tr\cap \Tr'^\perp\neq 0$.
Indeed, by the defining property of torsion class, any $M\in \Tr\setminus \Tr'$ admits a short exact sequence $0\to X \to M\to Y \to 0$ with $X\in \Tr'$ and $Y\in \Tr'$; hence, we have $Y\in \Fac(M)\subset \Tr$.

The following lemma simplifies the verification procedure a module in the right perpendicular class of a torsion class.

\begin{lemma} \label{shortermorp}
 Let $\Xr$ be a set of (isoclass representatives of) modules in $\mod\Lambda$ and $M \in \mod \Lambda$. If there exists $Y \in \Tors(\Xr)$ such that $\Hom_\Lambda(Y, M) \neq 0$ then there exists $X \in \Xr$ such that $\Hom_\Lambda(X, M) \neq 0$.
\end{lemma}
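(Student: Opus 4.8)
The plan is to exploit the description of $\Tors(\Xr)$ as $\Filt\Fac(\Xr)$ (the analogue of Lemma \ref{torclass0}(ii) in $\fl\Lambda$), which reduces the problem to the two building operations: taking factor modules and taking extensions. So I would argue that the property ``there exists $X \in \Xr$ with $\Hom_\Lambda(X, M) \neq 0$'' propagates along these two operations, in the contrapositive form: if $\Hom_\Lambda(X, M) = 0$ for all $X$ in some generating set, then $\Hom_\Lambda(Y, M) = 0$ for all $Y$ built from that set by factors and extensions.

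First I would record the elementary homological facts. If $Y' \twoheadrightarrow Y$ is an epimorphism then $\Hom_\Lambda(Y, M) \hookrightarrow \Hom_\Lambda(Y', M)$, so $\Hom_\Lambda(Y', M) = 0$ implies $\Hom_\Lambda(Y, M) = 0$; this handles $\Fac$. For extensions, if $0 \to Y_1 \to Y \to Y_2 \to 0$ is exact then applying $\Hom_\Lambda(-, M)$ gives the exact sequence $\Hom_\Lambda(Y_2, M) \to \Hom_\Lambda(Y, M) \to \Hom_\Lambda(Y_1, M)$, so if the two outer terms vanish so does the middle one; by induction on the length of the filtration this handles $\Filt$. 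Combining, if $\Hom_\Lambda(X, M) = 0$ for every $X \in \Fac(\Xr)$ then $\Hom_\Lambda(Y, M) = 0$ for every $Y \in \Filt\Fac(\Xr) = \Tors(\Xr)$.

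It remains to pass from ``$\Hom_\Lambda(X,M) = 0$ for all $X \in \Xr$'' to ``$\Hom_\Lambda(X,M) = 0$ for all $X \in \Fac(\Xr)$'', i.e. to handle the $\Fac$ step starting from the original set $\Xr$ rather than from $\Fac(\Xr)$. This is again just the epimorphism fact above: every $X' \in \Fac(\Xr)$ is a quotient of some $X \in \Xr$, so $\Hom_\Lambda(X', M) \hookrightarrow \Hom_\Lambda(X, M) = 0$. Putting the pieces together, the contrapositive of the claim follows, which is exactly the statement of Lemma \ref{shortermorp}.

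I do not expect a serious obstacle here; the only point requiring a little care is making sure the category $\fl\Lambda$ genuinely has the description $\Tors(\Xr) = \Filt\Fac(\Xr)$, but this is the length-category version of Lemma \ref{torclass0}(ii), whose proof (given in the excerpt for abelian length categories) applies verbatim since $\fl\Lambda$ is an abelian length category. If one prefers to avoid invoking that identification, an alternative is a direct induction on composition length of $Y \in \Tors(\Xr)$: if $\Hom_\Lambda(Y, M) \neq 0$, pick $0 \neq f : Y \to M$; then $\im f$ is a nonzero quotient of $Y$ lying in $\Tors(\Xr)$ and embedding in $M$, and one analyses a minimal-length such $Y$ using that $Y$ is an extension or a proper quotient coming from the generators — but the $\Filt\Fac$ route is cleaner and is the one I would write up.
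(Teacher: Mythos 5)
Your argument is correct and is essentially the paper's: both rest on the identification $\Tors(\Xr)=\Filt\Fac(\Xr)$ (Lemma \ref{filtfac}/\ref{torclass0}(ii) in the length category $\fl\Lambda$) together with left-exactness of $\Hom_\Lambda(-,M)$ and the epimorphism fact, the paper merely packaging the induction as a minimal-dimension counterexample rather than your contrapositive induction along the filtration. Nothing further is needed.
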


\begin{proof}
  Take a module $Y\in \Tors(\Xr)$ so that $\Hom_\Lambda(Y,M)\neq 0$ and it is of smallest possible dimension with respect to this property.   We claim that $Y \in \Fac(\Xr)$.  Indeed, if this is not the case, then it follows from the definition of torsion classes that there exists a short exact sequence 
  $0 \to Y_1 \to Y \to Y_2 \to 0$
 with $Y_1, Y_2 \in \Tors(\Xr) \setminus \{0\}$.  Applying $\Hom_\Lambda(-, M)$ to this sequence gives $\Hom_\Lambda(Y_1, M) \neq 0$ or $\Hom_\Lambda(Y_2, M) \neq 0$, contradicting the minimality of $Y$.

  By definition of $\Fac(\Xr)$, there exists $X \in \Xr$ with an epimorphism $X \twoheadrightarrow Y$, where $Y$ is chosen as in the previous paragraph.  It now follows that $\Hom_\Lambda(X, M) \neq 0$, as required.
\end{proof}

In the case of gentle algebras, Lemma \ref{shortermorp} provides some interesting combinatorial properties.

\begin{lemma}\label{homfromtor}
Consider $(\Sr,\blambda)\in \maxstr[k]$ and $S\in \mod\Lambda$ so that there is a non-zero morphism from $\Tr_\Lambda(\Sr,\blambda)$ to $S$.
\begin{enumerate}[\rm (i)]
\item If $S=X(\st\omega)$ for some confined string $\st\omega$, then there is an almost positive crossing from some $\st\delta\in \Sr$ to $\st\omega$.

\item If $S=X_M(\st\omega)$ such that $\st\omega\notin \Sr^p$ or $M\notin \blambda(\st\omega)$, then there is a positive crossing from some $\st\delta\in \Sr$ to $\st\omega$.
\end{enumerate}

\end{lemma}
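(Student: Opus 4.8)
The plan is to first apply Lemma~\ref{shortermorp} with $\Xr = X(\Fac\Sr)\cup X_{\blambda}(\Sr^p)$: since $\Tr_\Lambda(\Sr,\blambda) = \Tors\big(X(\Fac\Sr)\cup X_{\blambda}(\Sr^p)\big)$, a non-zero morphism from $\Tr_\Lambda(\Sr,\blambda)$ to $S$ forces a non-zero morphism $X\to S$ for some generator $X$, which is either a string module $X(\st\epsilon)$ with $\st\epsilon$ a confined factor of some $\st\delta\in\Sr$, or a band module $X_N(\st\gamma)$ with $\st\gamma\in\Sr^p$ and $N\in\blambda(\st\gamma)$. One then reads off a crossing into $\st\omega$ from the Hom-space descriptions of Theorems~\ref{basisMorphString} and~\ref{basisMorphBand}, and, in the case of a string generator, promotes it to a crossing out of $\st\delta$ using the factor decomposition of $\st\epsilon$ inside $\st\delta$.

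For (i), $S = X(\st\omega)$ is a finite-dimensional string module. If $X = X_N(\st\gamma)$, then $\Hom_\Lambda(X_N(\st\gamma),X(\st\omega))\neq 0$ together with Theorem~\ref{basisMorphBand}(i) gives $c^\geq(\st\gamma,\st\omega)\neq\emptyset$ with $\st\gamma\in\Sr$, which is what we want. If $X = X(\st\epsilon)$, Theorem~\ref{basisMorphString} gives some $\mt{d}\in c^\geq(\st\epsilon,\st\omega)$ with overlap $\mu$. Writing a factor decomposition $\delta = [\delta_1 q^-]\epsilon[r\delta_2]$ (brackets possibly absent, $q,r\in Q_1'$), the subwalk $\mu$ of $\epsilon$ is also a subwalk of $\delta$, and whenever a letter of $\delta$ sits immediately to the left (resp.\ right) of $\mu$ it is an inverse arrow (resp.\ arrow): it is either one of the letters of $\mt{d}$ flanking $\mu$ inside $\epsilon$, or, when $\mu$ is a prefix (resp.\ suffix) of $\epsilon$, the letter $q^-$ (resp.\ $r$) of the factor decomposition. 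Since the $\st\omega$-side of $\mt{d}$ is unchanged, this is an almost positive crossing from $\st\delta$ to $\st\omega$.

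For (ii), $S = X_M(\st\omega)$ is a band module, so $\st\omega$ is non-null periodic. If $X = X_N(\st\gamma)$ is a band generator, Theorem~\ref{basisMorphBand}(ii) writes $\Hom_\Lambda(X_N(\st\gamma),X_M(\st\omega))$ as a ``diagonal'' summand $\{h_\beta\mid\beta\in\Hom_{k[T,T^{-1}]}(N,M)\}$, present only when $\st\gamma = \st\omega$, plus a sum of $g^{\mathrm{bb}}_{\mt{d},\alpha}$ over $\mt{d}\in c^+(\st\gamma,\st\omega)$. The diagonal summand is ruled out by the hypothesis: $\st\gamma = \st\omega$ would give $\st\omega\in\Sr^p$, and since $N$ and $M$ are simple $k[T,T^{-1}]$-modules (elements of $B$), a non-zero map between them is an isomorphism, whence $M\cong N\in\blambda(\st\gamma) = \blambda(\st\omega)$, against the assumption. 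So $c^+(\st\gamma,\st\omega)\neq\emptyset$ with $\st\gamma\in\Sr$. If instead $X = X(\st\epsilon)$, Theorem~\ref{basisMorphBand}(i) gives $\mt{d}\in c^\geq(\st\epsilon,\st\omega)$ with overlap $\mu$, which, exactly as in (i), sits inside $\st\delta$ with the correct flanking letters wherever such letters exist; what must be shown is that they exist on both sides, i.e.\ that $\mu$ is neither a prefix nor a suffix of the infinite walk $\delta$. Here I would use that a periodic walk contains no blossom letter (a blossom arrow has an endpoint in $Q_0'\setminus Q_0$, and the relations forced by a blossoming prevent such an arrow, or its inverse, from occurring consecutively in a bi-infinite periodic pattern), so every vertex of the common subwalk $\mu$ lies in $Q_0$; were $\mu$ a prefix of $\delta$, the extremal vertex of $\delta$ would lie in $Q_0$, contradicting that $\delta$ is left-infinite (when $\delta$ is left-unbounded this is immediate, and when it is left-bounded the leftmost source must lie in $Q_0'\setminus Q_0$), and symmetrically on the right. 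Thus $\mu$ is flanked on both sides in $\delta$, producing a genuine positive crossing from $\st\delta$ to $\st\omega$.

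The routine parts are the unwinding of the factor and crossing decompositions and the invocation of the two Hom-basis theorems; the one step that needs an honest argument is the last one — ruling out the degenerate, bracket-missing almost positive crossings in part (ii) — and the key input is precisely that periodic strings avoid blossom letters, so that their common subwalks with any string cannot reach the extremal vertex of an infinite string of $\Sr$.
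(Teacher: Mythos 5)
Your argument is essentially the paper's own proof: reduce via Lemma \ref{shortermorp} to a generator in $X(\Fac\Sr)\cup X_{\blambda}(\Sr^p)$, read off an (almost) positive crossing from Theorem \ref{basisMorphString} or \ref{basisMorphBand}, and in the string-generator case extend it from the confined factor $\st\epsilon$ to the ambient infinite string $\st\delta\in\Sr$ — your flanking-letter analysis, and the check in (ii) that both flanks exist so the crossing is genuinely positive, just make explicit what the paper compresses into ``extend the crossing naturally'' and ``the crossing must be positive''. The one caveat is shared with the paper's proof: your dismissal of the $h_\beta$-summand treats $M$ as simple (an element of $B$), which is not literally in the hypothesis of part (ii) but is the only case in which the lemma is invoked (Lemma \ref{separate}), so this matches the paper's implicit reading rather than constituting a gap peculiar to your argument.
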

\begin{proof}
(i) By Lemma \ref{shortermorp} and the definition of $\Tr_\Lambda$, there is some (indecomposable) module $S\in X(\Fac(\Sr))\cup X_{\blambda}(\Sr^p)$ so that $\Hom_\Lambda(S, X(\st\omega))\neq 0$.

If $S\in X_{\blambda}(\Sr^p)$, say $S=X_M(\st\delta)$ with $\st\delta\in \Sr^p$ and $M\in \blambda(\st\delta)$, then the statement follows from Theorem \ref{basisMorphBand}.
On the other hand, if $S\in X(\Fac(\Sr))$, say $S=X(\st\epsilon)$ with $\st\epsilon\in \Fac(\Sr)$, then we have an almost positive crossing in $\mt{d}\in c^\geq(\st\epsilon,\st\omega)$ by Theorem \ref{basisMorphString}.  We can extend this crossing naturally to an almost crossing of the form as claimed (i.e. the overlap $\omega'$ is the same overlap in $\mt{d'}$). 

(ii) The argument is similar to (i), except that the use of Theorem \ref{basisMorphString} is replaced by Theorem \ref{basisMorphBand}.
This also means that the conditions on $\st\omega$ guarantees the morphism from $S$ to any module in $X_{\blambda}(\Sr^p)$ does not have the canonical morphism $h_\beta$ as its constituent.  This means that the crossing from $\st\delta$ to $\omega$ must be positive.
\end{proof}

\begin{lemma} \label{separate}
 Let $(\Sr, \blambda), (\Sr', \blambda') \in \maxstr[k]$ satisfying $(\Sr, \blambda) > (\Sr', \blambda')$. 
 \begin{enumerate}[\rm (a)]
  \item For any $\st \gamma \in \Sr$, $\st{\gamma'} \in \Sr'$ and $\mt{d} \in c^+(\st \gamma, \st \gamma')$, we have \[X(\st \omega) \in \Tr_\Lambda(\Sr, \blambda) \cap \Tr_\Lambda(\Sr', \blambda')^\perp\] where $\omega$ is the overlapping part of $\mt{d}$.
  \item For any $\st \gamma \in \Sr^p$ and $M \in \blambda(\st \gamma)$ satisfying either $\st \gamma \notin \Sr'^p$ or $M \notin \blambda'(\st \gamma)$, we have $X_M(\st \gamma) \in \Tr_\Lambda(\Sr, \blambda) \cap \Tr_\Lambda(\Sr', \blambda')^\perp$.
 \end{enumerate}
\end{lemma}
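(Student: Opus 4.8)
The plan is to establish the two membership conditions separately, and in each of (a) and (b) the containment in $\Tr_\Lambda(\Sr,\blambda)$ is immediate from the definition of $\Tr_\Lambda$, so the substance lies in the two perpendicularity statements, which I would prove by contradiction via Lemma~\ref{homfromtor}. For the easy halves: in (a), the overlap $\omega$ of $\mt d\in c^+(\st\gamma,\st{\gamma'})$ occurs in $\gamma$ as $\gamma=\gamma_1 q^-\omega q'\gamma_2$ with $q,q'$ arrows, so $\st\omega$ is a confined factor of $\st\gamma\in\Sr$ (in the sense of Definition~\ref{def:ext-fac}), whence $X(\st\omega)\in X(\Fac\Sr)\subseteq\Tr_\Lambda(\Sr,\blambda)$; in (b), $X_M(\st\gamma)\in X_{\blambda}(\Sr^p)\subseteq\Tr_\Lambda(\Sr,\blambda)$ directly, since $\st\gamma\in\Sr^p$ and $M\in\blambda(\st\gamma)$.

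For the perpendicularity in (b), suppose $\Hom_\Lambda(Y,X_M(\st\gamma))\neq 0$ for some $Y\in\Tr_\Lambda(\Sr',\blambda')$. Because $\st\gamma\notin\Sr'^p$ or $M\notin\blambda'(\st\gamma)$, Lemma~\ref{homfromtor}(ii) produces a positive crossing from some $\st\delta\in\Sr'$ to $\st\gamma$. But $(\Sr,\blambda)>(\Sr',\blambda')$ forces $\Sr\geq\Sr'$, hence $c^+(\Sr',\Sr)=\emptyset$ by Theorem~\ref{mainthmstrings}(b); since $\st\delta\in\Sr'$ and $\st\gamma\in\Sr$, this is the desired contradiction. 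So $X_M(\st\gamma)\in\Tr_\Lambda(\Sr',\blambda')^\perp$.

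For the perpendicularity in (a), suppose $\Hom_\Lambda(Y,X(\st\omega))\neq 0$ for some $Y\in\Tr_\Lambda(\Sr',\blambda')$. As the overlap of a positive crossing is confined (this is implicit already in the statement, since $X(\st\omega)$ must lie in $\fl\Lambda$), Lemma~\ref{homfromtor}(i) applies and yields an almost positive crossing $\mt e$ from some $\st\delta\in\Sr'$ to $\st\omega$, with overlap $\omega''$. The idea is to glue $\mt e$ to $\mt d$ so as to exhibit a genuine \emph{positive} crossing from $\st\delta$ to $\st{\gamma'}$, contradicting that $\Sr'$ is a non-crossing set. Choosing a representative with $\gamma'=\gamma_1''\,r\,\omega\,r'^-\gamma_2''$ (with $r,r'$ arrows) coming from $\mt d\in c^+(\st\gamma,\st{\gamma'})$, the block $\omega$ sits inside $\gamma'$ with an arrow immediately before it and an inverse arrow immediately after it. Since $\omega''$ is a confined subwalk of $\omega$ while $\st\delta$ is an infinite string, both bracketed parts on the $\delta$-side of $\mt e$ must actually occur, i.e.\ $\delta=\delta_1 s^-\omega'' s'\delta_2$ with $s,s'$ arrows. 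On the $\omega$-side, at each end of $\omega''$ the letter adjacent to $\omega''$ inside $\gamma'$ is either the one prescribed by $\mt e$ (an arrow on the left, an inverse arrow on the right) when $\omega''$ stops short of that end of $\omega$, or the corresponding end-letter of $\gamma'$ ($r$, resp.\ $r'^-$) when $\omega''$ reaches it; in every case $\omega''$ appears in $\gamma'$ preceded by an arrow and followed by an inverse arrow. Since these signs differ from $s^-$ and $s'$ adjacent to $\omega''$ in $\delta$, the common subwalk $\omega''$ is maximal, and the diagram obtained is an element of $c^+(\st\delta,\st{\gamma'})$, contradicting non-crossing of $\Sr'$.

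I expect the main obstacle to be the bookkeeping in this last paragraph: one has to keep precise track of which adjacent letters are arrows and which are inverse arrows, verify in each degenerate sub-case (when $\omega''$ equals $\omega$, or is a proper prefix or suffix of $\omega$) that the subwalk produced really is maximal so that it defines a crossing in the sense of Definition~\ref{def:cross}, and confirm that the infiniteness of $\st\delta$ together with the confinedness of $\omega$ (hence of $\omega''$) does force the $\delta$-side of $\mt e$ to be non-degenerate. The remaining ingredients — the easy containments and part (b) — are essentially formal once Lemma~\ref{homfromtor} and Theorem~\ref{mainthmstrings}(b) are in hand.
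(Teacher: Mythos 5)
Your proposal is correct and follows essentially the same route as the paper's proof: the easy containments from the definition of $\Tr_\Lambda$, then a contradiction via Lemma \ref{homfromtor}, gluing the almost positive crossing into $\gamma'$ in (a) to produce a positive crossing inside $\Sr'$, and using $\Sr\geq\Sr'$ (no positive crossings from $\Sr'$ to $\Sr$) in (b). The sign/maximality bookkeeping you flag is handled in the paper exactly as you sketch it.
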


\begin{proof}
 (a) Consider the crossing 
  \[\mt{d}:\crosswn{\gamma}{\gamma_1 q^{-}}{\omega}{r \gamma_2}{\gamma'}{\gamma'_1 q'}{r'^{-} \gamma'_2} \]
  as given by the assumption.
  Using the first row, we have $X(\st \omega) \in \Tr_\Lambda(\Sr, \blambda)$.  

  Suppose on the contrary that there is a non-zero morphism from a module of $\Tr_\Lambda(\Sr', \blambda')$ to $X(\st \omega)$, then applying Lemma \ref{homfromtor} (i) yields an almost positive crossing of the form
  \[\mt{d'}: \crosswn{\delta}{\delta_1 s'^-}{\omega'}{t' \delta_2}{\omega}{[\omega_1 s]}{[t^{-} \omega_2]}\]
  with $\st\delta\in \Sr'$.
  Now, combining with the previous crossing $\mt{d}$, we obtain a positive crossing
  \[\crosswn{\delta}{\delta_1 s'^{-}}{\omega'}{t' \delta_2}{\gamma'}{\gamma'_1 q' [\omega_1 s]}{[t^{-} \omega_2] r'^{-} \gamma'_2}\]
  between strings in $\Sr'$.  This contradicts the assumption that $\Sr'$ is a non-crossing set of strings.

 (b) It is immediate that $X_M(\st \gamma) \in \Tr_\Lambda(\Sr, \blambda)$.  Suppose on the contrary that there is a non-zero morphism from a module in $\Tr_\Lambda(\Sr', \blambda')$ to $X_M(\st \gamma)$, then it follows from Lemma \ref{homfromtor} that there is a positive crossing from some $\delta'\in \Sr'$ to $\st\gamma$.  But this contradicts the assumption that $\Sr \geq \Sr'$. 
\end{proof}

\subsection{The proof}

Now we have everything we need to prove the classification of $\tors(\fl\Lambda)$ by non-crossing sets.

\begin{proof}[Proof of Theorem \ref{classtors}] \eqloc{classtors}
 As mentioned before (see Remark \ref{rem:lattice}), it suffices to show that $\Tr_\Lambda:\maxstr[k]\to \tors(\fl\Lambda)$ is (1) a morphism of complete join-semilattices and also (2) a bijective map.

 For (1), this follows immediately from Proposition \ref{stjoin}.

 For (2), we first explain why $\Tr_\Lambda$ is surjective.  By Proposition \ref{brickposs}, any torsion class of the form $\Tors(S)$ is in the image $\Tr_\Lambda$.  But such form of torsion classes are precisely the completely join-irreducible elements of $\tors(\fl\Lambda)$ by Proposition \ref{torclass1} (ii).  Hence, surjectivity follows from Lemma \ref{bricksurj}.

 Finally, to show that $\Tr_\Lambda$ is injective, start by considering $(\Sr, \blambda) > (\Sr', \blambda')$ in $\maxstr[k]$.
 Then $\Tr_\Lambda (\Sr, \blambda) \supseteq \Tr_\Lambda (\Sr', \blambda')$ as we have shown that $\Tr_\Lambda$ is order-preserving. By Lemma \ref{separate}, there is a module in $\Tr_\Lambda(\Sr, \blambda) \cap \Tr_\Lambda(\Sr', \blambda')^\perp$.  So it follows from the discussion preceding Lemma \ref{separate} that $\Tr_\Lambda (\Sr, \blambda) \supsetneq \Tr_\Lambda (\Sr', \blambda')$.  Now injectivity follows from Lemma \ref{injmor}.
\end{proof}

\section{Application: Torsion classes of Brauer graph algebras}\label{sec:BGA}

We give an elementary proof for a generalisation of \cite[Corollary 5.20]{DIRRT}. 
\begin{proposition}\label{torsMorph}
Suppose $A$ is an algebra and $I$ is an ideal of $A$.
If $SI=0$ for every brick $S\in \brick(\fl A)$ and $I=\langle r_1, r_2, \ldots, r_n\rangle$ for some $r_1, \ldots, r_n\in A$, then there is an isomorphism of complete lattice
\[
\xymatrix@R=8pt{
\tors(\fl A) \ar[r]^{\sim\;\;} &  \tors(\fl A/I)\\
 \Tr \ar@{|->}[r]& \Tr\otimes_A A/I \\
 {\Ur}^\wedge & \Ur \ar@{|->}[l],
 }
\]
where ${\Ur}^\wedge:=\{X\in \fl A\mid X/XI\in \Ur\}$.
\end{proposition}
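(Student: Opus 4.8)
The plan is to make both maps explicit and then run through the checklist of Remark~\ref{rem:lattice}. Identify $\fl(A/I)$ with the full subcategory of $\fl A$ of modules annihilated by $I$; then $-\otimes_A A/I$ sends $M$ to $M/MI$, and for a torsion class $\Tr\in\tors(\fl A)$ its essential image is exactly $\Tr\cap\fl(A/I)$: indeed $M/MI$ is a quotient of $M$, so it lies in $\Tr\cap\fl(A/I)$ when $M\in\Tr$, while conversely any $I$-annihilated $Y\in\Tr$ equals $Y/YI$. So set $F(\Tr):=\Tr\cap\fl(A/I)$ and $G(\Ur):=\Ur^\wedge=\{X\in\fl A\mid X/XI\in\Ur\}$; both are visibly order-preserving.

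First I would verify these are well defined. Since $\fl(A/I)$ is an abelian subcategory of $\fl A$ closed under subobjects, quotients and extensions, $F(\Tr)=\Tr\cap\fl(A/I)$ is automatically closed under quotients and extensions inside $\fl(A/I)$, hence a torsion class. For $G(\Ur)$: right-exactness of $-\otimes_A A/I$ turns an epimorphism $Y\twoheadrightarrow Z$ into $Y/YI\twoheadrightarrow Z/ZI$, giving closure under quotients; and given $0\to X\to Y\to Z\to 0$ with $X,Z\in\Ur^\wedge$, right-exactness yields a short exact sequence $0\to K\to Y/YI\to Z/ZI\to 0$ with $K$ the image of $X/XI\to Y/YI$, hence a quotient of an object of $\Ur$, so $K\in\Ur$ and then $Y/YI\in\Ur$ by extension-closedness. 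Thus $G(\Ur)\in\tors(\fl A)$.

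Next I would prove that $F$ and $G$ are mutually inverse. The identity $F(G(\Ur))=\Ur$ is formal, since $\Ur^\wedge\cap\fl(A/I)=\{X\mid XI=0,\ X=X/XI\in\Ur\}=\Ur$. For $G(F(\Tr))=\Tr$, write $\Tr':=G(F(\Tr))=\{X\in\fl A\mid X/XI\in\Tr\}$; this is a torsion class containing $\Tr$ (quotients stay in $\Tr$), and by Lemma~\ref{torclass0}(i) it suffices to see $\brick(\Tr')\subseteq\brick(\Tr)$. This is exactly where the hypothesis enters: any $S\in\brick(\Tr')$ is a brick of $\fl A$, so $SI=0$, so $S=S/SI\in\Tr$. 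Hence $\brick(\Tr)=\brick(\Tr')$ and therefore $\Tr=\Filt(\brick(\Tr))=\Filt(\brick(\Tr'))=\Tr'$. I expect this step --- upgrading ``$X/XI\in\Tr$'' to ``$X\in\Tr$'' --- to be the crux; the brick-annihilation hypothesis is precisely what collapses the gap, while the rest is bookkeeping.

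Finally, by Remark~\ref{rem:lattice} it remains to show $F$ is a morphism of complete join-semilattices. Order-preservation gives $\bigvee_i F(\Tr_i)\subseteq F(\bigvee_i\Tr_i)$. For the reverse inclusion, take $Y\in F(\bigvee_i\Tr_i)$, so $YI=0$ and $Y\in\Tors_{\fl A}\!\big(\bigcup_i\Tr_i\big)=\Filt\Fac\big(\bigcup_i\Tr_i\big)=\Filt\big(\bigcup_i\Tr_i\big)$ by Lemma~\ref{torclass0}(ii), each $\Tr_i$ being quotient-closed. A $\Filt$-filtration of $Y$ is by submodules of $Y$, all annihilated by $I$, hence it is a filtration inside $\fl(A/I)$ whose subquotients lie in $\bigcup_i(\Tr_i\cap\fl(A/I))=\bigcup_i F(\Tr_i)$; thus $Y\in\Tors_{\fl(A/I)}\!\big(\bigcup_i F(\Tr_i)\big)=\bigvee_i F(\Tr_i)$. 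Therefore $F$ is a bijective morphism of complete join-semilattices with inverse $G$, i.e.\ an isomorphism of complete lattices, which proves the proposition. (The finiteness of the generating set of $I$ plays no explicit role in the argument sketched here; it is recorded because it is what holds in the intended applications to Brauer graph and related special biserial algebras.)
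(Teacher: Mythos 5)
Your proof is correct, but it departs from the paper's at the decisive step, in a way worth recording. The paper sets up the same two maps $\Tr\mapsto\Tr\cap\fl A/I$ and $\Ur\mapsto\Ur^\wedge$ and checks well-definedness essentially as you do, but it distributes the work over the two composites differently: it invokes the brick hypothesis to identify $\brick(\fl A)$ with $\brick(\fl A/I)$ and deduce $\Ur^\wedge\cap\fl A/I=\Ur$, and it proves the harder identity $(\Tr\cap\fl A/I)^\wedge=\Tr$ by a different mechanism, namely the finite generating set $r_1,\dots,r_n$ of $I$: the epimorphism $X^{\oplus n}\twoheadrightarrow XI$ gives $XI^a/XI^{a+1}\in\Tr$ inductively, and one concludes via the (terminating) $I$-adic filtration of $X$. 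You instead note that $\Ur^\wedge\cap\fl A/I=\Ur$ is purely formal, and you get $(\Tr\cap\fl A/I)^\wedge=\Tr$ from the brick hypothesis alone, via $\brick(\{X\in\fl A\mid X/XI\in\Tr\})=\brick(\Tr)$ together with Lemma~\ref{torclass0}(i). This is a genuine simplification: as you observe, the finite generation of $I$ is never used, so that hypothesis is superfluous for the statement, whereas in the paper it is exactly what powers the filtration argument. Your final join-preservation check is fine but redundant: once $F$ and $G$ are mutually inverse and order-preserving, an order isomorphism of complete lattices automatically preserves arbitrary joins and meets. One small slip: $\fl A/I$ is \emph{not} closed under extensions inside $\fl A$ (an extension of two $I$-annihilated modules is in general only annihilated by $I^2$); fortunately you never use this claim---closure of $\Tr\cap\fl A/I$ under extensions taken \emph{inside} $\fl A/I$ only needs that short exact sequences of $A/I$-modules remain exact over $A$.
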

\begin{proof}
Let us first explain how the maps are well-defined (c.f. \cite[Prop 5.1]{DIRRT}).  We regard $\fl A/I$ as a full subcategory of $\fl A$ (as we have a canonical embedding  $\Mod A/I \to \Mod A$ induced by the algebra morphism $A\to A/I$).  Then we can write $\Tr\otimes_A A/I$ as $\Tr\cap \fl A/I$.  It is easy to see that $\Tr\cap\fl A/I\in \tors(\fl A/I)$ and this construction preserves the inclusion order.

To show that ${\Ur}^\wedge$ is quotient-closed for $\Ur\in \tors(\fl A)$, consider an epimorphism $X\twoheadrightarrow Y$ with $X/XI= X\otimes_A A/I \in \Ur$.  Since $-\otimes_A A/I$ is right exact, we have an epimorphism $X/XI \twoheadrightarrow Y/YI$ in $\fl A/I$, and so $Y/YI \in \Ur$, meaning that $Y\in {\Ur}^\wedge$.

To see that ${\Ur}^\wedge$ is extension-closed, consider a short exact sequence $0\to X \xrightarrow{f} M \to Y \to 0$ in $\fl A$ with $X/XI, Y/YI\in \Ur$.  Applying $-\otimes_A A/I$ yields a short exact sequence
\[
0\to \im(f\otimes_AA/I) \to M/MI \to Y/YI \to 0.
\]
Since $\im(f\otimes_AA/I)$ is a quotient of $X/XI$, it is in $\Ur$, and so $M/MI\in \Ur$ as required.

Let us now show that ${\Ur}^\wedge\cap \fl A/I = {\Ur}^\wedge$.  Indeed, by the assumption of $SI=0$ for all $S\in \brick(\fl A)$, $-\otimes_A A/I$ induces a bijection between $\brick(\fl A)$ and $\brick(\fl A/I)$ (as any $a\in I$ induces an endomorphism $-\cdot a:X\to X$ on any $A$-module $X$).  Hence, we have \[
\brick({\Ur}^\wedge\cap \fl A/I) = \brick({\Ur}^\wedge)\cap \fl A/I = \brick(\Ur)\cap \fl A/I = \brick(\Ur).
\]
This implies that
\[
\Ur=\Filt(\brick(\Ur))=\Filt(\brick({\Ur\cap \fl A/I})) = (\Ur\cap \fl A/I)^\wedge.
\]
In particular, $\Ur \mapsto {\Ur}^\wedge$ is also a morphism of lattice.

It now remains to show that $(\Tr\otimes_A A/I)^\wedge=\Tr$.  Indeed, it follows immediate from the definitions that $(\Tr\otimes_A A/I)^\wedge \supseteq \Tr$.  For the other direction, consider $X\in (\Tr\otimes_A A/I)^\wedge$, we have $X/XI\in \Tr\otimes_A A/I$ and so $X/XI \in \Tr$.

By the assumption of $I=\langle r_1, r_2, \ldots, r_n\rangle$, we have an epimorphism $(-\cdot r_1, -\cdot r_2, \cdots, -\cdot r_n): X^{\oplus n} \twoheadrightarrow XI$.  Hence, for any integer $a\geq 0$, we have
\[
(XI^a/XI^{a+1})^{\oplus n} = \frac{(XI^a)^{\oplus n}}{(XI^{a+1})^{\oplus n}} \twoheadrightarrow \frac{XI^{a+1}}{XI^{a+2}}. 
\]
As $X/XI\in \Tr$, these epimorphisms imply that $XI^a/XI^{a+1}\in \Tr$ for all $a\geq 0$.  In particular, as $X$ is has a (finite) filtration $(XI^a)_{a\geq 0}$, it is also in $\Tr$.
\end{proof}

Let us now recall the definition of Brauer graph algebras.

\begin{definition}
A \defn{ribbon graph order} is an infinite-dimensional gentle algebra associated $(Q,R)$ so that any blossoming of $(Q,R)$ is $(Q,R)$ itself.
\end{definition}

For a ribbon graph order associated to $(Q, R)$ and $i\in Q_0$, we have exactly two incoming arrows $q,q'$ and two outgoing arrows $r,r'$ with $qr\neq 0\neq q'r'$.   This gives rise to two cycles (walks) $\gamma, \gamma'$ starting with the letters $r, r'$ respectively so that $\st{{}^\infty \gamma^\infty}, \st{{}^\infty \gamma'^\infty}\in \nstr$.  We call $\gamma, \gamma'$ the \defn{Brauer cycles at $i$}.

A \defn{multiplicity function} on a ribbon graph order is a map $m:\nstr \to \Z_{>0}$.  A \defn{Brauer commutation relation} with respect to a multiplicity function $m$ on a ribbon graph order $\Lambda$ is an element of the form 
\[
\gamma^{m(\gamma)}-\gamma'^{m(\gamma)}
\]
with $\gamma,\gamma'$ being the two distinct Brauer cycles at some $i\in Q_0$.

\begin{definition}
A finite-dimensional algebra $A$ is a \defn{Brauer graph algebra} if there is a multiplicity function $m$ such that it is Morita equivalent to the quotient of a ribbon graph order by the ideal generated by all Brauer commutation relations with respect to $m$.
\end{definition}
\begin{remark}
Usually in the literature, Brauer graph algebras are defined using a ribbon graph equipped with a multiplicity function defined on the vertices of the ribbon graph; ours are slightly different but it is easy to see the definitions are equivalent as vertices of the associated ribbon graph correspond bijectively with the null strings.
\end{remark}

Now we can obtain classification of torsion classes in the category $\mod A=\fl A$ of finite-dimensional modules over a Brauer graph algebra $A$.

\begin{corollary}
Let $A \cong \Lambda/I$ be a Brauer graph algebra arising from a ribbon graph order $\Lambda$ associated to $(Q,R)$.
Then there is a complete lattice isomorphism
 \begin{align*}
\Tr_\Lambda:\maxstr[k] & \xrightarrow{\sim} \tors(\mod A) \\
(\Sr, \blambda) &\mapsto \Tr_\Lambda(\Sr,\blambda)\cap \mod A.
 \end{align*}
\end{corollary}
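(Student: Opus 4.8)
The plan is to realise the asserted bijection as the composite of two isomorphisms of complete lattices. For the ribbon graph order $\Lambda$, Theorem \ref{classtors} supplies $\Tr_\Lambda\colon\maxstr[k]\xrightarrow{\sim}\tors(\fl\Lambda)$, $(\Sr,\blambda)\mapsto\Tr_\Lambda(\Sr,\blambda)$. On the other hand, Proposition \ref{torsMorph} applied to the pair $(\Lambda,I)$ produces an isomorphism $\tors(\fl\Lambda)\xrightarrow{\sim}\tors(\fl(\Lambda/I))$, $\Tr\mapsto\Tr\otimes_\Lambda\Lambda/I=\Tr\cap\fl(\Lambda/I)$. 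Since $A=\Lambda/I$ is a finite-dimensional algebra we have $\fl A=\mod A$, and the composite of these two maps sends $(\Sr,\blambda)$ to $\Tr_\Lambda(\Sr,\blambda)\cap\mod A$, which is precisely the map in the statement; a composite of complete lattice isomorphisms being again one, this would complete the proof. It therefore remains to verify the two hypotheses of Proposition \ref{torsMorph} for $(\Lambda,I)$.

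The ideal $I$ is finitely generated, being generated by the Brauer commutation relations, one per pair of Brauer cycles at a vertex of the finite quiver $Q$. The substantive point is that $SI=0$ for every brick $S\in\brick(\fl\Lambda)$. For this I would use the classification of finite-dimensional bricks of a gentle algebra, Proposition \ref{classbrick}: such an $S$ is either $X(\st\delta)$ with $\st\delta$ a confined string satisfying $\#c^{\geq}(\st\delta,\st\delta)=1$, or $X_M(\st\delta)$ with $[\st\delta,M]$ a non-self-crossing band and $M$ simple over $k[T,T^{-1}]$. It suffices to show $S\rho=0$ for each generator $\rho=\gamma^{m(\gamma)}-\gamma'^{m(\gamma)}$, where $\gamma,\gamma'$ are the two Brauer cycles at a vertex $i\in Q_0$. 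By the description of the $\Lambda$-action on canonical bases in Subsection \ref{defmods}, a path $p$ sends a canonical basis vector $x_j$ of $X(\st\delta)$ (resp.\ of $X_M(\st\delta)$) to a nonzero vector — the $k[T,T^{-1}]$-twist in the band case is irrelevant here — exactly when the walk $\delta$ reads $p$ as a consecutive subword starting at position $j$, and to $0$ otherwise. As $\gamma$ and $\gamma'$ begin with distinct arrows out of $i$, at most one of the paths $\gamma^{m(\gamma)}$, $\gamma'^{m(\gamma)}$ can be read at any given position, so $S\rho\neq0$ would force $\delta$ to read $\gamma^{m(\gamma)}$ (or $\gamma'^{m(\gamma)}$) as a consecutive subword.

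The heart of the matter is thus the combinatorial statement that \emph{the walk of a finite-dimensional brick of $\Lambda$ never reads a power of a Brauer cycle}. The key feature is that a Brauer cycle $\gamma$ is a cyclic path whose infinite repetition ${}^{\infty}\gamma^{\infty}$ is the null string of $\nstr$ attached to $i$; consequently the stationary walk $\us(\gamma)=\un{q_0}{r_0}$ immediately preceding a copy of $\gamma$ in any string coincides with the stationary walk $\ut(\gamma)=\un{q_0}{r_0}$ immediately following it, where $r_0$ and $q_0$ are the first and last arrows of $\gamma$. If $\st\delta=\st{\delta_1\gamma^k\delta_2}$ with $k\geq1$, then taking the run of $\gamma$'s maximal and using gentleness to pin down the letters flanking the run (they are inverses of arrows, or absent), one extracts an overlap of $\st\delta$ with itself matching the two ends of the run — a positive self-crossing when $k\geq2$, and an almost positive self-crossing with stationary overlap when $k=1$ — which is distinct from the trivial full overlap. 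In the string case this contradicts $\#c^{\geq}(\st\delta,\st\delta)=1$; in the band case the run is necessarily finite and flanked by inverse arrows (else $\st\delta$ would be null), and the resulting self-crossing can be arranged to be positive, contradicting non-self-crossing of $[\st\delta,M]$. I expect this lemma — in particular the bookkeeping that produces a bona fide self-crossing of the correct sign in each of the interior/boundary and string/band cases — to be the main obstacle; everything else is the formal composition of Theorem \ref{classtors} with Proposition \ref{torsMorph}.
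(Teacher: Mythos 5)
Your proposal is correct and matches the paper's approach: the paper's proof is precisely the one-line combination of Theorem~\ref{classtors} with Proposition~\ref{torsMorph}, the hypotheses of the latter (finite generation of $I$, and $SI=0$ for every brick $S$ via the classification in Proposition~\ref{classbrick}) being left implicit there and supplied by your sketch. One small repair in your key combinatorial lemma: the maximal run of \emph{full} copies of a Brauer cycle $\gamma$ may in fact be flanked by arrows (a partial copy of $\gamma$), so one should instead take the maximal subword of $\delta$ lying on the null walk ${}^\infty\gamma^\infty$ containing that run; gentleness then forces its flanking letters to be inverses of arrows or absent, and your shift-by-one-period self-overlap argument (almost positive in the string case, positive in the band case) goes through as described.
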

\begin{proof}
Combine Theorem \ref{classtors} with Proposition \ref{torsMorph}.
\end{proof}

\comment{
\subsection{Geometric realisation}

The mathematical argument will not require any geometric interpretation, but it will be immensely helpful to have some geometric picture to understand the combinatorics we use; indeed, the geometric picture is how we get the intuition of many of the arguments throughout this article.
The geometric realisation we use has already appeared in various literature, which became immensely popular due to the connection with cluster algebra and topological Fukaya categories.
We will be brief and refer the reader to, for example, \cite{OPS} for details.

\begin{definition}[Ribbon graph]
A \defn{graph} is defined by the data $\Gamma=(V,E,s,\iota)$ where $V$ is a finite set of \defn{vertices}, $E$ is a finite set of \defn{half-edges}, $s:E\to V$ is a map that sends a half-edge to is emmanating vertex, and $\iota:E\to E$ is a fixed-point-free involution.

A \defn{ribbon graph} is a graph $\Gamma$ equipped with a permutation $\sigma:E\to E$ whose cycles correspond to sets of the form $s^{-1}(v)$.
The cycle corresponding to a vertex $v\in V$ is called the \defn{cyclic ordering around $v$}.
Note that a ribbon graph is uniquely determined by the data $(E, \iota, \sigma)$.

A \defn{marked ribbon graph} is a ribbon graph $\Gamma$ equipped with a \defn{marking} map $m:V\to E\cup\{\infty\}$ such that $m(v)\in s^{-1}(v)\cup \{\infty\}$ for every $v\in V$.
\end{definition}

For simplicity, we assume a ribbon graph is connected in the obvious sense, and the set of half-edges is non-empty.

A ribbon graph is a deformation retract of a well-defined oriented surface with boundary equipped with distinguished points in its interior (which correspond to the vertices of the ribbon graph).  The orientation is induced by the cyclic orderings.
We will display the effect of applying the permutation $\sigma$ on an half-edge $e$ by drawing $e$ first and $\sigma(e)$ next in the  counter-clockwise direction around $v$.

The geometric effect of the marking $m$ is to deform the embedding of the ribbon graph by pushing a vertex $v$ of the ribbon with $m(v)\in s^{-1}(v)$ to the boundary which contains an interval that retracts to $m(v)$; otherwise, $v$ stays in the interior of the associated surface.

The marked ribbon graph $\Gamma=\Gamma(Q,I)$ associated to a gentle quiver $(Q,I)$ is constructed as follows.  
\begin{itemize}
\item The set of half-edges are given by the stationary walks $\un{q}{r}$ in $Q$; we note that one of $q,r$ could be in a blossoming of $(Q,I)$.

\item The involution $\iota$ is given by taking inverses of the stationary walks.

\item The cyclic ordering $\sigma$ is defined as follows.  
\begin{itemize}
\item If $r\in Q_1$, then there is a unique stationary walk of the form $\un{r}{s}$ and we define $\sigma(\un{q}{r}):=\un{r}{s}$.
\item If $r\notin Q_1$, then $\extl{r}=p\gamma$ for some $p\in Q_1'\setminus Q_1$, and we define $\sigma(\un{p}{r}):=\us(\gamma)$.
\end{itemize}

\item The marking $m:V\to E$ is given by $m(s^{-1}(\un{q}{r}))=\infty$ if $\extl{r}$ is left-unbounded; otherwise,  $m(s^{-1})(\un{q}{r})=\us{(\gamma)}$ for $\extl{r}=p\gamma$ with $p\in Q_1'\setminus Q_1$. 
\end{itemize}
}

\bibliographystyle{alphanum}
\bibliography{gentle}

\end{document}